\newtheorem{thm}{Theorem}[section]
\newtheorem{dfn}[thm]{Definition}
\newtheorem{lem}[thm]{Lemma}
\newtheorem{prp}[thm]{Proposition}
\newtheorem{rem}[thm]{Remark}
\newtheorem{cor}[thm]{Corollary}
\newtheorem{asm}[thm]{Assumption}
\newtheorem{exam}[thm]{Example}
\def\P{\mathbb{P}}
\def\A{\mathscr{A}}
\def\R{\mathbb{R}}
\def\Z{\mathbb{Z}}
\def\Q{\mathbb{Q}}
\def\C{\mathbb{C}}
\def\t{\mathbf{t}}
\def\n{\mathbf{n}}
\def\D{\mathcal{D}}
\def\J{\mathcal{J}}
\def\E{\mathcal{E}}
\def\CC{\mathcal{C}}
\def\N{\mathcal{N}}
\def\H{\mathcal{H}}
\def\Re{\mathop{\mathrm{Re}}\nolimits}
\def\Prim{\mathop{\mathrm{Prim}}\nolimits}
\def\sign{\mathop{\mathrm{sign}}\nolimits}
\def\ess{\mathop{\mathrm{ess}}\nolimits}
\def\pqs{\mathop{\mathrm{pqs}}\nolimits}
\def\Tan{\mathop{\mathrm{Tan}}\nolimits}
\def\qs{\mathop{\mathrm{qs}}\nolimits}
\def\Tan{\mathop{\mathrm{Tan}}\nolimits}
\def\Dol{\mathop{\mathrm{Dol}}\nolimits}
\def\Id{\mathop{\mathrm{Id}}\nolimits}
\def\CH{\mathop{\mathrm{CH}}\nolimits}
\def\Ker{\mathop{\mathrm{Ker}}\nolimits}
\def\sing{\mathop{\mathrm{sing}}\nolimits}
\def\Trop{\mathop{\mathrm{trop}}\nolimits}
\def\relint{\mathop{\mathrm{rel.int}}\nolimits}
\def\Star{\mathop{\mathrm{Star}}\nolimits}
\def\supp{\mathop{\mathrm{supp}}\nolimits}
\def\Spec{\mathop{\mathrm{Spec}}\nolimits}
\def\Hom{\mathop{\mathrm{Hom}}\nolimits}
\def\Trop{\mathop{\mathrm{Trop}}\nolimits}
\numberwithin{equation}{section}
\numberwithin{figure}{section}
\DeclareRobustCommand{\genericinterval}[2]{%
 \@ifstar{\genericinterval@star{#1}{#2}}{\genericinterval@nostar{#1}{#2}}}
  \newcommand{\genericinterval@star}[4]{\mathopen{}\mathclose{\left#1#3,#4\right#2}}
   \newcommand{\genericinterval@nostar}[4]{\mathopen{#1}#3,#4\mathclose{#2}}
\begin{document}
\title[On tropical Hodge theory]{On tropical Hodge theory for tropical varieties}
\author{Ryota Mikami}
\address{Institute of Mathematics, Academia Sinica, Astronomy-Mathematics Building, No.\ 1, Sec.\ 4, Roosevelt Road, Taipei 10617, Taiwan.}
\email{mikami@gate.sinica.edu.tw}
\subjclass[2020]{Primary 14T20 ; Secondary 52B40}
\keywords{tropical geometry, Hodge theory, the K\"{a}hler package, matroids}
\date{\today}
\maketitle

\begin{abstract}
  To prove log-concavity of the characteristic polynomials of matroids, 
  Adiprasito-Huh-Katz proved the K\"{a}hler package (the hard Lefschetz theorem and the Hodge-Riemann bilinear relations) for their Chow rings.
  Amini-Piquerez generalized it to tropical cohomology of smooth projective tropical varieties.
  Their proofs were combinatorial.
  In this paper, 
  we establish tropical Hodge theory except for regularity of solutions of Laplacian,
  and the K\"{a}hler package follows from tropical Hodge theory in the same way as compact K\"{a}hler manifolds. 
\end{abstract}

\setcounter{tocdepth}{1} 
\tableofcontents

\section{Introduction}
  \emph{Log-concavity} (i.e., $w_{k-1} ( M ) w_{k +1} ( M ) \leq w_k (M)^2$) of the absolute values $w_k (M)$ of the coefficients of the characteristic polynomial $\chi_M ( \lambda ) $ of a general matroid $M$ of rank $r +1$
  was conjectured by Welsh \cite{Wel76} in 1976.
  Adiprasito-Huh-Katz \cite{AHK18} proved it by showing \emph{the K\"{a}hler package} for its Chow rings $A^* ( M )$, 
  i.e., for $p \leq \frac{r}{2}$  and an ample element $l \in A^1 ( M ) \otimes \R$, 
    \begin{itemize}
      \item (the hard Lefschetz theorem \cite[Theorem 1.4 (1)]{AHK18}) 
            $$ A^p ( M ) \otimes \R \to A^{r  -p} ( M ) \otimes \R ,   \quad a \mapsto l^{r - 2p} \cdot a  $$
            is an isomorphism, 
      and 
      \item (the Hodge-Riemann bilinear relations \cite[Theorem 1.4 (2)]{AHK18}) a symmetric bilinear map 
            $$ A^p ( M ) \otimes \R \times A^p ( M ) \otimes \R \to \R , \quad  (a,b) \mapsto (-1)^p \deg(l^{r-2p} \cdot a  \cdot b )   $$
            is positive definite on the kernel of $l^{r-2p +1}$.
    \end{itemize}
(For more information on log-concavity, e.g., the case of realizable matroids, see \cite[Section 1]{AHK18}.)
Their proof was based on \emph{tropical geometry}.

  Amini-Piquerez \cite{AP20} generalized the K\"{a}hler package 
  to \emph{tropical singular cohomology} $H_{\Trop,\sing}^{p,q} ( Y )$ of smooth (i.e., locally matroidal) projective \emph{tropical variety} $Y$.
  (For a natural compactification $\overline{\Sigma_M}$ of the Bergman fan $\Sigma_M$ of a matroid $M$, 
  we have 
    $H_{\Trop,\sing}^{p,p} ( \overline{\Sigma_M}) \cong A^p ( M ) $ 
    and $H_{\Trop,\sing}^{p,q} ( \overline{\Sigma_M}) =0 $ $(p \neq q)$.)

Tropical singular cohomology $H_{\Trop,\sing}^{p,q} ( Y ) \otimes \R$
  is isomorphic to \emph{tropical Dolbeault cohomology} $ H_{\Trop,\Dol}^{p,q} (Y) $ (\cite[Theorem 1]{JSS19}), which is based on 
tropical analogs of differential $(p,q)$-forms on complex manifolds, called \emph{superforms}.
(Superforms were
  first introduced by Lagerberg (\cite{Lag12}) for $\R^n$, 
  and generalized by Chambert-Loir and Ducros (\cite{CLD12}) to tropical varieties and non-archimedean analytic spaces. 
  Many mathematicians have studied them, e.g., \cite{BGGJK21}, \cite{Gub16}, \cite{GJR21-1}, \cite{GJR21-2}, \cite{GK17}, \cite{Jel16A}, \cite{Jel16T},\cite{JSS19}, \cite{Mih21}.)

Although the K\"{a}hler package for singular (or Dolbeault) cohomology groups of compact K\"{a}hler manifolds 
  follows from \emph{Hodge theory} 
  (in particular, isomorphisms of Dolbeault cohomology groups and the spaces of harmonic forms), 
the known proofs (\cite{AHK18}, \cite{BHMPW22}, \cite{AP20}) of the above K\"{a}hler packages did not use differential geometry of tropical varieties.

In this paper, we study a tropical analog of Hodge theory.
The goal is the following.

\begin{thm}
  Let $\Lambda$ be a tropically compact projective tropical variety
  which is 
  $\Q$-smooth in codimension $1$ (\cref{definition of locally irreducible tropical varieties}).
  We assume that
  \cref{assumption regularity of Derichlet potentials} (regularity of solutions of tropical Laplacian) holds.

  Then the K\"{a}hler package holds for  a new tropical Dolbeault cohomology $H_{\Trop,\Dol,\pqs,\t''_{\min}}^{p,q} (\Lambda)$.  
\end{thm}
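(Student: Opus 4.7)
The plan is to mimic the classical Hodge-theoretic proof of the Kähler package for compact Kähler manifolds, replacing smooth complex $(p,q)$-forms by superforms on $\Lambda$, the Dolbeault operator $\bar\partial$ by the minimal tropical operator $d''_{\min}$ indicated by the subscript $\t''_{\min}$, and the Kähler class by a tropical ample class. The classical argument has four layers: an elliptic Hodge decomposition, the Kähler identities, the induced $\mathfrak{sl}_2$-action on harmonic forms yielding the hard Lefschetz theorem, and positivity of the Hermitian pairing on primitive harmonic forms yielding the Hodge-Riemann relations. I would reproduce each layer in the tropical setting, leaning on the regularity hypothesis for the one analytic step and on the combinatorial/local structure of $\Lambda$ for everything else.

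First I would fix a tropical Kähler class and an associated pointwise Hermitian inner product on $(p,q)$-superforms; tropical compactness of $\Lambda$ then upgrades this to a global $L^{2}$ inner product, and $\Q$-smoothness in codimension $1$ is what guarantees that integration by parts and Stokes' theorem apply to $\pqs$ (partially quasi-smooth) test forms without residual codimension-one boundary contributions, so that the formal adjoint $(d''_{\min})^{\ast}$ is well-defined. I would then assemble the tropical Dolbeault Laplacian $\Delta_{\t''} = d''_{\min}(d''_{\min})^{\ast} + (d''_{\min})^{\ast} d''_{\min}$. Invoking \cref{assumption regularity of Derichlet potentials}, one solves $\Delta_{\t''} u = f$ modulo the finite-dimensional kernel, producing the orthogonal decomposition
\begin{equation*}
\Omega^{p,q}_{\pqs}(\Lambda) = \H^{p,q}(\Lambda) \oplus d''_{\min}\Omega^{p,q-1}_{\pqs}(\Lambda) \oplus (d''_{\min})^{\ast}\Omega^{p,q+1}_{\pqs}(\Lambda),
\end{equation*}
and therefore the identification $H^{p,q}_{\Trop,\Dol,\pqs,\t''_{\min}}(\Lambda) \cong \H^{p,q}(\Lambda)$ of the new tropical Dolbeault cohomology with the space of harmonic $(p,q)$-superforms; finite dimensionality and Hodge symmetry follow formally from this identification.

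Finally, I would prove tropical Kähler identities relating $d'$, $d''_{\min}$, $L$ and their formal adjoints (modelled on $[(d''_{\min})^{\ast}, L] = i\, d'$), verifying them first on the $\Q$-smooth open locus and then propagating them across the higher-codimension singular strata by continuity of the $\pqs$-pairing. Combined with the Hodge decomposition, these identities force $\Delta_{\t''}$ to commute with $L$, so that $(L, L^{\ast}, [L,L^{\ast}])$ acts by $\mathfrak{sl}_{2}(\R)$ on $\bigoplus_{p,q}\H^{p,q}(\Lambda)$; the structure theory of finite-dimensional $\mathfrak{sl}_{2}$-representations yields the hard Lefschetz isomorphism, and the usual sign computation of the Hermitian pairing on primitive harmonic forms, copied from the compact Kähler case, yields the Hodge-Riemann bilinear relations. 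The main obstacle is the second step: controlling $\Delta_{\t''}$ on a space only $\Q$-smooth in codimension $1$, where ellipticity degenerates along the higher-codimension singular locus. This is exactly what is encapsulated by \cref{assumption regularity of Derichlet potentials}, so once that hypothesis is granted, the remaining ingredients are formal functional analysis, the Kähler identities, and $\mathfrak{sl}_{2}$-representation theory, in direct parallel to the compact Kähler case.
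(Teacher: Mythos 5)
Your overall strategy --- transplant the Hodge-theoretic proof from compact K\"{a}hler manifolds --- is indeed the paper's guiding philosophy, but the proposal misses the central structural point, and as a result the key decomposition you write down is not correct. The tropical variety $\Lambda$ is glued from polyhedral pieces $\exp(-P)_\C$, each of which is a compact manifold \emph{with boundary}; the boundary integrals arising from Green's formula on each piece do not cancel for arbitrary piece-wise quasi-smooth forms, even assuming $\Q$-smoothness in codimension $1$ (the balancing condition only kills boundary terms for forms satisfying matching conditions across the codimension-one faces). Consequently your three-term decomposition $\A_{\Trop,\pqs}^{p,q}(\Lambda)=\H^{p,q}\oplus\overline{\partial}\,\A_{\Trop,\pqs}^{p,q-1}\oplus\overline{\delta}\,\A_{\Trop,\pqs}^{p,q+1}$ is neither $L^2$-orthogonal nor valid: the correct statement is the Hodge--Morrey--Friedrichs decomposition into \emph{four} summands, in which the potentials must satisfy the Dirichlet-type condition $\t''_{\min}$ (for the $\overline{\partial}$-exact part) and the Neumann-type condition $\n''_{\min}$ (for the $\overline{\delta}$-coexact part), and the harmonic part splits further into $\H_D^{p,q}(\Lambda)$ plus a second summand of $\overline{\delta}$-images of forms satisfying $\n''_{\max}$. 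The analytic input is correspondingly not standard ellipticity on a closed manifold but an analog of Gaffney--G{\aa}rding's inequality on $W_{\Trop,\t''_{\min},\n''_{\max}}^{p,q,1,2}(\Lambda)$, which in turn rests on a density theorem for piece-wise quasi-smooth forms with prescribed boundary conditions; your appeal to ``formal functional analysis'' hides exactly this.

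Two further gaps. First, in the hard Lefschetz step the nontrivial point is not the pointwise $\mathfrak{sl}_2$ structure (Weil's linear algebra transfers verbatim) but the fact that $L$ and $\Lambda_L$ \emph{preserve the boundary conditions} defining the harmonic space $\H_{\min}^{p,q}(\Lambda)$; this requires an explicit computation with normal frames and the identity $[L,\overline{\delta}]=-i\partial$, not ``propagation by continuity across strata.'' Second, $\Q$-smoothness in codimension $1$ is used not to make the formal adjoint well-defined but to force $\t''_{\min}=\t''_{\max}$ and $\n''_{\min}=\n''_{\max}$, hence $\H_D^{p,q}(\Lambda)=\H_{\min}^{p,q}(\Lambda)$, so that the space on which hard Lefschetz and the Hodge--Riemann relations are established coincides with the space computing $H_{\Trop,\Dol,\pqs,\t''_{\min}}^{p,q}(\Lambda)$ via the Hodge isomorphism. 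Without that identification the argument does not close.
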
 
\begin{rem}
When $\Lambda$ is smooth (in particular, $\Q$-smooth in codimension $1$), 
our new tropical Dolbeault cohomology $H_{\Trop,\Dol,\pqs,\t''_{\min}}^{p,q} (\Lambda)$ is isomorphic to the usual one $ H_{\Trop,\Dol}^{p,q} (\Lambda) $. 
\end{rem}

\begin{rem}
 Classically, regularity of solutions of Laplacian 
 on compact Riemann manifolds (without boundaries)
 are known (see \cite[p.380]{GH78} or \cite[Theorem 4.9]{Wel08}).
 That on compact Riemann manifolds with smooth boundaries are 
 also known (\cite[Theorem 2.2.6]{Sch95}), 
 see also \cref{remark on regularity of solutions of Lalpacians elliptic partial differential operators}.

  However, there is a  tropically compact projective tropical variety
  which is $\Q$-smooth in codimension $1$ 
  and does not satisfy \cref{assumption regularity of Derichlet potentials} (\cite[Section 5]{BH17}). See \cref{Counter example of Assumption} for details.
\end{rem}

Our tropical Hodge theory imitates those of compact K\"{a}hler manifolds and of compact Riemann manifolds with smooth boundaries.
We shall introduce 
\begin{itemize}
  \item  generalizations of Largerberg's $\R$-K\"{a}hler forms to tropical varieties, 
  \item  boundary conditions $\t''_{\min}, \t''_{\max}, \n''_{\min}$ and $ \n''_{max}$ of differential forms, and 
  \item    $\overline{*}$-operator, codifferential $\overline{\delta} := - \overline{*} \overline{\partial} \overline{*}$, Laplacian $\Delta'':= \overline{\partial} \overline{\delta} +\overline{\delta} \overline{\partial} $, harmonic forms, $L^2$-norms, Sobolev norms, etc.
\end{itemize}

For technical reason, 
we generalize the complex of (usual smooth) superforms $(\A_{\Trop}^{p,*}, $ $d'' \colon \A_{\Trop}^{p,*} \to \A_{\Trop}^{p,* + 1})$ 
to that of \emph{quasi-smooth superforms} 
$(\A_{\Trop,\qs}^{p,*},\overline{\partial} \colon \A_{\Trop,\qs}^{p,*} \to \A_{\Trop,\qs}^{p,* + 1})$.  
We also consider 
\emph{piece-wise quasi-smooth superforms} $\A_{\Trop,\pqs  }^{p,q-1} ( \Lambda )$, 
and those $\A_{\Trop,\pqs , \epsilon_1, (\epsilon_2) }^{p,q-1} ( \Lambda )$ 
    satisfying boundary condition(s) $\epsilon_1$ (and $\epsilon_2$).

The following is the main results of this paper.

\begin{thm}[the Hodge-Morrey-Friedrichs decompostion (\cref{the Hodge-Morrey-Friedrichs decomposition})]\label{Intro the Hodge-Morrey-Friedrichs decompostion}
  Let $\Lambda$ be a tropically compact projective tropical variety.
  Under \cref{assumption regularity of Derichlet potentials} (regularity of solutions of tropical Laplacian),
  we have an $L^2$-orthogonal decomposition 
   \begin{align*}
  \A_{\Trop,\pqs}^{p,q} ( \Lambda )
   & = \{ \overline{ \partial } \alpha \mid \alpha \in \A_{\Trop,\pqs , \t''_{\min} }^{p,q-1} ( \Lambda ) \}
     \oplus \{ \overline{ \delta } \beta \mid \beta \in \A_{\Trop,\pqs , \n''_{\min} }^{p,q+1} ( \Lambda ) \}  \\
    & \oplus \H_{D}^{p,q} ( \Lambda )  
 \oplus  \{ \overline{ \delta} \gamma \in \H^{p,q} ( \Lambda ) \cap \A_{\Trop,\pqs}^{p,q}(\Lambda) 
  \mid \gamma \in \A_{\Trop, \pqs, \n''_{\max} }^{p,q+1} ( \Lambda )  \}    .
  \end{align*}
where (under \cref{assumption regularity of Derichlet potentials}) we put 
\begin{align*}
  \H_{D}^{p,q} ( \Lambda ) 
 & := \{ \omega \in \A_{\Trop, \pqs, \t''_{\min}, \n''_{\max} }^{p,q} ( \Lambda ) \mid \overline{\partial} \omega = 0, \ \overline{\delta} \omega = 0  \}  , \\
  \H^{p,q} ( \Lambda ) \cap \A_{\Trop,\pqs}^{p,q}(\Lambda) 
 & :=
    \{ \omega \in \A_{\Trop, \pqs, \t''_{\max}, \n''_{\max} }^{p,q} ( \Lambda ) \mid \overline{\partial} \omega = 0, \ \overline{\delta} \omega = 0  \}     .
\end{align*}
\end{thm}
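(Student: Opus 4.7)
The plan is to adapt the classical proof of the Hodge-Morrey-Friedrichs decomposition for compact Riemannian manifolds with boundary, following Schwarz \cite{Sch95}, to the tropical setting. I would replace $d, \delta$ by $\overline{\partial}, \overline{\delta}$, use the $L^{2}$-inner product on superforms defined earlier, and work throughout in the piecewise quasi-smooth category. The four summands match the classical template exactly: $\overline{\partial}$ of tangentially vanishing forms, $\overline{\delta}$ of normally vanishing forms, Dirichlet harmonic forms, and a Friedrichs complement of the Dirichlet harmonic forms inside all harmonic fields.

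The first step is to establish a tropical Green/Stokes integration-by-parts identity
\[
\langle \overline{\partial} \alpha, \beta \rangle_{L^{2}} - \langle \alpha, \overline{\delta} \beta \rangle_{L^{2}} = \text{boundary term},
\]
and to show that the boundary term vanishes whenever either $\alpha \in \A_{\Trop,\pqs,\t''_{\min}}^{p,q-1}(\Lambda)$ or $\beta \in \A_{\Trop,\pqs,\n''_{\min}}^{p,q+1}(\Lambda)$. Pairwise orthogonality of the four summands then follows routinely: only the cross terms between the $\overline{\partial}$- or $\overline{\delta}$-exact pieces and the two harmonic pieces require work, and these are dispatched by pairing with a harmonic field and invoking the $\t''_{\min}$ or $\n''_{\min}$ condition in the identity above.

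For the decomposition itself, I would first carry out the Hodge-Morrey step
\[
\A_{\Trop,\pqs}^{p,q}(\Lambda) = \overline{\partial} \A_{\Trop,\pqs,\t''_{\min}}^{p,q-1}(\Lambda) \oplus \overline{\delta} \A_{\Trop,\pqs,\n''_{\min}}^{p,q+1}(\Lambda) \oplus \bigl( \H^{p,q}(\Lambda) \cap \A_{\Trop,\pqs}^{p,q}(\Lambda) \bigr),
\]
by $L^{2}$-projecting onto the closed subspace of harmonic fields and solving $\Delta'' u = \omega - h$ with Dirichlet boundary data ($\t''_{\min}$) on the orthogonal complement, where \cref{assumption regularity of Derichlet potentials} ensures that $u$, and therefore $\overline{\delta} u$ and $\overline{\partial} u$, lie in the piecewise quasi-smooth category; the Dirichlet condition on $u$ should then translate into $\t''_{\min}$ on $\overline{\delta} u$ and into $\n''_{\min}$ on $\overline{\partial} u$. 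For the Friedrichs step, I would further split any $\omega \in \H^{p,q}(\Lambda) \cap \A_{\Trop,\pqs}^{p,q}(\Lambda)$ as $\omega_{D} + \overline{\delta} \gamma$ with $\omega_{D} \in \H_{D}^{p,q}(\Lambda)$ and $\gamma \in \A_{\Trop,\pqs,\n''_{\max}}^{p,q+1}(\Lambda)$, by solving an auxiliary boundary value problem whose data prescribes the $\n''_{\max}$ trace of $\gamma$, once again invoking the regularity assumption.

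The main obstacle lies in the Hodge-Morrey step. Translating the abstract Hilbert-space decomposition into the piecewise quasi-smooth category requires not only \cref{assumption regularity of Derichlet potentials} but also a careful account of how the four boundary conditions $\t''_{\min}, \t''_{\max}, \n''_{\min}, \n''_{\max}$ behave across the stratification of $\Lambda$ by its polyhedral structure, and of how the tropical Stokes identity interacts with that stratification. Arranging the definitions so that Dirichlet data on $u$ produces the correct conditions on $\overline{\partial} u$ and $\overline{\delta} u$ uniformly over all strata, and verifying that the resulting sums do remain in $\A_{\Trop,\pqs}^{p,q}(\Lambda)$ rather than only in its $L^{2}$-closure, is where the bulk of the technical labor would lie.
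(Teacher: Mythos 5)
Your overall template (Hodge--Morrey step followed by a Friedrichs step, imitating Schwarz) is the same as the paper's, and your treatment of orthogonality via Green's formula and of the Friedrichs splitting via a Dirichlet potential is essentially what the paper does. But there is a genuine gap in your Hodge--Morrey step: you propose to solve a \emph{single} boundary value problem $\Delta'' u = \omega - h$ with Dirichlet data and claim that the Dirichlet condition ``translates into $\t''_{\min}$ on $\overline{\delta}u$ and into $\n''_{\min}$ on $\overline{\partial}u$.'' The second half of that claim is false. The natural boundary conditions produced by the Dirichlet problem (\cref{Dirichlet potential solution of boundary value problem}) are $\t''_{\min}$ for $\overline{\delta}\phi_D$ but only $\n''_{\max}$ for $\overline{\partial}\phi_D$; the condition $\n''_{\min}$ is strictly stronger than $\n''_{\max}$ in general, the two coinciding only when $\Lambda$ is $\Q$-smooth in codimension $1$ (\cref{boundary conditions in Q-smooth in codimension 1 case}), which is not assumed in this theorem. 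Consequently the piece $\overline{\delta}\,\overline{\partial}u$ coming from your single solve is not known to lie in $\CC^{p,q}(\Lambda)=\{\overline{\delta}\beta \mid \beta\in\A_{\Trop,\pqs,\n''_{\min}}^{p,q+1}(\Lambda)\}$, and the remainder $\omega-\Delta''u$ lands in $\H_D^{p,q}(\Lambda)$ rather than in the full space of harmonic fields, so you do not recover the stated four-term decomposition. (This is not a tropical artifact: already for manifolds with boundary, the Morrey decomposition requires two potentials.)

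The fix is exactly what the paper does: solve \emph{two} problems. Write $\omega=\lambda_D+(\omega-\lambda_D)=\lambda_N+(\omega-\lambda_N)$ with $\lambda_D\in\H_D^{p,q}(\Lambda)$, $\lambda_N\in\H_N^{p,q}(\Lambda)$, take the Dirichlet potential $\phi_D$ of $\omega-\lambda_D$ and the Neumann potential $\phi_N$ of $\omega-\lambda_N$ (\cref{On Neumann potential and fields}), and set $\alpha_\omega:=\overline{\delta}\phi_D\in\A_{\Trop,\pqs,\t''_{\min}}^{p,q-1}(\Lambda)$ and $\beta_\omega:=\overline{\partial}\phi_N\in\A_{\Trop,\pqs,\n''_{\min}}^{p,q+1}(\Lambda)$; the Neumann problem is what supplies the $\n''_{\min}$ condition on $\beta_\omega$. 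One then still has to verify that $\kappa_\omega:=\omega-\overline{\partial}\alpha_\omega-\overline{\delta}\beta_\omega$ is $\overline{\partial}$- and $\overline{\delta}$-closed \emph{and} satisfies both $\t''_{\max}$ and $\n''_{\max}$ --- a step your proposal does not address; the paper gets closedness by testing against compactly supported quasi-smooth forms and the two boundary conditions by testing against all quasi-smooth forms in Green's formula.
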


\begin{cor}[the Hodge isomorphism (\cref{the Hodge isomorphism})]\label{Intro the Hodge isomorphism}
  We have 
   $$  H_{\Trop,\pqs,\Dol, \t''_{\min}}^{p,q} (\Lambda) := H^q( ( \A_{\Trop,\pqs,\t''_{\min} }^{p,*} ( \Lambda ), \overline{\partial} )) \cong \H_D^{p,q} ( \Lambda ) .$$
\end{cor}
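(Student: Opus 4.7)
The plan is to mimic the classical derivation of the Hodge isomorphism from the Hodge--Morrey--Friedrichs decomposition, now that \cref{Intro the Hodge-Morrey-Friedrichs decompostion} makes such a decomposition available in the tropical setting.

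First I would define the candidate map $\phi \colon \H_D^{p,q}(\Lambda) \to H_{\Trop,\pqs,\Dol,\t''_{\min}}^{p,q}(\Lambda)$ by $h \mapsto [h]$. It is well defined because, by definition, every $h \in \H_D^{p,q}(\Lambda)$ lies in $\A_{\Trop,\pqs,\t''_{\min}}^{p,q}(\Lambda)$ and satisfies $\overline{\partial}h = 0$. For injectivity, suppose $h \in \H_D^{p,q}(\Lambda)$ and $h = \overline{\partial}\alpha$ for some $\alpha \in \A_{\Trop,\pqs,\t''_{\min}}^{p,q-1}(\Lambda)$. Since the four summands in \cref{Intro the Hodge-Morrey-Friedrichs decompostion} are mutually $L^2$-orthogonal, $h$ is orthogonal to $\overline{\partial}\alpha$, so $\|h\|_{L^2}^2 = \langle h, \overline{\partial}\alpha\rangle_{L^2} = 0$, and therefore $h=0$.

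For surjectivity, take a representative $\omega \in \A_{\Trop,\pqs,\t''_{\min}}^{p,q}(\Lambda)$ with $\overline{\partial}\omega = 0$ and apply \cref{Intro the Hodge-Morrey-Friedrichs decompostion}:
\[
\omega = \overline{\partial}\alpha + \overline{\delta}\beta + h + \overline{\delta}\gamma,
\]
with $\alpha,\beta,h,\gamma$ in the respective boundary-conditioned spaces. Both the harmonic summand $h$ and the fourth summand $\overline{\delta}\gamma \in \H^{p,q}(\Lambda) \cap \A_{\Trop,\pqs}^{p,q}(\Lambda)$ are $\overline{\partial}$-closed, so applying $\overline{\partial}$ to the identity forces $\overline{\partial}\overline{\delta}\beta = 0$. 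Pairing with $\beta$ and using the Green identity for $(\overline{\partial},\overline{\delta})$ (where the boundary contribution vanishes thanks to the $\n''_{\min}$ condition on $\beta$) yields $\|\overline{\delta}\beta\|_{L^2}^2 = 0$, hence $\overline{\delta}\beta = 0$. To kill the last summand I would combine orthogonality with the reverse integration by parts:
\[
\|\overline{\delta}\gamma\|_{L^2}^2 = \langle \omega, \overline{\delta}\gamma\rangle_{L^2} = \langle \overline{\partial}\omega, \gamma\rangle_{L^2} = 0,
\]
the boundary contribution in the middle equality being killed this time by the $\t''_{\min}$ condition on $\omega$ (even though $\gamma$ only satisfies $\n''_{\max}$). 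Consequently $\omega = \overline{\partial}\alpha + h$, so $[\omega] = [h] = \phi(h)$ in $H_{\Trop,\pqs,\Dol,\t''_{\min}}^{p,q}(\Lambda)$.

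The main obstacle is verifying that the required Green-type identities genuinely apply to piece-wise quasi-smooth superforms on $\Lambda$ with the prescribed boundary conditions: one must track boundary contributions not only along the tropical ``boundary'' but also across the singular strata of $\Lambda$ where piece-wise quasi-smoothness is nontrivial. This is precisely where the formalism of $\overline{*}$, $\overline{\delta}$, and the $\t''_{\min}/\n''_{\min}/\n''_{\max}$ conditions built for \cref{Intro the Hodge-Morrey-Friedrichs decompostion} has to be re-used; once those identities are in hand, the argument above is essentially formal.
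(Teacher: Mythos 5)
Your argument is correct in outline and rests on the same pillar as the paper's proof of \cref{the Hodge isomorphism}, namely the Hodge--Morrey--Friedrichs decomposition; the difference lies in how the two residual summands are eliminated. For the fourth summand the paper does not integrate by parts: it observes that $\overline{\delta}\gamma_{\omega}=\omega-\overline{\partial}\alpha-h$ satisfies condition $\t''_{\min}$ (because $\omega$ and $h$ do, and $\overline{\partial}$ preserves $\t''_{\min}$ by \cref{condition t min max preserved by partial}), so that $\overline{\delta}\gamma_{\omega}$ is a Dirichlet field; since the Friedrichs decomposition (\cref{Friedrichs decomposition}) places it in $\H_D^{p,q}(\Lambda)^{\perp}$, it must vanish. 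Your route via $\lVert\overline{\delta}\gamma\rVert^2=\langle\omega,\overline{\delta}\gamma\rangle=\langle\overline{\partial}\omega,\gamma\rangle=0$ also works: the first equality is the orthogonality already recorded in \cref{L2 orthogonality of Hodge Morrey decomposition} and \cref{Friedrichs decomposition}, and the vanishing of the Green boundary term for a $\t''_{\min}$-form paired against an $\n''_{\max}$-form is exactly the mechanism invoked in the paper's proof of \cref{cor 2.1.3}, so nothing genuinely new has to be checked. The paper's bookkeeping argument never leaves the already-established decompositions; yours is closer to the classical K\"{a}hler computation and has the virtue of making the map $h\mapsto[h]$ and its injectivity/surjectivity explicit.

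One step you should tighten: to kill $\overline{\delta}\beta$ you pair $\overline{\partial}\overline{\delta}\beta$ with $\beta$ and assert that the boundary contribution dies ``thanks to the $\n''_{\min}$ condition on $\beta$''. That boundary term is $\sum_{P\in\Lambda_{\max}} m_P\int_{\partial\exp(-P)_\C}\overline{\delta}\beta_P\wedge\overline{*}\beta_P$, and its cancellation across a face $Q$ requires a matching of the tangential components of the \emph{first} factor across the $P$ containing $Q$ (a $\t''$-type condition on $\overline{\delta}\beta$), which is not what $\n''_{\min}$ provides. The repair is immediate and stays inside your framework: use the $L^2$-orthogonality of the four summands to write $\lVert\overline{\delta}\beta\rVert^2=\langle\omega,\overline{\delta}\beta\rangle$, and then apply Green's formula to $\omega$ (which satisfies $\t''_{\max}$, being $\t''_{\min}$) and $\beta$ (which satisfies $\n''_{\min}$) --- the second alternative of the boundary-vanishing mechanism of \cref{cor 2.1.3} --- to conclude $\langle\omega,\overline{\delta}\beta\rangle=\langle\overline{\partial}\omega,\beta\rangle=0$.
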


\begin{thm}[the K\"{a}hler package (\cref{hard Lefschetz} and \cref{Hodge-Riemann bilinear relation})]\label{Intro the Kahler package}
  Under \cref{assumption regularity of Derichlet potentials},
  when $\Lambda$ is  $\Q$-smooth in codimension $1$, 
  the K\"{a}hler package holds for  $\H_D^{p,q} ( \Lambda )$. 
\end{thm}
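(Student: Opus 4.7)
The strategy is to reduce the K\"{a}hler package for $\H_D^{p,q}(\Lambda)$ to a classical $\mathfrak{sl}_2$-representation-theoretic argument on a finite-dimensional vector space, following the standard compact K\"{a}hler manifold proof (cf.\ \cite{GH78,Wel08}). By the Hodge isomorphism \cref{Intro the Hodge isomorphism}, statements about the new Dolbeault group $H_{\Trop,\Dol,\pqs,\t''_{\min}}^{p,q}(\Lambda)$ translate into statements about harmonic representatives in $\H_D^{p,q}(\Lambda)$, and by the Hodge-Morrey-Friedrichs decomposition \cref{Intro the Hodge-Morrey-Friedrichs decompostion} these spaces are finite-dimensional $L^2$-orthogonal summands of $\A_{\Trop,\pqs}^{p,q}(\Lambda)$.

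The first and most delicate step is to establish \emph{tropical K\"{a}hler identities}. After fixing an $\R$-K\"{a}hler form $\omega$, let $L := \omega \wedge -$ act on $\A_{\Trop,\pqs}^{*,*}(\Lambda)$ and let $\Lambda_L$ be its $L^2$-adjoint obtained via $\overline{*}$. I would prove graded-commutator relations of the expected shape
\[
  [\Lambda_L,\, \overline{\partial}] \,=\, -\, d'^{*}, \qquad [L,\, \overline{\delta}] \,=\, d',
\]
where $d'$ is the companion of $\overline{\partial} = d''$ on superforms and signs are tracked carefully. On the smooth (open matroidal) locus these are pullbacks of Lagerberg-level identities on $\R^n$, which are purely algebraic and already appear in the literature. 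The $\Q$-smoothness in codimension $1$ hypothesis lets us extend the identities across codimension-$1$ singular strata by a weighted-average (balancing) argument, while higher-codimensional strata contribute trivially to the underlying integration-by-parts formulas thanks to the Dirichlet boundary conditions $\t''_{\min}$ and $\n''_{\max}$ built into $\H_D^{p,q}(\Lambda)$.

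Granted the K\"{a}hler identities, standard formal manipulations give $[L,\Delta''] = [\Lambda_L,\Delta''] = 0$, so $L$ and $\Lambda_L$ preserve harmonic forms. A direct check shows that $L$ preserves both Dirichlet conditions $\t''_{\min}, \n''_{\max}$ (immediate for $\t''_{\min}$; for $\n''_{\max}$ one uses that $\omega$ itself is boundary-compatible), and that $\overline{*}$ interchanges $\t''_{\bullet}$ and $\n''_{\bullet}$; consequently both $L$ and $\Lambda_L$ act on $\bigoplus_{p,q} \H_D^{p,q}(\Lambda)$. Together with the bidegree-counting operator $H$ they give a finite-dimensional $\mathfrak{sl}_2$-representation, and hard Lefschetz (\cref{hard Lefschetz}) is then the standard $\mathfrak{sl}_2$-theoretic statement that iterated application of the raising operator $L$ induces the predicted isomorphism of weight spaces.

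For the Hodge-Riemann relations (\cref{Hodge-Riemann bilinear relation}), I would use the induced primitive decomposition $\H_D^{p,q}(\Lambda) = \bigoplus_{k \geq 0} L^k P^{p-k,q-k}(\Lambda)$ to reduce positivity to primitive classes. On a primitive $\alpha \in P^{p,q}(\Lambda)$, a Weil-type identity expresses $L^{r-p-q}\alpha$ as an explicit positive multiple of $\overline{*}\alpha$, up to the sign appearing in the statement, so that the Hodge-Riemann pairing of $\alpha$ with itself is, up to this positive constant, the squared $L^2$-norm of $\alpha$ and hence positive. The main obstacle throughout is the first step: proving the tropical K\"{a}hler identities and verifying the compatibility of $L$, $\Lambda_L$, and $\overline{*}$ with the boundary conditions $\t''_{\min}, \n''_{\max}$ along both the geometric boundary of $\Lambda$ and its codimension-$1$ singular strata. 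Once that is in place, the rest is formal linear algebra on the finite-dimensional space $\H_D^{p,q}(\Lambda)$.
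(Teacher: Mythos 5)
Your proposal follows essentially the same route as the paper: the K\"{a}hler identities are inherited from the honest K\"{a}hler structure on each $\exp(-P)_\C$ (so nothing needs to be ``extended across strata'' except the boundary conditions), the real work is checking that $L$ and $\Lambda_L$ preserve those boundary conditions (the paper's \cref{support for L preserving harmonic forms} and \cref{ * L Lambda preserve harmonic forms}), and hard Lefschetz and Hodge--Riemann then follow from the pointwise primitive decomposition and Weil's identity $\overline{*}\omega = c\, L^{d-p-q}\omega$ exactly as you describe. The one organizational difference worth noting is that the paper works with the $\overline{*}$-stable space $\H_{\min}^{p,q}(\Lambda)$ (conditions $\t''_{\min}$ and $\n''_{\min}$) and invokes $\Q$-smoothness in codimension $1$ only at the end to identify it with $\H_D^{p,q}(\Lambda)$, whereas your direct use of the pair $(\t''_{\min},\n''_{\max})$ closes up under $\overline{*}$ and $\Lambda_L$ only because that same hypothesis collapses the min and max conditions.
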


Loosely speaking, this paper can be devided into 
\begin{itemize}
  \item introducing new notion in this paper (Section 3, 4),
  \item Hodge theory similar to that of compact Riemann manifolds with smooth boundaries (\cite[Chapter 2]{Sch95}) (Section 5,6,7),
  \item the K\"{a}hler package, proved in a parallel way to that of compact K\"{a}hler manifolds (Section 8), and
  \item Density of piece-wise quasi-smooth superforms in Sobolev spaces with boundary conditions (Section 9) (technical part).
\end{itemize}

More precisely, the content of this paper is as follows.

In Section 3, 
we shall introduce new differential forms, (piece-wise) quasi-smooth superforms.
We also define $\R$-K\"{a}hler forms, $L^2$-norms, Sobolev $s$-norms, $\overline{*}$-operator, etc.
In Section 4, 
we shall define normal frames and 4 boundary conditions.

In Section 5, 
we shall prove an analog of Gaffney-G{\aa}rding's inequality (\cref{Gaffney-Garding's inequality}), 
the key proposition in this paper, 
which is an estimate of Sobolev $1$-norm.
It guarantees the existence and uniqueness of weak solutions of Laplacian in Section 6.
In Section 7, 
we shall prove the Hodge-Morrey-Friedrichs decomposition (\cref{Intro the Hodge-Morrey-Friedrichs decompostion}) and  the Hodge ismorphism (\cref{Intro the Hodge isomorphism})
under an assumption of regularity of weak solutions of Laplacian.
Section 6 and 7 are completely parallel to Hodge theory of compact Riemann manifolds with smooth boundaries.

In Section 8, 
we shall prove the K\"{a}hler package (\cref{Intro the Kahler package}) in the same way as compact K\"{a}hler manifolds.

In Section 9, 
we shall prove density (\cref{density H = W with boundary conditions2}) of piece-wise quasi-smooth superforms with fixed boundary condition(s) in the Sobolev $1$-space with the same condition(s).
This is used to prove Gaffney-G{\aa}rding's inequality (\cref{Gaffney-Garding's inequality}).
The reader can skip this technical part.

\medbreak
\noindent{\bfseries Acknowledgment.}\quad
The author is deeply grateful to Tetsushi Ito and Yuan-Pin Lee 
for their encouragement.
Part of this paper was studied during author's stay at National Cheng Kung University. He is grateful to Jen-Chieh Hsiao and National Cheng Kung University for their hospitality.

\section{Notation}\label{sec;not;term}

\begin{itemize}
  \item For a $\Z$-module $G$, we put $G_\R := G \otimes_\Z \R$.
  \item For simplicity, we put e.g.,
          $$ f(x):= f(x_1,\dots,x_n)$$
          $$ f(- \log \lvert z \rvert ):= f(- \log \lvert z_1\rvert,\dots, - \log \lvert z_n \rvert ).$$
  \item For a differential $(1,0)$-form $\varphi'$, we put $\varphi'':= \overline{\varphi'}$ the complex conjugate.   Similarly, for a vector $(1,0)$-field $E'$, we put $E'':= \overline{E'}$.
  \item For a finite set $I=(i_1,\dots,i_r) \in \Z^r $,
  we use multi-index to simplify notations.
  For example,
  \begin{align*}
   d (-\frac{i}{2  } \log z_I):=& 
   d (-\frac{i}{2 } \log z_{i_1})\wedge \dots
   \wedge 
   d (-\frac{i}{2} \log z_{i_r}).
  \end{align*}
  \item The notation $\sum'$ means a restricted sum.
  For example, $\sum'_{I \in \{1,\dots,n\}^p , J \in \{1,\dots,n\}^q}$ means 
    the sum for all $I =( i_1,\dots, i_p)  \in \{1,\dots,n\}^p$
      and $J =( j_1,\dots, j_q)  \in \{1,\dots,n\}^q$,
    with $i_1 < \dots < i_p$ 
      and $j_1 < \dots < j_q$.
  When there is no confusion, 
  we denote it simply by $ \sum_{I,J}'$.
  \item For a differential $p$-form $\omega$ and vector fields $v_1,\dots,v_s$ ($s \leq p$), 
    we put $\omega(v_1,\dots,v_s)$ the contraction, 
    i.e., the $(p-s)$-form given by 
     $$(w_1,\dots,w_{v-s}) \mapsto \omega (v_1,\dots,v_s,w_1,\dots,w_{v-s}) $$
  (see e.g., \cite{Sch95}).
\end{itemize}

Throughout this paper,
we fix 
  a free $\Z$-module $M $ of finite rank $n$. We put $N:=\Hom(M,\Z)$. We often identify $\N_\R \cong \R^n$.

We also fix
\begin{itemize}
  \item a smooth projective toric variety $T_{\Sigma}$ over $\C$ (see \cite{CLS11} for toric geometry)  and 
  \item a unimodular fan $\Sigma$ in $N_\R$ corresponding to it.  
\end{itemize}

In Section 4-9, 
we fix 
\begin{itemize}
  \item a $d$-dimensional tropically compact (projective) tropical variety $(\Lambda, (m_P)_{P \in \Lambda_{\max}}) $ 
  in the projective tropical toric variety $\Trop (T_{\Sigma})$ and
  \item  an $\R$-K\"{a}hler superform $\omega_{Kah}$ on $\Lambda$ (see \cref{definition of Kahler forms}).
\end{itemize}

\section{Differential forms}

In this section, we shall remind definition of tropical varieties,
introduce (piece-wise) quasi-smooth superforms,
and introduce $\R$-K\"{a}hler superforms,
which enable us to define 
$L^2$-norms, Sobolev $s$-norms, and some operators
on tropical varieties 
in the same way as compact K\"{a}hler manifolds.

Remind that there is a natural bijection between the cones $\sigma \in \Sigma$ and the torus orbits $O(\sigma)$ in $T_{\Sigma}$.
The torus orbit $O(\sigma)$ is isomorphic to the torus $ \Spec \C[M  \cap \sigma^{\perp}].$
We put $N_{\sigma}:= \Hom (M  \cap \sigma^{\perp} , \Z)$.
We put $T_{\sigma} := \bigcup_{\substack{ \tau \in \Sigma \\ \tau \subset \sigma}} O(\tau)$ the affine toric variety corresponding to a cone $\sigma$.
See \cite{CLS11} for toric geometry.

\subsection{Tropical varieties}\label{subsec;fan}
In this subsection,
we recall the compactification $\Trop(T_{\Sigma})$  of $N_\R \cong \R^n$ and tropical varieties in it.

We  put 
$\Trop(T_{\Sigma}) := \bigsqcup_{\sigma \in \Sigma } N_{\sigma ,\R}$ as a set.
We define a topology on $\Trop(T_{\Sigma})$  as follows.
We extend the canonical topology on $\R$  to that on $\R \cup \{ \infty \}$ so that  $(a, \infty]$ for $a \in \R$ are a basis of neighborhoods of $\infty$.
We also extend  the addition on $\R$  to that on $\R \cup \{ \infty \}$ by $a + \infty = \infty$ for  $a \in \R \cup \{\infty\}$.
 We consider the set of semigroup homomorphisms $\Hom (M \cap \sigma^{\vee}, \R \cup \{ \infty \}) $  as a topological subspace of $(\R \cup \{ \infty \})^{M \cap \sigma^{\vee}} $.
We define a topology on 
  $\Trop (T_\sigma ) := \bigsqcup_{ \substack{\tau \in \Sigma \\ \tau \subset \sigma  } } N_{\tau,\R}$ 
    by the canonical bijection
$$\Hom (M \cap \sigma^{\vee}, \R \cup \{ \infty \}) \cong \bigsqcup_{ \substack{\tau \in \Sigma \\ \tau \subset \sigma  } } N_{\tau,\R}.$$
 Then we define a topology on  $\Trop(T_{\Sigma}) = \bigsqcup_{\sigma \in \Sigma } N_{\sigma,\R}$ by glueing the topological spaces $ \Trop ( T_\sigma ) $ together.

A toric morphism $\varphi \colon T_{\Sigma'} \to T_{\Sigma}$
induces a natural map
$$ \Trop(T_{\Sigma'}) \to \Trop(T_{\Sigma}),$$
which is also denoted by $\varphi$.

We shall define polyhedral complexes in $\Trop(T_{\Sigma})$.
\begin{dfn}
A subset of $\R^r$ is called a \emph{rational polyhedron}
if it is the intersection of sets of the form
$$\{x \in \R^r \mid \langle x , a \rangle \leq b \} \ (a \in \Z^r, b \in  \R),$$
here $\langle x,a\rangle$ is the usual inner product of $\R^r$.
\end{dfn}
In this paper, we call rational polyhedron simply polyhedron.
\begin{dfn}
For a cone $\sigma\in \Sigma $ and a polyhedron  $C\subset N_{\sigma,\R}$,
we call its closure $P:=\overline{C}$ in  $ \Trop(T_{\Sigma})$ a  \emph{polyhedron}  in $\Trop(T_{\Sigma})$.
We put $\dim(P):=\dim(C)$.
\end{dfn}

Let $P \subset \Trop(T_{\Sigma})$ be a polyhedron.
We put $\sigma_P \in \Sigma $ the unique cone such that $P \cap N_{\sigma_P,\R} \subset P$ is dense.
A subset $Q$ of  $P$ in $\Trop(T_{\Sigma})$ is called a \emph{face} of $P$ if it is
the closure of the intersection $P^a \cap  N_{\tau,\R}$
in $ \Trop(T_{\Sigma})$
for some $ a \in \sigma_P\cap M$ and some cone $\tau \in \Sigma $,
where $P^a$ is the closure of
$$\{x \in P\cap  N_{\sigma_P,\R} \mid x(a) \leq  y(a) \text{ for any } y \in P \cap  N_{\sigma_P,\R} \}  $$
in $ \Trop(T_{\Sigma})$.
A finite collection $\Lambda$ of   polyhedra  in $ \Trop(T_{\Sigma})$ is called  a   \emph{polyhedral complex} if it satisfies the following two conditions.
\begin{itemize}
\item For all $P \in \Lambda$, each face of $P$ is also in $\Lambda$.
\item For all $P,Q \in \Lambda$, the intersection $P \cap Q$ is a face of $P$ and $Q$.
\end {itemize}

Let $\Lambda$ be a polyhedral complex of pure dimension $d$.
We put $\Lambda_{\max} \subset \Lambda$ be the subset of maximal dimensional (i.e., $d$-dimensional) polyhedra.
We put $\Lambda_{\max - 1} \subset \Lambda$ be the subset of $(d-1)$-dimensional polyhedra.

\begin{dfn}\label{definition of tropical variety}
  A (projective) \emph{tropcial variety} $(\Lambda,(m_P)_{P \in \Lambda_{\max}})$ of dimension $d$ in $\Trop(T_{\Sigma})$
  consists of 
  a polyhedral complex $\Lambda$
  of pure dimension $d$
  and 
  integers $m_P \in \Z$
  such that 
  for any cone $Q \in \Lambda_{\max - 1 }$,
  we have 
  $$\sum_{ \substack{P \in \Lambda_{\max} \\ Q \subset P , \sigma_Q = \sigma_{P}} } m_{P} n_{Q,P} =0 ,$$
  where $n_{Q,P} \in N_{\sigma_Q, \R}/ \Tan(Q \cap N_{\sigma_Q,\R})$
  is the primitive vector spanning the image of $P \cap N_{\sigma_Q,\R}$.
  (Here $\Tan ( Q \cap N_{\sigma_Q,\R})$ is the tangent space considered as a subspace  of $N_{\sigma_Q, \R }$ in the natural way.)
\end{dfn}

By abuse of notation, 
we simply say a tropical variety $\Lambda$ 
        or a tropical variety $X := \bigcup_{P \in \Lambda_{\max}} P \subset \Trop (T_{\Sigma})$.

\begin{dfn}
  In this paper, 
we say that a polyhedron $P\subset \Trop(T_{\Sigma})$
is  \emph{tropically compact}
if
 \begin{itemize}
  \item ($P$ is compact, which is always the case since $T_{\Sigma}$ is proper)
\item $ P \cap N_\R \neq \emptyset$,
  \item  $P \subset \Trop(T_{\sigma})$ for some cone $\sigma \in \Sigma$,
  and 
  \item  for any $\tau \in \Sigma$  
    and $x \in P \cap N_{\tau,\R}$,
  there exists an open neighborhood $U_x \subset \Trop(T_{\tau})$ of $x$
  such that 
  $$P \cap N_\R \cap U_x =((P \cap N_{\tau,\R}) \times \Tan \tau) \cap U_x,$$
  where we fix an identification $N_\R  \cong N_{\tau,\R} \times \Tan \tau$.
 \end{itemize}

  We say that a tropical variety $\Lambda$ is \emph{tropically compact}
  if every maximal dimensional polyhedron $P \in \Lambda_{\max}$
  is tropically compact.
\end{dfn}
In this case, 
we put $\Lambda^{\ess} \subset \Lambda$ be the subset of  polyhedra intersecting with $N_\R \cong \R^n$ 
and $\Lambda_{\max -1 }^{\ess} := \Lambda_{\max - 1} \cap \Lambda^{\ess}$.

\begin{dfn}\label{definition of locally irreducible tropical varieties}
  We say that a tropically compact tropical variety $(\Lambda,(m_P)_{P \in \Lambda_{\max}})$ is \emph{$\Q$-smooth in codimension $1$}
  if 
  for a polyhedron $Q \in \Lambda_{\max - 1 }^{\ess}$
  and integers $m'_{Q,P} \in \Z $ ($ P \in \Lambda_{\max} $ containing $Q$)
  satisfying
  $$\sum_{ \substack{P \in \Lambda_{\max} \\  Q \subset P }} m'_P n_{Q,P} =0 ,$$
  there exists 
  a rational number $a \in \Q$ 
  such that 
  $m'_{Q,P} = a m_P$ 
  for all $P \in \Lambda_{\max} $ containing  $ Q$. 
\end{dfn}

A smooth (i.e., locally matroidal) tropical variety is $\Q$-smooth in codimension $1$.

\subsection{Quasi-smooth superforms}\label{subsection superforms and tropical Dolbeault cohomology}
In this subsection, we shall introduce (piece-wise) quasi-smooth superforms.

First, we shall define quasi-smooth superforms on tropical toric varieties.
We put 
$$ - \log \lvert \cdot \rvert \colon 
T_{\Sigma}(\C) \to \Trop(T_{\Sigma})$$
the coordinate-wise map,
i.e., for any cone $\sigma \in \Sigma$,
$$ - \log \lvert \cdot \rvert|_{O(\sigma)(\C)} \colon 
O(\sigma)(\C) = \Hom (M \cap \sigma^{\perp},\C^{\times}) \ni z \mapsto (- \log \lvert \cdot \rvert) \circ z \in N_{\sigma,\R} = \Hom (M \cap \sigma^{\perp},\R).$$

\begin{dfn}\label{definition quasi-smooth superforms}
Let $U \subset \Trop(T_{\Sigma})$ be an open subset.
A \emph{quasi-smooth $(p,q)$-superform} on $U$ or on $(- \log \lvert \cdot \rvert)^{-1}(U)$ 
is 
  a smooth differential $(p,q)$-form $\omega$ on 
  a complex manifold $(- \log \lvert \cdot \rvert)^{-1}(U)$ of the form
  \begin{align*}
    \omega
    = \sum'_{\substack{I = (i_1,\dots,i_p) \\ J=(j_1,\dots,j_q)} }
	   \alpha_{I,J} (-  \log \lvert z \rvert )
     d ( - \frac{i}{2 } \log  z_I )\wedge  d ( - \frac{1}{2} \log  \overline{z_J}  ),
  \end{align*}
  for some $\R$-valued functions 
    $$\alpha_{I,J} (-  \log \lvert z \rvert ) := \alpha_{I,J} (-  \log \lvert z_1 \rvert, \dots, -  \log \lvert z_n \rvert ) ,$$
  where $z_1, \dots,z_n \in M$ is a basis considered as a holomorphic coordinate of $(\C^{\times})^{n}$,
  and 
    $$d ( - \frac{i}{2 } \log  z_I ):= 
       d ( - \frac{i}{2 } \log  z_{i_1} ) \wedge \dots \wedge 
       d ( - \frac{i}{2 } \log  z_{i_p} )$$
  (similarly for $d ( - \frac{1}{2 } \log  \overline{z_J} )$).

 We put 
 $ \A_{\Trop, \qs}^{p,q} ( ( - \log \lvert \cdot \rvert )^{-1} ( U) ) $  
 the set of quasi-smooth superforms on $U$. 
 We also denote it by $\A_{\Trop, \qs}^{p,q} (   U ) $.
\end{dfn}

Next, we shall define quasi-smooth superforms on 
 a $d$-dimensional tropically compact polyhedron $P \subset \Trop (T_{\Sigma})$.
There is 
  a basis $x_1,\dots,x_n$ of $M$ (which gives a coordinate of $N_\R \cong \R^n$)
  such that the affine span of $P \cap \R^n$ is 
    $$ \{ (x_1,\dots,x_n) \in \R^n \mid x_i = a_i \ ( d +1 \leq i \leq n ) \}$$
  for some $a_i \in \R \ ( d +1 \leq i \leq n)$.
We put 
  $z_1,\dots,z_n$ the basis $x_1,\dots,x_n$ as a holomorphic coordinate of $(\C^{\times})^n$.
We put $\exp ( - P)_\C \subset T_{\Sigma}(\C)$ the closure of 
  $$ \{z \in  ( \C^{\times} )^n \mid  z_i = e^{-a_{i} } \ ( d+1 \leq i \leq n ) \} 
     \cap  (- \log \lvert \cdot \rvert )^{-1} (P)  .$$
  It is also the closure of a $d$-dimensional complex manifold $\relint ( \exp (-P)_\C)$.

\begin{dfn}
For an open subset $V$ of $P $, 
a \emph{quasi-smooth $(p,q)$-superform} on $V$ or on $\exp ( - P)_\C  \cap  (- \log \lvert \cdot \rvert )^{-1} ( V ) $ is a smooth differential $(p,q)$-forms on $\exp ( -P)_\C \cap  (- \log \lvert \cdot \rvert )^{-1} ( V ) $ (in a natural sense, see \cref{some objects on exp - P C })
which is 
  the restriction of a quasi-smooth $(p,q)$-superform on an open neighborhood of  $\exp ( - P)_\C  \cap  (- \log \lvert \cdot \rvert )^{-1} ( V ) $ in $T_{\Sigma}(\C)$.
  
We put
  $$ \mathscr{A}_{\Trop, \qs }^{p,q}( \exp ( - P)_\C  \cap  (- \log \lvert \cdot \rvert )^{-1} ( V )  )   $$ 
  the subset of quasi-smooth $(p,q)$-superform.
  We also denote it by  $\mathscr{A}_{\Trop, \qs }^{p,q}( V)$.
\end{dfn}

Finally, we shall define quasi-smooth superforms on a tropically compact tropical variety $(\Lambda, (m_P)_{P \in \Lambda_{\max}})$ of dimension $d$.
For an open subset $\Omega \subset X := \bigcup_{P \in \Lambda_{\max} } P$,
we put 
$$\A_{\Trop,\pqs}^{p,q}(\Omega) :=
\bigoplus_{P \in \Lambda_{\max}} \A_{\Trop,\qs}^{p,q}( \exp ( - P)_\C  \cap ( - \log \lvert \cdot \rvert )^{-1} ( \Omega ) ) ,$$
the set of \emph{piece-wise quasi-smooth superforms}.
We put $\A_{\Trop,\pqs}^{p,q}(\Lambda) := \A_{\Trop,\pqs}^{p,q}(X)$.

\begin{dfn}
A \emph{quasi-smooth $(p,q)$-superform} on $\Omega$  is a piece-wise quasi-smooth $(p,q)$-forms on $\Omega$ 
which is the restrictions of a quasi-smooth $(p,q)$-superforms on an open neighborhood of  
$$ \bigcup_{P \in \Lambda_{\max}} \exp (-P)_\C \cap  (- \log \lvert \cdot \rvert )^{-1} (\Omega)$$ in $T_{\Sigma}(\C)$.

We put 
  $\mathscr{A}_{\Trop,\qs}^{p,q}(\Omega)$
  the subset of quasi-smooth $(p,q)$-superforms.
We put $C_{\Trop,\qs}^{\infty}:= \A_{\Trop,\qs}^{0,0}$.
\end{dfn}
  
 We put 
 $$\overline{\partial} \colon \A_{\Trop, \pqs}^{p,q} (   \Omega )  
     \to \A_{\Trop, \pqs}^{p,q + 1} (   \Omega ) $$
  the formal sum of  
 the restrictions of the anti-holomorphic differential 
 $\overline{\partial} $
  of the (usual) smooth differential $(p,q)$-forms on open neighborhoods of  $\exp (-P)_\C \cap  (- \log \lvert \cdot \rvert )^{-1} (\Omega)$ ($ P \in \Lambda_{\max}$) in $T_{\Sigma}( \C)$.
  It gives a complex of sheaves 
  $(\A_{\Trop,\qs}^{p,*}, \overline{\partial}) $ of quasi-smooth superforms on $X$.

For a toric morphism $\varphi \colon T_{\Sigma'} \to T_{\Sigma}$
        between smooth toric varieties
    and tropical varieties $X \subset \Trop(T_{\Sigma})$ 
        and 
        $X' \subset \Trop(T_{\Sigma'})$
such that 
  $\varphi(X') \subset X$,
there is 
a natural pull-back map
  $$\varphi^* \colon \A_{\Trop,\qs,X }^{p,q} \to \A_{\Trop,\qs,X' }^{p,q}$$
of sheaves of quasi-smooth superforms.

\begin{rem}\label{comparison of smooth superforms and quasi-smooth superforms}
  There is a natural morphism 
    from the complex of sheaves of superforms $(\A_{\Trop}^{p,*}, d'')$ (see \cite[Definition 2.10]{JSS19} (where notation is different) for definition)
    to the complex of sheaves of quasi-smooth superforms $(\A_{\Trop,\qs}^{p,*}, \overline{\partial})$ 
    on $X$
    given by
    \begin{align*}
       \sum'_{I,J} f_{I,J}(x) d'x_I \wedge d'' x_J 
       & \mapsto
      i^p \sum'_{I,J} f_{I,J}( - \log \lvert z \rvert ) 
       \partial ( - \log \lvert z_I \rvert ) \wedge 
       \overline{\partial} ( - \log \lvert z_J \rvert ) \\
       & =
      \sum'_{I,J} f_{I,J}( - \log \lvert z \rvert ) 
       d ( - \frac{i}{2} \log  z_I ) \wedge 
       d ( - \frac{1}{2} \log \overline{z_J} ). 
    \end{align*}
  The restriction of this morphism to $X \cap \R^n$ is an isomophism of complexes of sheaves.
\end{rem}

\begin{rem}\label{some objects on exp - P C }
We shall define vector fields and smooth forms on  an open subset $ U \subset \exp ( -P_\C )$ in an obvious way.
\begin{itemize}
  \item
For $z \in \exp ( -  P_\C  )$,
we put 
  $$T_z \exp ( -  P_\C  ) := T_z V_z   $$
  the tangent space at $z$ of 
a $d$-dimensional complex manifold $V_z$
which is the intersection of an open neighborhood of $z \in T_{\Sigma} ( \C )$
and
$$ \overline{  \{z \in  ( \C^{\times} )^n \mid  z_i = e^{-a_{i} } \ ( d+1 \leq i \leq n ) \} } $$
(here we use the notation  ($z_i, a_i$) used in definition of $\exp ( - P)_\C $).
We also put 
  $$T U:= \bigsqcup_{ z \in U } T_z \exp ( - P)_\C   .$$
A \emph{vector field} on $U$ 
  is a map $U \to  T U $
     which is locally the restriction of a vector field 
        of the manifold $V_z$.
We put $\Gamma (  T U)$ the set of vector fields.

  \item
A \emph{smooth function} on $U$ is the restriction of a smooth function defiend on a neighborhood.
We put $C^{\infty} (U)$ (resp. $C_0^{\infty} (U)$) the set of smooth functions (resp. the set of compactly supported smooth functions).
When we  explicitly indicate the values of smooth functions,
we denote the set of them by $C_\R^{\infty} (U)$ or $C_\C^{\infty} (U)$.
  \item
A \emph{smooth differential $p$-form} on $U$
is a map 
$$ \bigsqcup_{z \in U } \wedge^p T_z \exp ( - P)_\C  \to \C$$
which is a restriction of a smooth differential $p$-form defined on a neighborhood.
We put $\A^p ( U)$ the set of smooth differential $p$-forms.
  \item
We also define 
  the set of \emph{smooth differential $(p,q)$-forms} $\A^{p,q} ( U)$ 
 and  
  the set of \emph{holomorphic} and \emph{anti-holomorphic vector fields} 
    $\Gamma ( T^{1,0} U ) $ 
   and 
    $\Gamma ( T^{0,1} U ) $
by using the complex structure of $V_z$.
\end{itemize}
\end{rem}

\subsection{Cohomology of quasi-smooth superforms}
A tropical analog of the Poincar\'{e} lemma was first proved by Jell  \cite[Theorem 2.16]{Jel16A} for superforms on tropical varieties in $\R^n$.
It was generalized by Jell-Shaw-Smacka \cite[Theorem 3.16]{JSS19} to superforms on tropical varieties in smooth tropical toric varieties.
In this subsection, we prove it for quasi-smooth superforms on 
 a tropically compact tropical variety $\Lambda$.
 We put
 $X:= \bigcup_{P \in \Lambda_{\max}}P  \subset \Trop(T_{\Sigma})$.

We put
$$  H_{\Trop,\Dol, \qs}^{p,q}(X)
  := H^q( (\A_{\Trop,\qs}^{p,*} (X), \overline{\partial})).$$

\begin{prp}\label{exactness of complex of quasi-smooth forms}
  For any $p$,
  the complex $(\A_{\Trop, \qs}^{p,*}, \overline{\partial})$ is a fine resolution of the sheaf
  $\Ker (\overline{\partial} \colon \A_{\Trop, \qs}^{p,0} \to \A_{\Trop, \qs}^{p,1})$ on $X$.
\end{prp}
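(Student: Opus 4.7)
The plan is to verify the two defining conditions for a fine resolution: fineness of each $\A_{\Trop,\qs}^{p,q}$ and local $\overline{\partial}$-exactness in positive degrees at every point of $X$.

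For fineness, the sheaves are modules over $C_{\Trop,\qs}^{\infty} = \A_{\Trop,\qs}^{0,0}$, so it suffices to produce partitions of unity in $C_{\Trop,\qs}^{\infty}$ subordinate to any open cover of $X$. Starting from an ordinary smooth partition of unity on $T_\Sigma(\C)$ subordinate to the preimage of such a cover, averaging each bump over the compact real torus $(S^1)^n$ acting holomorphically on $T_\Sigma(\C)$ yields an invariant smooth partition of unity; these averaged functions are smooth functions of $|z|$ on $T_\Sigma(\C)$, hence smooth functions of $-\log|z|$ after the change of coordinates, so they are quasi-smooth, and their restrictions to $X$ give the required partition.

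For local exactness, fix $x \in X$ and a $\overline{\partial}$-closed quasi-smooth $(p,q)$-superform $\omega$ with $q \geq 1$ defined in a neighborhood of $x$. I would reduce to the Poincar\'e lemma for ordinary smooth superforms, due to Jell \cite{Jel16A} and Jell-Shaw-Smacka \cite[Theorem 3.16]{JSS19}. On a torus-invariant open $V = (-\log|\cdot|)^{-1}(U) \subset T_\Sigma(\C)$, the assignment
\[ \sum{}'_{I,J} \alpha_{I,J}(-\log|z|)\, d(-\tfrac{i}{2}\log z_I) \wedge d(-\tfrac{1}{2}\log\overline{z_J}) \;\longleftrightarrow\; \sum{}'_{I,J} \alpha_{I,J}(x)\, d' x_I\wedge d'' x_J \]
is a bijection between quasi-smooth $(p,q)$-superforms on $V$ and smooth $(p,q)$-superforms on $U$, and the identities $d(-\tfrac{1}{2}\log\overline{z_k}) = -\tfrac{1}{2\bar z_k} d\bar z_k$ and $\tfrac{\partial}{\partial\bar z_k}\alpha(-\log|z|) = -\tfrac{1}{2\bar z_k}\tfrac{\partial\alpha}{\partial(-\log|z_k|)}$ show it intertwines $\overline{\partial}$ with $d''$. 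Restricting to the sub-sheaves associated to the tropical subvariety $X \subset U$ and invoking $d''$-exactness yields a quasi-smooth primitive.

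The main obstacle is the toric boundary of $\Trop(T_\Sigma)$: Remark 3.7 only states that the morphism from smooth to quasi-smooth superforms is an isomorphism on $X \cap \R^n$, so at boundary strata $X \cap N_{\sigma,\R}$ with $\sigma \neq 0$ the identification above must be checked stratum by stratum using the smoothness (unimodularity) of $\Sigma$, which supplies compatible toric coordinates on each orbit in which the same argument applies. As a backup, I would instead apply the classical Dolbeault-Grothendieck $\overline{\partial}$-Poincar\'e lemma on a Stein polydisc neighborhood in $T_\Sigma(\C)$ to the ambient extension of $\omega$, average the resulting smooth primitive over $(S^1)^n$, and verify directly that an $(S^1)^n$-invariant smooth $(p,q)$-form on $T_\Sigma(\C)$ is quasi-smooth by expanding in the $d\log z_i, d\log\overline{z_j}$ basis and using that an invariant smooth coefficient of $d\log z_I \wedge d\log \overline{z_J}$ is a smooth function of $|z|$, hence of $-\log|z|$.
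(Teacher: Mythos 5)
Your fineness argument is fine (the paper simply cites the partition of unity by $\A_{\Trop}^{0,0}$ from \cite[Lemma 2.7]{JSS19} and pushes it into $\A_{\Trop,\qs}^{0,0}$, which is simpler than averaging, but both work). The exactness argument, however, has a genuine gap precisely at the point you flag and then defer. The dictionary between quasi-smooth and smooth superforms is an isomorphism only over $X\cap\R^n$; at a boundary stratum $X\cap N_{\sigma,\R}$ with $\sigma\neq 0$ the two sheaves really differ, because a quasi-smooth form near $z_j=0$ may contain components involving $d(-\tfrac12\log\overline{z_j})$ in the ``disc'' directions whose coefficients are constrained by smoothness at $z_j=0$ and which have no counterpart among smooth superforms on $\Trop(T_\Sigma)$. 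Saying the identification ``must be checked stratum by stratum using unimodularity'' names the problem without solving it. The paper's proof is genuinely two-step here: first a Dolbeault-type integration in the boundary (disc) coordinates, as in \cite[Section 0.2]{GH78}, producing $\eta_1$ with $\omega-\overline{\partial}\eta_1$ free of all $d(-\tfrac12\log\overline{z_j})$ with $j$ in the boundary directions; and only then a Jell-type homotopy operator in the remaining torus directions, contracting onto $\{x\}\times(\R\cup\{\infty\})^{\dim\sigma}$ rather than onto a point. Some version of that first step is unavoidable and is absent from your argument.

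Your backup route does not repair this. Applying the classical $\overline{\partial}$-Poincar\'e lemma on a Stein polydisc to ``the ambient extension of $\omega$'' presupposes that such an extension is $\overline{\partial}$-closed on the ambient space, which it need not be: $\omega$ is only $\overline{\partial}$-closed as a form on $X$, i.e.\ on each piece $\exp(-P)_\C$ compatibly. More fundamentally, local exactness on a tropical variety at a point where several maximal faces meet cannot be extracted from any ambient complex-analytic statement; this is exactly why the tropical Poincar\'e lemma of Jell and Jell--Shaw--Smacka requires an integration operator adapted to the polyhedral structure (and why the balancing condition enters nowhere in your argument, even though the statement is about the sheaf on $X$). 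There are also smaller issues with the averaging step --- the averaged primitive will in general have complex-valued coefficients in the $d(-\tfrac{i}{2}\log z_I)\wedge d(-\tfrac12\log\overline{z_J})$ basis, whereas quasi-smoothness requires $\R$-valued ones, so you would at least need to split into real and imaginary parts and use that $\overline{\partial}$ preserves real-coefficient forms --- but the essential missing ingredients are the boundary-direction Dolbeault step and the polyhedral homotopy operator.
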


\begin{rem}
  Let 
  $$H_{\Trop,\Dol}^{p,q}(X)
  := H^q( (\A_{\Trop}^{p,*} (X), d'')).$$
  be the usual tropical Dolbeault cohomology.
  It is easy to see that the natural inclusion of sheaves 
  $\A_{\Trop}^{p,*} \to \A_{\Trop, \qs}^{p,*}$
  (\cref{comparison of smooth superforms and quasi-smooth superforms})
  induces  an isomorphism
$$\Ker (d''\colon \A_{\Trop}^{p,0} \to \A_{\Trop}^{p,1}) \cong
\Ker ( \overline{\partial} \colon \A_{\Trop, \qs}^{p,0} \to \A_{\Trop, \qs}^{p,1}).$$
Hence by the Poincar\'{e} lemmas (\cite[Theorem 3.16]{JSS19} and \cref{exactness of complex of quasi-smooth forms}),
we have
    $$ H_{\Trop,\Dol}^{p,q}(X) \cong H_{\Trop,\Dol,\qs}^{p,q}(X)$$
\end{rem}

\begin{proof}
There is a partition of unity by $\A_{\Trop}^{0,0}$ on $X$ (\cite[Lemma 2.7]{JSS19}).
In paricular, the sheaf $\A_{\Trop, \qs}^{p,q}$ is fine.

Let $\sigma \in \Sigma$ be a cone.
Then for a $\overline{\partial}$-closed quasi-smooth superform $\omega$ on $X \cap \Trop (T_\sigma )$
and $x \in X \cap N_{\sigma,\R}$, 
in the same way as \cite[Section 0.2]{GH78},
there exists a quasi-smooth superform $\eta_1$
such that 
$$\omega - \overline{\partial} \eta_1 = \sum'_{\substack{I  \in \{1,\dots, n \}^p   \\ J \in \{1,\dots, n-\dim \sigma\}^q }}
           f_{I,J} ( - \log \lvert z \rvert ) d( - \frac{i}{2} \log z_I ) \wedge d ( -\frac{1}{2} \log \overline{z_J})$$
in a neighborhood of $x \in X \cap \Trop (T_\sigma )$
for some functions $f_{I,J}$,
where 
  $z_i$ is the coordinate of $T_{\sigma} \cong (\C^{\times})^{n-\dim \sigma} \times \C^{\dim \sigma}$.
Then 
we can find a quasi-smooth superform $\eta$ such that $\omega = \overline{\partial} \eta$
in the same way as \cite[Subsection 2.2]{Jel16A}
(but using a map onto $\{x\} \times (\R \cup \{\infty\})^{\dim \sigma}$ instead of a constant map to a point, here we identify 
$$\Trop(T_{\sigma} ) \cong N_{\sigma,\R} \times (\R \cup \{\infty\})^{\dim \sigma} ).$$
\end{proof}

\subsection{$\R$-K\"{a}hler forms, $L^2$-norms, and Sobolev norms}
Lagerberg (\cite{Lag11}) introduced an analog of K\"{a}hler forms,
 \emph{$\R$-K\"{a}hler superforms} on $\R^n$.
In this subsection, we shall introduce their generalizations to tropical varieties.
We also define $L^2$-norms, Sobolev $s$-norms, $\overline{*}$-operator, codifferential $\overline{\delta} := - \overline{*} \overline{\partial}\overline{*}$, and Laplacian $\Delta'':=  \overline{\delta} \overline{\partial}+ \overline{\partial}\overline{\delta} $ in the same way as K\"{a}hler geometry.
We will use basic facts of K\"{a}hler geometry freely.
See \cite{GH78} and \cite{Wei58} for K\"{a}hler geometry.

Let $\Lambda$ be a tropically compact tropical variety of dimension $d$.

\begin{dfn}\label{definition of Kahler forms}
  A quasi-smooth superform on $ \Lambda $
  is called an \emph{$\R$-K\"{a}hler superform} 
  if it is the restriction of a  K\"{a}hler form on $T_{\Sigma} (\C)$.
\end{dfn}

\begin{exam}
  The K\"{a}hler form $\omega$ of the Fubini-Study metric on $\P^r(\C)$ is an $\R$-K\"{a}hler superform.
  Its pull-back under a toric closed immersion $T_{\Sigma} \to \P^r ( \C)$ is also an $\R$-K\"{a}hler superform.
\end{exam}

Let 
$$\omega_{Kah}= i \sum_{i,j} \omega_{Kah, i,j}( - \log \lvert z \rvert ) d ( - \frac{1}{2} \log z_i ) \wedge d ( - \frac{1}{2} \log \overline{z_j} )   \in \A_{\Trop,\qs}^{1,1}(T_{\Sigma}(\C)) $$ 
be an $\R$-K\"{a}hler superform,
and $P \in \Lambda_{\max}$ be a polyhedron.
We put $g:= g_{\omega_{Kah}}$ the induced Riemann metric on $T_{\Sigma}(\C)$.
It induces a map
$$ g 
   \colon \bigsqcup_{a \in \exp ( - P)_\C  } T_{\R,a} ( \exp ( - P)_\C  )
          \times 
          \bigsqcup_{a \in \exp ( - P)_\C  } T_{\R,a} ( \exp ( - P)_\C  )
   \to \R.$$
By the Gram-Schmidt process, 
we have 
  $$\omega_{Kah}|_{\exp ( - P)_\C  }
    = i \sum_{i=1}^d \varphi_i' \wedge \varphi_i'' \in \A_{\Trop,\qs}^{1,1}(\exp ( - P)_\C  )$$ 
  for some $  \varphi_i' \in \A^{1,0}(\exp ( - P)_\C  ) $, 
  where 
    we put 
    $\varphi_i'' := \overline{\varphi_i'} \in \A^{0,1}(\exp ( - P)_\C  ) $
    the complex conjugate of $\varphi_i'$.

\begin{rem}\label{quasi-smooth superform orthonormal basis}
  In the above discussion,
  because of singularlity of $- \log \lvert z_i \rvert $ at (subsets of) the toric boundary of $T_{\Sigma} ( \C )$,
  it is NOT ALWAYS possible 
    to take $\varphi_1', \dots, \varphi_d'$ 
    so that 
      $ i \varphi_1', \dots, i \varphi_d' $ are \emph{quasi-smooth superforms}, 
      and
  it is ALWAYS possible 
  to take $  \phi_i' \in \A^{1,0}(\exp ( - P)_\C   \cap (\C^{\times})^n ) $ 
  such that 
    $$\omega_{Kah}|_{\exp ( - P)_\C  \cap (\C^{\times})^n }
      = i \sum_{i=1}^d \phi_i' \wedge \phi_i'' \in \A_{\Trop,\qs}^{1,1}(\exp ( - P)_\C  \cap (\C^{\times})^n )$$ 
  and 
    $i \phi_1', \dots, i \phi_d'$ are quasi-smooth superforms. 
\end{rem}

For 
\begin{align*}
  \alpha & = \sum'_{I,J} \alpha_{I,J} \varphi_I' \wedge \varphi_J'' \in \A^{p,q}(\exp ( - P)_\C  ), \\
  \beta & = \sum'_{I,J} \beta_{I,J} \varphi_I' \wedge \varphi_J'' \in \A^{p,q}(\exp ( - P)_\C  )
\end{align*}
  ($ \alpha_{I,J}, \beta_{I,J} \in C^{\infty} ( \exp ( - P)_\C  ) $, 
  $\varphi_I':= \varphi_{i_1}' \wedge \dots \wedge \varphi_{i_p}'$,
  $\varphi_J'':= \varphi_{j_1}'' \wedge \dots \wedge \varphi_{j_q}''$), 
we put 
  $$(\alpha,\beta) := \sum'_{I,J} \alpha_{I,J} \overline{\beta_{I,J}} \in C^{\infty} ( \exp ( - P)_\C  ) .  $$

\begin{lem}\label{ alpha beta also quasi-smooth}
  When $\alpha$ and $\beta$ are quasi-smooth superforms,
  the function $ ( \alpha, \beta )$ is in $ C_{\Trop,\qs}^{\infty}(\exp ( -P)_\C ) $.
\end{lem}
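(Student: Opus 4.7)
The plan is to bypass the orthonormal frame $\varphi_i'$ (which by \cref{quasi-smooth superform orthonormal basis} need not be quasi-smooth near the toric boundary) and instead compute $(\alpha,\beta)$ in a coordinate frame that is manifestly quasi-smooth on $(\C^\times)^n$, then extend to the toric boundary by appealing to the fact that all objects involved are restrictions of quasi-smooth data on an ambient neighborhood in $T_\Sigma(\C)$.

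First I would localize: fix a chart and choose the coordinates $z_1,\dots,z_n$ associated to $P$ so that $\exp(-P)_\C$ is cut out by $z_i = e^{-a_i}$ for $i>d$. On $\exp(-P)_\C \cap (\C^\times)^n$, the family
$$d\bigl(-\tfrac{i}{2}\log z_I\bigr) \wedge d\bigl(-\tfrac{1}{2}\log\overline{z_J}\bigr) \quad (I,J \subset \{1,\dots,d\})$$
is a frame for the bundle of $(p,q)$-forms consisting of quasi-smooth superforms. Writing
$$\omega_{Kah}|_{\exp(-P)_\C \cap (\C^\times)^n} = i\sum_{i,j=1}^{d} h_{ij}(-\log|z|)\, d\bigl(-\tfrac{1}{2}\log z_i\bigr) \wedge d\bigl(-\tfrac{1}{2}\log\overline{z_j}\bigr),$$
the coefficients $h_{ij}$ are $\R$-valued functions of $-\log|z_1|,\dots,-\log|z_d|$ alone (the values $a_{d+1},\dots,a_n$ being absorbed into these functions), and the matrix $(h_{ij})$ is positive definite because $\omega_{Kah}$ is Kähler. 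Expanding $\alpha = \sum'_{I,J}\alpha_{IJ}(-\log|z|)\, d(-\tfrac{i}{2}\log z_I)\wedge d(-\tfrac{1}{2}\log\overline{z_J})$ and similarly $\beta$, the pointwise Hermitian inner product $(\alpha,\beta)$ is, by a standard multilinear computation, a fixed polynomial in the entries of $(h^{ij}):=(h_{ij})^{-1}$, the $\alpha_{IJ}$, and the $\overline{\beta_{KL}}$.

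The key algebraic point is that since $(h_{ij})$ is a positive definite matrix whose entries are smooth functions of $-\log|z_1|,\dots,-\log|z_d|$, so are the entries of its inverse $(h^{ij})$. Therefore $(\alpha,\beta)$ is itself a smooth function of $-\log|z_1|,\dots,-\log|z_d|$ on $\exp(-P)_\C \cap (\C^\times)^n$, which is the defining property of a quasi-smooth $(0,0)$-superform there. Moreover, by definition $\alpha$ and $\beta$ are restrictions of quasi-smooth superforms $\widetilde\alpha, \widetilde\beta$ on some open neighborhood $U$ of $\exp(-P)_\C$ in $T_\Sigma(\C)$, and $\omega_{Kah}$ is itself a Kähler form on all of $T_\Sigma(\C)$; repeating the above computation on $U$ yields a quasi-smooth function $(\widetilde\alpha,\widetilde\beta) \in C_{\Trop,\qs}^\infty(U)$ whose restriction to $\exp(-P)_\C$ equals $(\alpha,\beta)$ (the pointwise Hermitian pairing commutes with restriction along the Kähler submanifold $\exp(-P)_\C \subset U$). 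This gives $(\alpha,\beta) \in C_{\Trop,\qs}^{\infty}(\exp(-P)_\C)$ by the very definition of quasi-smoothness on $\exp(-P)_\C$.

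The main obstacle is the one highlighted in \cref{quasi-smooth superform orthonormal basis}: the Gram--Schmidt frame $\varphi_i'$ is not quasi-smooth near the toric boundary, so the naive reading of $(\alpha,\beta) = \sum'_{I,J}\alpha_{IJ}\overline{\beta_{IJ}}$ in that frame does not immediately yield quasi-smoothness. The computation above circumvents this by switching to a quasi-smooth coordinate frame at the price of a nontrivial metric matrix; the remaining content is the elementary fact that inversion preserves the class of matrices with quasi-smooth entries, together with the extension-by-restriction argument from the ambient smooth toric variety to handle the toric boundary.
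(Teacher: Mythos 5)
Your computation on the dense torus orbit is sound and is essentially a coordinate-frame version of what the paper does: the paper invokes the quasi-smooth orthonormal frame $\phi_i'$ of \cref{quasi-smooth superform orthonormal basis} together with frame-independence of the pointwise pairing, while you invert the Gram matrix $(h_{ij})$ of the coordinate frame. Either way one obtains that $(\alpha,\beta)$ is a function of $-\log\lvert z\rvert$ on $\exp(-P)_\C\cap(\C^{\times})^n$.

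The gap is in your treatment of the toric boundary, which is exactly the delicate point the lemma is about. The parenthetical claim that the pointwise Hermitian pairing ``commutes with restriction along the K\"ahler submanifold $\exp(-P)_\C\subset U$'' is false: for a complex submanifold, the pairing of the pullbacks with respect to the induced metric is not the restriction of the ambient pairing. Already for $U=\C^2$ with the standard metric and $S=\{z_2=0\}$, the form $\tilde\alpha=dz_2$ has $(\tilde\alpha,\tilde\alpha)_U>0$ while $\tilde\alpha|_S=0$; in general the components of $\tilde\alpha,\tilde\beta$ conormal to $\exp(-P)_\C$ contribute to $(\tilde\alpha,\tilde\beta)_U$ but are killed by restriction. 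So $(\tilde\alpha,\tilde\beta)|_{\exp(-P)_\C}\neq(\alpha,\beta)$ in general, and this step does not produce the quasi-smooth ambient extension that the definition of $C^{\infty}_{\Trop,\qs}(\exp(-P)_\C)$ requires. A secondary problem with instead pushing your interior computation up to the boundary: the inverse entries $h^{ij}$ are functions of $-\log\lvert z\rvert$ that are smooth only on the torus part and typically blow up at the toric boundary (for Fubini--Study on $\P^1$ one has $h^{11}=(1+\lvert z\rvert^2)^2/\lvert z\rvert^2$), so ``a polynomial in quasi-smooth data'' only yields quasi-smoothness away from the boundary, where the statement was never in doubt. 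What is needed is the combination the paper's proof points at: $(\alpha,\beta)$ is smooth on all of $\exp(-P)_\C$ (compute it with the globally defined smooth frame $\varphi_i'$), and on the dense torus part it is invariant under the argument torus, i.e.\ a function of $-\log\lvert z\rvert$; one then extends this smooth torus-invariant function to a quasi-smooth function on an ambient neighborhood. Your proposal is missing that step and substitutes for it an identity that does not hold.
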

\begin{proof}
  This follows from 
    \cref{quasi-smooth superform orthonormal basis} and a basic fact on K\"{a}hler geometry 
    that
    we can use $\phi_i'$ as in \cref{quasi-smooth superform orthonormal basis} instead of $\varphi_i'$ to compute $( \alpha, \beta )$.
\end{proof}

We put 
  $$\langle \alpha , \beta \rangle_{L^2 (P) }
  := \langle \alpha , \beta \rangle_{L^2 ( \relint ( \exp ( - P)_\C  ) ) }
  :=  \int_{\relint ( \exp ( - P)_\C  ) } (\alpha,\beta) \omega_{Kah,d} ,$$
  where $\omega_{Kah,d} := \frac{ \wedge^d \omega_{Kah}}{d !}$ 
        is the volume form on 
              the K\"{a}hler manifold $(\relint ( \exp ( - P)_\C  ) , \omega_{Kah} )$.
We put 
$$L^{p,q,2}(\relint ( \exp ( - P)_\C  ) ) \quad
  ( \text{resp.} \ L_{\Trop}^{p,q,2}(\relint ( \exp ( - P)_\C  ) ) ) $$
the closure of $\A^{p,q}( \exp ( - P)_\C  )$
 (resp. $\A_{\Trop,\qs}^{p,q}( \exp ( - P)_\C  )$)
  with respect to the \emph{$L^2$-norm} 
  $$\alpha \mapsto \Arrowvert \alpha \Arrowvert_{L^2 (P) } 
  := \Arrowvert \alpha \Arrowvert_{L^2 (\relint ( \exp ( - P)_\C  ) ) } 
  :=\langle \alpha,\alpha\rangle_{L^2 ( \relint ( \exp ( - P)_\C  ) ) }^{\frac{1}{2}} . $$ 

  Let $(E_1, \dots, E_{2d}) $ be a $g$-orthonormal frame on $\exp ( - P)_\C $, i.e., a tuple  of smooth vector fields $ E_i \in \Gamma(T_\R \exp ( - P)_\C  )$ 
  such that 
  $$g(E_i,E_j)= \delta_{i,j} $$ 
  on $ \exp ( - P)_\C .$
  For $s \in \Z_{\geq 0}$, 
  we define  
    the Sobolev space $W^{p,q,s,2}(\relint ( \exp ( - P)_\C  ) )$ as the closure of $\A^{p,q}( \exp ( - P)_\C  )$ 
   with respect to the Sobolev $s$-norm $\Arrowvert \cdot  \Arrowvert_{W^{s,2}( P ) }$ given by 
  $$ \Arrowvert \omega \Arrowvert_{W^{s,2}( P ) }^2
   := \Arrowvert \omega \Arrowvert_{W^{s,2}( \relint ( \exp ( - P)_\C  ) ) }^2
    := \sum_{i = 0}^s \sum_{L=(l_1,\dots, l_i ) \in \{1, \dots, 2d \}^i } 
      \Arrowvert \nabla_{E_{l_1}} \dots \nabla_{E_{l_i}} \omega \Arrowvert_{L^2 ( P ) }^2  ,$$
   where $\nabla$ is the Levi-Civita connection on the Riemann manifold $(\relint (\exp ( - P)_\C  ) , g)$.
We put 
 $$W_{\Trop}^{p,q,s,2}(\relint ( \exp ( - P)_\C  ) ) 
  := W^{p,q,s,2}(\relint ( \exp ( - P)_\C  ) )
  \cap L_{\Trop}^{p,q,2}(\relint ( \exp ( - P)_\C  ) ) .$$
This is a Hilbert subspace of $ W^{p,q,s,2}(\relint ( \exp ( - P)_\C  ) ).$

\begin{rem}
  The Sobolev $s$-norm depends on the choice of a $g$-orthonormal frame $(E_1, \dots, E_{2d}) $ on $\exp ( - P)_\C $ for $s \in \Z_{\geq 2}$, but its equivalent class is independent of the choice,  see \cite[Section 1.3]{Sch95}.
\end{rem}

\begin{rem}\label{comaparison of Sobolev norm with Euclid metric}
  Since $P \in \Lambda_{\max}$ is tropically compact, 
  there is 
    a cone $\sigma \in \Sigma$ 
  such that 
    for a suitable choice of isomorphism 
      $T_{\sigma} ( \C ) \cong \C^{\dim \sigma} \times ( \C^{\times} )^{ n - \dim \sigma }$,
  we have 
    $$ \exp ( - P)_\C  \subset \C^{\dim \sigma} \times ( \C^{\times} )^{ d - \dim \sigma } \times \{(a_{ d+1 },\dots, a_n)\} $$
    ($a_i \in \C^{\times}$).
  Since 
    $\omega_{Kah}$ is defined on right hand side
    and
    $\exp ( - P)_\C$ is compact (in particular, bounded), 
  our Sobolev $s$-norm on $\exp ( - P)_\C $ 
  is equivalent to that given by the Euclid metric on 
    $ \C^{\dim \sigma} \times ( \C^{\times} )^{ d - \dim \sigma } \subset \C^d$.
\end{rem}

\begin{lem}\label{L2 superforms}
  \begin{align*}
    & L_{\Trop}^{p,q,2}(\relint ( \exp ( - P)_\C  ) )  \\
    = & \bigg\{  \sum'_{\substack{I \in \{1,\dots,d\}^p \\J  \in \{1,\dots,d\}^q }} \alpha_{I,J} ( - \log \lvert z \rvert )
           d ( - \frac{i}{2} \log z_I)   \wedge d ( - \frac{1}{2} \log \overline{z_J} ) \in L^{p,q,2}(\relint ( \exp ( - P)_\C  ) )  \\
     &     \bigg|  \alpha_{I,J} ( - \log \lvert z \rvert ) \text{ are } \R\text{-valued functions}  \bigg\} ,
  \end{align*}  
  where $z_1,\dots,z_d \in M$ are as in definition of $\exp (-P)_\C$.
\end{lem}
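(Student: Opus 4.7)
The plan is to identify $L^{p,q,2}_{\Trop}(\relint(\exp(-P)_\C))$ inside $L^{p,q,2}(\relint(\exp(-P)_\C))$ as the $L^2$-closed subspace cut out by two conditions: invariance under the natural compact torus action on $\exp(-P)_\C$, and reality of the coefficients in the monomial basis $d(-\frac{i}{2}\log z_I)\wedge d(-\frac{1}{2}\log \overline{z_J})$. The equality of this subspace with the right-hand side of the lemma will then be read off directly.

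I would first set up the compact torus $T^d:=(S^1)^d$ acting on $\exp(-P)_\C$ by rotating the arguments of the free coordinates $z_1,\dots,z_d$ (recall that $z_i=e^{-a_i}$ is frozen for $i>d$). Because each K\"{a}hler coefficient $\omega_{Kah,i,j}$ is a function of $-\log\lvert z\rvert$ alone, the K\"{a}hler form, its induced Riemannian metric $g$, and the volume form $\omega_{Kah,d}$ are $T^d$-invariant. Hence $T^d$ acts by $L^2$-isometries on $L^{p,q,2}(\relint(\exp(-P)_\C))$, and the invariant subspace $L^{p,q,2,T^d}$ is a closed subspace.

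For the inclusion $\subseteq$, any quasi-smooth superform on an open neighborhood of $\exp(-P)_\C$ in $T_\Sigma(\C)$ restricts on $\exp(-P)_\C$ to a sum with indices in $\{1,\dots,d\}^p\times\{1,\dots,d\}^q$ (all other terms vanish because $d(-\frac{i}{2}\log z_i)=0$ where $z_i$ is constant), and is manifestly $T^d$-invariant with $\R$-valued coefficients. Thus $L^{p,q,2}_{\Trop}\subseteq L^{p,q,2,T^d}$. On the open dense orbit $\exp(-P)_\C\cap(\C^\times)^d$ the monomial basis trivialises $\wedge^{p,q}T^*$, so any $\omega\in L^{p,q,2,T^d}$ decomposes as $\omega=\sum'_{I,J}\alpha_{I,J}\,d(-\frac{i}{2}\log z_I)\wedge d(-\frac{1}{2}\log\overline{z_J})$ with $L^2$-coefficients $\alpha_{I,J}$; the $T^d$-invariance of $\omega$ and of the basis forms forces each $\alpha_{I,J}$ to be a.e.\ a function of $-\log\lvert z\rvert$ alone. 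Reality of the $\alpha_{I,J}$ is preserved in the $L^2$-closure since $\R$-valued $L^2$-functions form a closed real subspace of complex $L^2$.

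For the inclusion $\supseteq$, I would approximate each $\R$-valued $L^2$-coefficient $\alpha_{I,J}(-\log\lvert z\rvert)$ by smooth $\R$-valued functions of $-\log\lvert z\rvert$ via standard cutoff-and-mollification on $P\cap N_\R$ and extension across the boundary of $P$ in $\Trop(T_\Sigma)$, working in the topology induced by the identifications $\Trop(T_\sigma)\cong\Hom(M\cap\sigma^\vee,\R\cup\{\infty\})$. The resulting forms are restrictions of quasi-smooth superforms on open neighborhoods of $\exp(-P)_\C$ in $T_\Sigma(\C)$ and converge to $\omega$ in $L^2$. The main obstacle is precisely this last step: one must ensure that the smooth approximations extend to genuine quasi-smooth superforms on a neighborhood in $T_\Sigma(\C)$ along the toric strata where $P$ meets $\partial\Trop(T_\Sigma)$ and some $-\log\lvert z_i\rvert\to\infty$. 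I would handle this using a partition of unity on $\Trop(T_\Sigma)$ subordinate to the toric charts together with the fact that pullbacks of smooth $\R$-valued functions of $-\log\lvert z\rvert$ under $-\log\lvert\cdot\rvert$ automatically respect the toric stratification.
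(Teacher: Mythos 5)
Your first inclusion ($L_{\Trop}^{p,q,2}\subseteq$ RHS) is fine: the frame $d(-\tfrac{i}{2}\log z_I)\wedge d(-\tfrac{1}{2}\log\overline{z_J})$ is invariant under the $(S^1)^d$-action, the metric and volume form depend only on $-\log\lvert z\rvert$, and torus-invariance plus reality of the coefficients are closed conditions in $L^2$. The gap is in the density direction, exactly at the step you flag as "the main obstacle." The assertion that "pullbacks of smooth $\R$-valued functions of $-\log\lvert z\rvert$ automatically respect the toric stratification" is false, and no partition of unity repairs it: a quasi-smooth superform must by definition be a \emph{smooth} differential form on $(-\log\lvert\cdot\rvert)^{-1}(U)$, and a generic smooth function of $x=-\log\lvert z\rvert$ does not pull back to a smooth function of $z$ across the toric boundary (e.g.\ $e^{-x_1}=\lvert z_1\rvert$ is not smooth at $z_1=0$; this is precisely the singularity the paper emphasizes in \cref{quasi-smooth superform orthonormal basis}). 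So the approximants you produce by mollifying on all of $P$, including near the strata at infinity, need not be quasi-smooth superforms, and the inclusion into $L_{\Trop}^{p,q,2}$ (the closure of genuine quasi-smooth superforms) is not established.

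The repair is to never let the approximants see the toric boundary: first truncate to a compact subset of $P\cap N_\R$ (this converges in $L^2(\omega_{Kah,d})$ by dominated convergence, since the boundary strata have measure zero for the finite measure $\omega_{Kah,d}$), and only then smooth; the resulting function of $-\log\lvert z\rvert$ is compactly supported in $(\C^{\times})^n$ and extends by zero to a quasi-smooth superform on a neighborhood of $\exp(-P)_\C$. With that modification your mollify-in-$x$ route works. The paper's own argument is organized differently and avoids mollification in the tropical variable altogether: it takes arbitrary smooth approximants $\alpha_i(z)$ with $\supp\alpha_i\subset\exp(-P)_\C\cap(\C^{\times})^n$ (standard density in $L^2$ of the complex manifold), and then averages them over $(S^1)^d$; Fubini and H\"{o}lder show the averaging operator is an $L^2$-contraction fixing the torus-invariant target $\alpha(-\log\lvert z\rvert)$, so the averaged sequence still converges and now consists of quasi-smooth functions supported away from the boundary. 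Your proposal is essentially the same idea with the roles of "smoothing" and "torus-invariance" interchanged, but as written it leans on a false smoothness claim at the boundary rather than on a support restriction.
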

\begin{proof}
  It suffices to show that a differential form $\alpha$ contained in the right hand side is also contained in the left hand side.
  For simplicity, we assume $p=q=0$. The other case is similar.
  Then there exist $\alpha_i (z) \in C_\R^{\infty} ( \exp ( - P)_\C  )$ 
  converging to $ \alpha (- \log \lvert z \rvert )$
  in the $L^2$-topology
  such that 
    $$\supp \alpha_i \subset \exp ( - P)_\C  \cap (\C^{\times})^n.$$
  We put 
    $$\alpha_i' ( - \log \lvert z \rvert ) 
      := \frac{1}{( 2 \pi )^d} 
         \int_{\theta  = ( \theta_1, \dots, \theta_d ) \in [0,2 \pi]^d} 
          \alpha_i ( \lvert z_1 \rvert  e^{i \theta_1}, \dots, \lvert z_d \rvert  e^{i \theta_d}) 
           d \theta_1 \wedge \dots \wedge d \theta_d  
          .$$
  Then by Fubini's theorem and H\"{o}lder's inequality,
    functions $\alpha_i' ( - \log \lvert z \rvert ) \in C_{\Trop,\qs}^{\infty} ( \exp ( - P)_\C  )$ converge to $\alpha $ in the $L^2$-topology.
\end{proof}

As usual, we define $\overline{*}$-operator 
$$\overline{*} \colon L^{p,q,2}( \relint ( \exp ( - P)_\C  ) ) \to L^{d-p,d-q,2}( \relint ( \exp ( - P)_\C  ) )$$
by 
$$ (\alpha ,\beta) \omega_{Kah,d} = \alpha \wedge \overline{*}\beta.$$
where we extend $(\cdot,\cdot)$ to $L^{p,q,2}( \relint ( \exp ( - P)_\C  ) )$.
We have $\overline{*} \overline{*} \beta = (-1)^{p+q} \beta.$

\begin{lem}
 The $\overline{*}$-operator induces an isomorphism 
$$ \overline{*} \colon \A_{\Trop,qs}^{p,q}( \exp ( - P)_\C   ) \cong \A_{\Trop,qs}^{d-p,d-q}(  \exp ( - P)_\C   ).$$
\end{lem}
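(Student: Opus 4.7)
The plan is to establish an explicit local formula for $\overline{*}\beta$ in a well-chosen coframe, verify the formula keeps us inside the quasi-smooth subspace, and then deduce bijectivity from the classical K\"{a}hler identity $\overline{*}\overline{*}=(-1)^{p+q}\,\id$ on $(p,q)$-forms. By \cref{quasi-smooth superform orthonormal basis}, on the open dense torus part $\exp(-P)_\C\cap(\C^{\times})^n$ there exists a $g$-unitary coframe $\phi_1',\dots,\phi_d'$ of $(1,0)$-forms such that each $i\phi_i'$ is a quasi-smooth $(1,0)$-superform; taking complex conjugates, each $\phi_i''$ is then a quasi-smooth $(0,1)$-superform. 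The change-of-basis matrix between $\{d(-\tfrac{i}{2}\log z_I)\wedge d(-\tfrac{1}{2}\log\overline{z_J})\}$ and $\{(i\phi_{i_1}')\wedge\cdots\wedge(i\phi_{i_p}')\wedge\phi_{j_1}''\wedge\cdots\wedge\phi_{j_q}''\}$ has real-valued entries that are smooth functions of $-\log\lvert z\rvert$ only, so any quasi-smooth $\beta$ expands in this new coframe with $\R$-valued coefficients depending only on $-\log\lvert z\rvert$.

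I would then apply the standard Hodge-star formula in a unitary coframe,
$$\overline{*}(\phi'_I\wedge\phi''_J)\;=\;c(p,q,d)\,\epsilon(I,J)\,\phi'_{I^c}\wedge\phi''_{J^c},$$
where $\epsilon(I,J)$ is an explicit sign and $c(p,q,d)$ is a fixed monomial in $i$ and $-1$. Redistributing the $i$-factors so that each $\phi_k'$ on the right reappears as $i\phi_k'$ (still quasi-smooth) shows that $\overline{*}\beta$ again admits a real-coefficient expansion in the above coframe, hence defines a quasi-smooth superform on the torus part of $\exp(-P)_\C$. To upgrade this to a quasi-smooth extension on an open neighborhood of $\exp(-P)_\C$ in $T_{\Sigma}(\C)$, as required by the definition of $\A_{\Trop,\qs}^{p,q}(\exp(-P)_\C)$, take a quasi-smooth representative $\tilde\beta$ of $\beta$ on a neighborhood $U$, and foliate $U$ (after shrinking) by the parallel submanifolds $\exp(-P_a)_\C:=\{z_i=e^{-a_i}\colon d+1\le i\le n\}$ as $(a_{d+1},\dots,a_n)$ varies near the constants defining $\exp(-P)_\C$. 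Applying the leaf-wise formula produces a smooth $(p,q)$-form on $U$ whose coefficients in the basis $\{d(-\tfrac{i}{2}\log z_K)\wedge d(-\tfrac{1}{2}\log\overline{z_L})\}_{K,L\subset\{1,\dots,d\}}$ are real-valued smooth functions of $-\log\lvert z_1\rvert,\dots,-\log\lvert z_n\rvert$; this is the desired quasi-smooth extension.

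Finally, the identity $\overline{*}\overline{*}=(-1)^{p+q}\,\id$ of K\"{a}hler geometry, combined with the preservation of quasi-smoothness in both directions $(p,q)\leftrightarrow(d-p,d-q)$ established above, yields the bijectivity $\overline{*}\colon\A_{\Trop,\qs}^{p,q}(\exp(-P)_\C)\cong\A_{\Trop,\qs}^{d-p,d-q}(\exp(-P)_\C)$. I expect the main technical obstacle to be the careful bookkeeping of the powers of $i$ in the unitary-coframe Hodge-star formula, so that the output has genuinely real coefficients in the quasi-smooth coframe; once this is secured, the extension via the leaf foliation is a routine smooth gluing using the product structure of a neighborhood of $\exp(-P)_\C$ in $T_{\Sigma}(\C)$.
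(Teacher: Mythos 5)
Your proposal is correct and follows essentially the same route as the paper: both work in the unitary coframe $\phi_1',\dots,\phi_d'$ of \cref{quasi-smooth superform orthonormal basis} with each $i\phi_k'$ quasi-smooth, check that $\overline{*}$ sends real-coefficient expansions to real-coefficient expansions, and conclude bijectivity from $\overline{*}\,\overline{*}=(-1)^{p+q}\id$; the paper merely packages the coefficient computation by pairing $\beta$ against the quasi-smooth forms $\alpha=i^p\phi_I'\wedge\phi_J''$ via $(\alpha,\beta)\,\omega_{Kah,d}=\alpha\wedge\overline{*}\beta$ and invoking \cref{ alpha beta also quasi-smooth}, instead of writing the explicit unitary-coframe Hodge-star formula. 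Your extra step extending $\overline{*}\beta$ from the torus part to a neighborhood of $\exp(-P)_\C$ by the leaf foliation makes explicit a point the paper leaves implicit, and is a welcome addition rather than a deviation.
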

\begin{proof}
  Let $ \beta \in \A_{\Trop,qs}^{p,q}( \exp ( - P)_\C   )$.
  Then $\overline{*} \beta |_{\exp ( - P)_\C  \cap (\C^{\times})^n }$ is a quasi-smooth superform by \cref{ alpha beta also quasi-smooth}
  for quasi-smooth superforms $\alpha := i^p \phi_I' \wedge \phi_J''$ (with respect to all $I$ and $J$), 
  where $\phi_i'$ are as in \cref{quasi-smooth superform orthonormal basis}. Hence $\overline{*} \beta $ is a quasi-smooth superform.
\end{proof}

Since the $\overline{*}$-operator is $L^2$-isometric and $W^{s,2}$-isometric, we have the following.
\begin{cor}
  The $\overline{*}$-operator induces isomorphisms
    $$\overline{*} \colon L_{\Trop}^{p,q,2}( \relint ( \exp ( - P)_\C  ) ) 
    \cong L_{\Trop}^{d-p,d-q,2}( \relint ( \exp ( - P)_\C  ) )$$
   and 
    $$\overline{*} \colon W_{\Trop}^{p,q,s,2}( \relint ( \exp ( - P)_\C  ) ) 
    \cong W_{\Trop}^{d-p,d-q,s,2}( \relint ( \exp ( - P)_\C  ) ).  $$
\end{cor}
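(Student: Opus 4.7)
The plan is to reduce the statement to what is already known for the ambient $L^{2}$ and $W^{s,2}$ spaces together with the preservation of quasi-smoothness under $\overline{*}$, exploiting the density of quasi-smooth superforms in the tropical spaces by definition.

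First I would recall that, on the Riemannian manifold $(\relint(\exp(-P)_\C),g)$, the Hodge star $\overline{*}$ is an $L^{2}$-isometry and, more generally, a $W^{s,2}$-isometry. The $L^{2}$ statement is immediate from the identity $(\alpha,\beta)\omega_{Kah,d}=\alpha\wedge\overline{*}\beta$ used to define $\overline{*}$ together with $\overline{*}\,\overline{*}=(-1)^{p+q}$; the $W^{s,2}$ statement follows because the Levi-Civita connection is compatible with the metric and with the volume form, so $\nabla$ commutes with $\overline{*}$ up to a pointwise isometry, and iterating this at each order $i\le s$ shows each term in the definition of $\Arrowvert\cdot\Arrowvert_{W^{s,2}(P)}$ is preserved. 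In particular $\overline{*}$ extends to isometric isomorphisms
\begin{align*}
  \overline{*}\colon L^{p,q,2}(\relint(\exp(-P)_\C)) &\xrightarrow{\ \cong\ } L^{d-p,d-q,2}(\relint(\exp(-P)_\C)),\\
  \overline{*}\colon W^{p,q,s,2}(\relint(\exp(-P)_\C)) &\xrightarrow{\ \cong\ } W^{d-p,d-q,s,2}(\relint(\exp(-P)_\C)).
\end{align*}

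Next I would invoke the previous lemma which shows that $\overline{*}$ carries $\A_{\Trop,\qs}^{p,q}(\exp(-P)_\C)$ bijectively onto $\A_{\Trop,\qs}^{d-p,d-q}(\exp(-P)_\C)$ (applying the lemma to both $\overline{*}$ and to $\overline{*}\,\overline{*}=(-1)^{p+q}\id$). By definition, $L_{\Trop}^{p,q,2}$ is the $L^{2}$-closure of $\A_{\Trop,\qs}^{p,q}$ inside $L^{p,q,2}$, and $W_{\Trop}^{p,q,s,2}$ is by definition $W^{p,q,s,2}\cap L_{\Trop}^{p,q,2}$. Since $\overline{*}$ is a bounded isometry on the ambient spaces and sends the dense subspace $\A_{\Trop,\qs}^{p,q}$ onto $\A_{\Trop,\qs}^{d-p,d-q}$, it sends the closure of the former onto the closure of the latter; this yields the first isomorphism. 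For the second, combining the already-established $W^{s,2}$-isometry with the just-proved identification of $L_{\Trop}$-spaces gives the isomorphism on $W_{\Trop}^{p,q,s,2}=W^{p,q,s,2}\cap L_{\Trop}^{p,q,2}$.

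I expect no serious obstacle: the content is essentially a density-plus-isometry argument once the pointwise statement that $\overline{*}$ preserves quasi-smoothness is in hand, and that statement has already been established. The only mildly delicate point is the verification that $\overline{*}$ is a $W^{s,2}$-isometry for $s\ge 2$, where strictly speaking only the equivalence class of the Sobolev norm is canonical (as noted in the remark following the definition); but since both sides are measured with the same orthonormal frame $(E_1,\dots,E_{2d})$ and $\overline{*}$ commutes with $\nabla$ while preserving pointwise inner products, the norms match exactly, so the argument goes through with equality (and in any case the resulting map of Hilbert spaces, up to equivalent-norm rescaling, is an isomorphism, which is all that is claimed).
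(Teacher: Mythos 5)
Your proposal is correct and follows essentially the same route as the paper: the paper deduces the corollary in one line from the fact that $\overline{*}$ is $L^2$- and $W^{s,2}$-isometric together with the preceding lemma that $\overline{*}$ preserves quasi-smoothness, which is exactly your density-plus-isometry argument. Your additional justification of the $W^{s,2}$-isometry via parallelism of $\overline{*}$ under the Levi-Civita connection on a K\"ahler manifold is a correct elaboration of a step the paper leaves implicit.
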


\begin{lem}
  Let 
    $s \in \Z_{ \geq 1}$.
  The continuous linear operator 
    $$\overline{\partial} \colon 
      W^{p,q,s,2}( \relint ( \exp ( - P)_\C  ) ) 
    \to W^{p,q+1,s-1,2}( \relint ( \exp ( - P)_\C  ) )  $$
    induces a continuous linear operator
    $$\overline{\partial} \colon 
      W_{\Trop}^{p,q,s,2}( \relint ( \exp ( - P)_\C  ) ) 
    \to W_{\Trop}^{p,q+1,s-1,2}( \relint ( \exp ( - P)_\C  ) ) . $$
\end{lem}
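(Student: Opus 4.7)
The plan is to use the explicit characterization of $L_{\Trop}^{p,q+1,2}$ given by \cref{L2 superforms} together with a chain-rule computation showing that $\overline{\partial}$ preserves the tropical shape with real coefficients. The continuity of $\overline{\partial}$ between the ambient Sobolev spaces $W^{p,q,s,2}$ and $W^{p,q+1,s-1,2}$ is standard for a first-order differential operator with bounded coefficients on the compact set $\exp(-P)_\C$. Since $W_{\Trop}^{p,q,s,2} := W^{p,q,s,2} \cap L_{\Trop}^{p,q,2}$, the real content of the lemma is to verify that $\overline{\partial}$ sends $L_{\Trop}^{p,q,2}$ into $L_{\Trop}^{p,q+1,2}$.

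For the smooth case, if $\alpha = \sum'_{I,J} a_{I,J}(-\log\lvert z \rvert) \, d(-\tfrac{i}{2}\log z_I) \wedge d(-\tfrac{1}{2}\log\overline{z_J}) \in \A_{\Trop,\qs}^{p,q}(\exp(-P)_\C)$ with real-valued smooth $a_{I,J}$, the chain rule
$$\overline{\partial}(a_{I,J}(-\log\lvert z \rvert)) = \sum_k \frac{\partial a_{I,J}}{\partial x_k}(-\log\lvert z \rvert) \, d(-\tfrac{1}{2}\log \overline{z_k})$$
gives $\overline{\partial}\alpha \in \A_{\Trop,\qs}^{p,q+1}$, since the real-valuedness of the tropical-basis coefficients persists under partial differentiation in the $x$-variables.

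To extend this to general $\alpha \in W_{\Trop}^{p,q,s,2}$, I would pick an $L^2$-approximating sequence $\alpha_i \in \A_{\Trop,\qs}^{p,q}(\exp(-P)_\C)$ with $\alpha_i \to \alpha$ in $L^{p,q,2}$, guaranteed by the definition of $L_{\Trop}^{p,q,2}$. By the smooth case each $\overline{\partial}\alpha_i \in \A_{\Trop,\qs}^{p,q+1}$ has real-valued tropical-basis coefficients, and $\overline{\partial}\alpha_i \to \overline{\partial}\alpha$ in the sense of distributions. Since the classical continuity gives $\overline{\partial}\alpha \in W^{p,q+1,s-1,2} \subset L^{p,q+1,2}$, reading off the tropical-basis coefficients of $\overline{\partial}\alpha$ on the full-measure open subset $\exp(-P)_\C \cap (\C^{\times})^n$ identifies them as distributional limits of real-valued functions, hence as real-valued $L^2$ functions themselves. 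Applying \cref{L2 superforms} then gives $\overline{\partial}\alpha \in L_{\Trop}^{p,q+1,2}$. The main obstacle I anticipate is making rigorous the identification of the tropical-basis coefficients of the distributional limit with the distributional limits of the coefficients of $\overline{\partial}\alpha_i$; the cleanest way to handle this is via the angular $(S^1)^d$-averaging construction used in the proof of \cref{L2 superforms}, which is an $L^2$-contraction commuting with $\overline{\partial}$ (because $(S^1)^d$ acts by biholomorphisms preserving the $T^d$-invariant K\"{a}hler form $\omega_{Kah}$) and which fixes every quasi-smooth tropical form; averaging expresses the tropical-basis coefficients as iterated integrals along angular fibers, and convergence of these integrals under $L^2$-perturbation then follows from Fubini's theorem.
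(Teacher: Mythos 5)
Your proposal is correct, but it reaches the conclusion by a different route than the paper. The paper reduces the lemma to a local density statement in the Sobolev $s$-norm: every $\omega \in W_{\Trop}^{p,q,s,2}$ is locally approximable in $W^{s,2}$ (not merely $L^2$) by quasi-smooth superforms, proved by the same smoothing-plus-angular-averaging device as \cref{L2 superforms}; continuity of $\overline{\partial}\colon W^{s,2}\to W^{s-1,2}$ then exhibits $\overline{\partial}\omega$ locally as an $L^2$-limit of quasi-smooth superforms. You instead use only the $L^2$-approximation available by definition, pass to distributional limits of the tropical-frame coefficients on the full-measure open subset $\exp ( - P)_\C \cap (\C^{\times})^n$ (where the tropical and Euclidean frames are locally comparable, so $L^2$-convergence of forms does give local distributional convergence of coefficients), note that real-valuedness and torus-invariance survive distributional limits, and then invoke the explicit characterization in \cref{L2 superforms} to conclude $\overline{\partial}\omega\in L_{\Trop}^{p,q+1,2}$. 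This is softer: it never requires Sobolev-norm density of quasi-smooth forms, only the characterization lemma plus the weak chain rule for real-valuedness. Your closing remark in fact yields an even shorter variant: the angular averaging operator fixes every element of $L_{\Trop}^{p,q,2}$, commutes with $\overline{\partial}$ (this needs only that the rotations are biholomorphisms; preservation of the torus-invariant $\omega_{Kah}$ is what makes it an $L^2$-contraction), so $\overline{\partial}\omega = A\overline{\partial}\omega$ gives torus-invariance of the coefficients with no approximating sequence at all. Both arguments ultimately rest on the same averaging mechanism, so they are cousins rather than strangers; the paper's version has the reusable byproduct of local $W^{s,2}$-density of quasi-smooth superforms, while yours is more self-contained given \cref{L2 superforms}.
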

\begin{proof}
  Since $\overline{\partial}$ is  continuous and it induces 
    $$\overline{\partial} \colon 
      \A_{\Trop,\qs}^{p,q}(  \exp ( - P)_\C   ) 
    \to \A_{\Trop,\qs}^{p,q+1}( \exp ( - P)_\C   ) , $$
  it suffices to show that 
  for $\omega \in 
      W_{\Trop}^{p,q,s,2}( \relint ( \exp ( - P)_\C  ) ) $
      and $x \in P \cap \R^n$,
  there exist $f_x \in \A_{\Trop,\qs}^{p,q}(  \exp ( - P)_\C   ) $
    and an open neighborhood $U_x \subset P \cap \R^n$ of $x$
    such that 
    the Sobolev $s$-norm
    $$  \Arrowvert f_x - \omega \Arrowvert_{ 
      W^{s,2}( \relint ( \exp ( - P)_\C  )  \cap (- \log \lvert \cdot \rvert )^{-1} ( U_x ) ) } $$
      on $\relint ( \exp ( - P)_\C  )  \cap (- \log \lvert \cdot \rvert )^{-1} ( U_x )$
      (defined using a $g$-orthonormal frame on $\exp ( - P)_\C $)
    is small.
  This easily follows from the same method as \cref{L2 superforms}.
\end{proof}

We put 
$$\overline{\delta} := - \overline{*} \overline{\partial} \overline{*} \colon W_{\Trop}^{p,q,s,2}(\relint ( \exp ( - P)_\C  )  ) \to W_{\Trop}^{p,q-1,s-1,2}(\relint ( \exp ( - P)_\C  )  )$$
and
$\Delta'' := \overline{\delta} \overline{\partial} + \overline{\partial} \overline{\delta}$.

We define similar norms, spaces, and operators on $\Lambda$ as follows.
Let $\alpha = (\alpha_P)_{P \in \Lambda_{\max}}, $ $\beta = (\beta_P)_{P \in \Lambda_{\max}} \in \A_{\Trop,\pqs}^{p,q}(\Lambda)$.
We put 
  $$ \int_{\Lambda} \alpha := \sum_{P \in \Lambda_{\max}} m_P \int_{\exp (-P)_\C} \alpha_P .$$
We put 
$$ \langle \alpha, \beta \rangle_{L^2( \Lambda)}
  := \sum_{P \in \Lambda_{\max}} m_P \langle \alpha_P, \beta_P \rangle_{L^{2}(  P  )} 
  =   \sum_{P \in \Lambda_{\max}} m_P \int_{\exp (-P)_\C } (\alpha_P,\beta_P) \omega_{Kah,d} .$$
We also define the \emph{$L^2$-norm} 
  by
  $$\alpha 
    \mapsto \Arrowvert \alpha \Arrowvert_{L^2 ( \Lambda ) }
    := \langle \alpha, \alpha \rangle_{L^2( \Lambda)}^{\frac{1}{2}}.$$
We put 
  $$L_{\Trop}^{p,q,2}(\Lambda):= \bigoplus_{P\in \Lambda_{\max}} L_{\Trop}^{p,q,2}( \relint ( \exp ( - P)_\C  ) ).$$
  This is the completion of $\A_{\Trop,\qs}^{p,q}(\Lambda)$ with respect to the $L^2$-norm $\Arrowvert \alpha \Arrowvert_{L^2 ( \Lambda ) }$.
Similarly, we define 
 $$W_{\Trop}^{p,q,s,2}(\Lambda):= \bigoplus_{P \in \Lambda_{\max} } W_{\Trop}^{p,q,s,2}( \relint ( \exp ( - P)_\C  ) ).$$
and the Sobolev $s$-norm 
  $$\alpha 
    \mapsto \Arrowvert \alpha \Arrowvert_{W^{s,2} ( \Lambda ) }
    := \big(\sum_{P \in \Lambda_{\max}} m_P \Arrowvert \alpha_P \Arrowvert_{W^{s,2}(  P)}^2 \big)^{\frac{1}{2} }  .$$
We also define the $\overline{*}$-operator, 
differential $\overline{\partial}$, codifferential $\overline{\delta}$, and Laplacian $\Delta''$ on $\Lambda$
as the formal sums of them.

\section{Boundary conditions}
In the rest of this paper, 
we fix 
\begin{itemize}
  \item a $d$-dimensional tropically compact (projective) tropical variety $(\Lambda, (m_P)_{P \in \Lambda_{\max}}) $ in $\Trop (T_{\Sigma})$
  and
  \item  an $\R$-K\"{a}hler superform $\omega_{Kah}$ on $\Lambda$.
\end{itemize}

In this section, 
we shall introduce 
normal fields and $4$ boundary conditions $\t''_{\min}$, $\n''_{\min}$, $\t''_{\max}$, and $\n''_{\max}$ for differential forms.
We shall give explicit expressions of $\n''_{\max}$. It will be used in proof of \cref{expression of Sobolev 1 norm}, which shows Gaffney-G{\aa}rding's inequality (\cref{Gaffney-Garding's inequality}).
We also define a new tropical Dolbeault cohomology $H_{\Trop,\Dol,\pqs,\t''_{\min}}^{p,q}( \Lambda)$,
which is isomorphic to the usual tropical Dolbeault cohomology $H_{\Trop,\Dol}^{p,q}( \Lambda)$ when $\Lambda$ is smooth.

\subsection{Normal fields}
We shall define normal fields.

Let $P \in \Lambda_{\max} $ and  $Q \in \Lambda_{\max - 1 }^{\ess}$  a face of $P$.
We fix 
  a basis $x_1,\dots,x_n$ of $M$ 
such that 
\begin{align*}
  \text{the affine span of } (P \cap \R^n )
 = & \{ x \in \R^n \mid  x_i = a_{P,i} \ ( d+1 \leq i \leq n ) \} , \\
   P  \cap \R^n 
 \subset & \{x \in \R^n \mid x_1 \geq a_{P,Q} \}  , \  \text{and} \\
  Q \cap \R^n 
 = & P \cap \{ x \in \R^n \mid  x_1 = a_{P,Q} \}
\end{align*}
  for some $a_{P,Q} , a_{P,i} \in \R$.
We put 
  $z_1,\dots,z_n$ the basis $x_1,\dots,x_n$ as a holomorphic coordinate of $(\C^{\times})^n$.
Rimind that we put $\exp ( - P)_\C \subset T_{\Sigma}(\C)$ the closure of 
  $$ \{z \in  ( \C^{\times} )^n \mid  z_i = e^{-a_{P,i} } \ ( d+1 \leq i \leq n ) \} 
     \cap  (- \log \lvert \cdot \rvert )^{-1} (P)  .$$
Similarly, we put $\exp ( - Q)_\C \subset T_{\Sigma}(\C)$ the closure of 
  $$ \{z \in  ( \C^{\times} )^n \mid z_1=e^{-a_{P,Q}}, \  z_i = e^{-a_{P,i} } \ ( d+1 \leq i \leq n ) \} 
     \cap  (- \log \lvert \cdot \rvert )^{-1} (Q)  .$$

\begin{dfn}\label{definition of N}
The \emph{(inward pointing) unit normal field} 
$$ \N  := \N_P  \in \Gamma (T_\R \exp (- P_\C ) |_{  \exp ( - P)_\C  \cap ( - \log \lvert \cdot \rvert )^{ - 1 } ( Q )  } ) $$
is a vector field 
such that 
  $$g(\N,\N)=1 , \quad g(v,\N )=0  \quad
  ( v \in \Gamma (T_\R   (\exp ( - P)_\C  \cap ( - \log \lvert \cdot \rvert )^{ - 1 } ( Q )  ) ) ) $$
 and it is of the form
  $$ \sum_{ i = 1 }^{ d } f_i \frac{ \partial }{ \partial ( - \frac{1}{2} \log r_i ) } + g_i \frac{ \partial }{ \partial ( - \frac{1}{2} \theta_i ) }$$
  for some positive function $f_1 >0 $ and
      $\R$-valued functions $f_i$ and $ g_i$,
      where $z_i = r_i e^{i \theta_i} $.
  (Strictly speaking, the vector field $\N$ is the smooth extension of a vector field of this form on 
  $$   \exp ( - P)_\C  \cap ( - \log \lvert \cdot \rvert )^{ - 1 } ( Q )  \cap ( \C^{ \times } )^n . ) $$
\end{dfn}

\begin{lem}\label{normal field N explicitly}
  The unit normal field 
$ \N $ is of the form 
  $$\N =  \sum_{ i = 1 }^{ d } f_i ( - \log \lvert z \rvert ) \frac{ \partial }{ \partial ( - \frac{1}{2} \log r_i ) } $$
  for some positive function $f_1 ( - \log \lvert z \rvert ) >0 $ 
      and $\R$-valued functions $f_i ( - \log \lvert z \rvert )$. 
\end{lem}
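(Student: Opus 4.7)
The plan is to exploit the fact that the coefficients of the $\R$-K\"{a}hler form $\omega_{Kah}$ depend only on $-\log\lvert z \rvert$: this forces the Riemannian metric $g$ to be block-diagonal with respect to the splitting of the tangent space into ``radial'' and ``angular'' parts, and the normality condition then pins down the shape of $\N$. I will work in the real coordinates $u_i := -\tfrac{1}{2}\log r_i$ and $\phi_i := -\tfrac{1}{2}\theta_i$ ($i=1,\dots,d$) on $\exp(-P)_\C$, and write
$$\omega_{Kah}|_{\exp(-P)_\C} = i\sum_{i,j=1}^d \omega_{ij}(-\log\lvert z \rvert)\,d(-\tfrac{1}{2}\log z_i)\wedge d(-\tfrac{1}{2}\log\overline{z_j}).$$
By \cref{definition quasi-smooth superforms} the functions $\omega_{ij}$ are real-valued, and Hermiticity of $(\omega_{ij})$ (required for $\omega_{Kah}$ to be a real $(1,1)$-form) combined with real-valuedness forces $(\omega_{ij})$ to be real symmetric. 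Using $d(-\tfrac{1}{2}\log z_i) = du_i + i\,d\phi_i$, a short calculation yields
$$\omega_{Kah}|_{\exp(-P)_\C} = 2\sum_{i,j}\omega_{ij}(-\log\lvert z\rvert)\,du_i\wedge d\phi_j,$$
and hence for the induced Riemannian metric $g$,
$$g(\partial_{u_k},\partial_{u_l}) = g(\partial_{\phi_k},\partial_{\phi_l}) = 2\,\omega_{kl}(-\log\lvert z\rvert),\qquad g(\partial_{u_k},\partial_{\phi_l}) = 0,$$
so the $u$-span and the $\phi$-span are $g$-orthogonal at every point and carry the same positive-definite Gram matrix $2(\omega_{ij})$.

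Next I would identify the relevant tangent space. By the coordinate choice preceding \cref{definition of N}, the submanifold $\exp(-P)_\C \cap (-\log\lvert\cdot\rvert)^{-1}(Q)$ is cut out inside $\exp(-P)_\C$ by $r_1 = e^{-a_{P,Q}}$ with $\theta_1$ free, so its real tangent space is spanned by $\{\partial_{u_i}\}_{i=2}^d \cup \{\partial_{\phi_i}\}_{i=1}^d$. Writing $\N = \sum_i(f_i\partial_{u_i} + g_i\partial_{\phi_i})$ as in \cref{definition of N}, the vanishing of the off-diagonal block of $g$ gives
$$0 = g(\N,\partial_{\phi_l}) = 2\sum_{i} g_i\,\omega_{il}(-\log\lvert z\rvert)\qquad (l = 1,\dots, d),$$
and positive-definiteness of $(\omega_{ij})$ forces $g_i \equiv 0$.

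It remains to show the $f_i$ depend only on $-\log\lvert z\rvert$. The remaining constraints $g(\N,\partial_{u_j}) = 0$ for $j = 2,\dots,d$ read $\sum_i f_i\,\omega_{ij}(-\log\lvert z\rvert) = 0$, a linear system of rank $d-1$ whose coefficients depend only on $-\log\lvert z\rvert$; its one-dimensional solution space is therefore spanned by a vector $(f_1^0,\dots,f_d^0)$ whose entries are smooth functions of $-\log\lvert z\rvert$. Normalizing by $g(\N,\N) = 2\sum_{i,j}f_if_j\,\omega_{ij}(-\log\lvert z\rvert) = 1$ with the sign convention $f_1 > 0$ pins down a unique scalar multiple, again a smooth function of $-\log\lvert z\rvert$, yielding the claimed form of $\N$. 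The one genuinely non-trivial step is the computation of the matrix of $g$ in the real $(u,\phi)$-coordinates; once this block-diagonal structure is in hand the rest is elementary linear algebra, and the smooth extension across the toric boundary required by \cref{definition of N} is automatic since our $\N$ is constructed as the restriction of a smooth toric object.
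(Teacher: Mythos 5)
Your argument is correct and takes the same route the paper intends: the paper's entire proof is the one-line remark that the claim follows from the definition of $\R$-K\"{a}hler superforms, and your computation — real symmetric coefficients depending only on $-\log\lvert z\rvert$ force $g$ to be block-diagonal with equal Gram matrices in the radial and angular directions, so orthogonality to the $\partial_{\phi_i}$ kills the angular components and the remaining linear system has coefficients (hence solutions) depending only on $-\log\lvert z\rvert$ — is precisely the content of that remark, worked out in full.
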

\begin{proof}
  This follows from the definition of $\R$-K\"{a}hler superforms.
\end{proof}

\begin{dfn}\label{definition N' N''}
  We put
  \begin{align*}
  \N' &:= \N'_P := \frac{1}{\sqrt{2}} ( \N - i I(\N)) \in  \Gamma (T^{1,0} \exp (- P_\C ) |_{  \exp ( - P)_\C  \cap ( - \log \lvert \cdot \rvert )^{ - 1 } ( Q )  } ) ,  \\
   \N''& := \N''_P := \frac{1}{\sqrt{2}} ( \N + i I(\N)) \in  \Gamma (T^{0,1} \exp (- P_\C ) |_{  \exp ( - P)_\C  \cap ( - \log \lvert \cdot \rvert )^{ - 1 } ( Q )  } )  ,
  \end{align*}
   where $I$ is the complex structure.
   We call $\N'$ the \emph{(inward pointing) unit normal $(1,0)$-field} 
   and $\N''$ the \emph{(inward pointing) unit normal $(0,1)$-field}.
   Since $g$ is $I$-invariant, we have 
  $$g(\N',\N'')=1 , \quad g(v,\N' ) = g( v , \N'' ) = 0  \quad
  ( v \in \Gamma (T_\R   (\exp ( - P)_\C  \cap ( - \log \lvert \cdot \rvert )^{ - 1 } ( Q )  )  ) ).$$
\end{dfn}

\begin{dfn}
We call a vector $(1,0)$-field  
a \emph{quasi-smooth $(1,0)$-superfield}
if it is of the form 
  $$ \sum_{i} f_{i}( - \log \lvert z \rvert ) \frac{\partial }{\partial (- \frac{1}{2} \log z_i)}$$
  for some $\R$-valued functions $f_{i}$
\end{dfn}

  The  unit normal $(1,0)$-field $\N'$ is a quasi-smooth $(1,0)$-superfield.

\begin{dfn}
  Let $U \subset \exp ( - P)_\C  $ be an open subset.
  A \emph{local $g$-orthonormal $(1,0)$-frame} on $U$
   is a tuple $(E_i')_{i = 0}^{ d-1 }$ of vector fields $E_i' \in \Gamma ( T^{1,0} U )$
  such that 
   $$g(E_i', E_j'') = \delta_{i,j} $$
  on $ U,$
  where we put $E_i'' := \overline{E_i'} \in \Gamma ( T^{0,1} U )$.
  We call it a \emph{local $g$-orthonormal normal $(1,0)$-frame} (for $Q \subset P$)
  when 
   $$ E_0'|_{U \cap (- \log \lvert \cdot \rvert )^{-1} (Q) } = \N'.$$
   We call it a
\emph{local $g$-orthonormal normal quasi-smooth $(1,0)$-superframe} (for $Q \subset P$) 
if moreover each $E_i'$ is a quasi-smooth $(1,0)$-superfield.
\end{dfn}
A local $g$-orthonormal normal quasi-smooth $(1,0)$-superframe corresponds to differential $(1,0)$-forms $\varphi_i'$ 
(i.e., $\varphi_i' (v) = g (E_i'', v)$ for vector fields $v$)
such that 
  $\omega_{Kah} = i \sum_{i =0}^{d-1} \varphi_i' \wedge \varphi_i''$
  on $U$
and 
  $i \varphi_0', \dots, i \varphi_{d-1}'$ are quasi-smooth superforms.

\begin{rem}
A local $g$-orthonormal normal quasi-smooth $(1,0)$-superframe 
  $E_{P,Q,0}' , \dots, $ $E_{P,Q,d-1}' $  
for $Q \subset P$
exists on $\exp ( -P_\C ) \cap ( \C^{\times} )^n$
(see \cref{quasi-smooth superform orthonormal basis}).
Moreover, by the Gram-Schmids process, we can take 
  $E_{P,Q,0}' , \dots, E_{P,Q,d-1}' \ ( P \in \Lambda_{\max}, \ Q \subset P)$  
so that 
  for $Q \subset P $ and $Q \subset P'$,
  we have 
  $$E_{P,Q,i}'|_{\exp ( - Q )_\C \cap (\C^{\times})^n} 
   = E_{P',Q,i}'|_{\exp ( - Q )_\C \cap (\C^{\times})^n}  \quad ( i \geq 1).$$
In this case, we say 
  $E_{P,Q,0}' , \dots, E_{P,Q,d-1}'  \ ( P \in \Lambda_{\max}, \ Q \subset P)$  are \emph{compatible}.
\end{rem}

\subsection{Boundary conditions}
In this subsection, we shall define $4$ boundary conditions $\t''_{\min}$, $\n''_{\min}$, $\t''_{\max}$, and $\n''_{\max}$,
and give explicit expressions of $\n''_{\max}$.
Pairs of condition $(\t''_{\min}, \n''_{\max})$ and  $(\n''_{\min},\t''_{\max})$ are respectively analogs of Dirichlet and Neumann boundary conditions in Hodge theory of compact Riemann manifolds with smooth boundaries, see \cite{Sch95}.
As in this case, 
we can interpret our conditions using the theory of Hilbert complexes. See \cite{BL92} for Hilbert complexes, in particular,  \cite[Section 4]{BL92} for the case of compact Riemann manifolds with smooth boundaries.
See \cite{Wlo87} for general theory of Sobolev spaces.

\begin{rem}\label{review trace operators}
  We remind \emph{trace operators}.
  There is 
    a continuous linear operator (\cite[Theorem 8.7]{Wlo87})
    $$ - |_{\partial \exp ( - P)_\C  } \colon  W^{1,2}( \relint ( \exp ( - P)_\C  ) ) 
          \ni \omega \mapsto \omega|_{\partial \exp ( - P)_\C  } 
        \in 
        W^{\frac{1}{2},2}( \partial \exp ( - P)_\C   ) $$
  called the \emph{trace operator}
  whose 
    restriction to the subset of smooth functions is the usual restriction,
  where 
    $\partial  \exp ( - P)_\C $ is the boundary of $\exp ( - P)_\C $.
  (See \cite[Definition 4.4]{Wlo87} 
    for the definition of \emph{fractional Sobolev spaces} 
    $  W^{\frac{1}{2},2}( \partial \exp ( - P)_\C   ) $.)
    The only important fact on fractional Sobolev spaces used in this paper (more precisely, in \cref{approximation in codimension 1})
    is that 
    there exists 
    a continuous linear operator (\cite[Theorem 8.8]{Wlo87})
    $$ Z \colon  
        W^{\frac{1}{2},2}( \partial \exp ( - P)_\C   )
        \to W^{1,2}( \relint ( \exp ( - P)_\C  ) ) $$
    such that $ - |_{\partial \exp ( - P)_\C  }  \circ Z = \Id$.
  
  We also put 
  \begin{align*}
    &  - |_{\partial \exp ( - P)_\C  } \colon  W^{p,q,1,2}( \relint ( \exp ( - P)_\C  ) )  \\
    \cong  &  \bigoplus_{\substack{I \in \{1,\dots, d \}^p \\ J \in \{1,\dots, d \}^q }}' W^{1,2}( \relint ( \exp ( - P)_\C  ) ) d z_I \wedge d \overline{z_J} 
          \ni  \omega  = \sum'_{\substack{I \in \{1,\dots, d \}^p \\ J \in \{1,\dots, d \}^q }} \omega_{I,J}(z) d z_I \wedge d \overline{z_J} \\
          \mapsto  &
          \omega|_{\partial \exp ( - P)_\C  } := \sum'_{\substack{I \in \{1,\dots, d \}^p \\ J \in \{1,\dots, d \}^q }} \omega_{I,J}(z) |_{\partial \exp ( - P)_\C  } d z_I \wedge d \overline{z_J}
        \in 
      \bigoplus_{\substack{I \in \{1,\dots, d \}^p \\ J \in \{1,\dots, d \}^q }}' W^{\frac{1}{2},2}( \partial \exp ( - P)_\C  ) d z_I \wedge d \overline{z_J}   
  \end{align*}
  (where $z_1, \dots,z_d \in M$ are as in definition of $\exp (-P)_\C$).
  We also call it the \emph{trace operator}.
\end{rem}

We put 
  $$  T^{0,1}( \exp ( - P)_\C   \cap (- \log \lvert \cdot \rvert )^{-1} ( Q ) )
    \subset 
      T^{0,1} \exp ( - P)_\C 
       |_{ \exp ( - P)_\C   \cap (- \log \lvert \cdot \rvert )^{-1} ( Q ) } $$
the subset of vector fields $v$ such that $g(\N', v )=0$.

\begin{dfn}\label{definition of boundary conditions}
  We say that $\omega = (\omega_P)_{P \in \Lambda_{\max} } \in W_{\Trop}^{p,q,1,2}(\Lambda)$ satisfies conditions $\t''_{\min}, \n''_{\min}$, $ \t''_{\max}, \n''_{\max}$
  if the following conditions hold, respectively.
  \begin{itemize}
    \item ($\t''_{\min}$) 
           For any $Q \in \Lambda_{\max -1}^{\ess}$, 
           there exists 
            $$\eta_Q = \sum'_{I,J} \eta_{Q,I,J} ( - \log \lvert z \rvert ) d ( - \frac{i}{2} \log z_I) \wedge d ( - \frac{1}{2} \log \overline{z_J}) $$
           (where $\eta_{Q,I,J} $ is a Lebesgue measurable function on $Q$)
           such that 
           \begin{align*}
            & \omega_P|_{\wedge^p \Gamma (T^{1,0} \exp ( - P)_\C  |_{\exp ( - P)_\C   \cap (- \log \lvert \cdot \rvert )^{-1} ( Q ) })
              \wedge \wedge^q \Gamma (T^{0,1}( \exp ( - P)_\C   \cap (- \log \lvert \cdot \rvert )^{-1} ( Q ) ) )}  \\
            = &  \eta_Q |_{\wedge^p \Gamma (T^{1,0} \exp ( - P)_\C  |_{  \exp ( - P)_\C   \cap (- \log \lvert \cdot \rvert )^{-1} ( Q )  }) 
              \wedge \wedge^q \Gamma ( T^{0,1}( \exp ( - P)_\C   \cap (- \log \lvert \cdot \rvert )^{-1} ( Q ) ) )} 
           \end{align*}
            for any $P \in \Lambda_{\max}$ containing $Q$.
    \item ($\n''_{\min}$) $\overline{*} \omega $ satisfies condition $\t''_{\min}$.
    \item ($\t''_{\max}$) 
           For any $Q \in \Lambda_{\max -1}^{\ess}$ 
           and 
            $\zeta \in\A_{\Trop,\qs}^{d-p,d-q-1} ( \Lambda ),$
           we have 
            $$\sum_{\substack{P \in \Lambda_{\max} \\  Q \subset P } } 
             m_P \int_{  \exp ( - P)_\C  \cap ( - \log \lvert \cdot \rvert )^{ - 1 } ( Q )   }
            \omega_P \wedge   \zeta =0 .$$
    \item ($\n''_{\max}$) $\overline{*} \omega $ satisfies condition $\t''_{\max}$.
  \end{itemize}
\end{dfn}

  (Here we mean that condition $\t''_{\min}$ and $\t''_{\max}$ are empty condition when $q= d$, 
  and condition $\n''_{\min}$ and $\n''_{\max}$ are empty condition when $q= 0$.)

\begin{rem}
Let $\omega \in W_{\Trop}^{p,q,1,2}(\Lambda)$.
 \begin{itemize}
  \item When $\omega$ is a quasi-smooth superform, it satisfies condition $\t''_{\min}$.
  \item When $\omega$ satisfies condition $\t''_{\min}$ (resp. $\n''_{\min} $), it also satisfies condition $\t''_{\max}$ (resp. $\n''_{\max} $). This follows from 
  the fact 
  (which follows from the boundary condition (\cref{definition of tropical variety}) of tropical varieties)
  that
    for $Q \in \Lambda_{\max -1}^{\ess}$ 
         and 
        $ f  \in \A_{\Trop,\qs}^{d,d-1} ( \Lambda ),$
    we have 
      $$\sum_{ \substack{P \in \Lambda_{\max} \\ Q \subset P } } 
         m_P 
          \int_{ \exp ( - P)_\C  \cap ( - \log \lvert \cdot \rvert )^{ - 1 } ( Q )  }    f
         =0 .$$
  (See \cite[Proposition 3.8]{Gub16}, \cite[Theorem 4.9]{JSS19}.)
 \end{itemize}
\end{rem}

\begin{rem}
  A differential form
$\omega = (\omega_P)_{P \in \Lambda_{\max} } \in W_{\Trop}^{p,q,1,2}(\Lambda)$
satisfies condition $\t''_{\min}$
if and only if  for each $Q \in \Lambda_{\max -1}^{\ess}$,
$$v_P' \in \Gamma (T^{1,0} \exp (-P)_\C |_{\exp (-Q)_\C}),
 \ v_i' \in \Gamma (T^{1,0} \exp (-Q)_\C),
 \ w_j' \in \Gamma (T^{0,1} \exp (-Q)_\C)$$
such that 
$\sum_{\substack{ P \in \Lambda_{\max} \\ Q \subset P} }
v_P' =0$, we have
$$\sum_{\substack{ P \in \Lambda_{\max} \\ Q \subset P} }
\omega_P (v_P', v_1',\dots, v_{p-1}',w_1'', \dots, w_q'') =0$$
on $\exp ( -Q)_\C \cap (\C^{\times})^n$.
\end{rem}

\begin{lem}\label{boundary conditions in Q-smooth in codimension 1 case}
  When $\Lambda$ is $\Q$-smooth in codimension $1$ (\cref{definition of locally irreducible tropical varieties}),
  equalities of conditions 
  $\t''_{\min} = \t''_{\max}$
  and 
  $\n''_{\min} = \n''_{\max}$ hold.
\end{lem}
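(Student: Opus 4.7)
The forward direction $\t''_{\min} \Rightarrow \t''_{\max}$ (and its analog for the $\n''$-conditions) was already recorded in the remark following \cref{definition of boundary conditions}; it uses only the tropical balancing condition and requires no smoothness hypothesis. The content of this lemma is therefore the converse implication $\t''_{\max} \Rightarrow \t''_{\min}$ under $\Q$-smoothness in codimension $1$. The normal analog $\n''_{\max} \Rightarrow \n''_{\min}$ then follows immediately by applying $\overline{*}$, which interchanges the two pairs of conditions.

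For the converse, fix $Q \in \Lambda_{\max-1}^{\ess}$ and choose a compatible local orthonormal normal quasi-smooth $(1,0)$-superframe $(E'_{P,Q,0}, \ldots, E'_{P,Q,d-1})$ for every $P \in \Lambda_{\max}$ with $Q \subset P$, as constructed in the last remark of the preceding subsection: here $E'_{P,Q,0} = \N'_P$, while the tangential vectors $E'_{P,Q,i}$ ($i \geq 1$) are arranged to agree on $\exp(-Q)_\C \cap (\C^\times)^n$ for all such $P$. Expand $\omega_P = \sum'_{I,J} c_{P,I,J}\, \varphi'_{P,I} \wedge \varphi''_{P,J}$ in the dual coframe. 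The one-forms $\varphi'_{P,0}$ and $\varphi''_{P,0}$ each restrict on $\exp(-P)_\C \cap (-\log\lvert\cdot\rvert)^{-1}(Q)$ to a multiple of the normal circle direction $d\theta_1^{(P)}$, so only the components $c_{P,I,J}$ with $0 \notin J$ can contribute to boundary integrals; the $P$-independence of exactly these components is the content of $\t''_{\min}$.

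Assume $\omega$ satisfies $\t''_{\max}$. For a quasi-smooth test form $\zeta$ on $\Lambda$, restricting $\omega_P \wedge \zeta$ to the boundary and integrating out $\theta_1^{(P)}$ (legal because quasi-smooth coefficients are independent of the $\theta$-variables) reduces the defining identity of $\t''_{\max}$ to a pointwise relation on $\exp(-Q)_\C$,
\[
  \sum_{\substack{P \in \Lambda_{\max}\\ Q \subset P}} m_P\, \phi_{\zeta,P}\, c_{P,I,J}\big|_{\exp(-Q)_\C} = 0,
\]
valid for every tangential multi-index $(I,J)$. Here $\phi_{\zeta,P}$ is the scalar produced by writing $\zeta$ in its global $M$-basis and then re-expanding in the $P$-adapted basis $(z_{P,1},z_2,\ldots,z_d)$; a direct change-of-basis computation identifies $\phi_{\zeta,P}$ with the value at $n_{Q,P}$ of some $\R$-linear functional on $N_{\sigma_Q,\R}/\Tan Q$, and varying $\zeta$ realises every such functional on the $\R$-span of $\{n_{Q,P} : Q \subset P\}$. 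Consequently $\sum_{P} m_P c_{P,I,J}\, n_{Q,P} = 0$ pointwise on $\exp(-Q)_\C$.

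Combined with the tropical balancing $\sum_P m_P n_{Q,P}=0$, both $(m_P)$ and $(m_P c_{P,I,J})$ lie in the $\R$-kernel of the map $(m'_P) \mapsto \sum_P m'_P n_{Q,P}$. By \cref{definition of locally irreducible tropical varieties} this kernel is one-dimensional over $\R$ (spanned by $(m_P)$), hence $c_{P,I,J}$ is $P$-independent; packaging these common values into a single form $\eta_Q$ provides the witness required by $\t''_{\min}$. The main technical obstacle I anticipate lies in the change-of-basis step: one must carefully verify that the coefficient of the boundary restriction of $\zeta$ in the surviving slot depends on $P$ precisely as a linear functional in $n_{Q,P}$, and that varying $\zeta$ exhausts all such functionals on the relevant $\R$-span.
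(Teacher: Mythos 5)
Your reduction to the single implication $\t''_{\max}\Rightarrow\t''_{\min}$, the $\overline{*}$-trick for the $\n''$-conditions, and the overall strategy (localize the integral identity of $\t''_{\max}$ to a pointwise relation on $\exp(-Q)_\C$, then use $\Q$-smoothness to identify the kernel of $(m'_P)\mapsto\sum_P m'_P n_{Q,P}$ with $\R\cdot(m_P)$) are all sound; the paper gives no argument to compare against. But there is a genuine error in your characterization of $\t''_{\min}$, and it breaks the last step. Among the components surviving on the boundary (those with $0\notin J$) there are two classes: the purely tangential ones $\omega_P(E'_I,E''_J)$ with $0\notin I$, and the mixed ones $\omega_P(\N'_P,E'_{I'},E''_J)$. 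Condition $\t''_{\min}$ forces $P$-independence only of the first class; for the second it requires $\omega_P(\N'_P,\dots)=\eta_Q(\N'_P,\dots)$ for a \emph{common} ambient form $\eta_Q$, and since the vectors $\N'_P$ genuinely depend on $P$ these values are in general different for different $P$. Worse, because the images of the $\N'_P$ in $N_{\sigma_Q,\C}/\Tan(Q\cap\R^n)_\C$ satisfy the nontrivial balancing relation, a linear form $\eta_Q$ taking one and the same prescribed value on all of them typically does not exist, so ``packaging the common values into $\eta_Q$'' fails precisely on the mixed components. The correct condition there (given agreement of the tangential components and $\Q$-smoothness) is a single scalar relation $\sum_P m_P\,\omega_P(v_P,E'_{I'},E''_J)=0$ on $\exp(-Q)_\C$, where $v_P$ are lifts of $n_{Q,P}$ with $\sum_P m_P v_P=0$ — this is exactly the reformulation of $\t''_{\min}$ given in the remark after \cref{definition of boundary conditions}.

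A related imprecision hides the fix: on $\exp(-Q)_\C$ the boundary restriction of $\omega_P\wedge\zeta$ decomposes as $\omega_P(v_P)\wedge\zeta\pm\omega_P\wedge\zeta(v_P)$, so the \emph{mixed} components of $\omega_P$ are paired with the purely tangential (hence $P$-independent) components of $\zeta$, while the \emph{tangential} components of $\omega_P$ are paired with $\zeta(v_P,\dots)$, which is the part linear in $n_{Q,P}$. Your single identity $\sum_P m_P\phi_{\zeta,P}c_{P,I,J}=0$ conflates these. One must decouple them: testing against $\zeta$ with vanishing tangential restriction yields the vector relation $\sum_P m_P\,c^{\mathrm{tang}}_{P,I,J}\,n_{Q,P}=0$, to which your one-dimensional-kernel argument applies and gives $P$-independence of the tangential components; testing against purely tangential $\zeta$ yields the scalar relation above for the mixed components, and $\Q$-smoothness is then used a second time to see that this scalar relation is exactly what is needed for a consistent $\eta_Q$ to exist. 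Applying the vector-relation/$P$-independence argument uniformly to all $(I,J)$ with $0\notin J$, as you do, proves a condition that is strictly different from $\t''_{\min}$.
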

\begin{proof}
  Straightforward. 
\end{proof}

\begin{rem}
Gubler-Jell-Rabinoff introduced another kind of new superforms, \emph{weakly smooth forms}(\cite{GJR21-1}, \cite{GJR21-2}).
  They are related to conditions $\t''_{\max}$ and $\n''_{\max}$.
\end{rem}

\begin{lem}\label{n max explicitly}
  A differential form $\omega = (\omega_P)_{P \in \Lambda_{\max}} \in W_{\Trop}^{p,q,1,2}(\Lambda)$
  satisfies condition $\n''_{\max}$
if and only if 
  for $Q \in \Lambda_{\max -1}^{\ess} $,
compatible local $g$-orthonormal normal quasi-smooth $(1,0)$-superframes
  $E_{P,Q,0}' , \dots, E_{P,Q,d-1}'  \ ( P \in \Lambda_{\max}, \ Q \subset P)$ on $\exp (-P)_\C \cap (\C^{\times})^n$, 
   a quasi-smooth superform $ \zeta \in \A_{\Trop,\qs}^{p,q-1} ( \Lambda ) $,
    and
      $I \in  \{0,1,\dots,d-1 \}^p $  and
      $J \in  \{1,\dots,d-1 \}^{q-1} $ 
  we have
  $$ \sum_{\substack{P \in \Lambda_{\max} \\ Q \subset P} }
    \frac{m_P}{f_{P,1} ( -\log \lvert z \rvert ) }
    \zeta (E_{P,Q,I}', E_{P,Q,J}'') \overline{\omega_P ( \N''_P , E_{P,Q,I}', E_{P,Q,J}'' )}  = 0  $$
  on $ \exp ( - Q )_\C  \cap (\C^{\times})^n$.
\end{lem}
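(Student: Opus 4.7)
The strategy is to unfold the definition of $\n''_{\max}$ and compute the resulting boundary integral explicitly in a chosen local orthonormal $(1,0)$-coframe. By \cref{definition of boundary conditions}, $\omega$ satisfies $\n''_{\max}$ if and only if $\overline{*}\omega$ satisfies $\t''_{\max}$, i.e., for every $\zeta \in \A_{\Trop,\qs}^{p,q-1}(\Lambda)$,
\[ \sum_{P\supset Q} m_P \int_{\exp(-P)_\C \cap (-\log|\cdot|)^{-1}(Q)} \overline{*}\omega_P \wedge \zeta = 0. \]
Using the compatible frames $E_{P,Q,i}'$ and their duals $\varphi_{P,i}'$ (with $\omega_{Kah} = i\sum_i \varphi_{P,i}'\wedge\varphi_{P,i}''$), I would expand $\omega_P$ and $\zeta$ in these bases and apply the standard K\"ahler formula $\overline{*}(\varphi_K'\wedge\varphi_L'') = c_{K,L}\,\varphi_{K^c}'\wedge\varphi_{L^c}''$ for universal constants $c_{K,L} \in \C^\times$.

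The key geometric observation is that on the real boundary the dual 1-form $\N_P^*$ vanishes (since $\N_P$ is transverse) while $I(\N_P)^*$ does not; writing $\varphi_{P,0}' = \frac{1}{\sqrt{2}}(\N_P^* + iI(\N_P)^*)$ then forces $\varphi_{P,0}'|_b = -\varphi_{P,0}''|_b$, so any $(d,d-1)$-form containing both $\varphi_{P,0}'$ and $\varphi_{P,0}''$ restricts to zero on the boundary. Since a $(d,d-1)$-form necessarily contains the full $(1,0)$-wedge $\varphi_{P,0}'\wedge\cdots\wedge\varphi_{P,d-1}'$, only those basis terms whose $(0,1)$-index equals $\{1,\ldots,d-1\}$ survive. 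Extracting the corresponding coefficients in $\overline{*}\omega_P\wedge\zeta$ expresses the boundary integrand as a sum, indexed by $I\in\{0,\ldots,d-1\}^p$ and $J\in\{1,\ldots,d-1\}^{q-1}$, of terms proportional to $\overline{\omega_P(\N''_P,E_{P,Q,I}',E_{P,Q,J}'')}\,\zeta(E_{P,Q,I}',E_{P,Q,J}'')$, because evaluating $\omega_P$ at $(\N''_P, E_I', E_J'')$ extracts precisely the coefficient indexed by $(I, \{0\}\cup J)$.

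It remains to convert the induced boundary measure $dV_b = \iota_{\N_P}\omega_{Kah,d}|_b$ to the product $d\theta_1\wedge d\mathrm{vol}_{\exp(-Q)_\C}$. Using $\N_P = \sum_i f_{P,i}\,\partial/\partial u_i$ with $u_i = -\frac{1}{2}\log r_i$ (\cref{normal field N explicitly}), together with $I(\partial/\partial u_1) = -2\partial/\partial\theta_1$ and $I$-invariance of $g$, a direct Jacobian computation yields $dV_b = (\text{const})\cdot f_{P,1}^{-1}\,d\theta_1\wedge d\mathrm{vol}_{\exp(-Q)_\C}$, and this is precisely where the factor $1/f_{P,1}$ emerges. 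Integrating over the $\theta_1$-circle is trivial (both $\omega_P$ and $\zeta$ are $\theta$-independent by quasi-smoothness), producing
\[ \sum_P m_P \int_b \overline{*}\omega_P\wedge\zeta = C\sum'_{I,J}\int_{\exp(-Q)_\C}\!\bigg(\sum_{P\supset Q}\frac{m_P}{f_{P,1}(-\log|z|)}\zeta(E_{P,Q,I}',E_{P,Q,J}'')\,\overline{\omega_P(\N''_P,E_{P,Q,I}',E_{P,Q,J}'')}\bigg)d\mathrm{vol}_{\exp(-Q)_\C} \]
for a universal non-zero constant $C$; the equivalence with the claimed pointwise identity then follows by varying $\zeta$ to localize near any point of $\exp(-Q)_\C\cap(\C^\times)^n$ and to isolate individual $(I,J)$-evaluations via the freedom of the quasi-smooth coefficient functions. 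The principal obstacle is this Jacobian computation yielding the factor $1/f_{P,1}$; once it is in place, the remaining steps are routine book-keeping in the orthonormal coframe.
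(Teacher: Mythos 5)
Your proposal is correct and follows essentially the same route as the paper: unfold $\n''_{\max}$ as $\t''_{\max}$ for $\overline{*}\omega$, evaluate the $(d,d-1)$-form $\zeta\wedge\overline{*}\omega_P$ on the boundary in the normal orthonormal $(1,0)$-frame (where the $1/f_{P,1}$ normalization of the $\N'$-direction produces the stated factor, and only terms whose $(0,1)$-index omits $0$ survive), and then isolate individual $(I,J)$-components and localize by varying the quasi-smooth coefficients of $\zeta$. The paper's proof is terser but contains exactly these steps, so no further comparison is needed.
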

\begin{proof}
  We fix $Q \in \Lambda_{\max -1 }^{\ess}$.
  We assume that $\omega$ satisfies condition $\n''_{\max}$.
  Then 
  we have 
  $$\sum_{ \substack{P \in \Lambda_{\max} \\ Q \subset P}}
     m_P \zeta \wedge \overline{*}\omega_P (\frac{\N'_P}{f_{P,1}(- \log \lvert z \rvert ) }, E'_{P,Q, (1,\dots,d-1)}, E''_{P,Q, (1,\dots,d-1)} )  
    = 0. $$
  We may assume that 
$\zeta (E_{P,Q,I}', E_{P,Q,J}'')$
($P \in \Lambda_{\max}, \ Q \subset P$) are possibly non-zero only for a pair $(I,J)$,
hence the assertion holds.
(To see this, 
let  $\varphi_{P,Q,i}'$ be differential forms corrsponding to quasi-smooth $(1,0)$-superfields extending $E_{P,Q,i}'$ to a neighborhood in $(\C^{\times})^n$.
Then without loss of generality, we may assume that $\zeta$ is a $C_{\Trop,\qs}^{\infty}$-linear sum of products of $i\varphi_{P,Q,i}'$ and $\varphi_{P,Q,j}''$.)
\end{proof}

\begin{lem}\label{n max for strange superframe}
  There exists 
  a local $g$-orthonormal normal $(1,0)$-frame 
  $E_{P,Q,0}' , \dots, E_{P,Q,d-1}' $ 
  on a neighborhood $U_{P,Q}$ of $\exp ( - P)_\C  \cap ( - \log \lvert \cdot \rvert )^{-1} (Q)$ in $\exp ( - P)_\C$
  for each pair $ Q \subset P \ (P \in \Lambda_{\max}, \ Q \in \Lambda_{\max - 1 }^{\ess})  $
  such that 
  for $Q \subset P $ and $Q \subset P'$,
  we have 
  $$E_{P,Q,i}'|_{\exp ( - Q )_\C } 
   = E_{P',Q,i}'|_{\exp ( - Q )_\C }  \quad ( i \geq 1)$$
  and
    $$E_{P,Q,0}',   e^{ i \sum_j \alpha_{1,j} \theta_j } E_{P,Q,1}', \dots, e^{ i \sum_j \alpha_{d-1,j} \theta_j } E_{P,Q,d-1}' $$
    is a local $g$-orthonormal normal quasi-smooth $(1,0)$-superframe 
   on $U_{P,Q} \cap (\C^{\times})^n$ 
   for some $\alpha_{i,j} \in \Z$,
     where $z_1= r_1 e^{i \theta_1} , \dots, z_n = r_n e^{i \theta_n} \in M$ is a basis as a holomorphic coordinate of $(\C^{\times})^n$.

  In particular, 
  for 
    a differential form $\omega = (\omega_P)_{P \in \Lambda_{\max}} \in W_{\Trop}^{p,q,1,2}(\Lambda)$ 
    satisfying condition $\n''_{\max}$,
   a polyhedron $Q \in \Lambda_{\max -1}^{\ess} $,
   a quasi-smooth superform $ \zeta = (\zeta_P) \in \A_{\Trop,\qs}^{p,q-1} ( \Lambda ) $,
    and
      $I \in \{0,\dots,d-1 \}^p $, 
      $J \in \{1,\dots,d-1 \}^{q-1} $ 
  we have
  $$ \sum_{\substack{P \in \Lambda_{\max} \\ Q \subset P} }
    \frac{m_P}{f_{P,1} ( -\log \lvert z \rvert ) }
    \zeta_P (E_{P,Q,I}', E_{P,Q,J}'') \overline{\omega_P ( \N''_P , E_{P,Q,I}', E_{P,Q,J}'' )}  = 0  $$
  on $ \exp ( - Q )_\C$.
\end{lem}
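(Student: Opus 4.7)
The statement has two assertions---existence of the frame with the monomial-phase property, and the integral identity---which I address in turn.

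\emph{Constructing the frame.} On the torus locus $\exp(-P)_\C \cap (\C^\times)^n$, by \cref{normal field N explicitly} the normal $(1,0)$-field $\N_P'$ is itself a quasi-smooth $(1,0)$-superfield. Applying Gram--Schmidt as in \cref{quasi-smooth superform orthonormal basis}, I produce for every pair $Q \subset P$ a compatible family of $g$-orthonormal normal quasi-smooth $(1,0)$-superframes $\widetilde{E}_{P,Q,0}' = \N_P',\ \widetilde{E}_{P,Q,1}',\ \dots,\ \widetilde{E}_{P,Q,d-1}'$ on $\exp(-P)_\C \cap (\C^\times)^n$, orthonormalizing the tangential directions first on $\exp(-Q)_\C \cap (\C^\times)^n$ so that the outputs for $k \geq 1$ coincide across all $P \supset Q$. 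To extend to a neighborhood $U_{P,Q}$ meeting the toric boundary, I choose a local toric chart in which the boundary coordinates are $z_1,\dots,z_r$; the identity $z_j\,\partial/\partial z_j = \lvert z_j \rvert\, e^{i\theta_j}\,\partial/\partial z_j$ shows that a quasi-smooth superfield differs from a $(1,0)$-field extending smoothly across $\{z_j = 0\}$ by a factor $e^{i\theta_j}$ times a real quasi-smooth coefficient. I therefore define $E_{P,Q,k}' := e^{i \sum_j \alpha_{k,j} \theta_j}\, \widetilde{E}_{P,Q,k}'$ with integers $\alpha_{k,j}$ absorbing precisely the phase defect at each boundary coordinate; one has $\alpha_{0,j} = 0$ because $\N_P'$ already extends smoothly, and the first-step compatibility forces the same $\alpha_{k,j}$ to work for every $P \supset Q$ when $k \geq 1$.

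\emph{Main obstacle.} The delicate point is the toric-chart claim that the Gram--Schmidt output, once written in a frame extending smoothly across the toric boundary, differs from its $(\C^\times)^n$-expression by an \emph{integer} monomial phase. This reduces to the standard local-toric fact that the two natural $(1,0)$-framings $\{\partial/\partial z_j\}$ and $\{z_j\,\partial/\partial z_j\}$ on a smooth toric variety interconvert by integer monomial phases times real quasi-smooth functions, combined with the $\R$-linearity of Gram--Schmidt, which preserves the class of quasi-smooth superfields modulo such phase factors.

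\emph{The ``in particular'' conclusion.} Set $b_k := e^{i \sum_j \alpha_{k,j} \theta_j}$, so $\lvert b_k \rvert = 1$, $b_0 = 1$, $E_{P,Q,k}' = b_k \widetilde{E}_{P,Q,k}'$ and $E_{P,Q,k}'' = \overline{b_k}\, \widetilde{E}_{P,Q,k}''$. Multilinearity of $\zeta_P$ gives, for $I = (i_1,\dots,i_p)$ and $J = (j_1,\dots,j_{q-1})$,
\[
 \zeta_P(E_{P,Q,I}', E_{P,Q,J}'') = \Bigl(\prod_{l=1}^p b_{i_l}\Bigr) \Bigl(\prod_{m=1}^{q-1} \overline{b_{j_m}}\Bigr)\, \zeta_P(\widetilde{E}_{P,Q,I}', \widetilde{E}_{P,Q,J}''),
\]
and the identical scalar factor arises inside $\omega_P(\N_P'', E_{P,Q,I}', E_{P,Q,J}'')$ (using $b_0 = 1$). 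Taking complex conjugate of the $\omega_P$-term and multiplying by the $\zeta_P$-term, all unit-modulus phases cancel, so the integrand coincides with its quasi-smooth counterpart. \cref{n max explicitly} then gives vanishing of the sum on $\exp(-Q)_\C \cap (\C^\times)^n$; since the toric boundary of $\exp(-Q)_\C$ has measure zero, the identity holds as measurable functions on all of $\exp(-Q)_\C$.
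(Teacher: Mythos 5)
Your proposal is correct and follows essentially the same route as the paper, whose proof is simply ``the first assertion follows from the Gram--Schmidt process; the second follows from \cref{n max explicitly}'': you carry out the Gram--Schmidt construction (with the monomial-phase bookkeeping at the toric boundary made explicit) and then reduce the integral identity to \cref{n max explicitly} by observing that the unit-modulus phases cancel between $\zeta_P$ and $\overline{\omega_P(\cdots)}$. The only addition you make is the explicit justification of the integer-phase defect and the measure-zero extension to the boundary of $\exp(-Q)_\C$, both of which are sound and merely fill in details the paper leaves implicit.
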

\begin{proof}
  The first assertion follows from the Gram-Schmidt process.
  The second assertion follows from \cref{n max explicitly}.
\end{proof}

\begin{lem}\label{t''wedge * n'' restriction }
  Let $P \in \Lambda_{\max}$ be a polyhedron and $Q \in \Lambda_{\max -1 }^{\ess}$ contained in $P$.
  Let $\omega \in W^{p,q,1,2}( \relint (\exp ( - P)_\C )  )$
     and $\eta \in W^{p,q+1, 1,2 }( \relint ( \exp ( - P)_\C )  )$.
  We put $\chi :=  \omega \wedge \overline{*}  \eta $.
  Then 
    $$ \chi 
    |_{ \wedge^{2 d-1} \Gamma ( T ( \exp ( - P)_\C  \cap ( - \log \lvert \cdot \rvert )^{-1} ( Q)) ) }
    = \frac{1}{ \sqrt{2}} (\omega, \eta (\N'') )\mu_{ \partial \exp ( - P)_\C  } 
    |_{ \exp ( - P)_\C  \cap ( - \log \lvert \cdot \rvert )^{-1} ( Q))  } ,$$
  where $\mu_{ \partial \exp ( - P)_\C  } $ is the volume form on $ \partial \exp ( - P)_\C $ induced by the volume form $\omega_{Kah,d}$ on $\exp (-P)_\C$.
\end{lem}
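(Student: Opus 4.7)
The plan is to reduce the identity to smooth forms by a density argument, and then verify it pointwise at each boundary point via local computation in a $g$-orthonormal frame adapted to the normal direction.

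For the density reduction, both sides depend continuously on $(\omega, \eta)$ in $W^{p,q,1,2}(\relint(\exp(-P)_\C)) \times W^{p,q+1,1,2}(\relint(\exp(-P)_\C))$, since the trace operator $W^{1,2} \to W^{\frac{1}{2},2}$ (\cref{review trace operators}), the operator $\overline{*}$, and the contraction by $\N''$ are all continuous. Because smooth forms are dense in these Sobolev spaces by definition, it suffices to prove the identity when $\omega$ and $\eta$ are smooth, in which case it becomes a pointwise identity of $(2d-1)$-forms on $\exp(-P)_\C \cap (-\log\lvert\cdot\rvert)^{-1}(Q)$.

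To establish the pointwise identity at a boundary point $z_0$, I choose a local $g$-orthonormal $(1,0)$-frame $E_0' = \N', E_1', \dots, E_{d-1}'$ on a neighborhood of $z_0$ with dual $(1,0)$-coframe $\varphi_0', \dots, \varphi_{d-1}'$, normalized so that $\omega_{Kah} = i\sum_i \varphi_i' \wedge \varphi_i''$. Since $\N = \frac{1}{\sqrt{2}}(\N' + \N'')$, the real 1-form dual to $\N$ is $\frac{1}{\sqrt{2}}(\varphi_0' + \varphi_0'')$, and hence the real boundary tangent space $V \subset T_{\R, z_0}\exp(-P)_\C$ (defined by $g(-, \N) = 0$) satisfies $\varphi_0''|_V = -\varphi_0'|_V$. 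I then expand $\omega = \sum'_{I,J} \omega_{I,J} \varphi_I' \wedge \varphi_J''$ and $\eta = \sum'_{K,L} \eta_{K,L} \varphi_K' \wedge \varphi_L''$; the defining relation $\alpha \wedge \overline{*}\beta = (\alpha, \beta)\omega_{Kah,d}$ determines $\overline{*}(\varphi_K' \wedge \varphi_L'')$ up to an explicit sign as a scalar multiple of $\varphi_{K^c}' \wedge \varphi_{L^c}''$.

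Expanding $\omega \wedge \overline{*}\eta$ using these formulas, a monomial summand is nonzero only when $K = I$ and $L = J \cup \{j^*\}$ for some $j^* \notin J$. When $j^* \neq 0$, the resulting $(d, d-1)$-form contains both $\varphi_0'$ and $\varphi_0''$ as factors; after restriction to $V$ via $\varphi_0''|_V = -\varphi_0'|_V$, it acquires a factor $\varphi_0' \wedge \varphi_0'|_V = 0$ and hence vanishes. Only the contributions with $j^* = 0$ survive. Using the factorization $\omega_{Kah,d} = i \varphi_0' \wedge \varphi_0'' \wedge \omega_{Kah,d,\partial}$ with $\omega_{Kah,d,\partial} := i^{d-1}\varphi_1' \wedge \varphi_1'' \wedge \dots \wedge \varphi_{d-1}'\wedge \varphi_{d-1}''$, together with the identity $\mu_{\partial \exp(-P)_\C} = \iota_\N \omega_{Kah,d}|_V$, a direct sign-tracking computation yields $\omega \wedge \overline{*}\eta|_V = \frac{(-1)^p}{\sqrt{2}} \sum'_{I, J:\, 0\notin J} \omega_{I,J}\overline{\eta_{I, J\cup\{0\}}}\, \mu_{\partial \exp(-P)_\C}|_V$. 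On the other hand, $\eta(\N'') = (-1)^p \sum'_{K, L':\, 0\notin L'} \eta_{K, (0,L')}\varphi_K' \wedge \varphi_{L'}''$, so the Hermitian inner product $(\omega, \eta(\N'')) = (-1)^p \sum'_{I, J:\, 0\notin J}\omega_{I,J}\overline{\eta_{I, J\cup\{0\}}}$ matches the previous expression after the factor $\frac{1}{\sqrt{2}}$.

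The main obstacle is the bookkeeping of signs in the Hodge star formula and in the identification of the boundary volume form; however, once the decomposition along $\varphi_0''$ is made, the argument reduces to routine combinatorics, and the factor $\frac{1}{\sqrt{2}}$ arises transparently from the normalization $\N = (\N' + \N'')/\sqrt{2}$ relating the real unit normal to the complex unit normal $(1,0)$- and $(0,1)$-vectors.
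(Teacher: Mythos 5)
Your proof is correct and follows essentially the same route as the paper's: a pointwise computation in a local $g$-orthonormal normal $(1,0)$-frame, where the key point in both is that the $(d,d-1)$-type of $\omega\wedge\overline{*}\eta$ forces the only surviving boundary contribution to come through the normal $(0,1)$-direction, producing $\eta(\N'')$ and the factor $\frac{1}{\sqrt 2}$ from $\N=(\N'+\N'')/\sqrt 2$. The paper evaluates on the spanning tuple $(I(\N),E_1',E_1'',\dots,E_{d-1}',E_{d-1}'')$ while you dually restrict the coframe via $\varphi_0''|_V=-\varphi_0'|_V$, and you add an explicit (routine) density reduction; these are cosmetic differences.
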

Here $(\omega, \eta (\N''))$ means 
  the restriction of $(\omega, \tilde{\eta (\N'')})$ 
  to $ \exp ( - P)_\C  \cap ( - \log \lvert \cdot \rvert )^{-1} ( Q)) $
  for any extension $\tilde{\eta (\N'')} $ to $\exp ( - P)_\C $ 
    of $\eta(\N'') $.

\begin{proof}
  Proof is parallel to \cite[Proposition 1.2.6 (c)]{Sch95}.
  Let $E_i'$ be a local $g$-orthonomal normal $(1,0)$-frame on $\exp (-P)_\C \cap (\C^{\times})^n$.
  We put $S (p,d)$ (resp. $S(q,d-1)$) the set of permutations $\sigma_1$ of $\{0,\dots, d-1\}$
  (resp. $\sigma_2$ of $\{1,\dots,d-1\}$)
  such that 
    $\sigma_1 (0) < \dots < \sigma_1 ( p - 1)$ and 
    $\sigma_1 (p) < \dots < \sigma_1 ( d - 1)$
    (resp. $\sigma_2 (1) < \dots < \sigma_2 ( q )$ and 
    $\sigma_2 (q + 1) < \dots < \sigma_2 ( d - 1)$).
    Then on $\exp ( - P)_\C  \cap ( - \log \lvert \cdot \rvert )^{-1} ( Q)$, we have 
  \begin{align*}
    &  \chi (I(\N), E_1',E_1'', \dots, E_{d-1}', E_{d-1}'')  \\
    = &  \frac{i}{\sqrt{2}} (-1)^{\frac{(d-1)(d-2)}{2}} \chi (\N', E_1', \dots,E_{d-1}',  E_1'', \dots,  E_{d-1}'')  \\
    = & \frac{1}{\sqrt{2}} i^{d^2 + d -1 } 
    \sum_{ \substack{ \sigma_1 \in S(p,d) \\ \sigma_2 \in S(q,d-1) } }
    \sign (\sigma_1) \sign ( \sigma_2 ) 
    ( -1 )^{q (d-p)}
     \omega (E_{\sigma_1 (0)}', \dots,E_{\sigma_1(p-1)}',  E_{\sigma_2(1)}'', \dots,  E_{\sigma_2 ( q ) }'')   \\
   & \qquad \qquad \qquad \cdot \overline{*} \eta (E_{\sigma_1 (p)}', \dots,E_{\sigma_1 ( d -1 ) }',  E_{\sigma_2( q + 1 ) }'', \dots,  E_{\sigma_2 ( d-1 ) }'')   \\
    = &  \frac{1}{\sqrt{2}} i^{d-1}
    \sum_{ \substack{ \sigma_1 \in S(p,d) \\ \sigma_2 \in S(q,d-1) } }
    ( -1 )^{p}
     \omega (E_{\sigma_1 (0)}', \dots,E_{\sigma_1(p-1)}',  E_{\sigma_2(1)}'', \dots,  E_{\sigma_2 ( q ) }'')   \\
     & \qquad \qquad \qquad \cdot \overline{ \eta (E_{\sigma_1 (0)}', \dots,E_{\sigma_1(p-1)}', \N'', E_{\sigma_2(1)}'', \dots,  E_{\sigma_2 ( q ) }'')  } \\
    = & \frac{1}{\sqrt{2}} i^{d-1}
     (\omega , \eta ( \N'') )\\ 
    = & \frac{1}{\sqrt{2}} 
     ( \omega , \eta ( \N'') )
      \mu_{\partial \exp ( - P)_\C  } (I(\N), E_1',E_1'', \dots, E_{d-1}', E_{d-1}'') .
  \end{align*}
\end{proof}

\subsection{Another Dolbeault cohomology}
We shall define another tropical Dolbeault cohomology using condition $\t''_{\min}$.

\begin{lem}\label{condition t min max preserved by partial}
  Let $\omega \in W_{\Trop}^{p,q,2,2} ( \Lambda )$ satisfies condition $\t''_{\min}$ or $\t''_{\max}$.
  Then $\overline{\partial}  \omega$ satisfies the same condition.
\end{lem}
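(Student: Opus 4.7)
The plan is to handle the two boundary conditions by distinct mechanisms: for $\t''_{\min}$, a local tangential-trace calculation showing that the tangential trace of $\overline{\partial}\omega_P$ depends only on the tangential trace of $\omega_P$; for $\t''_{\max}$, a Leibniz-rule computation combined with the tropical balancing identity cited after \cref{definition of boundary conditions}.

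For condition $\t''_{\min}$, let $\eta_Q$ be the common tangential trace of the $\omega_P$'s on $B_{P,Q} := \exp(-P)_\C \cap (-\log|\cdot|)^{-1}(Q)$ for $P \supset Q$. Fix $P$ and choose a compatible $g$-orthonormal normal quasi-smooth $(1,0)$-superframe $\N'_P = E'_{P,Q,0}, E'_{P,Q,1}, \ldots, E'_{P,Q,d-1}$ as in \cref{n max for strange superframe}, together with $P$-adapted local holomorphic coordinates $z_1^{(P)},\ldots,z_d^{(P)}$ in which $\N''_P$ is along $\partial/\partial \overline{z_1^{(P)}}$ and the tangential $(0,1)$-directions at $B_{P,Q}$ are spanned by $\partial/\partial \overline{z_k^{(P)}}$ for $k \geq 2$. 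Expanding $\omega_P = \sum'_{I, J} \omega_{P, I, J} dz_I \wedge d\overline{z_J}$, the tangential trace recovers precisely the restrictions $\omega_{P, I, J}|_{B_{P, Q}}$ for $J \subset \{2, \ldots, d\}$, which by hypothesis agree with the corresponding components of $\eta_Q$. A direct computation of $\overline{\partial} \omega_P$ in these coordinates yields
\[
(\overline{\partial}\omega_P)^{\mathrm{tang}} = \pm \sum_{\substack{k \geq 2,\, I \\ J \subset \{2,\ldots,d\}}} \frac{\partial \omega_{P, I, J}}{\partial \overline{z_k^{(P)}}}\bigg|_{B_{P,Q}} dz_I \wedge d\overline{z_k^{(P)}} \wedge d\overline{z_J},
\]
which involves only tangential derivatives of tangential components. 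Since tangential differentiation commutes with restriction to $B_{P, Q}$ and the tangential components of $\omega_P$ are $P$-independent, so is the tangential trace of $\overline{\partial}\omega_P$, verifying $\t''_{\min}$ for $\overline{\partial}\omega$ with $\eta'_Q := \overline{\partial} \eta_Q$.

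For condition $\t''_{\max}$, fix a quasi-smooth test form $\zeta \in \A_{\Trop,\qs}^{d-p, d-q-2}(\Lambda)$ and $Q \in \Lambda_{\max - 1}^{\ess}$. The Leibniz rule on each $\exp(-P)_\C$ gives
\[
\overline{\partial}\omega_P \wedge \zeta_P = \overline{\partial}(\omega_P \wedge \zeta_P) - (-1)^{p+q}\omega_P \wedge \overline{\partial}\zeta_P.
\]
Integrating over $B_{P, Q}$, weighting by $m_P$, and summing over $P \supset Q$, the second term vanishes by $\t''_{\max}$ for $\omega$ applied to the quasi-smooth test form $\overline{\partial}\zeta \in \A_{\Trop,\qs}^{d-p, d-q-1}(\Lambda)$. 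The first term is a sum of integrals over $B_{P,Q}$ of the $(d, d-1)$-form $\overline{\partial}(\omega_P \wedge \zeta_P)$, which vanishes by the cited tropical balancing identity extended from quasi-smooth to Sobolev-class $(d, d-1)$-forms. Hence $\sum_{P \supset Q} m_P \int_{B_{P, Q}} \overline{\partial}\omega_P \wedge \zeta_P = 0$, which is $\t''_{\max}$ for $\overline{\partial}\omega$.

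The hard part will be justifying the $W^{2,2}$-regularity content of these arguments. For $\t''_{\max}$, the tropical balancing identity is stated for quasi-smooth superforms but must be applied to the $W^{1,2}$ form $\overline{\partial}(\omega_P \wedge \zeta_P)$; extending it demands continuity of the trace operator on $B_{P, Q}$ combined with density of quasi-smooth superforms in the relevant tropical Sobolev spaces. For $\t''_{\min}$, the tangential trace of a $W^{2,2}$ form lies in $W^{3/2,2}$ and tangential differentiation commutes with restriction to $B_{P,Q}$ by standard trace theory, but the $P$-dependence of the coordinate systems $z^{(P)}$ and of the ambient bundles $T^{1,0}\exp(-P)_\C$ needs to be handled carefully to confirm that $\eta'_Q := \overline{\partial}\eta_Q$ is well-defined of the required superform type on $Q$.
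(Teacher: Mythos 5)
Your treatment of $\t''_{\min}$ is correct and is exactly the reason the paper dismisses that case as trivial (the tangential trace of $\overline{\partial}\omega_P$ only involves tangential $\overline{z}$-derivatives of the components of $\omega_P$ that are already constrained to agree with $\eta_Q$), and the skeleton of your $\t''_{\max}$ argument --- Leibniz rule, then killing $\sum_P m_P\int \omega_P\wedge\overline{\partial}\zeta_P$ by applying condition $\t''_{\max}$ for $\omega$ to the test form $\overline{\partial}\zeta$ --- is the paper's argument. However, your disposal of the remaining term $\sum_P m_P\int_{B_{P,Q}}\overline{\partial}(\omega_P\wedge\zeta_P)$ is wrong. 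The balancing identity quoted after \cref{definition of boundary conditions} applies to a \emph{single} form $f\in\A_{\Trop,\qs}^{d,d-1}(\Lambda)$, i.e.\ to integrands on the various $B_{P,Q}$ that are restrictions of one ambient quasi-smooth superform; the cancellation comes from the balancing condition $\sum_P m_P n_{Q,P}=0$ acting on the varying tangent directions of the faces, not from each integral vanishing. The forms $\overline{\partial}(\omega_P\wedge\zeta_P)$ are genuinely piece-wise (take $\omega$ supported on a single $P_0$: the sum reduces to one term with no partner to cancel against), so no ``extension to Sobolev-class forms'' of that identity can apply here, and this is not a regularity issue that density or trace continuity can repair.

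The correct mechanism, and the one the paper uses, is Stokes' theorem on each face separately: since $\omega_P\wedge\zeta_P$ has type $(d,d-2)$, one has $\overline{\partial}(\omega_P\wedge\zeta_P)=d(\omega_P\wedge\zeta_P)$, so $\int_{B_{P,Q}}\overline{\partial}(\omega_P\wedge\zeta_P)=\int_{\partial B_{P,Q}}\omega_P\wedge\zeta_P$. This boundary integral is over a $(2d-2)$-dimensional set and does not vanish for degree reasons, which is why the paper first reduces (by a partition-of-unity/localization step) to test forms $\xi$ with $\xi|_{(-\log|\cdot|)^{-1}(Q')}=0$ for all $Q'\in\Lambda_{\max-1}^{\ess}\setminus\{Q\}$; for such $\xi$ the boundary term dies and each $\int_{B_{P,Q}}\overline{\partial}(\omega_P\wedge\xi)$ term is handled. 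You need to add both this localization of the test form and the Stokes step to close the gap.
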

\begin{proof}
  The case of condition $\t''_{\min}$ is trivial.
  We assume that $\omega$ satisfies condition $\t''_{\max}$.
  It suffices to show that 
  for $Q \in \Lambda_{\max - 1 }^{\ess}$ 
      and 
      $\xi \in \A_{\Trop,\qs}^{ d-p , d-q -2 } ( \Lambda ) $ 
      such that $\xi |_{ (-\log \lvert \cdot \rvert )^{-1} ( Q' ) }=0$ for $Q' \in \Lambda_{\max - 1 }^{\ess} \setminus \{Q\}$,
  we have 
    $$\sum_{\substack{P \in \Lambda_{\max} \\ Q \subset P }} 
             m_P \int_{  \exp ( - P)_\C  \cap ( - \log \lvert \cdot \rvert )^{ - 1 } ( Q )   }
            \overline{\partial}\omega_P \wedge \xi =0 .$$
  By Stokes' theorem and condition $\t''_{\max}$ for $\omega$, 
  we have 
  \begin{align*}
    & \sum_{\substack{P \in \Lambda_{\max} \\ Q \subset P }} 
             m_P \int_{  \exp ( - P)_\C  \cap ( - \log \lvert \cdot \rvert )^{ - 1 } ( Q )   }
            \overline{\partial}\omega_P \wedge \xi \\
     = & \sum_{\substack{P \in \Lambda_{\max} \\ Q \subset P }} 
             m_P \int_{  \exp ( - P)_\C  \cap ( - \log \lvert \cdot \rvert )^{ - 1 } ( Q )   }
            \overline{\partial}(\omega_P \wedge \xi) 
        -    \omega_P \wedge \overline{\partial}\xi  \\
    = & 0.
  \end{align*}
\end{proof}

\begin{cor}\label{condition n min max preserved by delta}
  Let $\eta \in W_{\Trop}^{p,q,2,2} ( \Lambda )$ satisfies condition $\n''_{\min}$ or $\n''_{\max}$.
  Then $\overline{\delta}  \eta$ satisfies the same condition.
\end{cor}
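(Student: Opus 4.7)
The plan is to deduce this corollary directly from \cref{condition t min max preserved by partial} by unwinding the definition of $\overline{\delta}$ together with the fact that conditions $\n''_{\min}$ and $\n''_{\max}$ are defined as the $\overline{*}$-transforms of $\t''_{\min}$ and $\t''_{\max}$, respectively.

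More explicitly, suppose $\eta \in W_{\Trop}^{p,q,2,2}(\Lambda)$ satisfies condition $\n''_{\min}$ (resp. $\n''_{\max}$). By definition this means $\overline{*}\eta \in W_{\Trop}^{d-p,d-q,2,2}(\Lambda)$ satisfies condition $\t''_{\min}$ (resp. $\t''_{\max}$), using that $\overline{*}$ is a $W^{s,2}$-isometry and so preserves the regularity class. Then \cref{condition t min max preserved by partial} applies to $\overline{*}\eta$ and yields that $\overline{\partial}(\overline{*}\eta) \in W_{\Trop}^{d-p,d-q+1,1,2}(\Lambda)$ satisfies the same condition $\t''_{\min}$ (resp. $\t''_{\max}$).

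Now I would apply $\overline{*}$ once more and use the identity $\overline{*}\overline{*} = (-1)^{a+b}$ on $(a,b)$-forms. Since $\overline{\delta}\eta = -\overline{*}\overline{\partial}\overline{*}\eta$, a direct computation gives
\begin{equation*}
\overline{*}\,\overline{\delta}\eta \;=\; -\overline{*}\overline{*}\,\overline{\partial}(\overline{*}\eta) \;=\; (-1)^{p+q+1}\,\overline{\partial}(\overline{*}\eta),
\end{equation*}
so $\overline{*}\,\overline{\delta}\eta$ is (up to sign) equal to $\overline{\partial}(\overline{*}\eta)$. As we just saw, the latter satisfies condition $\t''_{\min}$ (resp. $\t''_{\max}$), and these conditions are visibly preserved under multiplication by a constant. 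Hence $\overline{*}\,\overline{\delta}\eta$ satisfies $\t''_{\min}$ (resp. $\t''_{\max}$), which is exactly the definition of $\overline{\delta}\eta$ satisfying $\n''_{\min}$ (resp. $\n''_{\max}$).

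There is essentially no obstacle here; the only minor verifications are that $\overline{*}$ preserves the Sobolev class $W^{2,2}$ (established earlier in the paper), that the sign and bidegree bookkeeping for $\overline{*}\overline{*}$ is consistent, and that a nonzero constant multiple of a form satisfying a boundary condition still satisfies it, which is obvious from \cref{definition of boundary conditions}. Thus the corollary follows formally from \cref{condition t min max preserved by partial} via the $\overline{*}$-duality between the tangential and normal conditions.
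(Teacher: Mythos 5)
Your proposal is correct and is exactly the argument the paper intends: the corollary is stated without proof precisely because it follows from \cref{condition t min max preserved by partial} by the $\overline{*}$-duality built into the definitions of $\n''_{\min}$, $\n''_{\max}$ and of $\overline{\delta} = -\overline{*}\,\overline{\partial}\,\overline{*}$, together with $\overline{*}\,\overline{*} = \pm\Id$. Your sign and bidegree bookkeeping is consistent, so there is nothing to add.
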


  For $\epsilon, \epsilon_1, \epsilon_2 \in \{ \t''_{\min} , \t''_{\max} , \n''_{\min} , \n''_{\max} \} $, we put 
   $$\A_{\Trop, \pqs, \epsilon}^{p,q}(\Lambda) \quad (\text{resp.} \ \A_{\Trop, \pqs, \epsilon_1, \epsilon_2}^{p,q}(\Lambda) ) $$
  the subspace of 
  $ \A_{\Trop, \pqs}^{p,q}(\Lambda)$ consisting of elements satisfying the condition $\epsilon$ (resp. the conditions $\epsilon_1$ and $\epsilon_2$).

\begin{prp}\label{exactness of complex of pqs t min superforms}
  For any $p$,
  the complex $(\A_{\Trop, \pqs, \t''_{\min} }^{p,*}, \overline{\partial})$ is a fine resolution of 
  $\Ker (\overline{\partial} \colon \A_{\Trop, \pqs, \t''_{\min} }^{p,0} \to \A_{\Trop, \pqs, \t''_{\min} }^{p,1})$ on $X$.
\end{prp}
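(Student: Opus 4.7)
Multiplication by any $\phi \in \A_{\Trop}^{0,0}(X)$ preserves condition $\t''_{\min}$: if $\omega_P|_Q$ agrees with a common form $\eta_Q$ on $Q$ for all $P \supset Q$, then $(\phi\omega_P)|_Q = \phi|_Q \cdot \eta_Q$ is still common across $P$. Hence the partition of unity from \cite[Lemma 2.7]{JSS19} yields one in $\A_{\Trop,\pqs,\t''_{\min}}^{0,0}$, so each $\A_{\Trop,\pqs,\t''_{\min}}^{p,q}$ is a fine sheaf of $\A_{\Trop,\pqs,\t''_{\min}}^{0,0}$-modules.

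\textbf{Local exactness (Poincar\'e lemma with boundary).} I would prove this by induction on the number of codimension-one facets of $X$ meeting at a given point $x$. At an interior point of some $P \in \Lambda_{\max}$, condition $\t''_{\min}$ is locally vacuous and the assertion reduces immediately to \cref{exactness of complex of quasi-smooth forms}. At a point on a single facet $Q \in \Lambda_{\max-1}^{\ess}$, given a $\overline{\partial}$-closed $\omega = (\omega_P)$ whose common boundary datum $\eta_Q$ is provided by $\t''_{\min}$, one observes that $\eta_Q$ is itself a $\overline{\partial}$-closed quasi-smooth superform on the single polyhedron $\exp(-Q)_\C$. By \cref{exactness of complex of quasi-smooth forms} applied to $Q$ there exists $\alpha_Q$ with $\overline{\partial}\alpha_Q = \eta_Q$. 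Extend $\alpha_Q$ into each $\exp(-P)_\C$ by taking its coefficients (in the adapted coordinates of \cref{definition of boundary conditions}) to be independent of the normal variable $z_1$; this produces $\tilde\alpha \in \A_{\Trop,\pqs,\t''_{\min}}^{p,q-1}$ on a neighborhood of $x$. Then $\omega - \overline{\partial}\tilde\alpha$ is $\overline{\partial}$-closed with vanishing boundary datum on $Q$.

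For such a form with trivial boundary datum, apply \cref{exactness of complex of quasi-smooth forms} on each $\exp(-P)_\C$ separately to obtain primitives $\xi_P$. The restrictions $\xi_P|_Q$ are $\overline{\partial}$-closed on $Q$; a further application of the Poincar\'e lemma on $Q$ produces primitives which, after the same normal-coordinate extension back into each $P$ and subtraction from $\xi_P$, yield corrected primitives whose restrictions to $Q$ all vanish. The corrected primitives assemble into an element of $\A_{\Trop,\pqs,\t''_{\min}}^{p,q-1}$.

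\textbf{Main obstacle.} The principal technical difficulty is organizing this argument when several facets $Q_1,\dots,Q_k \in \Lambda_{\max-1}^{\ess}$ meet at $x$, so that $\t''_{\min}$ must be enforced on all of them simultaneously and the correction carried out on one facet must not destroy what has been arranged on the others. This calls for a common coordinate system adapted to every facet through $x$ (so that the normal variables for all $Q_i$ are simultaneously available) and a careful ordering of the clearing procedure, most naturally by a descending induction on the codimension of strata through $x$ together with an inner induction on $k$, together with a consistency check that the nested Poincar\'e-lemma primitives on lower-dimensional strata remain compatible with $\t''_{\min}$ on every facet through $x$.
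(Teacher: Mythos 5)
Your fineness argument is correct and matches the paper's. The local exactness argument, however, has a genuine gap, and it is precisely the one you flag yourself at the end: the case where several facets $Q_1,\dots,Q_k\in\Lambda_{\max-1}^{\ess}$ pass through the point $x$. This is not an edge case; it is the generic local picture at every stratum of codimension at least $2$, i.e., everywhere the combinatorics of the tropical variety is nontrivial. Your clearing procedure extends a primitive from a facet $Q_i$ into each adjacent $P$ by declaring its coefficients independent of the normal variable for $Q_i$; this extension is tied to a coordinate system adapted to $Q_i$ and will in general destroy the matching condition along a second facet $Q_j$ through the same point (the normal direction for $Q_i$ inside $P$ is not tangent to $Q_j$, and distinct $P$'s containing $Q_j$ receive unrelated extensions). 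Announcing ``a careful ordering of the clearing procedure'' and ``a consistency check'' names the problem rather than solving it. There is also a smaller issue at the bottom of your inner induction: when $q-1=0$ you cannot force the restrictions $\xi_P|_Q$ to vanish by subtracting a $\overline{\partial}$-primitive; at best you can argue they agree, which is what $\t''_{\min}$ actually requires.

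The paper's proof avoids the iteration entirely. The primitive in \cref{exactness of complex of quasi-smooth forms} is produced by an explicit homotopy operator (Jell's $G^*$, essentially fiber integration along a contraction of the whole star of $x$ onto $\{x\}\times(\R\cup\{\infty\})^{\dim\sigma}$), given by the same formula on every maximal polyhedron; the proof of \cref{exactness of complex of pqs t min superforms} then consists of the single observation that this operator commutes with restriction to the common faces and therefore preserves $\t''_{\min}$ in one stroke, with no facet-by-facet correction. If you want to complete your argument, the efficient repair is not to refine the clearing order but to replace the ad hoc normal-coordinate extensions by such a natural homotopy operator and verify its compatibility with the face maps.
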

\begin{proof}
  Similar to \cref{exactness of complex of quasi-smooth forms}. We omit details.
  The condition $\t''_{\min}$ is kept under our version of $G^*$ (used in proof of \cref{exactness of complex of quasi-smooth forms}) in \cite[Section 2.2]{Jel16A}. 
\end{proof}

We put 
$$ H_{\Trop,\Dol, \pqs, \t''_{\min} }^{p,q}( \Lambda )
  := H^q ( ( \mathscr{A}_{\Trop, \pqs, \t''_{\min} }^{p,*}(\Lambda) , \overline{\partial}) ) $$
By Poincar\'{e}'s lemma (\cref{exactness of complex of pqs t min superforms}), 
this cohomology group
  $ H_{\Trop,\Dol, \pqs, \t''_{\min} }^{p,q}( \Lambda ) $
is isomorphic to the $q$-th sheaf cohomology group
  $$ H^q ( X, 
      \Ker (\mathscr{A}_{\Trop, \pqs, \t''_{\min} }^{p,0} 
            \to \mathscr{A}_{\Trop, \pqs, \t''_{\min} }^{p,1} ) ) .$$
A natural map
$$  \Ker (\mathscr{A}_{\Trop, \qs }^{p,0} 
         \to \mathscr{A}_{\Trop, \qs }^{p,1} ) ) 
  \to 
    \Ker (\mathscr{A}_{\Trop, \pqs, \t''_{\min} }^{p,0} 
         \to \mathscr{A}_{\Trop, \pqs, \t''_{\min} }^{p,1} ) ) $$
of sheaves on $X$
is an isomorphism
if and only if 
for any $R \in \Lambda^{\ess}$, 
the star fan 
$$  \Star (R) := \bigcup_{ \substack{P \in \Lambda_{\max} \\  R \subset P} }  \R_{\geq 0} \cdot \overline{P \cap \R^n}  \subset \R^n / \Tan ( R \cap \R^n )$$
(where $\overline{P \cap \R^n} \subset \R^n / \Tan ( R \cap \R^n )$ 
is the image of $P \cap \R^n$ under the projection)
 is uniquely $p$-balanced in the sense of \cite[Definition 4.3.2]{Aks19}.
This condition holds when
for any $R \in \Lambda^{\ess}$, 
the star fan $\Star (R)$
satisfies Poincar\'{e} duality (\cite[Theorem 4.3.5]{Aks19}), e.g., $\Lambda$ is locally matroidal.
See also \cite[Theorem 1.5]{AP21}.

\section{Gaffney-G{\aa}rding's inequality}
In this section, we shall prove Gaffney-G{\aa}rding's inequality (\cref{Gaffney-Garding's inequality}).
Our proof is similar to that (\cite[Corollary 2.1]{Sch95}) of Gaffney's inequality for compact Riemann manifolds with smooth boundaries.
(See \cite[Section 0.6]{GH78} for the case of compact K\"{a}hler manifolds.)

\begin{prp}[Green's formula]\label{Green's formula}
  Let $\omega \in W^{p,q-1,1,2}(\relint(\exp ( - P)_\C ))$
  and $\eta \in W^{p,q,1,2}(\relint( \exp ( - P)_\C  ) )$.
  Then 
  $$ \langle \overline{\partial} \omega  , \eta \rangle 
  =  \langle \omega, \overline{\delta} \eta \rangle 
  + \int_{\partial ( \exp ( - P)_\C  )} \omega \wedge \overline{*}  \eta.$$
\end{prp}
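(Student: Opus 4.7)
The plan is to reduce Green's formula to Stokes' theorem, following the classical K\"{a}hler-geometric argument, and then extend from smooth to Sobolev $W^{1,2}$ forms by continuity.

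First I would prove the formula for smooth forms $\omega \in \A^{p,q-1}(\exp(-P)_\C)$ and $\eta \in \A^{p,q}(\exp(-P)_\C)$. Since $\omega \wedge \overline{*}\eta$ has type $(d, d-1)$, the $\partial$-component of $d(\omega \wedge \overline{*}\eta)$ would have type $(d+1,d-1) = 0$, hence
\begin{equation*}
d(\omega \wedge \overline{*}\eta) = \overline{\partial}(\omega \wedge \overline{*}\eta) = \overline{\partial}\omega \wedge \overline{*}\eta + (-1)^{p+q-1}\,\omega \wedge \overline{\partial}\,\overline{*}\eta.
\end{equation*}
Using $\overline{*}\,\overline{*} = (-1)^{p'+q'}$ on $(p',q')$-forms and the definition $\overline{\delta} = -\overline{*}\,\overline{\partial}\,\overline{*}$, I would rewrite $\overline{\partial}\,\overline{*}\eta = (-1)^{p+q}\,\overline{*}\,\overline{\delta}\eta$, so the second term becomes $-\omega \wedge \overline{*}\,\overline{\delta}\eta = -(\omega, \overline{\delta}\eta)\,\omega_{Kah,d}$. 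Combined with $\overline{\partial}\omega \wedge \overline{*}\eta = (\overline{\partial}\omega, \eta)\,\omega_{Kah,d}$, this gives
\begin{equation*}
d(\omega \wedge \overline{*}\eta) = \bigl((\overline{\partial}\omega, \eta) - (\omega, \overline{\delta}\eta)\bigr)\,\omega_{Kah,d}.
\end{equation*}
Applying Stokes' theorem on the manifold-with-corners $\exp(-P)_\C$ then yields the identity.

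Next I would extend the identity from smooth forms to arbitrary $\omega \in W^{p,q-1,1,2}$ and $\eta \in W^{p,q,1,2}$ by density. Smooth forms are dense in the $W^{1,2}$-spaces (standard mollification argument, using the fact recalled in \cref{comaparison of Sobolev norm with Euclid metric} that the Sobolev norm is equivalent to the Euclidean one on a bounded domain). The left-hand side $\langle \overline{\partial}\omega, \eta\rangle$ and the first term on the right $\langle \omega, \overline{\delta}\eta\rangle$ are continuous in the $W^{1,2}$-topology, since $\overline{\partial}$ and $\overline{\delta}$ are continuous from $W^{1,2}$ to $L^2$. For the boundary term, I would invoke continuity of the trace operator $W^{1,2}(\relint(\exp(-P)_\C)) \to W^{1/2,2}(\partial \exp(-P)_\C)$ from the remark on trace operators, so that $\omega|_{\partial \exp(-P)_\C}$ and $\eta|_{\partial \exp(-P)_\C}$ depend continuously on $\omega$ and $\eta$, and hence so does the integral $\int_{\partial \exp(-P)_\C} \omega \wedge \overline{*}\eta$.

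The main technical point is the last step: one must justify that the integrand $\omega \wedge \overline{*}\eta$ restricted to the boundary has well-defined $L^1$-norm controlled by the $W^{1/2,2}$-norms of both factors via the trace. This is standard since the boundary component is a compact manifold (with corners) of dimension $2d-1$ and the pairing is bilinear and continuous on $W^{1/2,2}\times W^{1/2,2}$, but it is the only step that goes beyond formal manipulation. Everything else reduces to bidegree bookkeeping and Stokes' theorem exactly as in the smooth case on compact K\"{a}hler manifolds.
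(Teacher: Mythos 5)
Your proposal is correct and matches the paper's (one-line) proof, which simply invokes Stokes' theorem; your bidegree computation, the identity $d(\omega\wedge\overline{*}\eta)=\bigl((\overline{\partial}\omega,\eta)-(\omega,\overline{\delta}\eta)\bigr)\omega_{Kah,d}$, and the extension to $W^{1,2}$ by density and continuity of the trace are exactly the standard details being left implicit there.
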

\begin{proof}
  This follows from Stokes' theorem.
\end{proof}

\begin{dfn}
We define \emph{the Dirichlet integral} by
$$\D \colon W_{\Trop}^{p,q,1,2}(\Lambda) \times W_{\Trop}^{p,q,1,2}(\Lambda)\ni (\omega,\eta) 
\mapsto 
\sum_{P \in \Lambda_{\max}} m_P \langle \overline{\partial} \omega_P, \overline{\partial} \eta_P\rangle 
+ m_P \langle \overline{\delta} \omega_P , \overline{\delta} \eta_P \rangle  \in \R.$$
\end{dfn}

\begin{cor}\label{the Dirichlet integral}
  Let $\omega \in W^{p,q,2,2}(\Lambda)$
  and $\eta \in W^{p,q,1,2}(\Lambda)$.
  Then 
  $$\D(\omega,\eta)  = \sum_{P \in \Lambda_{\max}} m_P 
   \bigg(\langle \Delta''_P \omega_P,  \eta_P\rangle 
  +  \overline{\int_{\partial \exp ( - P)_\C  } \eta_P \wedge \overline{*} \overline{\partial} \omega_P}
  - \int_{\partial \exp ( - P)_\C  } \overline{\delta} \omega_P \wedge \overline{*}  \eta_P \bigg) .$$
\end{cor}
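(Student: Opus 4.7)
The plan is to apply Green's formula (\cref{Green's formula}) on each maximal polyhedron $P \in \Lambda_{\max}$ to each of the two summands $\langle \overline{\partial}\omega_P, \overline{\partial}\eta_P \rangle$ and $\langle \overline{\delta}\omega_P, \overline{\delta}\eta_P \rangle$ that make up the Dirichlet integral, and then take the $m_P$-weighted sum over $P$. The hypothesis $\omega \in W^{p,q,2,2}(\Lambda)$ is exactly what ensures that $\overline{\partial}\omega_P$ and $\overline{\delta}\omega_P$ lie in the Sobolev $1$-space, so every pairing and boundary trace appearing below is well defined via the trace operator recalled in \cref{review trace operators}.

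For the $\overline{\partial}$-summand, the goal is to produce $\langle \overline{\delta}\overline{\partial}\omega_P, \eta_P \rangle$, i.e.\ to move both $\overline{\partial}$'s onto $\omega_P$. First, Hermiticity of the $L^2$-pairing gives $\langle \overline{\partial}\omega_P, \overline{\partial}\eta_P \rangle = \overline{\langle \overline{\partial}\eta_P, \overline{\partial}\omega_P \rangle}$. Next, \cref{Green's formula} applied with the role of ``$\omega$'' played by $\eta_P$ and the role of ``$\eta$'' played by $\overline{\partial}\omega_P \in W^{p,q+1,1,2}$ (reading the proposition with its ``$q$'' replaced by $q+1$) yields $\langle \overline{\partial}\eta_P, \overline{\partial}\omega_P \rangle = \langle \eta_P, \overline{\delta}\overline{\partial}\omega_P \rangle + \int_{\partial \exp(-P)_\C} \eta_P \wedge \overline{*}\,\overline{\partial}\omega_P$, and taking complex conjugate gives
\begin{equation*}
\langle \overline{\partial}\omega_P, \overline{\partial}\eta_P \rangle = \langle \overline{\delta}\overline{\partial}\omega_P, \eta_P \rangle + \overline{\int_{\partial \exp(-P)_\C} \eta_P \wedge \overline{*}\,\overline{\partial}\omega_P}.
\end{equation*}
For the $\overline{\delta}$-summand, I apply \cref{Green's formula} directly with ``$\omega$'' equal to $\overline{\delta}\omega_P \in W^{p,q-1,1,2}$ and ``$\eta$'' equal to $\eta_P$, and rearrange to obtain
\begin{equation*}
\langle \overline{\delta}\omega_P, \overline{\delta}\eta_P \rangle = \langle \overline{\partial}\overline{\delta}\omega_P, \eta_P \rangle - \int_{\partial \exp(-P)_\C} \overline{\delta}\omega_P \wedge \overline{*}\eta_P.
\end{equation*}

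Adding the two identities collapses $(\overline{\delta}\overline{\partial} + \overline{\partial}\overline{\delta})\omega_P$ to $\Delta''_P\omega_P$ in the bulk and produces the two boundary contributions with exactly the signs required in the statement. Multiplying by $m_P$ and summing over $P \in \Lambda_{\max}$ then completes the proof. There is no substantive obstacle; the only points that require care are the index shift in \cref{Green's formula} (which is precisely why the $W^{2,2}$-hypothesis is imposed on $\omega$) and the complex conjugation introduced by Hermiticity in the first boundary term, both of which are bookkeeping and transparent once the trace operators from \cref{review trace operators} are in place.
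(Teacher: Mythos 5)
Your proposal is correct and is exactly the intended derivation: the paper states this as an immediate corollary of Green's formula (\cref{Green's formula}) without writing out the details, and your two applications of that formula (one to $(\eta_P,\overline{\partial}\omega_P)$ after using Hermiticity of the $L^2$-pairing, one to $(\overline{\delta}\omega_P,\eta_P)$), together with the observation that the $W^{2,2}$-hypothesis on $\omega$ puts $\overline{\partial}\omega_P$ and $\overline{\delta}\omega_P$ in the Sobolev $1$-spaces, fill them in correctly, including the conjugation on the first boundary term and the sign on the second.
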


\begin{lem}\label{Corollary 2.1.3 (b)}
  Let $\omega \in \A^{0,1}(\exp ( - P)_\C )$.
  Then 
  $$ \int_{\exp ( - P)_\C } (\overline{\delta} \omega) \omega_{Kah,d} 
    = - \frac{1}{\sqrt{2}}  \int_{\partial \exp ( - P)_\C } 
      \omega (\N'')\mu_{\partial \exp ( - P)_\C }.$$
\end{lem}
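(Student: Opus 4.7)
The plan is to deduce the identity directly from Green's formula (\cref{Green's formula}) by plugging in a constant function. Specifically, I will apply Green's formula with the pair $(\omega_G, \eta_G) = (1, \omega)$, where $1 \in W^{0,0,1,2}(\relint \exp(-P)_\C)$ is the constant function regarded as a smooth $(0,0)$-form and $\omega$ is the given smooth $(0,1)$-form. Since $\overline{\partial}(1) = 0$, the left-hand side of Green's formula vanishes, leaving the identity
$$0 \;=\; \langle 1, \overline{\delta}\omega\rangle \;+\; \int_{\partial \exp(-P)_\C} 1 \wedge \overline{*}\,\omega.$$

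Next, I will evaluate the two surviving terms. The first is immediate from the definition of the Hermitian $L^2$-pairing: since $(1, \overline{\delta}\omega) = \overline{\overline{\delta}\omega}$, one has
$$\langle 1, \overline{\delta}\omega\rangle \;=\; \overline{\int_{\exp(-P)_\C} (\overline{\delta}\omega)\,\omega_{Kah,d}}.$$
For the boundary term I will invoke \cref{t''wedge * n'' restriction }, which computes the restriction of $\omega_0 \wedge \overline{*}\eta_0$ to the boundary in terms of the contraction $\eta_0(\N'')$. Applied with the lemma's inputs $\omega_0 \rightsquigarrow 1$ and $\eta_0 \rightsquigarrow \omega$ (so $p = q = 0$ in the lemma), it yields, on every boundary face $\exp(-P)_\C \cap (-\log|\cdot|)^{-1}(Q)$,
$$\bigl(1 \wedge \overline{*}\,\omega\bigr)\big|_{\partial \exp(-P)_\C}
 \;=\; \tfrac{1}{\sqrt{2}}\,(1, \omega(\N''))\,\mu_{\partial \exp(-P)_\C}
 \;=\; \tfrac{1}{\sqrt{2}}\,\overline{\omega(\N'')}\,\mu_{\partial \exp(-P)_\C}.$$

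Substituting these two evaluations into the vanishing identity produces
$$0 \;=\; \overline{\int_{\exp(-P)_\C} (\overline{\delta}\omega)\,\omega_{Kah,d}} \;+\; \tfrac{1}{\sqrt{2}}\,\overline{\int_{\partial \exp(-P)_\C} \omega(\N'')\,\mu_{\partial \exp(-P)_\C}},$$
and taking the complex conjugate of both sides (using that $\mu_{\partial \exp(-P)_\C}$ is a real measure) delivers precisely the claimed formula. No real obstacle is expected: the argument is a short, direct application of already-established results, and the only small point requiring care is bookkeeping of the antilinearity of $(\,\cdot\,,\,\cdot\,)$ in the second slot, which enters twice and is absorbed by the final conjugation.
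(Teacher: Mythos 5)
Your proposal is correct and is essentially the paper's own argument in repackaged form: the paper applies Stokes' theorem to $\overline{\partial}\,\overline{*}\omega$ together with the boundary restriction formula $\overline{*}\omega|_{\partial\exp(-P)_\C} = \tfrac{1}{\sqrt{2}}\overline{\omega(\N'')}\,\mu_{\partial\exp(-P)_\C}$ (the $p=q=0$ case of \cref{t''wedge * n'' restriction }), which is exactly what your application of Green's formula to the pair $(1,\omega)$ unwinds to. The conjugation bookkeeping is handled correctly, so the proof goes through.
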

\begin{proof}
  This follows from Stokes' theorem,
  $(\overline{\delta} \omega) \omega_{Kah,d} 
  = -  \overline{\overline{\partial} \overline{*} \omega  }$, and 
  $ \overline{*} \omega |_{\wedge^{2d-1} \Gamma (T\partial \exp ( - P)_\C) }
    = \frac{1}{\sqrt{2}}
      \overline{\omega (\N'')} \mu_{\partial \exp ( - P)_\C }$.
\end{proof}

\begin{dfn}
  For $s \geq 1$ and $\epsilon, \epsilon_1, \epsilon_2 \in \{ \t''_{\min} , \t''_{\max} , \n''_{\min} , \n''_{\max} \} $, we put 
  $$ W_{\Trop, \epsilon}^{p,q,s,2}(\Lambda)  \quad (\text{resp. } W_{\Trop, \epsilon_1,\epsilon_2}^{p,q,s,2}(\Lambda))$$
  the Hilbert subspace of 
  $ W_{\Trop}^{p,q,s,2}(\Lambda)$ consisting of elements satisfying the condition $\epsilon$ (resp. the conditions $\epsilon_1$ and $\epsilon_2$).
\end{dfn}

We will give a proof of the following in Section 9.
\begin{prp}\label{density H = W with boundary conditions}
  For $\epsilon, \epsilon_1, \epsilon_2 \in \{ \t''_{\min} , \t''_{\max} , \n''_{\min} , \n''_{\max} \} $, 
  $$ \A_{\Trop, \pqs, \epsilon}^{p,q}(\Lambda) \subset W_{\Trop, \epsilon}^{p,q,1,2}(\Lambda)   ,
   \quad \A_{\Trop, \pqs, \epsilon_1,\epsilon_2}^{p,q}(\Lambda) \subset W_{\Trop, \epsilon_1,\epsilon_2}^{p,q,1,2}(\Lambda)$$
  are dense.
\end{prp}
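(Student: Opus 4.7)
The plan is to adapt the boundary-density arguments of Hodge theory on manifolds with smooth boundary (e.g.\ \cite[Section 1.3]{Sch95}) to the polyhedral setting. First, a partition of unity reduces the problem to the interior of each maximal polyhedron $P$ (where the boundary conditions are vacuous and a two-step procedure -- angular averaging as in \cref{L2 superforms} followed by Euclidean tangential mollification, legitimized by \cref{comaparison of Sobolev norm with Euclid metric} -- yields piece-wise quasi-smooth $W^{1,2}$-approximation) and to a neighborhood of each face $Q \in \Lambda_{\max -1}^{\ess}$. Hence it suffices, for a single $Q$ and each single or composite boundary condition, to approximate $\omega = (\omega_P)_P$ in $W^{1,2}$ by piece-wise quasi-smooth forms satisfying the condition at $Q$.

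Consider first $\t''_{\min}$: the hypothesis is that the tangential-restrictions of $\omega_P$ on the face all coincide with a common form $\eta_Q$ on $Q$. Work in a compatible local $g$-orthonormal normal quasi-smooth $(1,0)$-superframe supplied by \cref{n max for strange superframe}, and approximate $\eta_Q$ on $Q$ by a quasi-smooth form $\eta_Q^{(n)}$ (using the same statement inductively on the tropically compact $(d-1)$-dimensional face). Take any unconditional piece-wise quasi-smooth approximation $\omega^{(n)} \to \omega$ in $W^{1,2}$, and form the trace discrepancy $d_P^{(n)} := (\omega_P^{(n)} - \eta_Q^{(n)})\big|_Q$ in the relevant tangential components; by continuity of the trace operator, $\|d_P^{(n)}\|_{W^{1/2,2}(Q)} \to 0$. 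Applying the extension operator $Z$ of \cref{review trace operators} on each piece $P$, multiplying by a cut-off localizing near $Q$, and subtracting, one obtains a corrected piece-wise quasi-smooth form that satisfies $\t''_{\min}$ at $Q$, with a correction whose $W^{1,2}$-norm is bounded by $\sum_P \|d_P^{(n)}\|_{W^{1/2,2}(Q)} \to 0$. Because the discrepancy is already quasi-smooth on $Q$, $Z$ can be replaced by an ad hoc quasi-smooth cylindrical extension off $Q$ in the collar afforded by the tropical compactness of $P$, preserving the piece-wise quasi-smooth nature of the corrected approximation.

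The case $\n''_{\max}$ is, by \cref{n max explicitly}, the pointwise vanishing on $Q$ of weighted linear combinations of the traces $\omega_P(\N''_P, E_I', E_J'')$; the same extension-based correction applies, now using the orthogonal projection onto the linear subspace cut out by these relations. Since $\overline{*}$ is $W^{1,2}$-isometric and interchanges $(\t''_{\min},\n''_{\max})$ with $(\n''_{\min},\t''_{\max})$ by \cref{definition of boundary conditions}, the other two single-condition cases follow. For the pairs $(\t''_{\min}, \n''_{\max})$ and $(\n''_{\min}, \t''_{\max})$, the key observation is that the two constituent conditions impose constraints on complementary components of the trace (the tangential $(0,1)$-components of $\omega_P$ versus its $\N''_P$-contractions), so the corresponding corrections can be constructed and applied independently. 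Finally, summing corrections over the finite set $\Lambda_{\max -1}^{\ess}$ with cut-offs supported in pairwise disjoint neighborhoods gives the global approximation.

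The main obstacle is the quasi-smooth regularity of the correction. The abstract extension operator $Z$ returns only a $W^{1,2}$-form, whereas we need a piece-wise quasi-smooth output; one must therefore construct, by hand, a quasi-smooth extension of the quasi-smooth discrepancy $\omega_P^{(n)}|_Q - \eta_Q^{(n)}$ off $Q$ into $\exp(-P)_{\C}$, with $W^{1,2}$-norm bounded by the $W^{1/2,2}(Q)$-norm of the trace data. The product structure of a collar neighborhood of $Q$ inside $\exp(-P)_{\C}$ -- guaranteed by the tropical compactness of $P$ and the explicit form of $\exp(-P)_{\C}$ -- makes this possible, and this is where the tropical compactness hypothesis enters essentially.
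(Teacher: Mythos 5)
Your codimension-one mechanism (approximate the common trace data, extend it back into each piece via the trace/extension operators of \cref{review trace operators}, and restore quasi-smoothness by angular averaging) matches the paper's \cref{approximation in codimension 1} and the final assembly of the proof, and the observation that $\t''_{\min}$ and $\n''_{\max}$ constrain complementary trace components is correct. However, there is a genuine gap at the very first step. A partition of unity subordinate to the interiors of the maximal polyhedra together with neighborhoods of the \emph{single} faces $Q \in \Lambda_{\max-1}^{\ess}$ does not cover $X$: near a stratum $R \in \Lambda^{\ess}$ of dimension $\le d-2$, several codimension-one faces meet, any neighborhood of $R$ meets all of them, and $\partial\exp(-P)_\C$ has corners there, so the reduction ``it suffices to treat one $Q$ at a time'' is not available and the trace-extension correction cannot be applied as stated. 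The same problem occurs along the toric boundary $\exp(-P)_\C \setminus (\C^{\times})^n$, where your frames from \cref{n max for strange superframe} and the quasi-smooth cylindrical extension are only defined after restricting to $(\C^{\times})^n$.

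The missing idea is capacity theory: these bad sets have Sobolev $2$-capacity zero, so by (a multi-function, $S^1$-averaged version of) the Adams--Hedberg theorem \cite[Theorem 9.9.1]{AH96} one can multiply $\omega$ by a cut-off of the form $1-u(-\log|z|)$ that vanishes identically near the codimension $\ge 2$ strata and near the toric boundary while changing $\omega$ by arbitrarily little in $W^{1,2}$ (this is \cref{approximation by multiplying continuous functions}, which needs \cref{Theorem of Adams-Hedberg}, \cref{theta invariant version of Adams Hedberg Theorem} and \cref{ average of theta function W 12}; note that the naive cut-off $1-u$ with small $\|u\|_{W^{1,2}}$ does \emph{not} suffice, because $\|(\partial u) f\|_{L^2}$ need not be small when $f$ is unbounded). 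Only after this step does the support meet at most one $Q$, and only then does your correction argument go through. Without supplying this ingredient, or some substitute for handling corners and the toric boundary, the proof is incomplete.
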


\begin{thm}\label{expression of Sobolev 1 norm}
  There exist 
    smooth bundle endomorphisms 
    $$S_{P,Q} \colon \A^{p,q}( \exp ( - P)_\C  )
        |_{ \exp ( - P)_\C  \cap ( - \log \lvert \cdot \rvert )^{-1} ( Q ) } 
       \to
        \A^{p,q}( \exp ( - P)_\C  )
        |_{ \exp ( - P)_\C  \cap ( - \log \lvert \cdot \rvert )^{-1} ( Q ) } $$
    ($ P \in \Lambda_{\max}$ and $Q \in \Lambda_{\max - 1 }^{\ess}$
    contained in $P$)
  such that 
   for $\omega = ( \omega_P )_{P \in \Lambda_{\max} }\in W_{\Trop, \t''_{\min},\n''_{\max}}^{p,q,1,2}(\Lambda)$, 
  we have 
  \begin{align*}
    &\Arrowvert \omega\Arrowvert_{W^{1,2} ( \Lambda ) }^2  \\
    = & \lVert \omega \rVert_{L^2 ( \Lambda ) }^2
       + \Re \langle \mathcal{R}^W \omega, \omega \rangle 
       + 2 \D(\omega,\omega) \\
    + & \sum_{P \in \Lambda_{\max}} \sum_{ \substack{Q \in \Lambda_{\max -1}^{\ess}  \\ Q \subset P } }
        m_P \int_{ \exp ( - P)_\C  \cap ( - \log \lvert \cdot \rvert )^{-1} ( Q) } 
        \Re ( S_{P,Q} \omega_P,\omega_P )
         \mu_{\partial \exp ( - P)_\C  },
  \end{align*}
  where 
  $\Re$ means the real part, 
  and
  $\mathcal{R}^W $ is a smooth bundle morphism defined by \cite[Equation 2.24 in Chapter 1]{Sch95} for a fixed $g$-orthonomal frame $E_1,\dots,E_{2d}$ on $\exp ( - P)_\C$ (the ``Ricci operator'' associated to $\mathcal{R}^{\Lambda}$).
\end{thm}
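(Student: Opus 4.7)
The plan is to reduce the identity to the classical Bochner–Weitzenböck formula with boundary on each maximal piece $\exp(-P)_\C$, and then to exploit the boundary conditions $\t''_{\min}$ and $\n''_{\max}$ to control the interface contributions at codimension-one faces $Q \in \Lambda_{\max-1}^{\ess}$. By \cref{density H = W with boundary conditions}, and the continuity of both sides of the asserted equality in the Sobolev $1$-norm, it suffices to establish the identity for $\omega = (\omega_P)_{P \in \Lambda_{\max}} \in \A_{\Trop,\pqs,\t''_{\min},\n''_{\max}}^{p,q}(\Lambda)$.

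The first main step is to apply, on each compact piece $\exp(-P)_\C$, the classical Bochner–Weitzenböck identity for differential forms on a compact Kähler manifold with boundary, as developed in \cite[Chapter 1, Sections 2.1–2.2]{Sch95}. By \cref{comaparison of Sobolev norm with Euclid metric}, this is legitimate: the metric $g=g_{\omega_{Kah}}$ extends smoothly to the ambient toric variety, and $\exp(-P)_\C$ embeds as a compact subset of a smooth Kähler manifold. The identity produces, for each $P$, an expression of $\Arrowvert\omega_P\Arrowvert_{W^{1,2}(P)}^2$ as the sum of $\Arrowvert\omega_P\Arrowvert_{L^2(P)}^2$, the Ricci term $\Re\langle \mathcal{R}^W\omega_P,\omega_P\rangle$, twice the pointwise Dirichlet integral $\D_P(\omega_P,\omega_P)$, an intrinsic symmetric boundary integral of $\Re(S_P\omega_P,\omega_P)$ against $\mu_{\partial\exp(-P)_\C}$ (coming from the second fundamental form), and two extrinsic cross terms, namely $\overline{\int_{\partial\exp(-P)_\C}\omega_P\wedge\overline{*}\,\overline{\partial}\omega_P}$ and $-\int_{\partial\exp(-P)_\C}\overline{\delta}\omega_P\wedge\overline{*}\omega_P$; compare \cref{the Dirichlet integral}.

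The second step is to multiply by $m_P$ and sum over $P\in\Lambda_{\max}$. The boundary $\partial\exp(-P)_\C$ decomposes into pieces lying over interior faces $Q\in\Lambda_{\max-1}^{\ess}$ and pieces lying over the toric boundary of $T_\Sigma$. On the toric-boundary pieces the $\omega_{Kah}$-volume degenerates, but since $\omega_{Kah}$ and the forms $\omega_P$ are both smooth on the ambient toric variety, these pieces do not contribute to the boundary integrals (equivalently, they are absorbed into the standard integration by parts already built into Schwarz's formula). Keeping the symmetric intrinsic boundary integrals and letting $S_{P,Q}$ denote the restriction of $S_P$ to the face $\exp(-P)_\C\cap(-\log|\cdot|)^{-1}(Q)$ yields the desired $S_{P,Q}$-terms; what remains is to show that the two families of extrinsic cross terms sum to zero over each $Q$.

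This cancellation is the main obstacle. Fix $Q\in\Lambda_{\max-1}^{\ess}$. By \cref{t''wedge * n'' restriction }, the restriction of $\omega_P\wedge\overline{*}\,\overline{\partial}\omega_P$ to the face is $\tfrac{1}{\sqrt{2}}(\omega_P,\overline{\partial}\omega_P(\N_P''))\mu_{\partial\exp(-P)_\C}$, and similarly for the companion term. Since $\omega$ satisfies $\t''_{\min}$, \cref{condition t min max preserved by partial} gives that $\overline{\partial}\omega$ also satisfies $\t''_{\min}$, so both $\omega_P$ and $\overline{\partial}\omega_P$ have, on each face $Q$, only tangential components that depend on $P$ only through a face-intrinsic form $\eta_Q$. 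The remaining normal-direction pairings $\omega_P(\N_P'',\cdot,\cdot)$ and $\overline{\partial}\omega_P(\N_P'',\cdot,\cdot)$ are then exactly the quantities controlled by $\n''_{\max}$: by \cref{n max explicitly} and \cref{n max for strange superframe}, taking for $\zeta$ the face-intrinsic tangential parts coming from $\eta_Q$ (respectively from $\overline{\partial}\eta_Q$), the weighted sums over $P\supset Q$ of these pairings vanish identically on $\exp(-Q)_\C$. This forces the total contribution of both cross-term families at each $Q$ to cancel, leaving precisely the claimed expression. The analogous toric-boundary contributions vanish for the same reason, using that $f_{P,1}>0$ extends smoothly and the weight balancing of \cref{definition of tropical variety} governs the remaining integrals via \cite[Proposition 3.8]{Gub16}.
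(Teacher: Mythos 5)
Your overall skeleton --- density reduction via \cref{density H = W with boundary conditions}, Weitzenb\"ock on each piece, summation over $P$, conversion of the wedge-product boundary terms into contractions via \cref{t''wedge * n'' restriction }, and cancellation at each $Q\in\Lambda_{\max-1}^{\ess}$ using $\t''_{\min}$ and $\n''_{\max}$ --- matches the paper's. But your handling of the boundary terms has a genuine gap. First, your single-piece identity is mis-stated: integrating the Weitzenb\"ock formula over $\exp(-P)_\C$ produces, besides the two cross terms of \cref{the Dirichlet integral}, the boundary integral of $\tfrac{1}{\sqrt2}D((\omega_P,\omega_P))(\N_P'')$ coming from $-\int\Delta''(\omega_P,\omega_P)\,\omega_{Kah,d}$ (this is \cref{Corollary 2.1.3 (b)}), and this term is nowhere in your accounting. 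It is genuinely first order and cannot be absorbed into a zeroth-order term $\Re(S_P\omega_P,\omega_P)$ ``coming from the second fundamental form'': the reduction to the shape operator in \cite[Chapter 2]{Sch95} requires the form to satisfy an absolute or relative boundary condition on that same manifold, whereas $\t''_{\min}$ and $\n''_{\max}$ are matching conditions across faces and impose nothing on a single $\omega_P$ on $\partial\exp(-P)_\C$.

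Second, and more seriously, the claimed vanishing of the summed cross terms is not justified and is false in general. The quantities $(\omega_P,\overline{\partial}\omega_P(\N_P''))$ and $(\overline{\delta}\omega_P,\omega_P(\N_P''))$ contain first (in particular normal) derivatives of $\omega_P$ that neither boundary condition controls: $\n''_{\max}$ constrains the weighted sum of $\omega_P(\N_P'',\cdot,\cdot)$, not of $\overline{\partial}\omega_P(\N_P'',\cdot,\cdot)$, and $\overline{\partial}$ does not preserve $\n''_{\max}$ (only the $\t''$-conditions are preserved, \cref{condition t min max preserved by partial}); likewise $\overline{\delta}\omega_P$ is not face-intrinsic, so pairing it against $\omega_P(\N_P'')$ is not an instance of \cref{n max explicitly}. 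In the paper's proof the cross terms do \emph{not} cancel: their zeroth-order parts are precisely what define the endomorphisms $S_{P,Q}$; their normal-derivative parts combine with $\tfrac{1}{\sqrt2}D((\omega_P,\omega_P))(\N_P'')$ into derivatives along $I(\N_P)$, which are again zeroth order by rotation-invariance of quasi-smooth data (\cref{n max for strange superframe}); and only the residual tangential-derivative terms $\Re\big(D\big(\omega_P(E_I',E_J'')\,\overline{\omega_P(\N_P'',E_I',E_{J\setminus\{j\}}'')}\big)(E_j')\big)$ vanish after summing over $P\supset Q$, by Fubini together with $\t''_{\min}$ (face-intrinsicness of the first factor) and $\n''_{\max}$ (vanishing of the weighted sum of the second factors). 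This pointwise bookkeeping is the real content of the theorem; your proposal replaces it with an assertion under which the stated $S_{P,Q}$-terms could not arise at all.
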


\begin{rem}\label{Weizenbock formula}
By Weizenb\"{o}ck formula (\cite[Equation 2.23 in Chapter 1]{Sch95}), we have 
$$\sum_{i =1}^{2d}
      ( \nabla_{E_i} \omega_P,\nabla_{E_i} \omega_P ) 
      = - \Delta'' ( \omega_P,  \omega_P )
        + 2 \Re  ( \Delta'' \omega_P,  \omega_P )
        + \Re  ( \mathcal{R}^W \omega_P,  \omega_P ) ,$$
  where $\nabla $ is the Levi-Civita connection 
(cf. \cite[Equation 1.8 in Chapter 2]{Sch95}).
\end{rem}

\begin{proof}
  By \cref{density H = W with boundary conditions},
    we may and do assume  $\omega \in \A_{\Trop,\pqs, \t''_{\min}, \n''_{\max} }^{p,q}(\Lambda)$.
 By definition, 
 we have 
   $$\lVert \omega\rVert_{W^{1,2} ( \Lambda ) }^2 
   = \lVert \omega \rVert_{L^2 ( \Lambda ) }^2
   + \sum_{P \in \Lambda_{\max}} \sum_{i =1 }^{ 2d } 
     m_P \int_{ \exp ( - P)_\C } ( \nabla_{E_i} \omega_P,\nabla_{E_i} \omega_P ) \omega_{Kah,d} .$$
Hence by \cref{Weizenbock formula}, it suffices to show that there exist 
    smooth bundle endomorphisms
    $S_{P,Q} $
such that 
\begin{align*}
 & \sum_{P \in \Lambda_{\max}} \sum_{\substack{Q \in \Lambda_{\max -1}^{\ess}  \\ Q \subset P }  }
  m_P \int_{ \exp ( - P)_\C  \cap ( - \log \lvert \cdot \rvert )^{-1} ( Q) } 
    \Re ( S_{P,Q} \omega_P,\omega_P  )
  \mu_{\partial \exp ( - P)_\C  }  \\
 = & \sum_{P \in \Lambda_{\max}}   
  m_P \int_{ \exp ( - P)_\C  } 
     (2 \Re ( \Delta'' \omega_P,\omega_P  )
   - \Delta'' ( \omega_P,\omega_P ) )
  \omega_{Kah,d}
  - 2 \D(\omega,\omega) .
\end{align*}
By \cref{the Dirichlet integral} and \cref{Corollary 2.1.3 (b)},
it suffices to show that there exist 
 smooth bundle endomorphism $S_{P,Q}$ 
 such that for $Q \in \Lambda_{\max -1 }^{\ess}$,
 \begin{align*}
 & \sum_{ \substack{P \in \Lambda_{\max} \\ Q \subset P }} 
  m_P  \int_{ \exp ( - P)_\C  \cap ( - \log \lvert \cdot \rvert )^{-1} ( Q) } 
     \Re (S_{P,Q} \omega_P,\omega_P )
  \mu_{\partial \exp ( - P)_\C  }  \\
 = & 
    \sum_{\substack{P \in \Lambda_{\max} \\ Q \subset P }}   
  m_P \int_{ \exp ( - P)_\C  \cap ( - \log \lvert \cdot \rvert )^{-1} ( Q) } 
  - 2 \Re ( \omega_P \wedge \overline{*} \overline{\partial} \omega_P)
    + 2 \Re (\overline{\delta} \omega_P \wedge \overline{*} \omega_P ) \\
  & + \frac{1}{\sqrt{2}} D((\omega_P,\omega_P))(\N_P'')  \mu_{ \partial \exp ( - P)_\C  }  ,
 \end{align*}
 where $D((\omega_P,\omega_P))(\N_P'')$ is the derivative of $(\omega_P,\omega_P)$ in the direction of $\N_P''$.
  By \cref{t''wedge * n'' restriction },
  the right hand side equals 
  \begin{align*}
   &\sum_{\substack{P \in \Lambda_{\max} \\ Q \subset P }}   
     m_P \int_{ \exp ( - P)_\C  \cap ( - \log \lvert \cdot \rvert )^{-1} ( Q) } 
     (   -  \sqrt{2} \Re ( \omega_P, \overline{ \partial} \omega_P ( \N''_P ) )
     + \sqrt{2} \Re ( \overline{ \delta} \omega_P,  \omega_P ( \N''_P ) )  \\
    & + \frac{1}{\sqrt{2}} D((\omega_P,\omega_P))(\N_P'')  )
     \mu_{ \partial \exp ( - P)_\C  }   .
  \end{align*}

  We fix $Q \in \Lambda_{\max -1 }^{\ess}$.
  Let $E_{P,Q,0}',\dots,E_{P,Q,d-1}'$ be a local $g$-orthonormal normal $(1,0)$-frame as in \cref{n max for strange superframe} for each $P $ containing $Q$. 
  For simplicity, we put $E_i' := E_{P,Q,i}'$ when there is no confusion.
  \begin{align*}
    & ( \omega_P, \overline{ \partial} \omega_P ( \N''_P ) ) \\
    = & \sum'_{\substack{I,J \\ 0 \notin  J}}
         \omega_P(E_I',E_J'') 
         \cdot \overline{\overline{\partial} \omega_P (\N''_P , E_I' , E_J'' ) }  \\
    = & \sum'_{\substack{I,J \\ 0 \notin   J}}
         \omega_P(E_I',E_J'') 
         \cdot \bigg(
         \sum_{1 \leq k \leq q} (-1)^{ p + k }
           \overline{(\nabla_{E_{j_k}''} \omega_P) (\N''_P , E_I' , E_{J \setminus \{j_k\} }'' ) }  
         + \overline{(\nabla_{\N_P''} \omega_P) ( E_I' , E_J'' ) }  
         \bigg).
  \end{align*}
    Hence we have
  \begin{align*}
    &  ( \omega_P, \overline{ \partial} \omega_P ( \N''_P ) )
    - ( \omega_P, S_{P,Q,1} \omega_P )  \\
   = &   \sum'_{\substack{I,J \\ 0 \notin J}}
         \omega_P(E_I',E_J'') 
         \cdot \bigg(
         \sum_{1 \leq k \leq q} (-1)^{ p + k }
           \overline{ D (\omega_P (\N''_P , E_I' , E_{J \setminus \{j_k\} }'' ) )  (E_{j_k}'')}
         + \overline{ D( \omega_P ( E_I' , E_J'' )) ( \N''_P ) }  
         \bigg)  
  \end{align*}
  for some smooth endomorphism $S_{P,Q,1}$.
  Similarly,
  \begin{align*}
    & ( \overline{ \delta} \omega_P,  \omega_P ( \N''_P ) )
    -  (S_{P,Q,2} \omega_P,\omega_P )  \\
   = - &  \sum'_{\substack{I,J \\ 0 \notin  J}}
          \bigg(
         \sum_{ j \notin J} 
           D ( \omega_P (E_j'', E_I' , E_J'' ) ) (E_j')  
         + D(\omega_P ( \N''_P,  E_I' , E_J'' ) ) (\N'_P)  
         \bigg) 
         \cdot \overline{\omega_P( \N''_P, E_I',E_J'') }
  \end{align*}
  for some smooth endomorphism $S_{P,Q,2}$.
 Since  
$$  \frac{1}{\sqrt{2}} D((\omega_P,\omega_P))(\N_P'')   
   = \frac{1}{2} D((\omega_P,\omega_P))(\N_P)   ,$$
  we have
  \begin{align*}
   & -  \sqrt{2} \Re \bigg(  \sum'_{\substack{I,J \\ 0 \notin J}}
         \omega_P(E_I',E_J'') 
         \cdot 
          \overline{ D( \omega_P ( E_I' , E_J'' ) ) ( \N''_P ) } 
       \bigg) \\
    & - \sqrt{2} \Re \bigg( \sum'_{\substack{I,J \\ 0 \notin  J}}
          D(\omega_P ( \N''_P,  E_I' , E_J'' ) ) (\N'_P)  
         \cdot \overline{\omega_P( \N''_P, E_I',E_J'') }
        \bigg)  \\
    & + \frac{1}{\sqrt{2}} D((\omega_P,\omega_P))(\N_P'')   \\
    = &  \frac{i}{2} 
         \sum'_{\substack{I,J \\ 0 \notin  J}}
         \bigg(  \omega_P(E_I',E_J'') 
         \cdot 
           D( \overline{\omega_P ( E_I' , E_J'' )} ) ( I(\N_P) ) 
       - \overline{ \omega_P(E_I',E_J'') }
         \cdot 
          D( \omega_P ( E_I' , E_J'' ) ) ( I(\N_P) )    \\
     &  +    D(\omega_P ( \N''_P,  E_I' , E_J'' ) ) ( I(\N_P) )  
         \cdot \overline{\omega_P( \N''_P, E_I',E_J'') }  \\
     &  - \omega_P( \N''_P, E_I',E_J'') 
         \cdot 
          D(\overline{ \omega_P ( \N''_P, E_I' , E_J'' )} ) ( I (\N_P) )   \bigg).
     \end{align*}
  By \cref{normal field N explicitly} and the condition on $E_i'$ (\cref{n max for strange superframe}), 
  this can be expressed 
  as $\Re (S_{P,Q,3} \omega_P,\omega_P ) $
  for some smooth endomorphism $S_{P,Q,3}$.
  
  Consequently,
  the remaining parts are
  \begin{align*}
   &  \sum_{ \substack{P \in \Lambda_{\max} \\ Q \subset P} }   
     m_P \int_{ \exp ( - P)_\C  \cap ( - \log \lvert \cdot \rvert )^{-1} ( Q) } 
     \Re (D ( \omega_P (E_I' , E_J'' )
        \overline{\omega_P( \N''_P, E_I',E_{J \setminus \{j \} }'') }
       ) (E_j') )
     \mu_{ \partial \exp ( - P)_\C  }  
  \end{align*}    
    ($I \in \{0,1,\dots,d-1\}^p ,J \in \{1,\dots,d-1\}^q$ and $j \in J$).
  Let $\mu_{ \exp ( - Q )_\C} :=  \omega_{Kah,d-1}$ be the volume form on $\exp (-Q)_\C$.
  We fix an identification
    $$\exp ( - P)_\C  \cap ( - \log \lvert \cdot \rvert )^{-1} ( Q) 
      = S^1 \times \exp ( - Q )_\C.$$
  We put 
    $$\mu_{ \partial \exp ( - P)_\C  } = \omega_{Kah,d} (\N)
      = \frac{1}{f_{P,1} ( - \log \lvert z \rvert )} d \theta_{P,1} \wedge \mu_{ \exp ( - Q )_\C}$$
    on $\exp ( - P)_\C  \cap ( - \log \lvert \cdot \rvert )^{-1} ( Q)$,
    where $e^{i \theta_{P,1}}$ is the coordinate of $S^1$.
  Hence 
  it suffices to show that 
    \begin{align*}
   &  \sum_{\substack{P \in \Lambda_{\max} \\ Q \subset P}}   
     m_P \int_{ \exp ( - P)_\C  \cap ( - \log \lvert \cdot \rvert )^{-1} ( Q) } 
     \Re (D \big( \frac{1}{f_{P,1} ( - \log \lvert z \rvert ) } \omega_P (E_I' , E_J'' )
        \overline{\omega_P( \N''_P, E_I',E_{J \setminus \{j \} }'') }
       \big) (E_j') )  \\
   &  d \theta_{P,1} \wedge \mu_{ \exp ( - Q )_\C} \\
   = & 0. 
  \end{align*}  
  By the condition on $E_i'$ (\cref{n max for strange superframe}), 
  the integrands are functions of $-\log \lvert z \rvert $.
  In particular, by Fubini's theorem,
  it suffices to show that 
  \begin{align*}
   & \sum_{ \substack{P \in \Lambda_{\max} \\ Q \subset P}}   
      \frac{m_P}{f_{P,1} ( - \log \lvert z \rvert )} \omega_P (E_I' , E_J'' )
      \overline{\omega_P( \N''_P, E_I',E_{J \setminus \{j \} }'') } 
      = 0  
  \end{align*}
  on $\exp (-Q)_\C$.
  This follows from 
    condition $\t''_{\min}$ for $\omega$ 
    (which means ``condition $\t''_{\min}$ at $Q$'' 
    for $(\omega_{P} (E_j''))_{\substack{P \in \Lambda_{\max} \\ Q \subset P}}$), condition $\n''_{\max}$ for $\omega$,
   and \cref{n max for strange superframe}.
\end{proof}

Since smooth endomorphisms $\mathcal{R}^W$ and $S_{P,Q}$ 
are bounded, 
we have the following.

\begin{cor}[Gaffney-G{\aa}rding's inequality]\label{Gaffney-Garding's inequality}
  There exists $C >0$ 
  such that for any $\omega \in W_{\Trop, \t''_{\min}, \n''_{\max}}^{p,q,1,2} ( \Lambda)$,
  we have 
 $$\lVert \omega\rVert_{W^{1,2} ( \Lambda )}^2 
   \leq   C ( \lVert \omega \rVert_{L^2 ( \Lambda )}^2 + \D(\omega,\omega) ).$$
\end{cor}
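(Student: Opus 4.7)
The plan is to read off the inequality directly from the identity in \cref{expression of Sobolev 1 norm}. After rearranging that identity, it suffices to bound the ``error'' contributions
$$\bigl|\Re\langle \mathcal{R}^W\omega,\omega\rangle\bigr|\quad\text{and}\quad\biggl|\sum_{P,Q} m_P\int_{\exp(-P)_\C\cap(-\log\lvert\cdot\rvert)^{-1}(Q)}\Re(S_{P,Q}\omega_P,\omega_P)\,\mu_{\partial\exp(-P)_\C}\biggr|$$
by a fixed multiple of $\|\omega\|_{L^2(\Lambda)}^2+\D(\omega,\omega)$, up to a fraction of $\|\omega\|_{W^{1,2}(\Lambda)}^2$ that can then be absorbed on the left.

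First I would handle the bulk term. Since $\Lambda$ is tropically compact, every $\exp(-P)_\C$ is compact, and $\mathcal{R}^W$ is a smooth bundle endomorphism, so its operator norm is uniformly bounded. Pointwise Cauchy--Schwarz then gives $|\Re\langle\mathcal{R}^W\omega,\omega\rangle|\le C_1\|\omega\|_{L^2(\Lambda)}^2$. Similarly, each $S_{P,Q}$ is a smooth bundle endomorphism on the compact boundary piece $\exp(-P)_\C\cap(-\log\lvert\cdot\rvert)^{-1}(Q)$, so it is uniformly bounded, and the corresponding integrand is controlled pointwise by $C_2\lvert\omega_P\rvert^2$ on that piece.

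The main step, and the main obstacle, is to control the resulting $L^2$-norms on the boundary pieces by bulk norms. For this I would invoke an Ehrling-type trace inequality: combining the continuous trace operator $W^{1,2}(\relint\exp(-P)_\C)\to W^{1/2,2}(\partial\exp(-P)_\C)$ from \cref{review trace operators} with the compact embedding $W^{1/2,2}(\partial)\hookrightarrow L^2(\partial)$ yields, for each $\epsilon>0$, a constant $C_\epsilon$ such that
$$\|\omega_P\rvert_{\partial\exp(-P)_\C}\|_{L^2(\partial\exp(-P)_\C)}^2\le\epsilon\,\|\omega_P\|_{W^{1,2}(\relint\exp(-P)_\C)}^2+C_\epsilon\|\omega_P\|_{L^2(\relint\exp(-P)_\C)}^2.$$
The standard proof goes through because, by \cref{comaparison of Sobolev norm with Euclid metric}, each $\exp(-P)_\C$ is, up to equivalence of norms, a bounded domain with piecewise-smooth boundary in $\C^d$.

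Summing these estimates over $P\in\Lambda_{\max}$ with weights $m_P$ and plugging into the identity of \cref{expression of Sobolev 1 norm}, I would then choose $\epsilon$ small enough (depending only on the uniform bound on the $S_{P,Q}$ and on $\#\Lambda_{\max-1}^{\ess}$) so that the total $\|\omega\|_{W^{1,2}(\Lambda)}^2$ contribution on the right carries coefficient at most $1/2$. Absorbing this contribution into the left-hand side and collecting the remaining terms yields
$$\tfrac12\|\omega\|_{W^{1,2}(\Lambda)}^2\le 2\D(\omega,\omega)+\bigl(1+C_1+C_2C_\epsilon\bigr)\|\omega\|_{L^2(\Lambda)}^2,$$
which is the desired inequality with $C:=2\max(2,1+C_1+C_2C_\epsilon)$. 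Once the Ehrling inequality is in hand, the rest is bookkeeping, so the conceptual content of the corollary is really the interplay between the boundary-integral identity from \cref{expression of Sobolev 1 norm} and the trace theory for Sobolev spaces on each compact piece.
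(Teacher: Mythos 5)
Your proposal is correct and follows essentially the same route as the paper: start from the identity of \cref{expression of Sobolev 1 norm}, use boundedness of the smooth endomorphisms $\mathcal{R}^W$ and $S_{P,Q}$, and control the boundary terms by Ehrling's inequality (obtained from compactness of the trace operator $W^{1,2}(\exp(-P)_\C)\to L^2(\partial\exp(-P)_\C)$), then absorb the small $W^{1,2}$ contribution into the left-hand side. The paper cites Ehrling's inequality directly from Schwarz's book rather than rederiving it via the $W^{1/2,2}$ embedding, but this is the same argument.
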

\begin{proof}
  The assertion follows from \cref{expression of Sobolev 1 norm}
  and 
  Ehrling's inequality (\cite[Lemma 1.5.3]{Sch95}): 
  for each $\epsilon >0$,
  there exists $C_\epsilon >0 $ 
  such that 
    $$\lVert \omega_P|_{\partial \exp ( - P)_\C  } 
    \rVert_{L^2 (\partial \exp ( - P)_\C  ) )} \leq \epsilon  \lVert \omega_P \rVert_{W^{1,2}( \exp ( - P)_\C  ) } + C_\epsilon \lVert \omega_P \rVert_{L^2(\exp ( - P)_\C )}$$
  ($P \in \Lambda_{\max}$).
  (Here to apply Ehrling's inequality, we use the fact that 
   the trace operator $$W^{1,2}( \exp ( - P)_\C  ) \to L^{2} (\partial \exp ( - P)_\C  ) $$
   is compact.)
\end{proof}

By applying $\overline{*}$-operator, we also have the following.
\begin{cor}\label{* of Gaffney-Garding's inequality}
  There exists $C >0$ 
  such that for any $\omega \in W_{\Trop, \n''_{\min}, \t''_{\max}}^{p,q,1,2} ( \Lambda)$,
  we have 
 $$\lVert \omega\rVert_{W^{1,2} ( \Lambda )}^2 
   \leq   C ( \lVert \omega \rVert_{L^2 ( \Lambda )}^2 + \D(\omega,\omega) ).$$
\end{cor}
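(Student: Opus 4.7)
The plan is to deduce this corollary from \cref{Gaffney-Garding's inequality} by transporting the problem along the $\overline{*}$-operator. The key observation is that $\overline{*}$ is both an $L^2$- and a $W^{1,2}$-isometry (recorded earlier in Section~3), and it swaps the two pairs of boundary conditions of interest: $(\n''_{\min},\t''_{\max})$ for $(p,q)$-forms goes to $(\t''_{\min},\n''_{\max})$ for $(d-p,d-q)$-forms. Hence the already established inequality for the latter transports to the former with the same constant.

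First I would verify that $\overline{*}$ induces an isomorphism
$$\overline{*} \colon W_{\Trop, \n''_{\min}, \t''_{\max}}^{p,q,1,2}(\Lambda) \xrightarrow{\ \cong\ } W_{\Trop, \t''_{\min}, \n''_{\max}}^{d-p,d-q,1,2}(\Lambda).$$
The underlying $W^{1,2}$-isomorphism is known, so only the boundary conditions need checking. By the very definitions in \cref{definition of boundary conditions}, $\omega$ satisfies $\n''_{\min}$ (resp.\ $\n''_{\max}$) if and only if $\overline{*}\omega$ satisfies $\t''_{\min}$ (resp.\ $\t''_{\max}$). Combined with $\overline{*}\overline{*} = (-1)^{p+q}\id$, one sees at once that $\omega$ satisfying $\t''_{\max}$ is equivalent to $\overline{*}\omega$ satisfying $\n''_{\max}$, giving the claimed correspondence.

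Next, I would check that the Dirichlet integral is preserved: $\D(\overline{*}\omega,\overline{*}\omega) = \D(\omega,\omega)$. Starting from $\overline{\delta} = -\overline{*}\,\overline{\partial}\,\overline{*}$ and $\overline{*}\overline{*} = \pm\id$, one obtains the identities $\overline{\partial}\,\overline{*} = \pm\overline{*}\,\overline{\delta}$ and $\overline{\delta}\,\overline{*} = \pm\overline{*}\,\overline{\partial}$. Since $\overline{*}$ is $L^2$-isometric, it follows that $\lVert\overline{\partial}\overline{*}\omega\rVert_{L^2(\Lambda)} = \lVert\overline{\delta}\omega\rVert_{L^2(\Lambda)}$ and $\lVert\overline{\delta}\overline{*}\omega\rVert_{L^2(\Lambda)} = \lVert\overline{\partial}\omega\rVert_{L^2(\Lambda)}$, so the two summands in $\D$ are merely exchanged. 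Applying \cref{Gaffney-Garding's inequality} to $\eta := \overline{*}\omega$ and using $\lVert\eta\rVert_{W^{1,2}(\Lambda)} = \lVert\omega\rVert_{W^{1,2}(\Lambda)}$, $\lVert\eta\rVert_{L^2(\Lambda)} = \lVert\omega\rVert_{L^2(\Lambda)}$, and $\D(\eta,\eta) = \D(\omega,\omega)$ gives the desired inequality with the same constant $C$. The argument is essentially mechanical; the only point requiring care is the bookkeeping of signs in $\overline{*}\overline{*} = (-1)^{p+q}\id$, but since all relevant identifications pass through $L^2$-norms, the signs are immaterial.
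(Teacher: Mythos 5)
Your proposal is correct and is exactly the paper's intended argument: the paper derives this corollary from \cref{Gaffney-Garding's inequality} simply ``by applying the $\overline{*}$-operator,'' and your verification that $\overline{*}$ is an $L^2$- and $W^{1,2}$-isometry exchanging the boundary-condition pairs and the two summands of $\D$ fills in precisely the bookkeeping the paper leaves implicit.
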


\section{Dirichlet and Neumann potentials}
In this section, we study Dirichlet and Neumann potentials.
This section is almost parallel to \cite[Section 2.2]{Sch95}.
We often omit proofs.
We put 
\begin{align*}
  \H^{p,q} ( \Lambda ) & := \{ \omega \in W_{\Trop, \t''_{\max}, \n''_{\max} }^{p,q,1,2} ( \Lambda ) \mid \overline{\partial} \omega_P = 0, \ \overline{\delta} \omega_P = 0  \ ( P \in \Lambda_{\max} ) \} , \\
  \H_D^{p,q} ( \Lambda ) & := \H^{p,q} ( \Lambda ) \cap W_{\Trop,\t''_{\min}, \n''_{\max} }^{p,q,1,2} ( \Lambda ) , \\
  \H_N^{p,q} ( \Lambda ) & := \H^{p,q} ( \Lambda ) \cap W_{\Trop,\t''_{\max}, \n''_{\min} }^{p,q,1,2} ( \Lambda ) .
\end{align*}
The elements of $\H_D^{p,q} ( \Lambda )$ are called \emph{Dirichlet fields} and 
those of $\H_N^{p,q} ( \Lambda )$ are called \emph{Neumann fields}.

\begin{rem}[Rellich's lemma]\label{Rellich's lemma}
  For any $P \in \Lambda_{\max}$,
  the natural map
   $$W^{p,q,1,2} ( \relint ( \exp ( - P)_\C  ) ) \to L^{p,q,2} ( \relint ( \exp ( - P)_\C  ) )$$ 
  is compact (\cite[Theorem 7.2]{Wlo87}).
\end{rem}

\begin{thm}\label{finite dimensionality of Dirichlet fields}
  $$\dim \H_D^{p,q} ( \Lambda ) < \infty .$$
\end{thm}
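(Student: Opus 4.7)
The plan is to deduce finite dimensionality in the standard functional-analytic way, combining Gaffney-Gårding's inequality (\cref{Gaffney-Garding's inequality}) with Rellich's compactness (\cref{Rellich's lemma}) via F.~Riesz's theorem that a normed space whose closed unit ball is compact must be finite-dimensional.

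First I would observe that $\H_D^{p,q}(\Lambda)$ sits inside $W_{\Trop,\t''_{\min},\n''_{\max}}^{p,q,1,2}(\Lambda)$, and that on this subspace the Dirichlet integral vanishes identically: for $\omega \in \H_D^{p,q}(\Lambda)$ one has $\overline{\partial}\omega_P = 0$ and $\overline{\delta}\omega_P = 0$ for every $P \in \Lambda_{\max}$, so $\D(\omega,\omega)=0$. Applying \cref{Gaffney-Garding's inequality}, I get
\begin{equation*}
  \lVert \omega \rVert_{W^{1,2}(\Lambda)}^2 \leq C \lVert \omega \rVert_{L^2(\Lambda)}^2
\end{equation*}
for all $\omega \in \H_D^{p,q}(\Lambda)$. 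In particular, the $L^2$-norm and the $W^{1,2}$-norm are equivalent on $\H_D^{p,q}(\Lambda)$.

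Second, I would check that $\H_D^{p,q}(\Lambda)$ is closed in $L_{\Trop}^{p,q,2}(\Lambda)$. Indeed, the defining conditions are linear: $\overline{\partial}$ and $\overline{\delta}$ are continuous on $W^{1,2}$, and the boundary conditions $\t''_{\min}$, $\n''_{\max}$ are preserved under $W^{1,2}$-limits because the trace operator is continuous (\cref{review trace operators}). Combined with the norm equivalence above, any $L^2$-Cauchy sequence in $\H_D^{p,q}(\Lambda)$ is also $W^{1,2}$-Cauchy, its limit lies in $W_{\Trop,\t''_{\min},\n''_{\max}}^{p,q,1,2}(\Lambda)$, and continuity of $\overline{\partial}, \overline{\delta}$ forces it into $\H_D^{p,q}(\Lambda)$. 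So $\H_D^{p,q}(\Lambda)$ is an $L^2$-closed subspace.

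Third, I would argue compactness of the unit ball. Let $(\omega_n)$ be a sequence in $\H_D^{p,q}(\Lambda)$ with $\lVert \omega_n \rVert_{L^2(\Lambda)} \leq 1$; by the norm equivalence, $\lVert \omega_n \rVert_{W^{1,2}(\Lambda)}$ is bounded. Applying Rellich's lemma (\cref{Rellich's lemma}) componentwise over $P \in \Lambda_{\max}$, I extract a subsequence converging in $L_{\Trop}^{p,q,2}(\Lambda)$; by closedness the limit lies in $\H_D^{p,q}(\Lambda)$. Thus the $L^2$-unit ball of $\H_D^{p,q}(\Lambda)$ is compact, so by F.~Riesz's theorem, $\dim \H_D^{p,q}(\Lambda) < \infty$. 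I do not expect serious obstacles here, since every ingredient has been set up in the preceding sections; the only point requiring mild care is verifying that $\n''_{\max}$ really is preserved under $W^{1,2}$-limits, which follows directly from its definition as the vanishing of a continuous linear functional on $W^{1,2}$ via the trace operator.
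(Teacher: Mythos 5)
Your proposal is correct and is essentially the paper's own argument: the paper proves this exactly by combining Gaffney--G{\aa}rding's inequality with Rellich's lemma, following \cite[Theorem 2.2.2]{Sch95}, which is the same Riesz-compactness argument you spell out. The details you fill in (vanishing of the Dirichlet integral on harmonic fields, $L^2$-closedness via continuity of the trace operator, and compactness of the unit ball) are the intended ones.
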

\begin{proof}
  In the same way as \cite[Theorem 2.2.2]{Sch95}, 
  this follows from Gaffney-G{\aa}rding's inequality (\cref{Gaffney-Garding's inequality}) and Rellich's lemma (\cref{Rellich's lemma}).
\end{proof}

In particular, the space $\H_D^{p,q} ( \Lambda )$ is closed. 
We have $L^2$-orthogonal decomposition 
$$ L^{p,q,2} ( \Lambda ) = \H_D^{p,q} ( \Lambda ) \oplus \H_D^{p,q} ( \Lambda )^{\perp}.$$
We put 
$$ \H_D^{p,q} ( \Lambda )^{\circledast} := W_{\Trop,\t''_{\min}, \n''_{\max} }^{p,q,1,2} ( \Lambda ) \cap \H_D^{p,q} ( \Lambda )^{\perp},$$
which is closed in $W_{\Trop,\t''_{\min}, \n''_{\max} }^{p,q,1,2} ( \Lambda ) $, hence a Hilbert subspace of $W_{\Trop,\t''_{\min}, \n''_{\max} }^{p,q,1,2} ( \Lambda )$.

\begin{prp}\label{D is W12 elliptic on the Hilbert subspace}
  There exists $c_1,c_2 >0$ 
   such that 
    $$c_1 \Arrowvert \omega \Arrowvert_{W^{1,2} ( \Lambda )}^2 
      \leq D ( \omega, \omega )
      \leq c_2 \Arrowvert \omega \Arrowvert_{W^{1,2} ( \Lambda )}^2 
      \quad ( \omega \in \H_D^{p,q} ( \Lambda )^{\circledast} ).$$
\end{prp}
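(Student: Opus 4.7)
The plan is to prove the two inequalities separately, with the upper bound being essentially trivial and the lower bound using a standard contradiction argument based on Gaffney-G{\aa}rding's inequality combined with Rellich's lemma.

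For the upper bound, I would observe that by definition
\[
D(\omega,\omega) = \sum_{P \in \Lambda_{\max}} m_P \bigl( \lVert \overline{\partial} \omega_P \rVert_{L^2}^2 + \lVert \overline{\delta} \omega_P \rVert_{L^2}^2 \bigr),
\]
and the operators $\overline{\partial}$ and $\overline{\delta}$ are continuous from $W^{1,2}$ to $L^2$ on each piece. Summing the resulting estimates over $P \in \Lambda_{\max}$ immediately gives $D(\omega,\omega) \leq c_2 \lVert \omega \rVert_{W^{1,2}(\Lambda)}^2$ for some $c_2>0$, and this holds on all of $W_{\Trop}^{p,q,1,2}(\Lambda)$, not just the subspace.

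For the lower bound, I would argue by contradiction. Suppose no such $c_1$ exists; then there is a sequence $\omega_n \in \H_D^{p,q}(\Lambda)^{\circledast}$ with $\lVert \omega_n \rVert_{W^{1,2}(\Lambda)} = 1$ and $D(\omega_n, \omega_n) \to 0$. By Rellich's lemma (\cref{Rellich's lemma}) applied piecewise, the inclusion $W_{\Trop}^{p,q,1,2}(\Lambda) \hookrightarrow L_{\Trop}^{p,q,2}(\Lambda)$ is compact, so after passing to a subsequence $\omega_n \to \omega$ in $L^2$. Since $\H_D^{p,q}(\Lambda)^{\circledast}$ is a linear subspace, the differences $\omega_n - \omega_m$ lie in it, and Gaffney-G{\aa}rding (\cref{Gaffney-Garding's inequality}) gives
\[
\lVert \omega_n - \omega_m \rVert_{W^{1,2}(\Lambda)}^2
 \leq C\bigl( \lVert \omega_n - \omega_m \rVert_{L^2(\Lambda)}^2 + D(\omega_n-\omega_m, \omega_n-\omega_m) \bigr).
\]
The first term on the right goes to $0$ by $L^2$-convergence, and the second by the parallelogram-type bound $D(\omega_n-\omega_m, \omega_n-\omega_m) \leq 2D(\omega_n,\omega_n) + 2D(\omega_m,\omega_m) \to 0$. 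Hence $(\omega_n)$ is Cauchy in $W^{1,2}$, so $\omega_n \to \omega$ in $W^{1,2}$, and $\lVert \omega \rVert_{W^{1,2}(\Lambda)} = 1$.

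To close the argument, I would use that $\H_D^{p,q}(\Lambda)^{\circledast}$ is closed in $W^{1,2}$ (by definition), so $\omega \in \H_D^{p,q}(\Lambda)^{\circledast}$. Continuity of $D$ in the $W^{1,2}$-topology gives $D(\omega,\omega) = 0$, i.e.\ $\overline{\partial} \omega = 0$ and $\overline{\delta} \omega = 0$. Since $\omega$ already satisfies the boundary conditions $\t''_{\min}$ and $\n''_{\max}$ (as it lies in $W_{\Trop,\t''_{\min},\n''_{\max}}^{p,q,1,2}(\Lambda)$, and these conditions pass to $W^{1,2}$-limits because the trace operator is continuous, and condition $\t''_{\min}$ implies $\t''_{\max}$), we have $\omega \in \H_D^{p,q}(\Lambda)$. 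But $\omega$ is also $L^2$-orthogonal to $\H_D^{p,q}(\Lambda)$ by the definition of $\circledast$, forcing $\omega = 0$, contradicting $\lVert \omega \rVert_{W^{1,2}} = 1$. The main subtlety is verifying that the boundary conditions and the membership in $\H_D^{p,q}(\Lambda)^{\circledast}$ both survive the $W^{1,2}$-limit; both reduce to closedness of the relevant subspaces, which is built into their definitions.
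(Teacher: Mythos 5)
Your argument is correct and is exactly the standard functional-analytic proof the paper invokes by citing \cite[Proposition 2.2.3]{Sch95}: the upper bound from continuity of $\overline{\partial}$ and $\overline{\delta}$ on $W^{1,2}$, and the lower bound by the contradiction/compactness argument combining Rellich's lemma with Gaffney--G{\aa}rding's inequality to extract a $W^{1,2}$-convergent subsequence whose limit lies in $\H_D^{p,q}(\Lambda)\cap\H_D^{p,q}(\Lambda)^{\perp}=\{0\}$. Your attention to the closedness of the boundary-condition subspaces under $W^{1,2}$-limits fills in the ``functional analysis'' the paper leaves implicit, so this matches the intended proof.
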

\begin{proof}
  In the same way as \cite[Proposition 2.2.3]{Sch95},
  this follows from Gaffney-G{\aa}rding's inequality (\cref{Gaffney-Garding's inequality}), Rellich's lemma (\cref{Rellich's lemma}), and functional analysis.
\end{proof}

\begin{thm}\label{existence and uniqueness of Dirichlet potential }
 For each $\eta \in \H_D^{p,q} ( \Lambda )^{\perp}$,
  there exists 
   a unique element $\phi_D \in \H_D^{p,q} ( \Lambda )^{\circledast}$
  such that 
   $$ \langle \overline{\partial} \phi_D, \overline{\partial} \xi \rangle_{L^{2} ( \Lambda ) }  
    + \langle \overline{\delta} \phi_D, \overline{\delta} \xi \rangle_{L^{2} ( \Lambda ) }  
    = \langle \eta , \xi \rangle_{ L^{2} ( \Lambda ) }
    \quad ( \xi \in W_{\Trop,\t''_{\min}, \n''_{\max}}^{p,q,1,2} ( \Lambda ) ).$$
\end{thm}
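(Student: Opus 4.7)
The plan is to recognize this statement as a standard Lax--Milgram (or Riesz representation) argument on the Hilbert space $\H_D^{p,q}(\Lambda)^{\circledast}$, with the Dirichlet integral $\D$ playing the role of the coercive bilinear form. The key inputs are already in place: \cref{D is W12 elliptic on the Hilbert subspace} gives exactly the coercivity and boundedness of $\D$ relative to the $W^{1,2}$-norm on $\H_D^{p,q}(\Lambda)^{\circledast}$, and the Hilbert space structure on $\H_D^{p,q}(\Lambda)^{\circledast}$ (as a closed subspace of $W_{\Trop,\t''_{\min}, \n''_{\max}}^{p,q,1,2}(\Lambda)$) follows from \cref{finite dimensionality of Dirichlet fields}.

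First I would define the linear functional $F_\eta\colon \H_D^{p,q}(\Lambda)^{\circledast} \to \R$ by $F_\eta(\xi) := \langle \eta, \xi\rangle_{L^2(\Lambda)}$. By Cauchy--Schwarz and the continuous inclusion $\H_D^{p,q}(\Lambda)^{\circledast} \hookrightarrow L^{p,q,2}(\Lambda)$, this is a bounded linear functional. Next I would observe that $\D$ restricted to $\H_D^{p,q}(\Lambda)^{\circledast} \times \H_D^{p,q}(\Lambda)^{\circledast}$ is a continuous symmetric bilinear form which, by \cref{D is W12 elliptic on the Hilbert subspace}, is equivalent to the $W^{1,2}$-inner product. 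Applying Lax--Milgram (equivalently, Riesz representation for the inner product $\D$) yields a unique $\phi_D \in \H_D^{p,q}(\Lambda)^{\circledast}$ with
\[
\D(\phi_D, \xi) = \langle \eta, \xi\rangle_{L^2(\Lambda)} \quad (\xi \in \H_D^{p,q}(\Lambda)^{\circledast}).
\]

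Then I would upgrade this identity from test functions in $\H_D^{p,q}(\Lambda)^{\circledast}$ to arbitrary test functions in $W_{\Trop,\t''_{\min}, \n''_{\max}}^{p,q,1,2}(\Lambda)$ by orthogonally decomposing $\xi = \xi_0 + \xi_1$ with $\xi_0 \in \H_D^{p,q}(\Lambda)$ and $\xi_1 \in \H_D^{p,q}(\Lambda)^{\circledast}$; since $\xi_0 \in W^{1,2}$ the summand $\xi_1 = \xi - \xi_0$ lies in $W_{\Trop,\t''_{\min}, \n''_{\max}}^{p,q,1,2}(\Lambda)$ as required. On $\xi_0$ the right-hand side vanishes because $\eta \in \H_D^{p,q}(\Lambda)^{\perp}$, and the left-hand side vanishes because $\overline{\partial}\xi_0 = \overline{\delta}\xi_0 = 0$, so both sides agree on $\xi_0$ and on $\xi_1$, hence on $\xi$.

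For uniqueness I would observe that if $\phi_D' \in \H_D^{p,q}(\Lambda)^{\circledast}$ solves the same variational equation, then $\D(\phi_D - \phi_D', \xi) = 0$ for all $\xi \in \H_D^{p,q}(\Lambda)^{\circledast}$; taking $\xi = \phi_D - \phi_D'$ and using the coercivity estimate $c_1 \Arrowvert \phi_D - \phi_D'\Arrowvert_{W^{1,2}(\Lambda)}^2 \leq \D(\phi_D - \phi_D', \phi_D - \phi_D') = 0$ forces $\phi_D = \phi_D'$. There is no real obstacle here; the whole statement is a formal consequence of \cref{D is W12 elliptic on the Hilbert subspace}, whose proof encapsulates the genuine analytic difficulty (Gaffney--G{\aa}rding plus Rellich). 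The only small point to be careful about is checking that the orthogonal projection splitting respects the boundary conditions, which is automatic because $\H_D^{p,q}(\Lambda) \subset W_{\Trop,\t''_{\min}, \n''_{\max}}^{p,q,1,2}(\Lambda)$ by definition.
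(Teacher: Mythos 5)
Your proposal is correct and follows essentially the same route as the paper, which likewise deduces existence from \cref{D is W12 elliptic on the Hilbert subspace} via the Lax--Milgram lemma and notes that uniqueness is easy; you merely spell out the extension of the variational identity from test forms in $\H_D^{p,q}(\Lambda)^{\circledast}$ to all of $W_{\Trop,\t''_{\min},\n''_{\max}}^{p,q,1,2}(\Lambda)$, a step the paper leaves to the cited reference.
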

We call $\phi_D$ the \emph{Dirichlet potential} of $\eta$.
\begin{proof}
  The proof is same as \cite[Theorem 2.2.4]{Sch95}.
  The existence of a Dirichlet potential $\phi_D$ follows from 
    \cref{D is W12 elliptic on the Hilbert subspace}
   and 
    the Lax-Milgram lemma (\cite[Corollary 1.5.10]{Sch95}),
  and 
  the uniquness is easy.
\end{proof}

In the rest of this paper, 
to proceed with our tropcial Hodge theory,
we use the following assumption on regularity of Dirichlet potentials.

\begin{asm}\label{assumption regularity of Derichlet potentials}
\begin{itemize}
  \item $ \H_D^{p,q} ( \Lambda )  \subset \A_{\Trop,\pqs}^{p,q} ( \Lambda )$.
  \item For $\eta \in \H_D^{p,q} ( \Lambda )^{\perp} \cap \A_{\Trop,\pqs}^{p,q} ( \Lambda )$,
  we have 
   $\phi_D \in  \H_D^{p,q} ( \Lambda )^{\circledast} \cap \A_{\Trop,\pqs}^{p,q} ( \Lambda ) . $
 \end{itemize}
\end{asm}

\begin{rem}\label{remark on regularity of solutions of Lalpacians elliptic partial differential operators}
\cref{assumption regularity of Derichlet potentials} 
is an analog of regularity of weak solutions of Laplacian 
with Dirichlet boundary condition
on compact Riemann manifolds with smooth boundaries 
(\cite[Theorem 2.2.6]{Sch95}).
Proof in \cite{Sch95} is devided into 2 parts:
least (i.e., 2 times) regularity and full regularity.

To prove least regularity, 
an elementary method,
\emph{differential quotient}, 
is used (\cite[Corollary 2.3.4]{Sch95}), see also  
\cite[Chapter 5, Proposition 7.2]{Tay96}, 
\cite[Theorem 15.1 and Theorem 15.3]{Wlo87} (function case).
This method is not directly applicapable in our tropical case.

For full regularity,
the theory of \emph{symbols} and \emph{psuedo differential operators} is used,
see \cite[Theorem 20.1.2 and proof of Theorem 20.1.8]{Hor94}
and also references in \cite[below Theorem 1.6.2]{Sch95}.
\end{rem}

We do not have (sufficiently) non-trivial examples for which \cref{assumption regularity of Derichlet potentials} holds.

\begin{rem}\label{Counter example of Assumption}
  The natural compactification $\overline{\Lambda_{BH}}$ of a $2$-dimensional tropical fan $\Lambda_{BH}$ constructed by Babaee-Huh \cite[Section 5]{BH17}
  is a tropically compact projective tropical vareity,
  and $\Q$-smooth in codimension $1$, but
  it does NOT SATISFY \cref{assumption regularity of Derichlet potentials}.
  Namely, 
  by \cref{the Kahler package for tropical Dolbeault cohomology},
  if it satisfies \cref{assumption regularity of Derichlet potentials},
  then $H_{\Trop,\Dol,\pqs,\t''_{min}}^{1,1}(\overline{\Lambda_{BH}})$
  satisfies the Hodge index theorem.
  However, Babaee-Huh proved that 
  $$\CH^1 ( T_{\Lambda_{BH}}) \otimes \R \cong H_{\Trop,\Dol}^{1,1}(\overline{\Lambda_{BH}}) (\subset H_{\Trop,\Dol,\pqs,\t''_{min}}^{1,1}(\overline{\Lambda_{BH}}) )$$ 
  has $3$-dimensional positive definite subspace, which is a contraction (where $T_{\Lambda_{BH}}$ is the toric variety corresponding to the fan $\Lambda_{BH}$).
\end{rem}

Under \cref{assumption regularity of Derichlet potentials},
for $\eta \in \H_D^{p,q} ( \Lambda )^{\perp} \cap \A_{\Trop,\pqs}^{p,q} ( \Lambda )$, 
  a piece-wise quasi-smooth superform $\Delta'' \phi_D \in \A_{\Trop,\pqs}^{p,q} ( \Lambda )$ is well-defined,
and 
  by Green's formula,
  we have 
   \begin{align}\label{strong version of Dirichlet potential} 
    & \langle \Delta'' \phi_D , \xi \rangle_{ L^{2} ( \Lambda ) }
     + \sum_{P \in \Lambda_{\max}} m_P \int_{\partial  \exp ( - P)_\C   }
       \big( \overline{ \xi \wedge \overline{*} \overline{ \partial } \phi_D } 
        - \overline{ \delta} \phi_D  \wedge \overline{*} \xi   \big) 
    =  \langle \eta , \xi \rangle_{ L^{2} ( \Lambda ) } 
   \end{align}
  $ ( \xi \in W_{\Trop,\t''_{\min}, \n''_{\max}}^{p,q,1,2} ( \Lambda ) ) .$

\begin{thm}\label{Dirichlet potential solution of boundary value problem}
  Under \cref{assumption regularity of Derichlet potentials},
  \begin{itemize}
    \item the differential form 
   $\overline{\delta } \phi_D$ satisfies condition $\t''_{\min}$,
    \item the differential form 
   $\overline{\partial } \phi_D$ satisfies condition $\n''_{\max}$,
      and 
    \item we have $\Delta'' \phi_D = \eta   \in \A_{\Trop,\pqs}^{p,q} ( \Lambda )$.
  \end{itemize}
\end{thm}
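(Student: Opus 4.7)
The plan is to read off the three conclusions from equation \eqref{strong version of Dirichlet potential} by progressively specializing the test form $\xi$, following the classical template of \cite[Theorem 2.2.6]{Sch95} adapted to the new boundary conditions $\t''_{\min}$ and $\n''_{\max}$ and the piece-wise tropical structure.

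First I would establish $\Delta''\phi_D = \eta$. Take $\xi \in \A_{\Trop,\pqs}^{p,q}(\Lambda)$ with $\xi_P$ compactly supported in the interior $\relint(\exp(-P)_\C)$ for a single $P \in \Lambda_{\max}$ and $\xi_{P'}=0$ otherwise. Such $\xi$ vacuously lies in $W_{\Trop,\t''_{\min},\n''_{\max}}^{p,q,1,2}(\Lambda)$ and its trace on every codimension-one face vanishes, so the boundary integrals in \eqref{strong version of Dirichlet potential} drop out and $\langle \Delta''\phi_D - \eta, \xi\rangle_{L^2(P)} = 0$. By \cref{assumption regularity of Derichlet potentials} both sides are piece-wise quasi-smooth, and the density of interior test forms in $L^2(\relint(\exp(-P)_\C))$ (by the averaging construction of \cref{L2 superforms}) yields $\Delta''\phi_D = \eta$ pointwise on each piece.

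With $\Delta''\phi_D = \eta$ in hand, \eqref{strong version of Dirichlet potential} reduces to the pure boundary identity
$$\sum_{P \in \Lambda_{\max}} m_P \int_{\partial \exp(-P)_\C} \bigl(\overline{\xi \wedge \overline{*}\,\overline{\partial}\phi_D} - \overline{\delta}\phi_D \wedge \overline{*}\,\xi\bigr) = 0, \quad \xi \in W_{\Trop,\t''_{\min},\n''_{\max}}^{p,q,1,2}(\Lambda),$$
which by \cref{density H = W with boundary conditions} may be tested against piece-wise quasi-smooth $\xi$ satisfying the two boundary conditions. Applying \cref{t''wedge * n'' restriction } to both terms and localizing near a fixed $Q \in \Lambda_{\max-1}^{\ess}$ by a cut-off preserving both conditions recasts the identity as
$$\sum_{P \supset Q} m_P \int \bigl((\overline{\partial}\phi_D(\N''_P), \xi_P) - (\overline{\delta}\phi_D, \xi_P(\N''_P))\bigr)\,\mu_\partial = 0.$$
I would then exploit a compatible local $g$-orthonormal normal quasi-smooth $(1,0)$-superframe (\cref{n max for strange superframe}) to decompose $\xi_P$ on each $P \supset Q$ into a tangential part (coefficients not containing $\overline{E'_{P,Q,0}} = \N''_P$) and a normal part: the condition $\t''_{\min}$ forces the tangential restrictions to $Q$ to be common across all $P \supset Q$, while $\n''_{\max}$ imposes a single $m_P$-weighted balance among the normal parts. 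Choosing $\xi$ with vanishing normal part on $Q$ and prescribing its common tangential restriction as an arbitrary tangential form kills the second integrand and, via \cref{t''wedge * n'' restriction } read in reverse, identifies the first sum with the $\t''_{\max}$ condition for $\overline{*}\,\overline{\partial}\phi_D$, namely $\n''_{\max}$ for $\overline{\partial}\phi_D$. Dually, choosing $\xi$ with vanishing tangential part and normal components free subject only to the $\n''_{\max}$-balance isolates the $\overline{\delta}\phi_D$-term; combined with the tropical balancing condition (\cref{definition of tropical variety}) this forces the piece-wise tangential parts of $\overline{\delta}\phi_D$ to coincide on $Q$, which is condition $\t''_{\min}$.

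The main obstacle is the combinatorial bookkeeping of the last step: one must construct enough piece-wise quasi-smooth test forms in $\A_{\Trop,\pqs,\t''_{\min},\n''_{\max}}^{p,q}(\Lambda)$ whose tangential and normal components near a given face $Q$ can be prescribed independently subject to the coupled $\t''_{\min}$/$\n''_{\max}$ compatibility constraints across all polyhedra meeting at $Q$, and then to argue that the resulting identities actually yield the pointwise $\t''_{\min}$ condition for $\overline{\delta}\phi_D$ and the integral $\n''_{\max}$ condition for $\overline{\partial}\phi_D$. The density result \cref{density H = W with boundary conditions} together with the compatible normal frames (\cref{n max for strange superframe}) are what make this separation globally consistent.
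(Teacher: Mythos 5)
Your proposal is correct and follows essentially the same route as the paper: the paper's own proof is a one-line appeal to the weak identity \eqref{strong version of Dirichlet potential} together with density, and your argument (interior test forms giving $\Delta''\phi_D=\eta$, then the residual boundary identity tested against tangential and normal traces separately via \cref{t''wedge * n'' restriction } and \cref{n max for strange superframe}) is exactly the intended expansion of that line. The only quibble is that the final duality step for $\t''_{\min}$ of $\overline{\delta}\phi_D$ is a pointwise annihilator computation with the weights $m_P/f_{P,1}$ rather than an appeal to the balancing condition of \cref{definition of tropical variety}, but this does not affect correctness.
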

\begin{proof}
  This follows from \eqref{strong version of Dirichlet potential} and density of $W_{\Trop,\t''_{\min}, \n''_{\max}}^{p,q,1,2} ( \Lambda ) \subset L_{\Trop}^{p,q,2} ( \Lambda )$.
\end{proof}

We put
$$ \H_N^{p,q} ( \Lambda )^{\circledast} := W_{\Trop,\t''_{\max}, \n''_{\min} }^{p,q,1,2} ( \Lambda ) \cap \H_N^{p,q} ( \Lambda )^{\perp}.$$

By applying $\overline{*}$-operator, we also have the following.
\begin{thm}\label{On Neumann potential and fields}
  \begin{itemize}
   \item $ \H_N^{p,q} ( \Lambda ) $ is finite dimensional.
   \item For each $\eta \in \H_N^{p,q} ( \Lambda )^{\perp}$,
  there exists 
         a unique element $\phi_N \in \H_N^{p,q} ( \Lambda )^{\circledast}$
        such that 
         $$ \langle \overline{\partial} \phi_N, \overline{\partial} \xi \rangle_{L^{2} ( \Lambda ) }  
          + \langle \overline{\delta} \phi_N, \overline{\delta} \xi \rangle_{L^{2} ( \Lambda ) }  
          = \langle \eta , \xi \rangle_{ L^{2} ( \Lambda ) }
          \quad ( \xi \in W_{\Trop,\t''_{\max}, \n''_{\min} }^{p,q,1,2} ( \Lambda ) ).$$
   \item 
  Under \cref{assumption regularity of Derichlet potentials},
  for $\eta \in \H_N^{p,q} ( \Lambda )^{\perp}\cap \A_{\Trop,\pqs}^{p,q}(\Lambda)$,
  we have 
   $$ \langle \Delta'' \phi_N , \xi \rangle_{ L^{2} ( \Lambda ) }
     + \sum_{P \in \Lambda_{\max}} m_P \int_{\partial  \exp ( - P)_\C   }
       \big( \overline{  \xi \wedge \overline{*}  \overline{ \partial } \phi_N } 
        -  \overline{ \delta} \phi_N  \wedge \overline{*} \xi   \big) 
     = \langle \eta , \xi \rangle_{ L^{2} ( \Lambda ) } $$
  $ ( \xi \in W_{\Trop,\t''_{\max}, \n''_{\min} }^{p,q,1,2} ( \Lambda ) ) .$
  
  In particular, 
  \begin{itemize}
    \item the differential form 
   $\overline{\delta } \phi_N$ satisfies condition $\t''_{\max}$,
    \item the differential form 
   $\overline{\partial } \phi_N$ satisfies condition $\n''_{\min}$,
      and 
    \item 
    we have $\Delta'' \phi_N = \eta   \in \A_{\Trop,\pqs}^{p,q} ( \Lambda )$.
  \end{itemize}
  \end{itemize}
\end{thm}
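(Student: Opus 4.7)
The plan is to transport every assertion in \cref{finite dimensionality of Dirichlet fields}, \cref{existence and uniqueness of Dirichlet potential }, and \cref{Dirichlet potential solution of boundary value problem} from the Dirichlet setting to the Neumann setting by means of the $\overline{*}$-operator, exactly as suggested by the sentence that introduces the theorem. The key input is that, by inspection of \cref{definition of boundary conditions}, applying $\overline{*}$ interchanges $\t''_{\min} \leftrightarrow \n''_{\min}$ and $\t''_{\max} \leftrightarrow \n''_{\max}$, while sending bidegree $(p,q)$ to $(d-p,d-q)$. Combined with the intertwining $\overline{\delta} = -\overline{*}\,\overline{\partial}\,\overline{*}$, with $\overline{*}\,\overline{*} = (-1)^{p+q}\Id$, and with the fact that $\overline{*}$ is both an $L^2$- and a $W^{s,2}$-isometry, this will produce Hilbert-space isomorphisms $\overline{*} \colon \H_D^{d-p,d-q}(\Lambda) \cong \H_N^{p,q}(\Lambda)$, and analogously at the level of the $\circledast$-subspaces and of the $L^2$-orthogonal complements. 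The first bullet of the theorem is then immediate from \cref{finite dimensionality of Dirichlet fields}.

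For the second bullet, given $\eta \in \H_N^{p,q}(\Lambda)^{\perp}$, I would set $\widetilde{\eta} := \overline{*}\eta \in \H_D^{d-p,d-q}(\Lambda)^{\perp}$, apply \cref{existence and uniqueness of Dirichlet potential } to obtain the unique Dirichlet potential $\phi_D \in \H_D^{d-p,d-q}(\Lambda)^{\circledast}$, and define $\phi_N := (-1)^{p+q}\overline{*}\phi_D \in \H_N^{p,q}(\Lambda)^{\circledast}$. To verify the Neumann weak equation I would test against an arbitrary $\zeta \in W^{p,q,1,2}_{\Trop,\t''_{\max},\n''_{\min}}(\Lambda)$ after the substitution $\xi := \overline{*}\zeta$ in the Dirichlet weak equation; the isometry and intertwining properties of $\overline{*}$ will then turn the two terms on the left into $\langle \overline{\partial}\phi_N,\overline{\partial}\zeta\rangle_{L^2(\Lambda)}$ and $\langle \overline{\delta}\phi_N,\overline{\delta}\zeta\rangle_{L^2(\Lambda)}$, and the right hand side into $\langle \eta,\zeta\rangle_{L^2(\Lambda)}$. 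Uniqueness will then follow by reversing the $\overline{*}$-bijection and invoking uniqueness on the Dirichlet side.

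For the third bullet, I would use \cref{assumption regularity of Derichlet potentials} for $\widetilde{\eta}$, whose hypothesis $\widetilde{\eta} \in \H_D^{d-p,d-q}(\Lambda)^{\perp} \cap \A_{\Trop,\pqs}^{d-p,d-q}(\Lambda)$ holds because $\overline{*}$ preserves $\A_{\Trop,\pqs}^{*,*}(\Lambda)$; this yields $\phi_D \in \A_{\Trop,\pqs}^{d-p,d-q}(\Lambda)$ and hence $\phi_N \in \A_{\Trop,\pqs}^{p,q}(\Lambda)$. The weak Green-type identity for $\phi_N$ will then follow from \eqref{strong version of Dirichlet potential} applied to $\phi_D$, again after substituting $\xi = \overline{*}\zeta$ and using that $\overline{*}$ acts pointwise on $\partial\exp(-P)_\C$, so that the boundary integrals transport directly. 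The asserted boundary conditions will come from \cref{Dirichlet potential solution of boundary value problem} combined with the interchange rules: $\overline{\delta}\phi_D$ satisfies $\t''_{\min}$, so $\overline{\partial}\phi_N$ (proportional to $\overline{*}\,\overline{\delta}\phi_D$) satisfies $\n''_{\min}$; similarly $\overline{\partial}\phi_D$ satisfies $\n''_{\max}$, so $\overline{\delta}\phi_N$ satisfies $\t''_{\max}$. Finally, the equation $\Delta''\phi_N = \eta$ reduces to $\Delta''\phi_D = \widetilde{\eta}$ via the commutation $\Delta''\,\overline{*} = \overline{*}\,\Delta''$.

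The main obstacle I expect is purely bookkeeping: carefully tracking the signs arising from $\overline{*}\,\overline{*} = (-1)^{p+q}\Id$ and from $\overline{\delta} = -\overline{*}\,\overline{\partial}\,\overline{*}$ across both bidegrees $(p,q)$ and $(d-p,d-q)$, so as to pin down the correct scalar in the definition $\phi_N = (-1)^{p+q}\overline{*}\phi_D$ and to match both sides of the Neumann weak equation exactly. No new analytic difficulty should arise, since every analytic ingredient — Gaffney-G{\aa}rding's inequality (\cref{Gaffney-Garding's inequality} and its $\overline{*}$-variant \cref{* of Gaffney-Garding's inequality}), Rellich's lemma (\cref{Rellich's lemma}), and the Lax-Milgram lemma — was already used in the Dirichlet case and is either symmetric under $\overline{*}$ or has a stated Neumann counterpart in the paper.
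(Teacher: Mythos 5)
Your proposal is correct and is exactly the argument the paper intends: the theorem is prefaced by ``By applying $\overline{*}$-operator, we also have the following,'' and no further proof is given, so your detailed transport of \cref{finite dimensionality of Dirichlet fields}, \cref{existence and uniqueness of Dirichlet potential }, and \cref{Dirichlet potential solution of boundary value problem} via the $L^2$- and $W^{1,2}$-isometry $\overline{*}$, the interchange of boundary conditions, and the intertwining $\overline{\delta}=-\overline{*}\,\overline{\partial}\,\overline{*}$ is precisely the intended (and valid) proof.
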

We call $\phi_N$ the \emph{Neumann potential} of $\eta$.

\section{The Hodge-Morrey-Friedrichs decomposition and the Hodge isomorphism}
  In this section, we suppose that  \cref{assumption regularity of Derichlet potentials} hold,
  and we shall prove the Hodge-Morrey-Friedrichs decomposition (\cref{the Hodge-Morrey-Friedrichs decomposition})
  and the Hodge isomorphism (\cref{the Hodge isomorphism})
  in a parallel way to \cite[Section 2.4 and 2.6]{Sch95}.
\begin{dfn}
   We put
  \begin{align*}
    \E^{p,q} ( \Lambda ) 
    & := \{ \overline{ \partial } \alpha \mid \alpha \in \A_{\Trop,\pqs , \t''_{\min} }^{p,q-1} ( \Lambda ) \} \subset \A_{\Trop,\pqs}^{p,q} ( \Lambda )   \\
    \CC^{p,q} ( \Lambda )
    & := \{ \overline{ \delta } \beta \mid \beta \in \A_{\Trop,\pqs , \n''_{\min} }^{p,q+1} ( \Lambda ) \} \subset \A_{\Trop,\pqs}^{p,q} ( \Lambda )  .
  \end{align*}
\end{dfn}

\begin{thm}[the Hodge-Morrey decompostion]\label{Hodge-Morrey decomposition}
  Under \cref{assumption regularity of Derichlet potentials},
  we have an $L^2$-orthogonal decomposition
  \begin{align*}
  \A_{\Trop,\pqs}^{p,q} ( \Lambda )
   & =  \E^{p,q} ( \Lambda ) 
     \oplus  \CC^{p,q} ( \Lambda )
     \oplus (\H^{p,q} ( \Lambda )   \cap \A_{\Trop,\pqs}^{p,q} ( \Lambda ) ) .
  \end{align*}
\end{thm}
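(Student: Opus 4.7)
The plan is to follow the argument in \cite[Section~2.4]{Sch95} for compact Riemann manifolds with smooth boundary: first establish pairwise $L^2$-orthogonality of the three summands, then produce the decomposition by combining the Dirichlet and Neumann Hodge theories of Section~6. For orthogonality, I apply \cref{Green's formula} on each $P\in\Lambda_{\max}$, weighted by $m_P$. For $\overline{\partial}\alpha\in\E^{p,q}(\Lambda)$ (with $\alpha\in\A_{\Trop,\pqs,\t''_{\min}}^{p,q-1}(\Lambda)$) and $h\in\H^{p,q}(\Lambda)\cap\A_{\Trop,\pqs}^{p,q}(\Lambda)$, since $\overline{\delta}h=0$ the pairing $\langle\overline{\partial}\alpha,h\rangle_{L^2(\Lambda)}$ reduces to the boundary sum $\sum_P m_P\int_{\partial \exp(-P)_\C}\alpha_P\wedge\overline{*}h_P$, which \cref{t''wedge * n'' restriction } rewrites on each $Q\in\Lambda_{\max-1}^{\ess}$ as (up to a constant) $\sum_{P\supset Q}m_P\int_{\exp(-P)_\C\cap(-\log|\cdot|)^{-1}(Q)}(\alpha_P, h_P(\N''_P))\mu$. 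Choosing a $g$-orthonormal frame with first vector $\N'_P$, only the terms with tangential anti-holomorphic indices contribute (the others vanish by antisymmetry); there the $\t''_{\min}$-condition on $\alpha$ produces a factor common to all $P\supset Q$, while the $\n''_{\max}$-condition on $h$ forces the remaining weighted normal-contraction sum $\sum_{P\supset Q}m_P h_P(\N''_P)$ to vanish (the pointwise dual of the defining integral identity of $\n''_{\max}$ via \cref{t''wedge * n'' restriction }). The orthogonalities $\CC^{p,q}\perp(\H^{p,q}\cap\A_{\Trop,\pqs}^{p,q})$ and $\E^{p,q}\perp\CC^{p,q}$ follow analogously.

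For existence, given $\omega\in\A_{\Trop,\pqs}^{p,q}(\Lambda)$, I first apply Dirichlet Hodge theory (\cref{existence and uniqueness of Dirichlet potential }, \cref{Dirichlet potential solution of boundary value problem} under \cref{assumption regularity of Derichlet potentials}): with $h_D\in\H_D^{p,q}(\Lambda)$ the (piece-wise quasi-smooth) $L^2$-projection and $\phi_D\in\H_D^{\circledast}\cap\A_{\Trop,\pqs}^{p,q}(\Lambda)$ a Dirichlet potential of $\omega-h_D$,
\begin{align*}
\omega = h_D + \overline{\partial}(\overline{\delta}\phi_D) + \overline{\delta}(\overline{\partial}\phi_D).
\end{align*}
The first term lies in $\H^{p,q}$ (as $\H_D\subset\H$) and the second in $\E^{p,q}$ (as $\overline{\delta}\phi_D$ satisfies $\t''_{\min}$ by \cref{Dirichlet potential solution of boundary value problem}). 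Setting $\zeta:=\overline{\delta}(\overline{\partial}\phi_D)$, one has $\overline{\delta}\zeta=0$, and applying Neumann Hodge theory (\cref{On Neumann potential and fields}) to $\zeta$ gives $\zeta=h_N+\overline{\partial}(\overline{\delta}\tilde\phi)+\overline{\delta}(\overline{\partial}\tilde\phi)$ with $h_N\in\H_N^{p,q}\subset\H^{p,q}$ and Neumann potential $\tilde\phi\in\H_N^{\circledast}\cap\A_{\Trop,\pqs}^{p,q}(\Lambda)$. Since $\overline{\partial}\tilde\phi\in\n''_{\min}$, the summand $\overline{\delta}(\overline{\partial}\tilde\phi)\in\CC^{p,q}$.

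The crux is $\overline{\partial}(\overline{\delta}\tilde\phi)=0$: from $\overline{\delta}\zeta=0$ one deduces $\overline{\delta}\overline{\partial}(\overline{\delta}\tilde\phi)=0$, and pairing with $\overline{\delta}\tilde\phi$ in $L^2(\Lambda)$ via \cref{Green's formula} expresses $\lVert\overline{\partial}(\overline{\delta}\tilde\phi)\rVert_{L^2(\Lambda)}^2$ as a boundary integral that vanishes because $\overline{\delta}\tilde\phi\in\n''_{\min}$ (from $\tilde\phi\in\n''_{\min}$ via \cref{condition n min max preserved by delta}), again through the dual pairing provided by \cref{t''wedge * n'' restriction } and the balancing encoded in \cref{definition of tropical variety}. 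Combining everything, $\omega=(h_D+h_N)+\overline{\partial}(\overline{\delta}\phi_D)+\overline{\delta}(\overline{\partial}\tilde\phi)\in\H^{p,q}\oplus\E^{p,q}\oplus\CC^{p,q}$. The main obstacle will be exactly these boundary-term cancellations: in the smooth-boundary analog \cite[Section~2.4]{Sch95} they are automatic because $\t''_{\max}=\t''_{\min}$ and $\n''_{\max}=\n''_{\min}$ coincide, but in the tropical setting several maximal polyhedra meet along each $Q\in\Lambda_{\max-1}^{\ess}$ with multiplicities $m_P$, so the vanishing must be extracted from the weighted compatibility encoded by \cref{definition of tropical variety} together with the pairing identity of \cref{t''wedge * n'' restriction }.
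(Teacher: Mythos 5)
Your orthogonality argument is essentially the paper's (\cref{Green's formula} plus \cref{cor 2.1.3}, which always pairs a ``min'' condition against a ``max'' condition), and the first step of your existence argument, $\omega = h_D + \overline{\partial}(\overline{\delta}\phi_D) + \overline{\delta}(\overline{\partial}\phi_D)$, is fine. The gap is at your crux step. You set $\zeta:=\overline{\delta}\overline{\partial}\phi_D$, apply the Neumann theory to $\zeta$, and claim $\overline{\partial}\overline{\delta}\tilde\phi=0$. Running Green's formula gives
$\lVert\overline{\partial}\overline{\delta}\tilde\phi\rVert^2_{L^2}=\sum_P m_P\int_{\partial\exp(-P)_\C}\overline{\delta}\tilde\phi\wedge\overline{*}(\overline{\partial}\overline{\delta}\tilde\phi)$,
and by \cref{t''wedge * n'' restriction } the integrand on each $Q$ is $(\overline{\delta}\tilde\phi,\ (\overline{\partial}\overline{\delta}\tilde\phi)(\N''))$. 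Here $\overline{\delta}\tilde\phi$ enters through its \emph{tangential} components, for which you only know $\t''_{\max}$ (the Neumann potential gives $\tilde\phi$ conditions $\t''_{\max},\n''_{\min}$; the $\n''_{\min}$ of $\overline{\delta}\tilde\phi$ that you invoke controls its normal contraction, which is irrelevant to this pairing). On the other side, $\overline{\partial}\overline{\delta}\tilde\phi=\zeta-h_N-\overline{\delta}\overline{\partial}\tilde\phi$ satisfies only $\n''_{\max}$, because $\zeta=\overline{\delta}\overline{\partial}\phi_D$ inherits only $\n''_{\max}$ from $\overline{\partial}\phi_D$ (\cref{Dirichlet potential solution of boundary value problem}). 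So you are pairing a $\t''_{\max}$-form against an $\n''_{\max}$-form; neither case of \cref{cor 2.1.3} applies, and the balancing condition of \cref{definition of tropical variety} only kills such boundary sums when the integrand is globally quasi-smooth, which it is not here. The same obstruction reappears if you instead expand $\langle\overline{\partial}\overline{\delta}\tilde\phi,\zeta\rangle=\langle\overline{\partial}\overline{\delta}\tilde\phi,\overline{\delta}\overline{\partial}\phi_D\rangle$ ($\t''_{\max}$ against $\n''_{\max}$ again). This is exactly the point where $\t''_{\min}\neq\t''_{\max}$ in general makes the tropical setting harder than \cite[Section 2.4]{Sch95}.

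The paper avoids this ``max--max'' pairing entirely by not iterating: it applies the Dirichlet potential to $\omega-\lambda_D$ and, \emph{in parallel}, the Neumann potential to $\omega-\lambda_N$, takes $\overline{\partial}\overline{\delta}\phi_D\in\E^{p,q}$ and $\overline{\delta}\overline{\partial}\phi_N\in\CC^{p,q}$, and then characterizes the remainder $\kappa_\omega=\omega-\overline{\partial}\overline{\delta}\phi_D-\overline{\delta}\overline{\partial}\phi_N$ directly: orthogonality to $\E\oplus\CC$ via \cref{cor 2.1.3} (each pairing there has a ``min'' condition available on one side), closedness and coclosedness by testing against compactly supported quasi-smooth forms, and the conditions $\t''_{\max},\n''_{\max}$ by testing against global quasi-smooth forms. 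You should restructure the existence part along these lines rather than decomposing $\overline{\delta}\overline{\partial}\phi_D$ a second time.
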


\begin{lem}\label{cor 2.1.3}
  Let $\omega \in \A_{\Trop,\pqs}^{p,q - 1 } ( \Lambda )$ 
      and 
      $\eta \in \A_{\Trop, \pqs}^{p,q + 1} ( \Lambda )$.
  We assume that 
  \begin{itemize}
    \item $\omega $ satisfies condition $\t''_{\min}$ 
          and $\eta$ satisfies condition $\n''_{\max}$, 
      or 
    \item $\omega $ satisfies condition $\t''_{\max}$ 
          and $\eta$ satisfies condition $\n''_{\min}$.  
  \end{itemize}
  Then we have 
    $$ \langle \overline{\partial} \omega , \overline{\delta} \eta \rangle_{L^2 ( \Lambda ) } = 0 .$$
\end{lem}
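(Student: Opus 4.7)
The approach is to apply Green's formula (\cref{Green's formula}) on each $\exp(-P)_\C$, use $\overline{\partial}^2 = 0$ to cancel the interior terms, and show that the resulting boundary sum vanishes thanks to the stated conditions; Case 2 will then follow from Case 1 by $\overline{*}$-duality.

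For each $P \in \Lambda_{\max}$, applying \cref{Green's formula} to the pair $(\overline{\partial}\omega_P, \eta_P)$ and invoking $\overline{\partial}\,\overline{\partial}\omega_P = 0$ yields
$$\langle \overline{\partial}\omega_P, \overline{\delta}\eta_P \rangle = -\int_{\partial \exp(-P)_\C} \overline{\partial}\omega_P \wedge \overline{*}\eta_P.$$
Weighting by $m_P$ and summing,
$$\langle \overline{\partial}\omega, \overline{\delta}\eta \rangle_{L^2(\Lambda)} = -\sum_{P \in \Lambda_{\max}} m_P \int_{\partial \exp(-P)_\C} \overline{\partial}\omega_P \wedge \overline{*}\eta_P.$$
Each boundary $\partial \exp(-P)_\C$ decomposes along the faces of $P$; the contributions from faces not in $\Lambda_{\max-1}^{\ess}$ vanish for dimensional reasons (the corresponding toric boundary pieces have complex dimension too small to support a nonzero $(d,d-1)$-form integrand), so it remains to show that for every $Q \in \Lambda_{\max-1}^{\ess}$,
$$I_Q := \sum_{\substack{P \in \Lambda_{\max} \\ Q \subset P}} m_P \int_{\exp(-P)_\C \cap (-\log|\cdot|)^{-1}(Q)} \overline{\partial}\omega_P \wedge \overline{*}\eta_P$$
vanishes.

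In Case 1 ($\omega$ satisfies $\t''_{\min}$ and $\eta$ satisfies $\n''_{\max}$), \cref{t''wedge * n'' restriction } rewrites the hypersurface integrand as $\frac{1}{\sqrt{2}}(\overline{\partial}\omega_P, \eta_P(\N''_P))\mu_{\partial \exp(-P)_\C}$. Fixing compatible local $g$-orthonormal normal $(1,0)$-frames $(E_{P,Q,0}', \dots, E_{P,Q,d-1}')$ as in \cref{n max for strange superframe} and expanding in these frames, \cref{condition t min max preserved by partial} ensures $\overline{\partial}\omega$ still satisfies $\t''_{\min}$. Consequently, for each multi-index pair $(I,J)$ with $J \subset \{1,\dots,d-1\}^q$, the coefficients $\overline{\partial}\omega_P(E_{P,Q,I}', E_{P,Q,J}'')$ assemble across $P \supset Q$ into a single quasi-smooth form $\zeta$ on $Q$; pairing $\zeta$ with $\n''_{\max}$ on $\eta$ through \cref{n max explicitly} makes the $(I,J)$-contribution to $I_Q$ vanish. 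Case 2 ($\omega \in \t''_{\max}$, $\eta \in \n''_{\min}$) reduces to Case 1 by $\overline{*}$-duality: setting $\tilde\omega := \overline{*}\eta$ and $\tilde\eta := \overline{*}\omega$ flips the boundary conditions to $(\t''_{\min}, \n''_{\max})$, and the identities $\overline{\delta} = -\overline{*}\overline{\partial}\overline{*}$ and $\overline{*}\,\overline{*} = \pm\Id$ together with the $L^2$-isometry of $\overline{*}$ identify $\langle \overline{\partial}\omega, \overline{\delta}\eta \rangle_{L^2(\Lambda)}$ with $\pm\langle \overline{\partial}\tilde\omega, \overline{\delta}\tilde\eta \rangle_{L^2(\Lambda)}$.

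The main subtlety is in Case 1 when the multi-index $I$ contains the normal slot $0$, since $E_{P,Q,0}' = \N'_P$ genuinely depends on $P$. Handling these terms requires invoking the ``sum-zero'' characterisation of $\t''_{\min}$ for $\overline{\partial}\omega$ (the remark following \cref{definition of boundary conditions}) together with the tropical balancing relation $\sum_{P \supset Q} m_P n_{Q,P} = 0$ from \cref{definition of tropical variety} and the explicit form of $\N'_P$ given by \cref{normal field N explicitly}.
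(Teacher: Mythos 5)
Your proposal is correct and follows essentially the same route as the paper: the paper's own proof is just ``since $\overline{\partial}\omega$ still satisfies $\t''_{\min}$ (\cref{condition t min max preserved by partial}), apply Green's formula to $\overline{\partial}\omega$ and $\eta$ and use $\overline{\partial}{}^2=0$,'' leaving the vanishing of the boundary sum implicit, which is exactly the step you spell out via \cref{t''wedge * n'' restriction } and \cref{n max explicitly}/\cref{n max for strange superframe} (the same combination the paper deploys in the proof of \cref{expression of Sobolev 1 norm}). Your $\overline{*}$-duality reduction of Case 2 is a clean substitute for the paper's ``the other case is similar.''
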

\begin{proof}
  We prove the first case.  The other case is similar.
  Since $\overline{\partial}\omega$ satisfies condition $\t''_{\min}$ 
    (\cref{condition t min max preserved by partial}),
  by Green's formula  
    for $\overline{\partial} \omega$ and $ \eta$,
  we have 
    $ \langle \overline{\partial} \omega , \overline{\delta} \eta \rangle_{L^2 ( \Lambda ) }  = 0.$
\end{proof}

\begin{cor}\label{L2 orthogonality of Hodge Morrey decomposition}
  The direct summands in \cref{Hodge-Morrey decomposition}
  are $L^2$-orthogonal each other.
\end{cor}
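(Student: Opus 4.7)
The plan is to establish pairwise $L^2$-orthogonality of the three summands $\E^{p,q}(\Lambda)$, $\CC^{p,q}(\Lambda)$, and $\H^{p,q}(\Lambda) \cap \A_{\Trop,\pqs}^{p,q}(\Lambda)$. First, the orthogonality $\E^{p,q}(\Lambda) \perp \CC^{p,q}(\Lambda)$ is an immediate instance of \cref{cor 2.1.3}: for $\alpha \in \A_{\Trop,\pqs,\t''_{\min}}^{p,q-1}(\Lambda)$ and $\beta \in \A_{\Trop,\pqs,\n''_{\min}}^{p,q+1}(\Lambda)$, condition $\n''_{\min}$ implies condition $\n''_{\max}$ for $\beta$, so the first case of that lemma yields $\langle \overline{\partial}\alpha, \overline{\delta}\beta\rangle_{L^2(\Lambda)} = 0$.

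For $\E^{p,q}(\Lambda) \perp \H^{p,q}(\Lambda) \cap \A_{\Trop,\pqs}^{p,q}(\Lambda)$, I would fix $\alpha \in \A_{\Trop,\pqs,\t''_{\min}}^{p,q-1}(\Lambda)$ and $\omega_H \in \H^{p,q}(\Lambda) \cap \A_{\Trop,\pqs}^{p,q}(\Lambda)$, and apply Green's formula (\cref{Green's formula}) on each piece $\exp(-P)_\C$, summing with the weights $m_P$, to obtain
\[
\langle \overline{\partial}\alpha, \omega_H\rangle_{L^2(\Lambda)}
= \langle \alpha, \overline{\delta}\omega_H\rangle_{L^2(\Lambda)}
+ \sum_{P \in \Lambda_{\max}} m_P \int_{\partial \exp(-P)_\C} \alpha \wedge \overline{*}\omega_H.
\]
The first term vanishes because $\omega_H \in \H^{p,q}(\Lambda)$ forces $\overline{\delta}\omega_H = 0$. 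For the boundary term, I decompose $\partial \exp(-P)_\C$ into face pieces indexed by $Q \in \Lambda_{\max-1}^{\ess}$ with $Q \subset P$, and use \cref{t''wedge * n'' restriction} to rewrite each contribution as a multiple of $(\alpha,\omega_H(\N''_P))\mu_{\partial \exp(-P)_\C}$. Regrouping by $Q$ and running the same calculation as in the proof of \cref{cor 2.1.3}, the $\t''_{\min}$ condition on $\alpha$ (which makes the non-normal components of $\alpha$ independent of $P$ across the faces meeting at $Q$) combined with the $\n''_{\max}$ condition on $\omega_H$ (applied via \cref{n max explicitly} and \cref{n max for strange superframe}) forces the $Q$-contribution to vanish, so the whole boundary sum is zero.

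The remaining orthogonality $\CC^{p,q}(\Lambda) \perp \H^{p,q}(\Lambda) \cap \A_{\Trop,\pqs}^{p,q}(\Lambda)$ I would deduce by duality under the $\overline{*}$-operator. Indeed, $\overline{*}$ is an $L^2$-isometry that sends $\A_{\Trop,\pqs,\n''_{\min}}^{p,q+1}(\Lambda)$ into $\A_{\Trop,\pqs,\t''_{\min}}^{d-p,d-q-1}(\Lambda)$ (this is the very definition of condition $\n''_{\min}$), intertwines $\overline{\delta}$ and $\overline{\partial}$ up to sign, and maps $\H^{p,q}(\Lambda)$ onto $\H^{d-p,d-q}(\Lambda)$ (since the conditions $\t''_{\max}, \n''_{\max}$ are preserved and both $\overline{\partial}\omega_H$ and $\overline{\delta}\omega_H$ vanish). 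Hence, for $\beta \in \A_{\Trop,\pqs,\n''_{\min}}^{p,q+1}(\Lambda)$, I can write
\[
\langle \overline{\delta}\beta,\omega_H\rangle_{L^2(\Lambda)} = \langle \overline{*}\overline{\delta}\beta,\overline{*}\omega_H\rangle_{L^2(\Lambda)} = \pm\langle \overline{\partial}\overline{*}\beta,\overline{*}\omega_H\rangle_{L^2(\Lambda)},
\]
which is zero by the $\E^{d-p,d-q}\perp \H^{d-p,d-q}$ orthogonality just established.

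The only real work is the boundary-term analysis in the second step, but this is a direct transcription of the calculation in the proof of \cref{cor 2.1.3}: that calculation only uses that the left-hand form satisfies $\t''_{\min}$ and the right-hand form satisfies $\n''_{\max}$, not the specific shape of the right-hand form as a codifferential, so replacing $\overline{\delta}\eta$ by a harmonic $\omega_H$ does not change anything.
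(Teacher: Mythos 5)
Your proposal is correct and follows essentially the same route as the paper, whose proof is the one-line remark that the corollary follows from \cref{cor 2.1.3} (for $\E^{p,q}\perp\CC^{p,q}$) and Green's formula (for the two pairings against $\H^{p,q}\cap\A_{\Trop,\pqs}^{p,q}$); your boundary-term analysis via \cref{t''wedge * n'' restriction} and \cref{n max for strange superframe} just makes explicit what the paper leaves implicit. The only (harmless) variation is that you dispatch $\CC^{p,q}\perp\H^{p,q}$ by $\overline{*}$-duality from the $\E\perp\H$ case rather than by a second direct application of Green's formula.
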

\begin{proof}
  This follows from \cref{cor 2.1.3} 
            and Green's formula (\cref{Green's formula}).
\end{proof}

\begin{proof}[Proof of \cref{Hodge-Morrey decomposition}]
  We have decompositions 
    $$ \omega = \lambda_D + ( \omega - \lambda_D)
              = \lambda_N + ( \omega - \lambda_N)$$
    ($\lambda_D \in \H_D^{p,q} ( \Lambda )$,
     $\omega - \lambda_D \in \H_D^{p,q} ( \Lambda )^{\perp}$,
     $\lambda_N \in \H_N^{p,q} ( \Lambda )$,
     $\omega - \lambda_N \in \H_N^{p,q} ( \Lambda )^{\perp}$.)
  Let $\phi_D$ (resp. $\phi_N$) be 
    the Dirichlet (resp. Neumann) potential of $\omega - \lambda_D$ (resp. $\omega - \lambda_N$).
  By  
  \cref{assumption regularity of Derichlet potentials}, 
  \cref{Dirichlet potential solution of boundary value problem}, 
  and
     \cref{On Neumann potential and fields},
  we have 
    $\alpha_\omega := \overline{\delta} \phi_D \in \A_{\Trop,\pqs, \t''_{\min} }^{p,q-1} ( \Lambda ) $
    and
    $\beta_\omega := \overline{\partial} \phi_N \in \A_{\Trop, \pqs, \n''_{\min} }^{p,q+1} ( \Lambda ) $.
  We put 
    $\kappa_\omega := \omega - \overline{\partial} \alpha_\omega - \overline{\delta} \beta_\omega.$

    It suffices to show that $\kappa_\omega \in \H^{p,q} ( \Lambda ) \cap \A_{\Trop,\pqs}^{p,q} ( \Lambda )$.
  By \cref{Dirichlet potential solution of boundary value problem}, 
     \cref{On Neumann potential and fields},
     and \cref{cor 2.1.3},
  we have $ \kappa_\omega \in ( \E^{p,q} ( \Lambda ) \oplus \CC^{p,q} ( \Lambda ) )^{\perp}$.
  We have 
  \begin{align*}
    0 & = \langle \kappa_\omega , \overline{\partial} f \rangle_{L^2 ( \Lambda ) } 
        = \langle \overline{\delta} \kappa_\omega , f \rangle_{L^2 ( \Lambda ) }  
        \quad  \bigg( f \in \bigoplus_{ P \in \Lambda_{\max} } \A_{\Trop,\qs, 0 }^{p,q-1} ( \relint ( \exp ( - P)_\C  ) ) \bigg) ,  \\
    0 & = \langle \kappa_\omega , \overline{\delta} g \rangle_{L^2 ( \Lambda ) } 
        = \langle \overline{\partial} \kappa_\omega ,  g \rangle_{L^2 ( \Lambda ) }  
        \quad  \bigg( g \in \bigoplus_{ P \in \Lambda_{\max} } \A_{\Trop, \qs, 0 }^{p,q+1} ( \relint ( \exp ( - P)_\C  ) ) \bigg)  ,
  \end{align*}
  where $\A_{\Trop, \qs, 0 }^{p,*} ( \relint ( \exp ( - P)_\C  ) ) $ is the set of compactly supported quasi-smooth superforms on $\relint ( \exp ( - P)_\C  )$. 
  Hence we have 
    $ \overline{\delta} \kappa_\omega =0$ 
   and 
    $ \overline{\partial} \kappa_\omega =0$.
  Moreover, for $\alpha \in \A_{\Trop,\qs}^{p,q-1} ( \Lambda )$,
  we have
  \begin{align*}
    0  = \langle \kappa_\omega , \overline{\partial} \alpha \rangle_{L^2 ( \Lambda ) } 
       & = \langle \overline{\delta} \kappa_\omega , \alpha \rangle_{L^2 ( \Lambda ) }  
         + \sum_{ P \in \Lambda_{\max} } m_P 
           \overline{\int_{\partial \exp ( - P)_\C  }
            \alpha_P \wedge \overline{*}  \kappa_{\omega,P} 
           }  \\
      & =  \sum_{ P \in \Lambda_{\max} } m_P 
           \overline{\int_{\partial \exp ( - P)_\C  }
            \alpha_P \wedge \overline{*}  \kappa_{\omega,P} 
           }    .
  \end{align*}
  Hence $\kappa_\omega$ satisfies condition $\n''_{\max}$.
  Similarly, it also satisfies condition $\t''_{\max}$.
  Consequently, we have $ \kappa_\omega \in \H^{p,q} ( \Lambda ) \cap \A_{\Trop,\pqs}^{p,q} ( \Lambda )$.
\end{proof}

\begin{thm}[the Friedrichs decomposition]\label{Friedrichs decomposition}
  Under \cref{assumption regularity of Derichlet potentials},
 we have an $L^2$-orthogonal decomposition
 \begin{align*}
  \H^{p,q} ( \Lambda ) \cap \A_{\Trop, \pqs}^{p,q} ( \Lambda ) & = \H_D^{p,q} ( \Lambda ) 
  \oplus  \{ \overline{ \delta} \gamma \in \H^{p,q} ( \Lambda )\mid \gamma \in \A_{\Trop, \pqs, \n''_{\max} }^{p,q+1} ( \Lambda ) \}    . 
 \end{align*}
\end{thm}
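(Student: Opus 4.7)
The plan is to adapt Schwarz's proof of the Friedrichs decomposition (\cite[Theorem 2.4.8]{Sch95}) to our setting, with $(d,\delta)$ replaced by $(\overline{\partial},\overline{\delta})$ and careful bookkeeping of the piece-wise boundary structure of $\Lambda$. For the $L^2$-orthogonality of the two summands, let $\lambda \in \H_D^{p,q}(\Lambda)$ and $\overline{\delta}\gamma \in \H^{p,q}(\Lambda)$ with $\gamma \in \A_{\Trop,\pqs,\n''_{\max}}^{p,q+1}(\Lambda)$. Applying Green's formula (\cref{Green's formula}) to the pair $(\lambda,\gamma)$ on each $P \in \Lambda_{\max}$ and using $\overline{\partial}\lambda = 0$,
\[
\langle \lambda, \overline{\delta}\gamma\rangle_{L^2(\Lambda)} = -\sum_{P \in \Lambda_{\max}} m_P \int_{\partial \exp(-P)_\C} \lambda \wedge \overline{*}\gamma,
\]
and the right-hand side vanishes because $\lambda$ satisfies $\t''_{\min}$ and $\gamma$ satisfies $\n''_{\max}$, by the same boundary-vanishing argument implicit in the proof of \cref{cor 2.1.3}.

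For existence of the decomposition, given $\kappa \in \H^{p,q}(\Lambda) \cap \A_{\Trop,\pqs}^{p,q}(\Lambda)$, let $\kappa_D$ be the $L^2$-orthogonal projection of $\kappa$ onto the finite-dimensional subspace $\H_D^{p,q}(\Lambda)$ (finite by \cref{finite dimensionality of Dirichlet fields} and contained in $\A_{\Trop,\pqs}^{p,q}(\Lambda)$ by \cref{assumption regularity of Derichlet potentials}). Set $\xi := \kappa - \kappa_D$, so $\xi \in \H^{p,q}(\Lambda) \cap \A_{\Trop,\pqs}^{p,q}(\Lambda) \cap \H_D^{p,q}(\Lambda)^\perp$. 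By \cref{existence and uniqueness of Dirichlet potential } and \cref{Dirichlet potential solution of boundary value problem}, the Dirichlet potential $\phi_D \in \A_{\Trop,\pqs}^{p,q}(\Lambda)$ of $\xi$ satisfies $\Delta''\phi_D = \xi$, with $\overline{\delta}\phi_D$ satisfying $\t''_{\min}$ and $\overline{\partial}\phi_D$ satisfying $\n''_{\max}$. Setting $\gamma := \overline{\partial}\phi_D \in \A_{\Trop,\pqs,\n''_{\max}}^{p,q+1}(\Lambda)$, I reduce the problem to showing $\overline{\partial}\overline{\delta}\phi_D = 0$, which is equivalent to $\xi = \overline{\delta}\gamma$.

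To prove $\overline{\partial}\overline{\delta}\phi_D = 0$, expand using $\xi = \overline{\partial}\overline{\delta}\phi_D + \overline{\delta}\overline{\partial}\phi_D$:
\[
\|\overline{\partial}\overline{\delta}\phi_D\|_{L^2(\Lambda)}^2 = \langle \overline{\partial}\overline{\delta}\phi_D, \xi\rangle - \langle \overline{\partial}\overline{\delta}\phi_D, \overline{\delta}\overline{\partial}\phi_D\rangle.
\]
The cross term vanishes by \cref{cor 2.1.3} applied to $\omega = \overline{\delta}\phi_D$ (satisfying $\t''_{\min}$) and $\eta = \overline{\partial}\phi_D$ (satisfying $\n''_{\max}$). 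For the first term, Green's formula applied to $(\overline{\delta}\phi_D, \xi)$ gives $\langle \overline{\delta}\phi_D, \overline{\delta}\xi\rangle + \sum_P m_P \int_{\partial} \overline{\delta}\phi_D \wedge \overline{*}\xi$; the inner product is zero because $\overline{\delta}\xi = 0$ ($\xi \in \H^{p,q}$), and the boundary integral vanishes by the same $\t''_{\min}/\n''_{\max}$ boundary-vanishing used for orthogonality. Hence $\overline{\partial}\overline{\delta}\phi_D = 0$, giving $\xi = \overline{\delta}\gamma$ and the decomposition $\kappa = \kappa_D + \overline{\delta}\gamma$.

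The main obstacle is the boundary-vanishing input $\sum_P m_P \int_{\partial \exp(-P)_\C} \alpha \wedge \overline{*}\beta = 0$ whenever $\alpha$ satisfies $\t''_{\min}$ and $\beta$ satisfies $\n''_{\max}$. Although implicit in \cref{cor 2.1.3}, this is the essential geometric content of the theorem: via \cref{t''wedge * n'' restriction } the boundary integrand reduces pointwise to a multiple of $(\alpha, \beta(\N''))\mu_\partial$ on each face $Q \in \Lambda_{\max-1}^{\ess}$, and one must combine the explicit $\n''_{\max}$-relation from \cref{n max explicitly} and \cref{n max for strange superframe} with the $\t''_{\min}$-gluing and the tropical balancing condition (\cref{definition of tropical variety}) to obtain the signed cancellation across the $P$'s meeting $Q$.
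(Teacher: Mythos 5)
Your proposal is correct and follows essentially the same route as the paper: both take the Dirichlet potential $\phi$ of the component of $\kappa$ orthogonal to $\H_D^{p,q}(\Lambda)$, set $\gamma=\overline{\partial}\phi$, and reduce everything to the vanishing of $\overline{\partial}\overline{\delta}\phi$ via Green's formula and the $\t''_{\min}$/$\n''_{\max}$ boundary cancellation. The only (cosmetic) difference is the last step: you compute $\lVert\overline{\partial}\overline{\delta}\phi\rVert_{L^2}^2=0$ directly, whereas the paper observes that $\kappa-\overline{\delta}\gamma=\overline{\partial}\overline{\delta}\phi$ lies in $\H_D^{p,q}(\Lambda)\cap\H_D^{p,q}(\Lambda)^{\perp}=\{0\}$ using \cref{condition t min max preserved by partial} and \cref{Dirichlet potential solution of boundary value problem}.
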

\begin{proof}
  $L^2$-orthogonality follows from Green's formula (\cref{Green's formula}).
  Let 
    $$\kappa \in \H^{p,q} ( \Lambda ) \cap \A_{\Trop, \pqs}^{p,q} ( \Lambda ) \cap \H_D^{p,q} ( \Lambda )^{\perp} ,$$
  and $\phi_\kappa $ the Dirichlet potential of $\kappa$.
  We put $\gamma_\kappa := \overline{\partial} \phi_\kappa \in \A_{\Trop, \pqs, \n''_{\max} }^{p,q+1} ( \Lambda )$.
  We have 
    $\kappa - \overline{\delta}\gamma_\kappa \in \H_D^{p,q} ( \Lambda )^{\perp}.$
  We also have 
    $\kappa - \overline{\delta}\gamma_\kappa \in \H_D^{p,q} ( \Lambda ),$
  i.e.,
  \begin{itemize}
    \item $\overline{\partial} ( \kappa - \overline{\delta}\gamma_\kappa ) =0, 
          \  \overline{\delta} ( \kappa - \overline{\delta}\gamma_\kappa )
          =0$,
    \item 
      by \cref{condition t min max preserved by partial} and \cref{Dirichlet potential solution of boundary value problem},
      the differential form
      $\kappa - \overline{\delta}\gamma_\kappa = \overline{\partial} \overline{\delta} \phi_\kappa$ satisfies condition $\t''_{\min}$, 
  and 
  \item similarly, the differential form
      $\kappa - \overline{\delta}\gamma_\kappa = \kappa - \overline{\delta}\overline{\partial} \phi_\kappa $ satisfies condition $\n''_{\max}$.
  \end{itemize}
  Hence 
    $\kappa = \overline{\delta}\gamma_\kappa .$
\end{proof}

\begin{cor}[the Hodge-Morrey-Friedrichs decomposition]\label{the Hodge-Morrey-Friedrichs decomposition}
  We have 
  \begin{align*}
  \A_{\Trop,\pqs}^{p,q} ( \Lambda )
   & = \E^{p,q} ( \Lambda ) 
     \oplus  \CC^{p,q} ( \Lambda )
    \oplus \H_D^{p,q} ( \Lambda ) 
  \oplus  \{ \overline{ \delta} \gamma \in \H^{p,q} ( \Lambda )\mid \gamma \in \A_{\Trop, \pqs, \n''_{\max} }^{p,q+1} ( \Lambda ) \}    .
  \end{align*}
\end{cor}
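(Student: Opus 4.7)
The proof proposal is essentially a one-line assembly: simply substitute the Friedrichs decomposition (\cref{Friedrichs decomposition}) into the third summand of the Hodge-Morrey decomposition (\cref{Hodge-Morrey decomposition}). The Hodge-Morrey decomposition already gives the $L^2$-orthogonal splitting
\[
\A_{\Trop,\pqs}^{p,q}(\Lambda) = \E^{p,q}(\Lambda) \oplus \CC^{p,q}(\Lambda) \oplus \bigl(\H^{p,q}(\Lambda) \cap \A_{\Trop,\pqs}^{p,q}(\Lambda)\bigr),
\]
and the Friedrichs decomposition further refines the third summand as
\[
\H^{p,q}(\Lambda) \cap \A_{\Trop,\pqs}^{p,q}(\Lambda) = \H_D^{p,q}(\Lambda) \oplus \bigl\{\overline{\delta}\gamma \in \H^{p,q}(\Lambda) \mid \gamma \in \A_{\Trop,\pqs,\n''_{\max}}^{p,q+1}(\Lambda)\bigr\}.
\]
Concatenating these two decompositions yields the claimed four-term decomposition.

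The only residual task is to confirm that the four summands are pairwise $L^2$-orthogonal. Orthogonality among the first three summands $\E^{p,q}(\Lambda)$, $\CC^{p,q}(\Lambda)$, and $\H^{p,q}(\Lambda) \cap \A_{\Trop,\pqs}^{p,q}(\Lambda)$ is given by \cref{L2 orthogonality of Hodge Morrey decomposition}, and orthogonality between $\H_D^{p,q}(\Lambda)$ and $\{\overline{\delta}\gamma\}$ inside $\H^{p,q}(\Lambda) \cap \A_{\Trop,\pqs}^{p,q}(\Lambda)$ is the $L^2$-orthogonality part of \cref{Friedrichs decomposition} (Green's formula, using that elements of $\H_D^{p,q}(\Lambda)$ are $\overline{\partial}$-closed and satisfy condition $\t''_{\min}$). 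Since $\{\overline{\delta}\gamma\} \subset \H^{p,q}(\Lambda) \cap \A_{\Trop,\pqs}^{p,q}(\Lambda)$, its $L^2$-orthogonality to $\E^{p,q}(\Lambda)$ and $\CC^{p,q}(\Lambda)$ is inherited from the Hodge-Morrey orthogonality statement. Similarly, $\H_D^{p,q}(\Lambda) \subset \H^{p,q}(\Lambda) \cap \A_{\Trop,\pqs}^{p,q}(\Lambda)$ inherits orthogonality to $\E^{p,q}(\Lambda)$ and $\CC^{p,q}(\Lambda)$.

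There is no substantive obstacle here: the real work was done in \cref{Hodge-Morrey decomposition} (which relied on regularity of Dirichlet and Neumann potentials via \cref{assumption regularity of Derichlet potentials}, \cref{Dirichlet potential solution of boundary value problem}, and \cref{On Neumann potential and fields}) and in \cref{Friedrichs decomposition} (which used the Dirichlet potential of an element of $\H^{p,q}(\Lambda) \cap \H_D^{p,q}(\Lambda)^{\perp}$). The corollary itself is a purely formal consequence, and the proof consists of writing one sentence invoking the two preceding theorems.
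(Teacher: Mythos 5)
Your proposal is correct and matches the paper's intent exactly: the corollary is stated without proof precisely because it is the formal concatenation of \cref{Hodge-Morrey decomposition} and \cref{Friedrichs decomposition}, with pairwise $L^2$-orthogonality inherited as you describe. Nothing is missing.
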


\begin{thm}[the Hodge isomorphism]\label{the Hodge isomorphism}
  We have 
   $$ H^q( ( \A_{\Trop,\pqs,\t''_{\min} }^{p,*} ( \Lambda ), \overline{\partial} )) \cong \H_D^{p,q} ( \Lambda ) .$$
\end{thm}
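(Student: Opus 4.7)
The plan is to exhibit the isomorphism via the tautological map
$$\H_D^{p,q}(\Lambda) \longrightarrow H^q\bigl((\A_{\Trop,\pqs,\t''_{\min}}^{p,*}(\Lambda), \overline{\partial})\bigr), \qquad \kappa \mapsto [\kappa].$$
This map is well-defined: by \cref{assumption regularity of Derichlet potentials}, every $\kappa \in \H_D^{p,q}(\Lambda)$ lies in $\A_{\Trop,\pqs}^{p,q}(\Lambda)$, and by the very definition of $\H_D^{p,q}(\Lambda)$ such a $\kappa$ satisfies condition $\t''_{\min}$ and is $\overline{\partial}$-closed. Injectivity is immediate from the $L^2$-orthogonality of the summands in \cref{the Hodge-Morrey-Friedrichs decomposition}: if $\kappa = \overline{\partial}\alpha$ with $\alpha \in \A_{\Trop,\pqs,\t''_{\min}}^{p,q-1}(\Lambda)$, then $\kappa \in \E^{p,q}(\Lambda)$, which is orthogonal to $\H_D^{p,q}(\Lambda)$; thus $\langle \kappa,\kappa\rangle_{L^2(\Lambda)} = 0$ and $\kappa = 0$.

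For surjectivity I take a $\overline{\partial}$-closed $\omega \in \A_{\Trop,\pqs,\t''_{\min}}^{p,q}(\Lambda)$ and apply \cref{the Hodge-Morrey-Friedrichs decomposition} to write
$$\omega = \overline{\partial}\alpha + \overline{\delta}\beta + \kappa_D + \overline{\delta}\gamma,$$
with $\alpha \in \A_{\Trop,\pqs,\t''_{\min}}^{p,q-1}(\Lambda)$, $\beta \in \A_{\Trop,\pqs,\n''_{\min}}^{p,q+1}(\Lambda)$, $\kappa_D \in \H_D^{p,q}(\Lambda)$, and $\overline{\delta}\gamma$ in the fourth summand (in particular $\overline{\delta}\gamma \in \H^{p,q}(\Lambda) \cap \A_{\Trop,\pqs}^{p,q}(\Lambda)$, with $\gamma \in \A_{\Trop,\pqs,\n''_{\max}}^{p,q+1}(\Lambda)$). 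It suffices to prove that the residue $\rho := \overline{\delta}\beta + \overline{\delta}\gamma$ vanishes, for then $\omega = \overline{\partial}\alpha + \kappa_D$ and $[\omega] = [\kappa_D]$ in the cohomology of $(\A_{\Trop,\pqs,\t''_{\min}}^{p,*}(\Lambda),\overline{\partial})$.

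Rewriting $\rho = \omega - \overline{\partial}\alpha - \kappa_D$, I verify that $\rho$ itself lies in $\H_D^{p,q}(\Lambda)$: condition $\t''_{\min}$ holds because $\omega$ and $\kappa_D$ satisfy it by hypothesis and \cref{condition t min max preserved by partial} shows $\overline{\partial}\alpha$ does; condition $\n''_{\max}$ holds because $\overline{\delta}\beta$ satisfies $\n''_{\min}$ (hence $\n''_{\max}$) by \cref{condition n min max preserved by delta} while $\overline{\delta}\gamma \in \H^{p,q}(\Lambda)$ satisfies $\n''_{\max}$ by definition; $\overline{\partial}\rho = 0$ follows by applying $\overline{\partial}$ to the decomposition and using $\overline{\partial}\omega = 0$, $\overline{\partial}\kappa_D = 0$, and $\overline{\partial}^2\alpha = 0$; and $\overline{\delta}\rho = 0$ because $\overline{\delta}^2\beta = 0$ and $\overline{\delta}(\overline{\delta}\gamma) = 0$ (the latter since $\overline{\delta}\gamma \in \H^{p,q}(\Lambda)$). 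However, $\rho$ also lies in $\CC^{p,q}(\Lambda) \oplus \{\overline{\delta}\gamma \mid \gamma \in \A_{\Trop,\pqs,\n''_{\max}}^{p,q+1}(\Lambda)\}$, which is $L^2$-orthogonal to $\H_D^{p,q}(\Lambda)$ by \cref{the Hodge-Morrey-Friedrichs decomposition}. Therefore $\rho$ is orthogonal to itself, and $\rho = 0$.

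The main obstacle is really the bookkeeping in the surjectivity step — checking simultaneously that the residue $\rho$ is closed under both $\overline{\partial}$ and $\overline{\delta}$ and satisfies both boundary conditions $\t''_{\min}$ and $\n''_{\max}$, so that it qualifies as a Dirichlet field. All four preservation/closure statements feed in: stability of $\t''_{\min}$ under $\overline{\partial}$, stability of $\n''_{\min}$ under $\overline{\delta}$, membership of the fourth summand in $\H^{p,q}(\Lambda)$, and the $L^2$-orthogonality of the four summands. Since these are all already available from the preceding sections, the proof is essentially a clean assembly.
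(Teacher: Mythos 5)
Your proposal is correct and follows essentially the same route as the paper: decompose a $\overline{\partial}$-closed $\omega$ via the Hodge--Morrey--Friedrichs decomposition and kill everything except $\overline{\partial}\alpha + \kappa_D$ by showing the remainder is a Dirichlet field that is $L^2$-orthogonal to $\H_D^{p,q}(\Lambda)$. The only (harmless) difference is that you treat $\overline{\delta}\beta + \overline{\delta}\gamma$ as a single residue, whereas the paper first discards the $\CC^{p,q}$-component using $\overline{\partial}$-closedness and then applies the same orthogonality argument to the fourth summand alone; you also spell out well-definedness and injectivity, which the paper leaves implicit.
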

\begin{proof}
  Let $ \omega \in \A_{\Trop, \pqs,\t''_{\min} }^{p,q} ( \Lambda )$.
  Then $\overline{\partial } \omega =0$ 
  if and only if 
   $$\omega \in \E^{p,q} ( \Lambda ) 
     \oplus \H_D^{p,q} ( \Lambda ) 
   \oplus  \{ \overline{ \delta} \gamma \in \H^{p,q} ( \Lambda )\mid \gamma \in \A_{\Trop, \pqs, \n''_{\max} }^{p,q+1} ( \Lambda ) \}   . $$
  In this case, the 
   $$  \{ \overline{ \delta} \gamma \in \H^{p,q} ( \Lambda )\mid \gamma \in \A_{\Trop, \pqs, \n''_{\max} }^{p,q+1} ( \Lambda ) \}   \text{-component } \overline{\delta} \gamma_{\omega} $$
  of $\omega$ satisfies condition $\t''_{\min}$, i.e., $\overline{\delta} \gamma_{\omega} \in \H_D^{p,q} ( \Lambda )$, hence $\overline{\delta} \gamma_{\omega} =0$.
  The assertion holds.
\end{proof}

\section{The K\"{a}hler package}
In this section, we assume that \cref{assumption regularity of Derichlet potentials} holds,
and we shall prove 
  the K\"{a}hler package in the same way as compact K\"{a}hler manifolds, see \cite{GH78} and \cite{Wei58}.

We put
  $$\H_{\min }^{p,q} ( \Lambda ) := \H^{p,q} ( \Lambda ) \cap W_{\Trop,\t''_{\min}, \n''_{\min}}^{p,q,1,2}( \Lambda ).$$
For $ \omega \in L_{\Trop}^{p,q,2} ( \Lambda ) $,
we put 
\begin{align*}
  L \omega & := \omega_{Kah} \wedge \omega ,\\
  \Lambda_L \omega & :=  (-1)^{ p + q } \overline{*}  L   \overline{*} \omega .
\end{align*}

\begin{rem}\label{harmonic forms and operator}
  \begin{itemize}
    \item 
Since 
$$\overline{*} \colon W_{\Trop}^{p,q,1,2} (\Lambda ) \cong W_{\Trop}^{d-p,d-q,1,2} (\Lambda )$$ 
is an isometry,
we have 
  $$ \overline{*} \colon \H_{\min}^{p,q} ( \Lambda )
     \cong \H_{\min}^{d-p,d-q} ( \Lambda ).$$
  \item
Let $\omega$ be a piece-wise quasi-smooth superform
such that 
$\omega$ satisfies
condition $\t''_{\min}$ and $\n''_{\min}$,
$\overline{\partial} \omega$
satisfies condition $\n''_{\max}$, and
$\overline{\delta} \omega$
satisfies condition $\t''_{\max}$.
Then it
is $\overline{\partial}$-closed and $\overline{\delta}$-closed if and only if it is $\Delta''$-closed.   
We have 
$ \Delta'' L = L \Delta''$
(\cite[page 115]{GH78}).
  \end{itemize}
\end{rem}

\begin{lem}\label{support for L preserving harmonic forms}
  Let $\omega \in \H_{\min}^{p,q} ( \Lambda )$.
  Then 
    $ \overline{\delta} L \omega$ satisfies condition $\t''_{\min}$.
\end{lem}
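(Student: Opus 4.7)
The plan is to reduce the statement to showing that $d\omega$ satisfies condition $\t''_{\min}$ via the K\"{a}hler identity, and then verify this using compatible local normal superframes together with harmonicity of $\omega$. The classical K\"{a}hler identity $[\overline{\delta},L]=i\partial$ holds on each complex submanifold $\exp(-P)_{\C}$, since the restriction of $\omega_{Kah}$ gives it a K\"{a}hler structure. Combined with $\overline{\delta}\omega_P=0$ and $\overline{\partial}\omega_P=0$ from $\omega\in\H_{\min}^{p,q}(\Lambda)$, we obtain $(\overline{\delta}L\omega)_P=L\overline{\delta}\omega_P+i\partial\omega_P=i\partial\omega_P=i\,d\omega_P$, where $d$ denotes the exterior derivative on $\exp(-P)_{\C}$.

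Next, fix $Q\in\Lambda_{\max-1}^{\ess}$ and choose a compatible family of local $g$-orthonormal normal quasi-smooth $(1,0)$-superframes $E'_{P,Q,0}=\N'_P,E'_{P,Q,1},\dots,E'_{P,Q,d-1}$ for $P\supset Q$ as in \cref{n max for strange superframe}. The key structural input from $\omega\in\H_{\min}^{p,q}$ is that $\omega$ simultaneously satisfies $\t''_{\min}$ and $\n''_{\min}$. A frame-level computation parallel to the verification that $L\omega$ itself satisfies $\n''_{\min}$ (obtained by expanding $\overline{*}L\omega$ in the frame and using the $\overline{*}$-swap together with both hypotheses on $\omega$) shows that every coefficient $\omega_{P,I,J}|_Q$ in this frame is independent of $P\supset Q$. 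Since $d$ commutes with pullback along the inclusion $\iota_P\colon\exp(-Q)_{\C}\hookrightarrow\exp(-P)_{\C}$ and $\iota_P^{\,*}\omega_P$ depends only on $Q$, the restriction of $d\omega_P$ to tuples whose $(1,0)$-arguments all lie in $T^{1,0}\exp(-Q)_{\C}$ is automatically $P$-independent. This handles the fully tangential part of the $\t''_{\min}$ test.

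The remaining test vectors contain the normal $(1,0)$-vector $\N'_P$. Cartan's formula applied to $d\omega_P$ on such a tuple produces, in addition to terms already controlled above, a contribution involving the normal derivative $\nabla_{\N'_P}\omega_{P,I,J}$. To show that these contributions assemble into a $P$-independent quantity at $Q$, I would expand the harmonicity relations $\overline{\partial}\omega_P=0$ and $\overline{\delta}\omega_P=0$ in the chosen frame; combined with the frame compatibility and the explicit form of $\N'_P$ from \cref{normal field N explicitly}, these identities should express the required $\nabla_{\N'_P}$-derivatives modulo tangential derivatives of the $P$-independent coefficients $\omega_{P,I,J}|_Q$. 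The main obstacle is the explicit execution of this step: isolating the normal $(1,0)$-derivative contribution to $d\omega$ and converting it, using harmonicity and frame compatibility, into a manifestly $P$-independent expression. This parallels the boundary-term bookkeeping carried out in the proof of \cref{expression of Sobolev 1 norm} and I expect it to follow by similar techniques.
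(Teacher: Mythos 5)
Your opening reduction is exactly the paper's: the K\"{a}hler identity gives $\overline{\delta}L\omega = L\overline{\delta}\omega + i\partial\omega = i\partial\omega$, so the claim becomes that $\partial\omega$ satisfies $\t''_{\min}$, and the only nontrivial test tuples are those containing the normal $(1,0)$-vector. But the step you defer as "the main obstacle" is where all the content of the lemma lies, and the tools you propose for it would not close the gap. Expanding $\overline{\delta}\omega_P=0$ in a frame yields a divergence-type identity relating tangential holomorphic derivatives to the holomorphic normal derivative of coefficients that already carry $\N''_P$ in an antiholomorphic slot (compare the computation in the proof of \cref{expression of Sobolev 1 norm}), while $\overline{\partial}\omega_P=0$ constrains only antiholomorphic derivatives. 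Neither identity touches the quantity you actually need, namely the holomorphic normal derivative $D\bigl(\omega_P(\partial_{\log z_I},\partial_{\log \overline{z_J}})\bigr)(\partial_{\log z_P})$ of a coefficient whose antiholomorphic arguments are all tangential. The missing idea is the rotation-invariance of quasi-smooth coefficients: since $\omega_P(\partial_{\log z_I},\partial_{\log \overline{z_J}})$ is a function of $-\log\lvert z\rvert$ alone, its derivative along the holomorphic normal coordinate equals its derivative along the antiholomorphic one. Only after this swap does $\overline{\partial}\omega_P=0$ apply, converting the normal derivative into tangential antiholomorphic derivatives of the coefficients $\omega_P(\partial_{\log z_I},\partial_{\log\overline{z_P}},\dots)$; decomposing $\partial_{\log\overline{z_P}}=\tfrac{1}{f_P}\N''_P+(\text{tangential})$, these are exactly what conditions $\n''_{\min}$ and $\t''_{\min}$ control, and the sum over $P$ vanishes. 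This is the paper's argument, carried out in the constant coordinate fields $\partial/\partial\log z_i$ rather than in orthonormal superframes.

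A secondary inaccuracy: it is not true that "every coefficient $\omega_{P,I,J}|_Q$ in this frame is independent of $P$." Conditions $\t''_{\min}$ and $\n''_{\min}$ assert that the contractions involving $\N'_P$ or $\N''_P$ are the values of a single ambient form on the $P$-\emph{dependent} normal vectors; only the fully tangential coefficients are literally $P$-independent. Your pullback argument for the fully tangential part of $d\omega_P$ is sound (and uses only $\t''_{\min}$), but the intermediate claim as stated overreaches and is not what feeds into the normal-derivative step.
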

\begin{proof}
 We have $[L,\overline{\delta}] = - i \partial$, where $\partial $ is the holomorphic differential ($d = \partial + \overline{\partial}$).
 We fix $Q \in \Lambda_{\max -1}^{\ess}$ and 
 a basis $x_1,\dots,x_n \in M$ such that 
 the affine span of $Q \cap \R^n$ is 
 $$\{ (x_1,\dots,x_n) \in \R^n \mid x_i =a_i \ ( d \leq i \leq n) \}  $$
 ($ a_i \in \R$).
 We put $z_1 :=x_1,\dots,z_n:=x_n \in M$ as a coordinate of $(\C^{\times})^n$.
 For each $P \in \Lambda_{\max}$ containing $Q$,
 we fix $z_P= \prod_{i =d}^n z_i^{b_{P,i}}$ $(b_{P,i} \in \Z)$ such that $\frac{\partial}{\partial \log z_P} \in T \exp (-P)_\C \setminus \{0\}$.
 Then 
 it suffices to show that 
 for $c_P \in \Z$, $I \in \{1,\dots,d-1\}^p, $ and $ J \in \{1,\dots,d-1\}^q$
 such that 
 $ \sum_{\substack{P \in \Lambda_{\max} \\ Q \subset P}}  c_P 
  \frac{\partial}{\partial \log z_P} = 0$, 
  we have
 $$ \sum_{\substack{P \in \Lambda_{\max} \\ Q \subset P}} 
   c_P
   \partial \omega_P \bigg(\frac{\partial}{\partial \log z_P}, \frac{\partial}{\partial \log z_I}, \frac{\partial}{\partial \log \overline{z_J}}\bigg) 
   = 0 $$
 on $\exp (-Q)_\C \cap (\C^{\times})^n$. 
 This follows from
 conditions $\t''_{\min}$ and $\n''_{\min}$ for $\omega$
 and equalities
 \begin{align*}
  & D\bigg( \omega_P \bigg( \frac{\partial}{\partial \log z_I}, \frac{\partial}{\partial \log \overline{z_J}}\bigg)\bigg) \bigg(\frac{\partial}{\partial \log z_1} \bigg) \\
  = &  D\bigg( \omega_P \bigg( \frac{\partial}{\partial \log z_I}, \frac{\partial}{\partial \log \overline{z_J}}\bigg)\bigg) \bigg(\frac{\partial}{\partial \log \overline{z_1}} \bigg) \\
  =& \sum_{l =1}^q (-1)^q D\bigg( \omega_P \bigg( \frac{\partial}{\partial \log z_I}, \frac{\partial}{\partial \log \overline{z_1}}, \frac{\partial}{\partial \log \overline{z_{J \setminus \{j_l \} }}} \bigg)\bigg) \bigg(\frac{\partial}{\partial \log \overline{z_{j_l}}} \bigg),
 \end{align*}
 where 
 the first equality follows from the fact that 
  $\omega_P \big( \frac{\partial}{\partial \log z_I}, \frac{\partial}{\partial \log z_J}\big)$
  is a function on $- \log \lvert z \rvert$,
  and 
 the last equality follows from $\overline{\partial} \omega=0$.
(We use the following trivial fact to show the assertion. We put 
$$\frac{\partial}{\partial \log \overline{z_P}}
= \frac{1}{f_P(- \log \lvert z \rvert ) } \N''
  + \sum_{i =1}^{d-1} g_{P,i}( - \log \lvert z \rvert ) 
  \frac{\partial}{\partial \log \overline{z_i}}  .$$
  Then  we have 
 $$ \sum_{\substack{P \in \Lambda_{\max} \\ Q \subset P}}   
  \frac{c_P}{f_P(- \log \lvert z \rvert ) } \N' = 0,$$ 
and 
 $$ \sum_{\substack{P \in \Lambda_{\max} \\ Q \subset P}}  c_P 
  \sum_{i =1}^{d-1} g_{P,i}( - \log \lvert z \rvert ) 
  \frac{\partial}{\partial \log \overline{z_i}} = 0.)$$
\end{proof}

\begin{lem}\label{ * L Lambda preserve harmonic forms}
  Let $\omega \in \H_{\min}^{p,q} ( \Lambda )$.
  Then 
    $  L \omega$ and $ \Lambda_L \omega $ are in $ \H_{\min}^{p,q} ( \Lambda ).$
\end{lem}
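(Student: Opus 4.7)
The plan is to deduce the statement (reading $L\omega \in \H_{\min}^{p+1,q+1}(\Lambda)$ and $\Lambda_L\omega \in \H_{\min}^{p-1,q-1}(\Lambda)$, since $L$ and $\Lambda_L$ shift bidegree) from the characterization in \cref{harmonic forms and operator}: a piece-wise quasi-smooth superform satisfying $\t''_{\min}$ and $\n''_{\min}$, with $\overline{\partial}$ satisfying $\n''_{\max}$ and $\overline{\delta}$ satisfying $\t''_{\max}$, is both $\overline{\partial}$- and $\overline{\delta}$-closed if and only if it is $\Delta''$-closed. Combined with the commutation $\Delta''L = L\Delta''$ recorded in the same remark, this will yield the $L$-case, and the $\Lambda_L$-case will follow by duality via $\overline{*}$.

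For $L\omega = \omega_{Kah} \wedge \omega$, I would first verify the four boundary conditions. Since $\omega_{Kah}$ is a globally defined quasi-smooth $(1,1)$-superform on $T_{\Sigma}(\C)$, condition $\t''_{\min}$ is immediate: if $\eta_Q$ witnesses $\t''_{\min}$ for $\omega$, then $\omega_{Kah} \wedge \eta_Q$ witnesses it for $L\omega$. Condition $\t''_{\max}$ reduces similarly, by pairing against $\omega_{Kah} \wedge \zeta$ rather than $\zeta$. For $\n''_{\min}$ and $\n''_{\max}$, I would use the K\"{a}hler-type identity $\overline{*}L = \Lambda_L \overline{*}$ (up to sign), which reduces these to checking that contraction by $\omega_{Kah}$ preserves $\t''_{\min}$ and $\t''_{\max}$ applied to $\overline{*}\omega$; both can be verified by the same device, evaluating on local $g$-orthonormal normal $(1,0)$-superframes as in Section 4.

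Next, by Leibniz together with $\overline{\partial}\omega_{Kah} = 0$ (forced by $d\omega_{Kah}=0$ and the $(1,1)$-type of $\omega_{Kah}$) and $\overline{\partial}\omega = 0$, I obtain $\overline{\partial}L\omega = 0$, which trivially satisfies $\n''_{\max}$. \cref{support for L preserving harmonic forms} supplies that $\overline{\delta}L\omega$ satisfies $\t''_{\min}$, hence $\t''_{\max}$. All hypotheses of \cref{harmonic forms and operator} are therefore verified for $L\omega$. Since $\Delta''L\omega = L\Delta''\omega = 0$ by commutativity and $\omega \in \H^{p,q}(\Lambda)$, the remark yields $\overline{\delta}L\omega = 0$, so $L\omega \in \H_{\min}^{p+1,q+1}(\Lambda)$. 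For $\Lambda_L\omega = (-1)^{p+q}\overline{*}L\overline{*}\omega$, the first bullet of \cref{harmonic forms and operator} gives $\overline{*}\omega \in \H_{\min}^{d-p,d-q}(\Lambda)$; applying the $L$-result to $\overline{*}\omega$ and then $\overline{*}$ once more concludes.

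The main obstacle is establishing $\n''_{\min}$ and $\n''_{\max}$ for $L\omega$, which requires the K\"{a}hler-type relation between $\overline{*}$ and $L$ in the quasi-smooth superform framework; this should follow from a direct coordinate computation on a local $g$-orthonormal normal $(1,0)$-superframe, exploiting that $\omega_{Kah}$ takes the diagonalized shape $i\sum \varphi'_i \wedge \varphi''_i$ there.
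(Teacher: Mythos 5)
Your proposal is correct and follows essentially the same route as the paper: reduce via \cref{harmonic forms and operator} and \cref{support for L preserving harmonic forms} to checking the boundary conditions for $L\omega$ (with $\t''_{\min}$ immediate from globality of $\omega_{Kah}$ and $\n''_{\min}$ the real content, handled by a frame computation), then deduce the $\Lambda_L$ case by applying $\overline{*}$. One small caution for the computation you defer: contraction by $\omega_{Kah}$ does not preserve $\t''_{\min}$ of $\overline{*}\omega$ alone, because the $\N'\wedge\N''$-component of $\omega_{Kah}$ produces an extra term equal (up to sign) to a tangential component of $\omega$ itself, so the verification genuinely uses both $\t''_{\min}$ and $\n''_{\min}$ of $\omega$, exactly as the paper's second displayed identity shows.
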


\begin{proof}
  By  \cref{harmonic forms and operator} and \cref{support for L preserving harmonic forms},
  it suffices to show that 
    $L \omega $ satisfies condition $\n''_{\min}$.
  For a local $g$-orthogonal normal quasi-smooth $(1,0)$-superframe $E_i':= E_{P,Q,i}'$ as in \cref{n max explicitly},
  we have 
  \begin{align*}
    \overline{*} ( \omega_{Kah} \wedge \omega_P ) 
      ( \N', E_{I}',E_J'' )
      = & i^{d^2}  \epsilon_{I,J} \omega_{Kah} \wedge \omega_P  ( E_{ \{1,\dots,d-1\} \setminus I}', \N'', E_{\{1,\dots,d-1\} \setminus  J}'' )  \\
      = & i \sum_{j \notin I \cup J} \epsilon_{I,J,j}  \overline{*} \omega_P  (\N', E_{ I \cup \{j\} }',  E_{J \cup \{j\} }'' )  \\
    \overline{*} ( \omega_{Kah} \wedge \omega_P ) 
      ( E_K',E_J'' )
      = & i^{d^2} \epsilon_{K,J}' \omega_{Kah} \wedge \omega_P  (\N', E_{ \{1,\dots,d-1\} \setminus K}', \N'', E_{\{1,\dots,d-1\} \setminus  J}'' )  \\
      = & i^{d^2 -1 }\epsilon_{K,J}'' \omega_P  ( E_{ \{1,\dots,d-1\} \setminus K}',E_{\{1,\dots,d-1\} \setminus  J}'' )\\
      & + i  \sum_{j \notin K \cup J} \epsilon_{K,J,j}' \overline{*} \omega_P  (E_{ K \cup \{j\} }',  E_{J \cup \{j\} }'' )  
  \end{align*}
  for $I \in \{1,\dots,d-1\}^{d-p-2},K \in \{1,\dots,d-1\}^{d-p-1},$ and $J \in \{1,\dots,d-1\}^{d-q-1}$, 
  where $\epsilon_{I,J}, \epsilon_{K,J}', \epsilon_{K,J}'', \epsilon_{I,J,i}, $ $\epsilon_{K,J,i}' \in \{1,-1\}$.
  Hence $L \omega $ satisfies condition $\n''_{\min}$.
\end{proof}

For $p $ and $q$ with $p + q \leq d$,
we put 
  $$\Prim\H_{\min }^{p,q} ( \Lambda ) 
    := \ker ( \Lambda_L \colon \H_{\min}^{p,q} ( \Lambda ) \to \H_{\min}^{ p - 1 , q - 1 } ( \Lambda )  ).$$

\begin{cor}\label{hard Lefschetz}
  For $p $ and $q$ with $p + q \leq d$,
  we have an $L^2$-orthogonal decomposition
  $$ \H_{\min}^{p,q} ( \Lambda ) 
    = \bigoplus_{ i = 0}^{ \min \{ p, q \} }  
      L^i \Prim \H_{\min }^{p-i,q-i} ( \Lambda )   $$
  and an isomorphism
  $$ L^{ d - p - q } \colon \H_{\min}^{p,q} ( \Lambda ) 
    \cong  \H_{\min}^{d - q, d - p} ( \Lambda ) .$$
\end{cor}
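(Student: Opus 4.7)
The plan is to follow the $\mathfrak{sl}_2$-representation-theoretic proof of the K\"{a}hler package from compact K\"{a}hler geometry \cite[Chapter 0]{GH78}, \cite{Wei58}, using \cref{ * L Lambda preserve harmonic forms} as the essential input that $L$ and $\Lambda_L$ preserve the space $\H_{\min}^{*,*}(\Lambda)$. The argument has three ingredients to set up: the K\"{a}hler identities, the adjoint relation $\Lambda_L = L^*$, and finite-dimensionality; after these, the result is pure $\mathfrak{sl}_2$-representation theory.

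First I would record the K\"{a}hler identities. The pointwise relations $[L, \Lambda_L] = H$ (where $H$ acts as multiplication by $p+q-d$ on $(p,q)$-forms), $[H,L] = 2L$, and $[H, \Lambda_L] = -2\Lambda_L$ hold on every K\"{a}hler manifold, hence on each $\exp(-P)_\C$ (whose ambient neighborhood inherits the K\"{a}hler structure from $T_{\Sigma}(\C)$), and therefore on piece-wise quasi-smooth superforms on $\Lambda$. I would also verify that $L$ and $\Lambda_L$ are $L^2$-adjoint on $L_{\Trop}^{*,*,2}(\Lambda)$: neither operator involves differentiation, so no boundary contribution arises; the pointwise identity $L\overline{*}\beta = \overline{*}(\Lambda_L\beta)$ (a short sign computation from the definition $\Lambda_L = (-1)^{p+q} \overline{*} L \overline{*}$ and $\overline{*}\overline{*} = (-1)^{r+s}$) integrates, with weights $m_P$, to $\langle L\alpha, \beta\rangle_{L^2(\Lambda)} = \langle \alpha, \Lambda_L \beta\rangle_{L^2(\Lambda)}$.

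Next I would observe that $\bigoplus_{p,q} \H_{\min}^{p,q}(\Lambda)$ is a finite-dimensional $\mathfrak{sl}_2$-representation. Since condition $\n''_{\min}$ implies condition $\n''_{\max}$, we have $\H_{\min}^{p,q}(\Lambda) \subset \H_D^{p,q}(\Lambda)$, which is finite-dimensional by \cref{finite dimensionality of Dirichlet fields}; \cref{ * L Lambda preserve harmonic forms} ensures that $L$ and $\Lambda_L$ restrict to this graded space.

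The Lefschetz decomposition $\H_{\min}^{p,q}(\Lambda) = \bigoplus_{i=0}^{\min\{p,q\}} L^i \Prim \H_{\min}^{p-i,q-i}(\Lambda)$ is then the standard decomposition of a finite-dimensional $\mathfrak{sl}_2$-module into cyclic submodules generated by lowest-weight vectors (primitives exist in bidegree $(p-i,q-i)$ precisely when $(p-i)+(q-i) \leq d$, giving the indicated range of $i$). The $L^2$-orthogonality of the summands comes from the adjoint relation: for $i<j$, $\alpha \in \Prim \H_{\min}^{p-i,q-i}(\Lambda)$ and $\beta \in \Prim \H_{\min}^{p-j,q-j}(\Lambda)$, using $\Lambda_L^i L^j \beta = c \cdot L^{j-i}\beta$ (an elementary consequence of $\Lambda_L L^k v = k(n-k+1)L^{k-1}v$ on a primitive $v$ of weight $-n$), one obtains $\langle L^i\alpha, L^j\beta\rangle = \langle \alpha, \Lambda_L^i L^j\beta\rangle = c \langle \Lambda_L^{j-i}\alpha,\beta\rangle = 0$ since $\Lambda_L \alpha = 0$. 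The hard Lefschetz isomorphism $L^{d-p-q}\colon \H_{\min}^{p,q}(\Lambda) \cong \H_{\min}^{d-q,d-p}(\Lambda)$ follows by applying $L^{d-p-q}$ summand by summand: on the cyclic submodule generated by $\Prim \H_{\min}^{p-i,q-i}(\Lambda)$ (an irreducible of highest weight $n = d-p-q+2i$), the operator $L^{d-p-q}$ carries the weight-$(p+q-d)$ space $L^i\Prim \H_{\min}^{p-i,q-i}(\Lambda)$ isomorphically onto the weight-$(d-p-q)$ space $L^{d-p-q+i}\Prim \H_{\min}^{p-i,q-i}(\Lambda) \subset \H_{\min}^{d-q,d-p}(\Lambda)$, since both weights lie in $[-n,n]$. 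Since \cref{ * L Lambda preserve harmonic forms} (and the underlying regularity supplied by \cref{assumption regularity of Derichlet potentials}) has done the hard work, I do not anticipate a substantive obstacle; the remainder is a direct transcription of the finite-dimensional $\mathfrak{sl}_2$ representation theory.
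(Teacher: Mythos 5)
Your proposal is correct, and it rests on the same essential input as the paper's proof, namely \cref{ * L Lambda preserve harmonic forms}; but the way you extract the decomposition from that lemma differs. The paper argues \emph{pointwise}: the Lefschetz decomposition and the bijectivity of $L^{d-p-q}$ already hold fiberwise on the exterior algebra of each tangent space of a K\"ahler manifold (Weil's linear algebra), the projectors onto the primitive pieces and the inverse of $L^{d-p-q}$ are universal noncommutative polynomials in $L$ and $\Lambda_L$, and since by \cref{ * L Lambda preserve harmonic forms} these operators preserve $\H_{\min}^{*,*}(\Lambda)$, the decomposition, its pointwise (hence $L^2$-) orthogonality, and the isomorphism all restrict to harmonic forms. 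You instead run the \emph{global} finite-dimensional $\mathfrak{sl}_2$-argument on $\bigoplus_{p,q}\H_{\min}^{p,q}(\Lambda)$, which forces you to supply two extra ingredients the pointwise route does not need: finite-dimensionality (correctly obtained from $\n''_{\min}\Rightarrow\n''_{\max}$, hence $\H_{\min}^{p,q}(\Lambda)\subset\H_D^{p,q}(\Lambda)$, and \cref{finite dimensionality of Dirichlet fields}) and the $L^2$-adjointness $\langle L\alpha,\beta\rangle=\langle\alpha,\Lambda_L\beta\rangle$ (your sign computation is right: $\overline{*}\Lambda_L\beta=L\overline{*}\beta$, so adjointness follows from commuting the even-degree form $\omega_{Kah}$ past $\alpha$). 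What your route buys is independence from the explicit fiberwise inverse of $L^{d-p-q}$ and a cleaner derivation of orthogonality from $\Lambda_L=L^*$; what the paper's route buys is that it needs neither finite-dimensionality nor adjointness. Both are complete proofs given \cref{ * L Lambda preserve harmonic forms}.
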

\begin{proof}
  This follows from \cref{ * L Lambda preserve harmonic forms} and 
  the similar decomposition and isomorphism for differential forms at each point on K\"{a}hler manifolds.
\end{proof}

\begin{thm}[The Hodge-Riemann bilinear relation]\label{Hodge-Riemann bilinear relation}
  For $p$ and $q$ with $p +q \leq d$,
  a symmetric $\R$-bilinear map
  $$\Prim\H_{\min }^{p,q} ( \Lambda ) \times \Prim\H_{\min }^{p,q} ( \Lambda ) \ni (\omega,\eta) 
    \mapsto i^{p-q} (-1)^{\frac{(p +q)(p+q -1) }{2} }  
    \int_{\Lambda} L^{ d - p - q } \omega \wedge \overline{\eta} \in \R$$
  is positive definite.
\end{thm}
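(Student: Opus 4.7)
The plan is to reduce the statement to the classical pointwise Hodge--Riemann identity on each K\"ahler piece $\exp(-P)_\C$. By the very definition of $\int_\Lambda$ in Section~3.4, I have
$$i^{p-q}(-1)^{\frac{(p+q)(p+q-1)}{2}} \int_\Lambda L^{d-p-q}\omega \wedge \overline{\omega} = \sum_{P\in\Lambda_{\max}} m_P \cdot i^{p-q}(-1)^{\frac{(p+q)(p+q-1)}{2}} \int_{\exp(-P)_\C} L^{d-p-q}\omega_P \wedge \overline{\omega_P},$$
and each $\exp(-P)_\C$ is an open subset of the complex manifold $T_{\Sigma}(\C)$ on which $\omega_{Kah}$ restricts to a genuine K\"ahler form, while the multiplicities $m_P$ are positive (as implicitly required throughout Section~3 for $\Arrowvert \cdot \Arrowvert_{L^2(\Lambda)}$ to be a norm). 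It therefore suffices to produce, on each $\exp(-P)_\C$, a pointwise identity expressing the integrand as a positive constant times $(\omega_P,\omega_P)\,\omega_{Kah,d}$.

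The crucial observation enabling this reduction is that primitivity of $\omega$ in the sense $\Lambda_L\omega=0$ is equivalent to the pointwise primitivity of each $\omega_P$ in the usual K\"ahler sense, because $\Lambda_L = (-1)^{p+q}\overline{*}L\overline{*}$ is defined as a piecewise pointwise $\R$-linear bundle operator on each $\exp(-P)_\C$. Hence on each piece I may invoke Weil's classical Hodge--Riemann identity for primitive $(p,q)$-forms on a $d$-dimensional K\"ahler manifold (see \cite[pp.~122--123]{GH78} and \cite{Wei58}), which together with $(\omega_P,\omega_P)\,\omega_{Kah,d} = \omega_P \wedge \overline{*}\omega_P$ and a routine bookkeeping of signs and powers of $i$ (writing $k:=p+q$, using $(-1)^{k(k-1)/2 + k(k+1)/2}\cdot i^{2(p-q)} = (-1)^{p-q+k}=(-1)^{2p}=1$) yields the pointwise identity
$$i^{p-q}(-1)^{\frac{(p+q)(p+q-1)}{2}} L^{d-p-q}\omega_P \wedge \overline{\omega_P} = (d-p-q)!\cdot (\omega_P,\omega_P)\,\omega_{Kah,d}.$$

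Integrating, multiplying by $m_P>0$, and summing over $P\in\Lambda_{\max}$ yields
$$i^{p-q}(-1)^{\frac{(p+q)(p+q-1)}{2}}\int_\Lambda L^{d-p-q}\omega\wedge\overline{\omega} = (d-p-q)!\,\Arrowvert\omega\Arrowvert_{L^2(\Lambda)}^2,$$
which is strictly positive whenever $\omega\neq 0$. The only step requiring genuine care is sign bookkeeping, namely verifying that the conventions for $\overline{*}$ (with $\overline{*}\overline{*}=(-1)^{p+q}$) and $L$ fixed in Section~3 are consistent with the cited formulation of Weil's identity. No analytic difficulty arises: every nontrivial identity used is pointwise and purely algebraic, so the tropical statement reduces cleanly to standard K\"ahler linear algebra on $T_{\Sigma}(\C)$, exactly in parallel with the compact K\"ahler case.
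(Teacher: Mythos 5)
Your proposal is correct and follows essentially the same route as the paper: both rest on Weil's pointwise linear-algebra identity $\overline{*}\omega = i^{p-q}(-1)^{\frac{(p+q)(p+q+1)}{2}}\frac{1}{(d-p-q)!}L^{d-p-q}\overline{\omega}$ for primitive forms (\cite[Chapter 1, Theorem 2]{Wei58}), and then identify the bilinear pairing with $(d-p-q)!^{-1}$ times $\Arrowvert\omega\Arrowvert_{L^2(\Lambda)}^2>0$. Your explicit remarks that primitivity is a pointwise condition on each $\exp(-P)_\C$ and that positivity of the weights $m_P$ is needed are both correct and merely make explicit what the paper's one-line computation uses implicitly.
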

\begin{proof}
  For $\omega \in \Prim\H_{\min }^{p,q} ( \Lambda )$, 
  by computation of linear algebra (\cite[Chapter 1, Theorem 2]{Wei58}),
  we have 
  $$* \omega = i^{p-q} (-1)^{\frac{(p +q)(p+q +1) }{2} } \frac{1}{ ( d - p -q )! } L^{ d - p - q } \omega .$$
  Hence 
  \begin{align*}
     0< \Arrowvert \omega \Arrowvert_{L^2 ( \Lambda ) }^2  
   = & \int_{\Lambda} \omega \wedge \overline{*} \omega  \\
   = & \int_{\Lambda} \omega \wedge (-i)^{p-q} (-1)^{\frac{(p +q)(p+q +1) }{2} } \frac{1}{ ( d - p -q )! } L^{ d - p - q } \overline{\omega}  \\
   = & i^{p-q} (-1)^{\frac{(p +q)(p+q -1) }{2} } \frac{1}{ ( d - p -q )! }
     \int_{\Lambda} L^{ d - p - q } \omega \wedge   \overline{\omega}.
  \end{align*}
\end{proof}

  The operator $L$ induces an $\R$-linear map
        $$ L \colon  H_{\Trop,\Dol,\pqs,\t''_{\min}}^{p,q} ( \Lambda ) \ni \omega \mapsto \omega_{Kah} \wedge \omega  \in H_{\Trop,\Dol,\pqs,\t''_{\min}}^{p + 1, q + 1 } ( \Lambda )  . $$
When $\Lambda$ is $\Q$-smooth in codimension $1$, 
  we have $\H_{\min}^{p,q}( \Lambda) = \H_D^{p,q}( \Lambda)$ (\cref{boundary conditions in Q-smooth in codimension 1 case}).

\begin{cor}\label{the Kahler package for tropical Dolbeault cohomology}
  We assume that 
    $\Lambda$ is $\Q$-smooth in codimension $1$,
     and  
    \cref{assumption regularity of Derichlet potentials} 
        hold.

  Then 
       for $p$ and $q$ with $p +q \leq d$,
   \begin{itemize}
      \item (hard Lefschetz theorem)
       we have an isomorphism
            $$ L^{d-p-q} \colon H_{\Trop,\Dol,\pqs,\t''_{\min}}^{p,q} ( \Lambda ) 
              \cong  
              H_{\Trop,\Dol,\pqs,\t''_{\min}}^{d-q,d-p} ( \Lambda ) , $$ 
      and 
      \item (the Hodge-Riemann bilinear relations)
        a symmetric $\R$-bilinear map
        $$H_{\Trop,\Dol,\pqs,\t''_{\min}}^{p,q} ( \Lambda )  \times H_{\Trop,\Dol,\pqs,\t''_{\min}}^{p,q} ( \Lambda ) \ni (\omega, \eta) 
        \mapsto i^{p-q} (-1)^{\frac{(p +q)(p+q -1) }{2} }  \int_{\Lambda} L^{ d - p - q } \omega \wedge \overline{\eta} \in \R$$
            is positive definite on the kernel of $L^{n-p-q +1}$.
    \end{itemize}
\end{cor}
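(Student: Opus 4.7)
The plan is to transport the Kähler package established on harmonic forms in Corollary~8.5 and Theorem~8.6 to the cohomology groups $H^{p,q}_{\Trop,\Dol,\pqs,\t''_{\min}}(\Lambda)$ by means of the Hodge isomorphism (Theorem~7.9).

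First, I would invoke Lemma~4.10: because $\Lambda$ is $\Q$-smooth in codimension $1$, the boundary conditions satisfy $\t''_{\min} = \t''_{\max}$ and $\n''_{\min} = \n''_{\max}$. Hence all four Hilbert subspaces appearing in the definitions of $\H^{p,q}(\Lambda)$, $\H^{p,q}_D(\Lambda)$, $\H^{p,q}_N(\Lambda)$ and $\H^{p,q}_{\min}(\Lambda)$ agree; in particular $\H^{p,q}_D(\Lambda) = \H^{p,q}_{\min}(\Lambda)$. Combined with the Hodge isomorphism this yields a canonical identification
\[
H^{p,q}_{\Trop,\Dol,\pqs,\t''_{\min}}(\Lambda) \;\cong\; \H^{p,q}_D(\Lambda) \;=\; \H^{p,q}_{\min}(\Lambda),
\]
which sends each cohomology class to its unique harmonic representative.

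Next, I would verify that this identification intertwines the Lefschetz operator $L$ with itself. On cohomology, $L$ acts by wedging with $\omega_{Kah}$, while Lemma~8.3 shows that $L$ carries $\H^{p,q}_{\min}(\Lambda)$ into $\H^{p+1,q+1}_{\min}(\Lambda)$. Consequently, if $\omega$ is the harmonic representative of a class, then $L\omega$ is itself harmonic and hence is the harmonic representative of $L[\omega]$; iterating, $L^{d-p-q}$ on cohomology corresponds to $L^{d-p-q}$ on harmonic forms under the Hodge isomorphism. The hard Lefschetz isomorphism on cohomology is then immediate from Corollary~8.5.

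For the Hodge-Riemann bilinear relation, I would use the standard pointwise Kähler linear-algebra identity (see Chapter~1 of \cite{Wei58}): for a form of bidegree $(p,q)$ with $p+q \leq d$, the conditions $\Lambda_L\omega = 0$ and $L^{d-p-q+1}\omega = 0$ are equivalent. Hence the kernel of the relevant power of $L$ on $H^{p,q}_{\Trop,\Dol,\pqs,\t''_{\min}}(\Lambda)$ corresponds under the Hodge isomorphism to $\Prim\H^{p,q}_{\min}(\Lambda)$, on which Theorem~8.6 already furnishes positive definiteness of the pairing. (The exponent in the statement should be read as $d-p-q+1$.) There is essentially no genuine obstacle remaining: the substantive analytic work is in Sections~5-8, and the only delicate point here is the compatibility of $L$ with the passage to harmonic representatives, handled cleanly by Lemma~8.3 -- without that preservation of harmonicity, one would have to project $L\omega$ back onto harmonics and estimate correction terms.
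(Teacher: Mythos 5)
Your proposal is correct and follows essentially the same route as the paper, which simply cites the Hodge isomorphism (\cref{the Hodge isomorphism}), the hard Lefschetz decomposition for harmonic forms (\cref{hard Lefschetz}), and the Hodge--Riemann relations on primitive harmonic forms (\cref{Hodge-Riemann bilinear relation}), after noting via \cref{boundary conditions in Q-smooth in codimension 1 case} that $\H_{\min}^{p,q}(\Lambda)=\H_D^{p,q}(\Lambda)$ under $\Q$-smoothness in codimension $1$. Your explicit verification that $L$ commutes with passage to harmonic representatives (via \cref{ * L Lambda preserve harmonic forms}) and your identification of $\ker L^{d-p-q+1}$ with the primitive part are exactly the details the paper leaves implicit, and your reading of the exponent $n-p-q+1$ as $d-p-q+1$ is the intended one.
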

\begin{proof}
  This follows from \cref{the Hodge isomorphism}, \cref{hard Lefschetz}, and \cref{Hodge-Riemann bilinear relation}.
\end{proof}

\section{Density}
In this section, we shall prove \cref{density H = W with boundary conditions}. 
This is an analog of the following fact (\cite[Theorem 8.9 (a)]{Wlo87}):
for an open bouded subset $\Omega \subset \R^n$ with smooth boundary $\partial \Omega$,
the kernel $\ker T$ of the trace operator 
$$T \colon W^{1,2}(\Omega) \to L^2 (\partial \Omega)$$
is the closure of 
$\ker T \cap C^{\infty}(\Omega)$.

\begin{prp}[\cref{density H = W with boundary conditions}]\label{density H = W with boundary conditions2}
  For $\epsilon, \epsilon_1, \epsilon_2 \in \{ \t''_{\min} , \t''_{\max} , \n''_{\min} , \n''_{\max} \} $, 
  the subsets
  $$ \A_{\Trop, \pqs, \epsilon}^{p,q}(\Lambda) \subset W_{\Trop, \epsilon}^{p,q,1,2}(\Lambda)   ,
   \quad \A_{\Trop, \pqs, \epsilon_1,\epsilon_2}^{p,q}(\Lambda) \subset W_{\Trop, \epsilon_1,\epsilon_2}^{p,q,1,2}(\Lambda))$$
  are dense.
\end{prp}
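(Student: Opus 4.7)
The plan is to reduce to the unconditional density of piece-wise quasi-smooth superforms in $W_{\Trop}^{p,q,1,2}(\Lambda)$ and then enforce each boundary condition by a trace-correction argument modeled on \cite[Theorem 8.9]{Wlo87}.

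First I would establish unconditional density, i.e.\ that $\A_{\Trop,\pqs}^{p,q}(\Lambda)$ is dense in $W_{\Trop}^{p,q,1,2}(\Lambda)$. By \cref{comaparison of Sobolev norm with Euclid metric}, the Sobolev norm on each $\exp(-P)_\C$ is equivalent to a Euclidean Sobolev norm on a bounded subset of $\C^{\dim\sigma}\times(\C^\times)^{d-\dim\sigma}$, where smooth forms are $W^{1,2}$-dense by standard mollification. Averaging over the residual $(S^1)^d$-action as in the proof of \cref{L2 superforms} turns smooth $(p,q)$-forms into quasi-smooth superforms; this averaging is a bounded linear operator on $W^{1,2}$ (angular derivatives are controlled by Cartesian ones up to bounded weights) and is the identity on already-invariant forms, so $W^{1,2}$-convergence is preserved.

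For condition $\t''_{\min}$, the task is to correct the mismatch between tangential traces across different polyhedra. Given $\omega \in W_{\Trop,\t''_{\min}}^{p,q,1,2}(\Lambda)$ and an unconditional approximation $f_n \in \A_{\Trop,\pqs}^{p,q}(\Lambda)$, for each face $Q \in \Lambda_{\max-1}^{\ess}$ the trace operator (\cref{review trace operators}) gives $f_{n,P}|_Q \to \omega_P|_Q$ in $W^{1/2,2}(Q)$, and the tangential parts of these limits agree for all $P \supset Q$. I would choose a common representative $\eta_{n,Q}$ (the $(m_P)$-weighted average of tangential components, symmetrized to be $S^1$-invariant in the residual angular variable of $Q$ and hence quasi-smooth), apply the continuous right-inverse $Z$ of \cref{review trace operators} to lift the piecewise defect $\eta_{n,Q}-f_{n,P}|_Q$ to a $W^{1,2}$-small correction $\psi_{n,P,Q}$ supported in a tubular neighborhood of $Q$ in $\exp(-P)_\C$, and follow with a localized mollification-plus-averaging step near $Q$ to render $\psi_{n,P,Q}$ piece-wise quasi-smooth without altering its trace. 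Ordering faces by codimension and using nested cutoffs ensures that matching at $Q$ does not disturb matching at sub-faces. Subtracting these corrections from $f_n$ yields the required sequence in $\A_{\Trop,\pqs,\t''_{\min}}^{p,q}(\Lambda)$. The case $\n''_{\min}$ follows by applying $\overline{*}$, which is a $W^{1,2}$-isometry preserving quasi-smoothness.

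Conditions $\t''_{\max}$ and $\n''_{\max}$ are integral conditions that define closed subspaces of $W^{1,2}(\Lambda)$, and can be handled by a variant of the argument above in which the common representative $\eta_{n,Q}$ is relaxed to a family $(\eta_{n,Q,P})_{P\supset Q}$ satisfying only the integral identity against all quasi-smooth test forms $\zeta$ -- this strictly weaker matching is easier to enforce by a direct linear-algebraic construction on each face. Mixed conditions $(\epsilon_1,\epsilon_2)$ combine the two constructions and remain consistent because $\psi_{n,P,Q}$ in the $\t''_{\min}$-correction can be arranged to have vanishing normal component, preserving any $\n''_{\max}$-identity (and symmetrically under $\overline{*}$). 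The main obstacle is the coordinated trace-correction in the $\t''_{\min}$ case across all faces of each $P \in \Lambda_{\max}$: the $Z$-lifted corrections must be iteratively constructed from lowest-dimensional faces upward, and at each stage the subsequent mollification and $S^1$-averaging must be arranged not to undo the trace identity just enforced, which forces careful choice of cutoff scales and of the smoothing kernel.
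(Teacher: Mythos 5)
Your overall architecture for the codimension-one part is essentially the paper's: approximate unconditionally, use the $S^1$-averaging to land in quasi-smooth forms, and then correct traces along each $Q\in\Lambda_{\max-1}^{\ess}$ by choosing a common smooth representative of the tangential data and lifting the defect with the continuous right inverse $Z$ of the trace operator (this is exactly \cref{approximation in codimension 1} and the final argument of Section 9). But there is a genuine gap where you write that ``ordering faces by codimension and using nested cutoffs ensures that matching at $Q$ does not disturb matching at sub-faces.'' That sentence is the entire difficulty, and no mechanism is offered. For strata of codimension $\geq 2$ (and for the toric boundary, where $-\log\lvert z_i\rvert$ is singular) there is no trace operator on $W^{1,2}$ at all, so ``matching at sub-faces'' is not even a well-posed condition; and near a codimension-two stratum $Q\cap Q'$ the corrections you build for $Q$ and for $Q'$ inside the same $\exp(-P)_\C$ interfere, since each $Z$-lift is only controlled in $W^{1,2}$ and its own trace on the other face is not under control. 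Your proposal gives no way to decouple these.

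The paper's missing ingredient is capacity theory: the Sobolev $2$-capacity of the codimension-$\geq 2$ skeleton and of compact subsets of the toric boundary is zero, and the Adams--Hedberg theorem (\cref{Theorem of Adams-Hedberg}, strengthened to several $f_i$ and made $S^1$-invariant in \cref{theta invariant version of Adams Hedberg Theorem}) produces a cutoff $\omega(-\log\lvert z\rvert)$ equal to $1$ near that bad set with $\lVert\omega f_i\rVert_{W^{1,2}}$ arbitrarily small --- note that smallness of $\lVert\omega\rVert_{W^{1,2}}$ alone does not suffice because the $f_i$ need not be bounded, which is why the full strength of Adams--Hedberg is needed. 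This reduces the problem, after a partition of unity, to forms supported near the relative interior of a single $Q\in\Lambda_{\max-1}^{\ess}$ and away from $T_{\Sigma}(\C)\setminus(\C^\times)^n$, at which point your single-face trace-correction argument applies verbatim and no coordination between faces is required. Without this reduction (or an equivalent one), the iterative nested-cutoff scheme you sketch does not close. A secondary, smaller issue: your claim that the $\t''_{\min}$-corrections ``can be arranged to have vanishing normal component'' so as to preserve an $\n''_{\max}$-identity is asserted without construction; in the paper this problem does not arise because each boundary condition is encoded in the choice of the boundary data fed into \cref{approximation in codimension 1}, and the conditions are imposed simultaneously on the single remaining face.
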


Our strategy of proof is as follows.
For a given $ \omega = (\omega_P)_{P \in \Lambda_{\max}} \in W_{\Trop}^{p,q,1,2}(\Lambda)  $, we can approximate each $\omega_P$ by a smooth differential form $f_P$. 
The problem is to construct $(f_P)_{P \in \Lambda_{\max}}$ which satisfies the same boundary condtion(s) as $\omega$. 
To solve this problem, we shall use \emph{capacity theory}.
By using it, we may assume that the support $\supp (w)$ does not intersects with $(- \log \lvert \cdot \rvert^{-1}) (R)$ for polyhedra $R \in \Lambda$ of dimension at most $d-2$.
Then by partition of unity, we may assume that $\supp (w)$ intersects with $ (- \log \lvert \cdot \rvert )^{-1} (Q)$ for at most one $Q \in \Lambda_{\max -1}^{\ess}$.
Then we can easily construct required approximations $f_P$ by using ``inverses'' of the trace operators (\cite[Theorem 8.8]{Wlo87}, see \cref{review trace operators}) to approximations of traces $\omega_P|_{\partial \exp (-P)_\C \cap  (- \log \lvert \cdot \rvert )^{-1} (Q)} $.

In the following, 
we use the Euclid metric on $\R^r$ and $\C^r$ to define the Sobolev spaces on them.

For a set $E \subset \R^r$, \emph{the Sobolev $2$-capacity} (e.g., \cite[Definition 5.1 and Lemma 5.2]{KLV21}) is 
$\inf_{u \in \mathcal{A'}_2(E)} \lVert u   \rVert^{2}_{W^{1,2}(\R^r)}$, where $\mathcal{A'}_2(E)$ is the subset of $W^{1,2}(\R^r)$ consisting of elements $ 0 \leq u \leq 1$ with $ u  = 1$ almost everywhere in a neighborhood of $E$.
 
  We use the following facts freely. 
\begin{rem}
  \begin{itemize}
    \item The Sobolev $2$-capacity 
  of compact subsets of ``codimension $\geq 2$'' (e.g., polyhedra of codimension $\geq 2$, compact subsets of $\C^d \setminus (\C^{\times} )^d \subset \C^d$)
  is $0$ (see e.g., \cite[Lemma 5.6 and Remark 5.8]{KLV21}).
  \item 
  The subset $C_0^{\infty}( (\C^{\times} )^d ) \subset W^{1,2}(\C^d )$ is dense.
  This can be seen as follows.
  Remind that $C_0^{\infty}( \C^d ) \subset W^{1,2}(\C^d )$ 
  is dense (\cite[Corollary 3.1]{Wlo87}).
  By Lipnitz rule,
  given $f \in C_0^{\infty}( \C^d ) $ 
  can be approximated by 
  $(1-u)f$  for some  $$u \in \mathcal{A'}_2(\supp(f) \cap (\C^d \setminus (\C^{\times} )^d)).$$
  Then $(1-u)f$ has a compact support in $ (\C^{\times} )^d$. 
  A regularizer of $(1-u)f$ (in the sense of \cite[Lemma 3.3]{Wlo87}) is in $C_0^{\infty}( (\C^{\times} )^d ) $ and approximates $(1-u)f$.
  \end{itemize}
\end{rem}

\begin{lem}\label{ average of theta function W 12}
  Let $f \in W^{1,2} ( \C^d ) $.

  Then 
   $$f'(z = r e^{i \theta}):= \frac{1}{ ( 2 \pi )^d } \int_{\theta'= (\theta'_1, \dots,\theta'_d) \in [ 0 , 2 \pi ]^d } 
     f( r_1 e^{i \theta_1'}, \dots, r_d e^{i \theta_d'}) d \theta'_1 \wedge \dots \wedge d \theta'_d $$
   (which is a measurable function on $(\C^{\times})^d $)
    extends to a function in $W^{1,2}(\C^d)$, 
   and 
   there exists 
    a constant $C >0$ (independent of $f$) 
   such that 
    $$ \Arrowvert f' \Arrowvert_{W^{1,2}(\C^d)} 
      \leq C  \Arrowvert f \Arrowvert_{W^{1,2}(\C^d)} .$$
  We also have 
   $$ \frac{\partial}{\partial r_i } \int_{ \theta' = (\theta'_1, \dots,\theta'_d) \in[ 0, 2 \pi ]^d } f  (r e^{i \theta' } ) d \theta'_1 \wedge \dots \wedge d \theta'_d
     =  \int_{ \theta' = (\theta'_1, \dots,\theta'_d)\in[ 0, 2 \pi ]^d } \frac{\partial}{\partial r_i }  f (r e^{ i \theta' } ) d \theta'_1 \wedge \dots \wedge d \theta'_d$$
   for a.a. $r = (r_1,\dots,r_d) \in \R_{>0}^d$.
\end{lem}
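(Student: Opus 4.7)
The strategy is to approximate $f$ by functions smooth on the torus $(\C^\times)^d$, where the averaging operation and differentiation under the integral sign are classical, prove the estimate and the commutation relation in that setting via Jensen's inequality in polar coordinates, and then extend the averaging operator by continuity.

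First I would use the density $C_0^\infty((\C^\times)^d) \subset W^{1,2}(\C^d)$ recalled in the preceding remark to choose a sequence $f_n \in C_0^\infty((\C^\times)^d)$ with $f_n \to f$ in $W^{1,2}(\C^d)$. For each such $f_n$, the averaged function $f_n'$ is a genuinely smooth function with compact support in $(\C^\times)^d$, differentiation under the integral sign is elementary, and the pointwise identity in the last assertion of the lemma holds on the nose.

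Next I would establish the norm bound $\lVert f_n' \rVert_{W^{1,2}(\C^d)} \leq C \lVert f_n \rVert_{W^{1,2}(\C^d)}$ for a constant $C$ depending only on $d$. Write $z_j = r_j e^{i \theta_j}$, so that the Lebesgue measure on $\C^d$ factors as $\prod_j r_j\, dr_j\, d\theta_j$. Since $f_n'$ is $\theta$-independent, Jensen's inequality gives the pointwise estimate
$$|f_n'(r)|^2 \leq (2\pi)^{-d} \int_{[0,2\pi]^d} |f_n(r e^{i \theta'})|^2 \, d\theta',$$
and integrating against $(2\pi)^d \prod_j r_j \, dr_j$ then applying Fubini yields $\lVert f_n' \rVert_{L^2} \leq \lVert f_n \rVert_{L^2}$. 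For the gradient, the chain rule gives $\partial_{x_j} f_n' = \cos \theta_j \, \partial_{r_j} f_n'$ and $\partial_{y_j} f_n' = \sin \theta_j \, \partial_{r_j} f_n'$ (the $\theta$-derivatives vanish on $f_n'$), so $|\partial_{x_j} f_n'|^2 + |\partial_{y_j} f_n'|^2 = |\partial_{r_j} f_n'|^2$. Differentiating under the integral sign gives $\partial_{r_j} f_n' = (\partial_{r_j} f_n)'$, and $\partial_{r_j} f_n = \cos \theta_j \, \partial_{x_j} f_n + \sin \theta_j \, \partial_{y_j} f_n$. A second application of Jensen, together with $|\cos \theta_j|, |\sin \theta_j| \leq 1$, and Fubini gives the desired gradient bound.

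Finally I would pass to the limit. The estimate shows that the averaging map extends by continuity to a bounded linear operator $W^{1,2}(\C^d) \to W^{1,2}(\C^d)$; by extracting an a.e.\ convergent subsequence and applying Fubini to the angular variables one checks that this extension coincides a.e.\ with the integral formula defining $f'$ in the statement. The identity $\partial_{r_j} f_n' = (\partial_{r_j} f_n)'$ passes to the $L^2$ limit, yielding the last displayed formula for a.a.\ $r \in \R_{>0}^d$. The main subtlety I anticipate is precisely this last identification of the $W^{1,2}$-continuous extension with the pointwise Fubini average, but it is standard: both sides are linear and $L^2$-continuous in $f$, and they agree on the dense subspace $C_0^\infty((\C^\times)^d)$, so they agree a.e.\ on $\C^d$ in general.
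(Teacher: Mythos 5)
Your proposal is correct and follows essentially the same route as the paper: prove the bound and the commutation identity for $f_n \in C_0^{\infty}((\C^{\times})^d)$ using the polar-coordinate chain rule together with Fubini and Jensen (the paper invokes H\"older, which plays the same role), then pass to the limit and identify the $W^{1,2}$-limit of $f_n'$ with the pointwise average $f'$ via $L^2$-convergence. No gaps.
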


\begin{proof}
 When $f \in C_0^{\infty}( (\C^{\times} )^d )$,
 the assertions follow from 
  Fubini's theorem, H\"{o}lder's inequality, and 
  equalities
  \begin{align*}
    \frac{ \partial }{ \partial u_i } & 
    = \cos \theta_i \frac{ \partial }{ \partial r_i }  
    + - \frac{ \sin \theta_i }{ r_i } \frac{ \partial }{ \partial \theta_i }  , \\
    \frac{ \partial }{ \partial v_i } & 
    = \sin \theta_i \frac{ \partial }{ \partial r_i }  
    + \frac{ \cos \theta_i }{ r_i } \frac{ \partial }{ \partial \theta_i }  , 
     \qquad   \qquad  (z = u + i v = r e^{i \theta}) 
    \\
    \frac{ \partial }{ \partial r_i } & 
    = \cos \theta_i \frac{ \partial }{ \partial u_i }  
    +  \sin \theta_i \frac{ \partial }{ \partial v_i }  .
  \end{align*}

 In general, 
 let $f_j \in C_0^{ \infty } ( ( \C^{ \times } )^d ) $ 
  converges to $f$ in $W^{1,2}(\C^d)$ as $j \to \infty $.  
 Then 
  $(f_j')_j $  is a Cauchy sequence in $W^{1,2}(\C^d)$. 
  By H\"{o}lder's inequality, the functions $f_j'$ converges to $f'$ in $L^2 (\C^d)$.  
  Hence we have $f' = \lim_{j \to \infty} f_j' $ in $W^{1,2}(\C^d)$.
 The inequality $ \Arrowvert f' \Arrowvert_{W^{1,2}(\C^d)} 
      \leq C  \Arrowvert f \Arrowvert_{W^{1,2}(\C^d)} $
  and 
 the last assertion follow from the case of $f_j$.
\end{proof}

\begin{thm}[Adams-Hedberg {\cite[Theorem 9.9.1]{AH96}}]\label{Theorem of Adams-Hedberg}
 Let $f_1,\dots,f_r \in W^{1,2}(\R^n) $ $(n \geq 2)$ 
 and $K \subset \R^n$ be a compact subset whose Sobolev $2$-capacity is $0$.

 Then 
  for any $\epsilon > 0$ and any neighborhood $V$ of $K$,
  there exists a compactly supported continuous $\R$-valued function $\omega \in C_0 ( V )  \cap W^{1,2} ( \R^n ) $ 
  such that 
   $0 \leq \omega \leq 1$, we have $\omega =1$ on a neighborhood of $K$, 
   we have $\omega f_i \in W^{1,2}( \R^n)$, 
   and 
   $ \Arrowvert \omega f_i \Arrowvert_{ W^{1,2}(\R^n) } < \epsilon $ for $ 1 \leq i \leq r$.
\end{thm}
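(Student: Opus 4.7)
The plan is to follow the non-linear potential theory approach of \cite{AH96}. The starting point is the Bessel potential representation: every $f \in W^{1,2}(\R^n)$ can be written as $f = G_1 * g$ with $g \in L^2(\R^n)$ and $\Arrowvert g \Arrowvert_{L^2} \asymp \Arrowvert f \Arrowvert_{W^{1,2}}$, where $G_1$ is the Bessel kernel of order $1$. I write $f_i = G_1 * g_i$ for each $i = 1, \dots, r$. I also recall the dual characterization of capacity
\[
\mathrm{cap}_{1,2}(K) = \inf\{\Arrowvert g \Arrowvert_{L^2}^2 : G_1 * g \geq 1 \text{ on a neighborhood of } K\},
\]
so the capacity-zero hypothesis yields a sequence $h_j \in L^2(\R^n)_{\geq 0}$ with $G_1 * h_j \geq 1$ on a neighborhood $U_j$ of $K$ and $\Arrowvert h_j \Arrowvert_{L^2} \to 0$.

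First, from the $h_j$ I would construct continuous cut-offs: set $\omega_j := \min\{1, G_1 * h_j\}$, which lies in $W^{1,2}(\R^n)$ with $\Arrowvert \omega_j \Arrowvert_{W^{1,2}} \lesssim \Arrowvert h_j \Arrowvert_{L^2}$, satisfies $0 \le \omega_j \le 1$, and equals $1$ on the open set $U_j \supset K$. A further multiplication by a fixed smooth cut-off supported in $V$ and equal to $1$ near $K$ ensures $\omega_j \in C_0(V)$, while preserving the other properties and only worsening the $W^{1,2}$-norm by a fixed multiplicative constant.

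Next, I would estimate $\Arrowvert \omega_j f_i \Arrowvert_{W^{1,2}}$. The $L^2$-term is straightforward: $\Arrowvert \omega_j f_i \Arrowvert_{L^2} \to 0$ because $\omega_j \to 0$ quasi-everywhere and $|\omega_j f_i| \leq |f_i|$, so dominated convergence applies. For the gradient I expand via Leibniz
\[
\nabla(\omega_j f_i) = \omega_j \nabla f_i + f_i \nabla \omega_j;
\]
the first summand vanishes in $L^2$ by dominated convergence, but the second is the genuinely hard term, since $\nabla \omega_j$ and $f_i$ are only in $L^2$ and $L^{2^*}$ respectively, and H\"older's inequality alone would give $L^q$ control with $q < 2$.

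The main obstacle is therefore controlling $\Arrowvert f_i \nabla \omega_j \Arrowvert_{L^2}$ in terms of quantities that are small. To handle this I would invoke the Hedberg pointwise inequality for Bessel potentials, in the form
\[
|f_i(x)| = |G_1 * g_i(x)| \leq C\,(M|g_i|(x))^{\theta}\,(\mathrm{I}_{1-\sigma} * |g_i|(x))^{1-\theta}
\]
for suitable parameters, where $M$ is the Hardy-Littlewood maximal operator; combining this with the explicit construction $\omega_j = \min\{1, G_1*h_j\}$ (so that $\nabla \omega_j$ is dominated by $\nabla(G_1 * h_j)$ on $\{G_1*h_j < 1\}$ and vanishes elsewhere), the Wolff-type energy estimates of \cite[Chapter 9]{AH96} yield
\[
\Arrowvert f_i \nabla \omega_j \Arrowvert_{L^2} \leq C \Arrowvert g_i \Arrowvert_{L^2} \cdot \varepsilon_j,
\]
with $\varepsilon_j \to 0$ driven by $\Arrowvert h_j \Arrowvert_{L^2} \to 0$. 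Applying this simultaneously to the finite collection $f_1, \dots, f_r$ and passing to a large enough $j$ produces $\omega := \omega_j$ with all required properties, completing the proof.
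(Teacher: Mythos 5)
Your proposal collapses at exactly the point you identify as ``the genuinely hard term''. The cutoff $\omega_j=\min\{1,G_1*h_j\}$ is built from the capacity data alone, with no reference to the $f_i$, so the estimate you invoke, $\Arrowvert f_i\nabla\omega_j\Arrowvert_{L^2}\le C\Arrowvert g_i\Arrowvert_{L^2}\,\varepsilon_j$ with $\varepsilon_j$ controlled by $\Arrowvert h_j\Arrowvert_{L^2}$, would be a bilinear bound $L^2\times L^2\to L^2$ for $(g,h)\mapsto (G_1*g)\,\nabla\min\{1,G_1*h\}$. No such bound holds: $G_1*g\in W^{1,2}$ gives only $L^{2n/(n-2)}$ integrability (locally), $\nabla\min\{1,G_1*h\}$ is only in $L^2$, so the product is in general only in $L^{n/(n-1)}$, and it need not lie in $L^2$ at all --- superimpose a singularity $\lvert x-x_0\rvert^{-\alpha}$ of $f_i$ with $\alpha$ close to $(n-2)/2$ and a singularity $\lvert x-x_0\rvert^{-\beta}$ of $\nabla\omega_j$ with $\beta$ close to $n/2$ at a point where $\omega_j<1$, and $2\alpha+2\beta>n$. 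Hedberg's pointwise inequality and Wolff-type energy estimates do not produce the smallness you claim; indeed the fact that $\omega f_i\in W^{1,2}(\R^n)$ is itself part of the conclusion of the theorem already shows that a cutoff chosen independently of the $f_i$ cannot be expected to work. If your estimate were available, the original Theorem 9.9.1 of \cite{AH96} would admit a few-line proof, which it does not.

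The actual argument (and the one the paper extends) adapts the cutoff to the functions: for each large $\lambda$ one constructs $\omega_{i,\lambda}$ which equals $1$ not only on a neighborhood of $K$ but also on the region where $\lvert f_i\rvert$ is large, so that $\nabla\omega_{i,\lambda}$ is supported where $f_i$ is bounded by $\lambda$ and the Leibniz term $f_i\nabla\omega_{i,\lambda}$ can be controlled; the price is that these cutoffs have only \emph{bounded}, not small, $W^{1,2}$-products with $f_i$, and smallness is then recovered by passing to convex combinations $\sum_j a_j\omega_{\lambda_j}$. The paper's only new ingredient for $r\ge 2$ is to replace $\omega_\lambda$ by the pointwise maximum $\max_{1\le i\le r}\omega_{i,\lambda}$, which again lies in $W^{1,2}$, before taking the convex combination. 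Your construction omits this adaptation entirely, so the central difficulty is assumed rather than proved. (A minor additional point: $\min\{1,G_1*h_j\}$ is only quasicontinuous, whereas the statement requires a continuous $\omega$, so even the soft part of your construction needs a regularization step.)
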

\begin{proof}
  \cite[Theorem 9.9.1]{AH96} is the case of $r=1$. 
  We briefly recall their strategy.
  When $f=f_1$ is bounded, 
  it is enough to take 
  a small neighborhood $V'$ of $K$ and
  $\omega \in C_0 ( V' )  \cap W^{1,2} ( \R^n ) $ 
  such that 
  $0 \leq \omega \leq 1$, we have $\omega =1$ on a neighborhood of $K$, and $\Arrowvert \omega \Arrowvert_{ W^{1,2}(\R^n) } $ is small.
  Then, by Lipnitz rule,
  $$\Arrowvert \omega f \Arrowvert_{ W^{1,2}(\R^n) }  
  \leq \Arrowvert f \Arrowvert_{ W^{1,2}(V') }  
   + \sup_{x \in \R^n} \lvert f(x) \rvert \cdot \sum_{i=1}^n \Arrowvert \partial_{x_i}\omega  \Arrowvert_{ L^{2}(\R^n) } 
  $$
  is small.
  When $f$ is not bounded, the last term can be large.
  Therefore, in general, for given large $\lambda>0$, 
  they take $\omega_\lambda$ which is $1$ 
(in particular, $\partial_{x_i} \omega=0$)
    not only on a neighborhood of $K$ but also ``where $ \lvert f \rvert $ is large'' 
  so that we can control the last term
  and 
the required $\omega$ is given as a linear sum of $\omega_\lambda$ (for details see [loc.cit.]).
  We can generalize their proof to $r \geq 2$ by taking maximal function of $\omega_\lambda$ for each $f=f_i$. (Such a function is $1$ ``where some $ \lvert f_i \rvert $ is large''.)

  For $r \geq 2$,
  since most parts of proof are the same as \cite[Theorem 9.9.1]{AH96}
  we only give a rough proof. See [loc.cit.] for details. 
  For each large $\lambda >0$ and $f_i$,
   let $\omega_{i,\lambda} \in C_0^{ \infty } ( \R^n ) $ be as in proof of \cite[Theorem 9.9.1]{AH96} (i.e., $\omega_\lambda$ for $f = f_i$).
   Then we have
   \begin{itemize}
    \item $\supp (\omega_{i,\lambda}) \subset V$,
    \item $0 \leq \omega_{i,\lambda} \leq 1$, 
    \item $\omega_{i,\lambda} =1$ on a neighborhood of $K$,  and 
    \item $ \Arrowvert \omega_{i,\lambda} f_i \Arrowvert_{ W^{1,2}(\R^n) } < A $ for some constant $ A = A(f_i) > 0 .$ 
   \end{itemize}
  We put 
  $$\omega_\lambda ( x ) := \max_{1 \leq i \leq r} \omega_{i,\lambda} ( x ) \in C_0 (V) \cap W^{1,2} ( \R^n ) .$$
  (The maximum of finitely many functions in $W^{1,2}(\R^n)$ is again in $W^{1,2} ( \R^n ) $ (\cite[Corollary 2.1.8]{Zie89}).)
  In the same way as proof of $ \Arrowvert \omega_{i,\lambda} f_i \Arrowvert_{ W^{1,2}(\R^n) } < A  $ in \cite[Theorem 9.9.1]{AH96},
  we have 
  $ \Arrowvert \omega_{\lambda} f_i \Arrowvert_{ W^{1,2}(\R^n) } < C  \ ( 1 \leq i \leq r)$ for some constant $ C = C(f_1,\dots,f_r) > 0 $.
  In the same way as \cite[Theorem 9.9.1]{AH96},
  there are finitely many $a_j >0$ and large $\lambda_j >0$ such that $\sum_j a_j =1$
  and $ \Arrowvert (\sum_j a_j \omega_{\lambda_j}) f_i \Arrowvert_{ W^{1,2}(\R^n) } < \epsilon $ for $ 1 \leq i \leq r$.
  Hence $\omega :=\sum_j a_j \omega_{\lambda_j} $ is a required function.
\end{proof}

\begin{cor}\label{theta invariant version of Adams Hedberg Theorem}
  Let $f_i, n, K, \epsilon,V$ as in \cref{Theorem of Adams-Hedberg}.
  We assume that
    $n$ is even (and we identify $\R^n $ and a smooth toric variety $\C^{\frac{n}{2}}$),
    the subset $K $ is of the form $K= (- \log \lvert \cdot \rvert)^{-1} ( - \log \lvert K \rvert ) $, 
   and 
    $\lvert f_i \rvert $ are of the form $\lvert f_i (z) \rvert  = g_i ( - \log \lvert z \rvert )$
    for some function $g_i$ 
    on an open subset $U_i \subset \C^{\frac{n}{2}}$ such that $U_i =  (- \log \lvert \cdot \rvert)^{-1} ( - \log \lvert U_i \rvert )$.

  Then 
  there exists a compactly supported continuous $\R$-valued function $\omega ( - log \lvert z \rvert ) \in C_0 ( V )  \cap W^{1,2} ( \C^{\frac{n}{2}} ) $ 
  such that 
   $0 \leq \omega \leq 1$, we have $\omega =1$ on a neighborhood of $K$, 
   we have $\omega f_i \in W^{1,2}(U_i)$, 
   and $ \Arrowvert \omega f_i \Arrowvert_{ W^{1,2}(U_i) } < \epsilon $ for $ 1 \leq i \leq r$.
\end{cor}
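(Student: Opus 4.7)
The plan is to apply \cref{Theorem of Adams-Hedberg} to produce an auxiliary (non-theta-invariant) function $\omega_0$, and then take its toric average via \cref{ average of theta function W 12} to obtain the desired theta-invariant $\omega$. Throughout, write $T$ for the averaging operator $Tg(r) := (2\pi)^{-n/2} \int_{[0,2\pi]^{n/2}} g(re^{i\theta})\, d\theta$.

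First I would replace $V$ by a smaller theta-invariant open neighborhood $V' \subset V$ of $K$ chosen so that $\Arrowvert \nabla f_i \Arrowvert_{L^2(V')}$ is sufficiently small for every $i$. Such a $V'$ exists because $K$ has Sobolev $2$-capacity zero (hence Lebesgue measure zero), each $\nabla f_i$ lies in $L^2(\R^n)$, and $K$ is theta-invariant and compact, so one can cover $K$ by small theta-invariant tubes contained in $V$ and take a finite subcover. Then I would apply \cref{Theorem of Adams-Hedberg} to $f_1,\dots,f_r$ with neighborhood $V'$ and a sufficiently small $\epsilon'' > 0$, obtaining $\omega_0 \in C_0(V') \cap W^{1,2}(\R^n)$ with $0 \leq \omega_0 \leq 1$, $\omega_0 = 1$ on an open set $W_0 \supset K$, and $\Arrowvert \omega_0 f_i \Arrowvert_{W^{1,2}(\R^n)} < \epsilon''$. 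Since $K$ is compact and theta-invariant, one can find a theta-invariant open $W$ with $K \subset W \subset W_0$, and then $\omega_0 = 1$ on $W$. Setting $\omega := T\omega_0$, \cref{ average of theta function W 12} gives $\omega \in W^{1,2}(\C^{n/2})$; theta-invariance, $0 \leq \omega \leq 1$, continuity, $\omega = 1$ on $W$, and $\omega \in C_0(V') \subset C_0(V)$ follow directly from the construction together with the theta-invariance of $V'$ and $W$.

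The main remaining task is the bound $\Arrowvert \omega f_i \Arrowvert_{W^{1,2}(U_i)} < \epsilon$. For the $L^2$ part, Jensen's inequality gives $\omega^2 \leq T(\omega_0^2)$, and theta-invariance of $\lvert f_i \rvert^2$ on $U_i$ yields $\Arrowvert \omega f_i \Arrowvert_{L^2(U_i)} \leq \Arrowvert \omega_0 f_i \Arrowvert_{L^2(\R^n)} < \epsilon''$. For the gradient, expand via Leibniz $\nabla(\omega f_i) = f_i \nabla\omega + \omega \nabla f_i$. The term $\omega \nabla f_i$ is bounded in $L^2(U_i)$ by $\Arrowvert \nabla f_i \Arrowvert_{L^2(V')}$ since $\omega \leq 1$ and $\supp \omega \subset V'$. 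For $f_i \nabla\omega$, theta-invariance of $\omega$ gives $\lvert \nabla\omega \rvert^2 = \sum_k (\partial_{r_k}\omega)^2$ in Cartesian coordinates; expressing $\partial_{r_k}\omega$ as the toric average of $\cos\theta_k \partial_{u_k}\omega_0 + \sin\theta_k \partial_{v_k}\omega_0$ via \cref{ average of theta function W 12}, applying Cauchy--Schwarz pointwise, and performing the $d\theta$-integration (using once more the theta-invariance of $\lvert f_i \rvert^2$ on $U_i$) yields $\Arrowvert f_i \nabla\omega \Arrowvert_{L^2(U_i)}^2 \leq 2 \Arrowvert f_i \nabla\omega_0 \Arrowvert_{L^2(\R^n)}^2$. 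A further Leibniz expansion bounds $\Arrowvert f_i \nabla\omega_0 \Arrowvert_{L^2(\R^n)}$ by $\Arrowvert \omega_0 f_i \Arrowvert_{W^{1,2}(\R^n)} + \Arrowvert \nabla f_i \Arrowvert_{L^2(V')}$. Choosing $V'$ and $\epsilon''$ small enough at the outset then gives the required bound.

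The principal obstacle is that $\omega f_i$ is not itself theta-invariant (only $\omega$ is, and $\lvert f_i \rvert^2$ is theta-invariant only on $U_i$), so \cref{ average of theta function W 12} cannot be applied directly to $\omega f_i$. The resolution is to handle the Leibniz expansion of $\nabla(\omega f_i)$ term by term, transferring the averaging operation onto $\omega_0$ and its partial derivatives, and invoking theta-invariance of $\lvert f_i \rvert^2$ on $U_i$ only to perform the $d\theta$-integration.
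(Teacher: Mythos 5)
Your proposal is correct and follows essentially the same route as the paper: apply \cref{Theorem of Adams-Hedberg} with a small parameter, average the resulting cutoff over the torus action, and use \cref{ average of theta function W 12} together with the $\theta$-invariance of $\lvert f_i\rvert$ on $U_i$ to control the $W^{1,2}$-norm of the product. You merely make explicit two points the paper leaves implicit — the preliminary shrinking of $V$ to a $\theta$-invariant neighborhood on which $\Arrowvert \nabla f_i \Arrowvert_{L^2}$ is small (hidden in the paper's ``by construction''), and the Leibniz/Jensen manipulations transferring the estimates from $\omega_0$ to its average.
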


\begin{proof}
  Let $\omega$ be as in \cref{Theorem of Adams-Hedberg} for sufficiently small $\epsilon' > 0$.
  We put 
   $$\omega'(z = r e^{i \theta }):= \frac{1}{ ( 2 \pi )^{ \frac{n}{2} } } \int_{\theta' = (\theta'_1, \dots,\theta'_{\frac{n}{2}})\in [ 0 , 2 \pi ]^{\frac{n}{2}} } 
    \omega(r e^{i \theta'}) d \theta'_1 \wedge \dots \wedge d \theta'_{\frac{n}{2}}  .$$
  We have $\omega' \in W^{1,2}( \R^n )$ by \cref{ average of theta function W 12}.
  It suffices to show that $\omega' f_i \in W^{1,2}(U_i)$ and 
   $ \Arrowvert \omega' f_i \Arrowvert_{ W^{1,2}(U_i) } < \epsilon $ for $ 1 \leq i \leq r$.
   We put $x_j$ a coordinate of $\R^n$.
  By construction, 
  we can take 
   $ \Arrowvert \omega'  f_i \Arrowvert_{ L^{2}(U_i) } $ 
   and 
   $ \Arrowvert \omega' \frac{\partial}{ \partial x_j } f_i \Arrowvert_{ L^{2}(U_i) } $ small enough.
  By the assumption on $\lvert f_i \rvert$ and the last assertion of \cref{ average of theta function W 12} for $\omega$,
  we can take 
   $ \Arrowvert (\frac{\partial}{ \partial x_j } \omega')  f_i \Arrowvert_{ L^{2}(U_i) } $ small enough.
  In particular, we have
  $\omega' f_i \in W^{1,2} ( U_i )$. 
  This $\omega'$ is a required function.
\end{proof}

\begin{lem}\label{approximation by multiplying continuous functions}
  Let $R \in \Lambda^{\ess}$ be a polyhedron, and
  $\sigma \in \Sigma$ the maximal cone 
  among
  $R \cap N_{\sigma,\R} \neq \emptyset$.
  Let  
    $ \alpha = ( \alpha_P )_{P \in \Lambda_{\max}}\in W_{\Trop}^{p,q,1,2}( \Lambda )$
   such that   
    $$\supp ( \alpha ) \subset T_{\sigma}(\C) \cap  \bigcup_{ \substack{P \in \Lambda_{\max}  \\ R \subset P}} 
    \bigg( \exp ( - P)_\C  \setminus  \bigcup_{ \substack{ R' \in \Lambda^{\ess} \\ R \not\subset R', \ R' \subset P }} 
     ( - \log \lvert \cdot \rvert )^{-1} ( R') \bigg) $$
   (i.e., $\supp (\alpha)$ is contained in a (small) open neighborhood of $\relint ( (-\log \lvert \cdot \rvert )^{-1} (R)) $ in $\bigcup_{P \in \Lambda_{\max} } \exp (-P)_\C $).
  We put 
   $$ K:= 
    \begin{cases}
       \supp \alpha  \setminus (\C^{\times} )^n  & ( \dim R \geq d- 1 )  \\
        ( \supp \alpha \setminus (\C^{\times} )^n   ) \cup  \bigcup_{ \substack{P \in \Lambda_{\max} \\ R \subset P}} (\exp ( - P)_\C  \cap (-\log \lvert \cdot \rvert )^{-1} ( R ))  & ( \dim R \leq d - 2 ).
    \end{cases}
   $$

  Then for any $\epsilon >0$, 
  there exists 
   a compactly supported continuous $\R$-valued function $\omega := \omega ( - \log \lvert z \rvert ) $ on $ T_{\sigma}(\C) \cap \bigcup_{P \in \Lambda_{\max}} \exp ( - P)_\C  $ 
  such that  
  \begin{itemize}
    \item $0 \leq \omega \leq 1$, 
    \item $\omega =1$ on a neighborhood of $K$,
    \item $\omega \alpha \in  W_{\Trop}^{p,q,1,2} ( \Lambda ) $, and  $ \Arrowvert \omega \alpha \Arrowvert_{W^{1,2}(\Lambda)}  < \epsilon $.
  \end{itemize}
\end{lem}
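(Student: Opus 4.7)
The plan is to apply the $\theta$-invariant Adams--Hedberg corollary (\cref{theta invariant version of Adams Hedberg Theorem}) to the coefficient functions of $\alpha$ in a common ambient toric variety. Since $\sigma$ is the maximal cone meeting $R$, every $P \in \Lambda_{\max}$ containing $R$ satisfies $\exp(-P)_\C \subset T_\sigma(\C) \cong \C^{\dim \sigma} \times (\C^{\times})^{n - \dim \sigma}$. I would first use \cref{L2 superforms} (together with the $\theta$-averaging trick used in its proof, applied to $\alpha_P$ and also to $\overline{\partial}\alpha_P$, $\overline{\delta}\alpha_P$) to write each $\alpha_P$ in $P$-adapted coordinates as
$$\alpha_P = \sum'_{I,J} \alpha_{P,I,J}(-\log \lvert z \rvert)\, d(-\tfrac{i}{2} \log z_I) \wedge d(-\tfrac{1}{2} \log \overline{z_J})$$
with coefficients $\alpha_{P,I,J} \in W^{1,2}(\relint \exp(-P)_\C)$ depending only on $-\log \lvert z \rvert$.

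Second, I would check that $K$ has Sobolev $2$-capacity zero. The part $\supp \alpha \setminus (\C^{\times})^n$ lies in the toric boundary, which is a closed analytic subset of real codimension $\geq 2$. In the case $\dim R \leq d-2$, the additional component $\bigcup_{R \subset P} \bigl(\exp(-P)_\C \cap (-\log \lvert \cdot \rvert)^{-1}(R)\bigr)$ has real dimension $\dim R + d \leq 2d-2$ inside each $2d$-real-dimensional $\exp(-P)_\C$, again codimension $\geq 2$. Both are therefore compact subsets of Sobolev $2$-capacity zero by the capacity remarks at the start of Section 9. I would then apply \cref{theta invariant version of Adams Hedberg Theorem} to the finite list $\{\alpha_{P,I,J}\}_{P,I,J}$, yielding a compactly supported continuous $\theta$-invariant function $\omega(-\log \lvert z \rvert)$ on $T_\sigma(\C) \cap \bigcup_P \exp(-P)_\C$ with $0 \leq \omega \leq 1$, $\omega = 1$ on a neighborhood of $K$, and $\Arrowvert \omega\, \alpha_{P,I,J} \Arrowvert_{W^{1,2}} < \epsilon'$ for every $(P,I,J)$, with $\epsilon' > 0$ at our disposal.

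Finally, because $\omega$ depends only on $-\log \lvert z \rvert$, the product $\omega \alpha_P$ remains in $W_{\Trop}^{p,q,1,2}(\relint \exp(-P)_\C)$; by \cref{comaparison of Sobolev norm with Euclid metric} its intrinsic Sobolev norm on $\exp(-P)_\C$ is equivalent to the Euclidean norm used in the capacity argument, so summing
$$\Arrowvert \omega \alpha \Arrowvert_{W^{1,2}(\Lambda)}^2 = \sum_{P \in \Lambda_{\max}} m_P \Arrowvert \omega \alpha_P \Arrowvert_{W^{1,2}(P)}^2$$
and taking $\epsilon'$ small enough (relative to the finitely many $m_P$ and the number of multi-indices $(I,J)$) yields $\Arrowvert \omega \alpha \Arrowvert_{W^{1,2}(\Lambda)} < \epsilon$. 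The main obstacle is invoking \cref{theta invariant version of Adams Hedberg Theorem} coherently across the different submanifolds $\exp(-P)_\C$, each carrying its own $P$-adapted coordinates: the resolution is that all such $\exp(-P)_\C$ share the $\theta$-coordinates inherited from the common ambient $T_\sigma(\C)$, so a single $\theta$-invariant $\omega$ pulls back simultaneously to each, and the Adams--Hedberg cutoff construction can be carried out either uniformly within each $\exp(-P)_\C \cong \C^d$ or directly on $T_\sigma(\C)$ viewed via its $-\log \lvert \cdot \rvert$-fibration.
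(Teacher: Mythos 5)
Your overall strategy coincides with the paper's: reduce to the $\theta$-invariant coefficient functions of $\alpha$, observe that $K$ is compact of Sobolev $2$-capacity zero (your codimension count in the case $\dim R\le d-2$ is the right one), apply \cref{theta invariant version of Adams Hedberg Theorem} to the finite family of coefficients, and conclude by equivalence of the intrinsic and Euclidean Sobolev norms. However, the step you yourself flag as ``the main obstacle'' is a genuine gap, and neither of your two proposed resolutions works as stated. \cref{Theorem of Adams-Hedberg} and its corollary require the functions $f_1,\dots,f_r$ to lie in $W^{1,2}$ of a \emph{single} Euclidean space and produce \emph{one} cutoff adapted to all of them simultaneously. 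Your coefficients $\alpha_{P,I,J}$ live on the various $d$-dimensional submanifolds $\exp(-P)_\C$ of the $n$-dimensional $T_\sigma(\C)$, which point in different directions. Applying the corollary separately ``within each $\exp(-P)_\C\cong\C^d$'' yields cutoffs $\omega_P$ that need not agree on the common faces $\exp(-Q)_\C$, so they do not assemble into the single continuous function on $T_\sigma(\C)\cap\bigcup_P\exp(-P)_\C$ that the lemma requires; and applying it ``directly on $T_\sigma(\C)$'' is not possible, because the $\alpha_{P,I,J}$ admit no canonical $W^{1,2}$ extension from a positive-codimension submanifold to the ambient space, so there is no family $\{f_i\}\subset W^{1,2}(\R^{2n})$ to feed into the theorem.

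The paper closes this gap with a device you do not mention: a monomial projection $\Phi_\C=\Id_{\C^r}\times(\Phi_1\otimes_\Z\C^\times)\colon \C^r\times(\C^\times)^{n-r}\to\C^r\times(\C^\times)^{d-r}$ chosen to be injective on each $\exp(-P)_\C\cap T_\sigma(\C)$. This transports all the data into one $d$-dimensional space: each coefficient is pushed forward to $\relint(\Phi_\C(\exp(-P_\alpha)_\C))$ and then extended to $W^{1,2}(\C^d)$ by the continuous extension operator of \cite[Theorem 5.4]{Wlo87}, the compact set becomes $\Phi_\C(K)$ (still of capacity zero), and a single application of \cref{theta invariant version of Adams Hedberg Theorem} to this finite family produces one $\theta$-invariant $\omega_0$ on $\C^r\times(\C^\times)^{d-r}$. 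Because $\Phi_\C$ is monomial, $\omega_0\circ\Phi_\C$ is again a function of $-\log\lvert z\rvert$ and restricts coherently to every $\exp(-P)_\C$, which is exactly the single global cutoff you were missing. With this insertion the rest of your argument (capacity of $K$, the $\theta$-averaged coefficients as in \cref{L2 superforms}, and the norm comparison via \cref{comaparison of Sobolev norm with Euclid metric}) goes through.
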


\begin{rem}
  When $\alpha $ satisfies one of four boundary condition in \cref{definition of boundary conditions}, 
  the superform $\omega \alpha$ also satisfies the same one.
\end{rem}

\begin{proof}
 We identify 
  \begin{align*}
   T_{\sigma} ( \C ) & \cong  \C^r \times (\C^{\times})^{n-r}
   \qquad  \qquad  ( r := \dim \sigma)
    \\
   \Trop( T_{\sigma}  ) & \cong  (\R \cup \{ \infty \})^r \times \R^{n-r}  .
  \end{align*}
 Let 
  $ \Phi_1 \colon \Z^{n-r} \to \Z^{d-r}$
  be a $\Z$-linear map 
 such that 
  $$ \Phi_\C := \Id_{ \C^r } \times \Phi_1\otimes_\Z \C^{\times} 
  \colon \C^r \times (\C^{\times})^{n-r}  \to \C^r \times (\C^{\times})^{d-r}  $$
  is injective on each $\exp ( -P_\C ) \cap T_{\sigma} ( \C ) \ ( P \in \Lambda_{\max}$). 
 For each $P \in \Lambda_{\max}$ containing $R$, 
 there exists 
  a $d$-dimensional tropically compact polyhedron 
  $P_\alpha \subset P \cap \Trop (T_\sigma)$ 
 such that $\supp ( \alpha_P ) \subset \exp ( - P_{\alpha })_\C $.
 We can write 
  $$\alpha_P = \sum'_{I,J} \alpha_{ P, I, J } d z_{ P , I } \wedge d \overline{z_{ P , J } } $$
 with 
  $ \alpha_{P, I, J } \in W^{1,2} ( \relint ( \exp ( - P_\alpha)_\C ) )  $ 
 such that 
  $$ \lvert \alpha_{P, I, J } (z_P) \rvert = \beta_{P,I, J} ( - \log \lvert z_P \rvert ) $$
  on $\exp ( - P_\alpha)_\C$
 for some function $\beta_{P, I, J}$, 
   where $z_{P,1}, \dots, z_{P,d} \in M$ is a holomorphic coordinate of $\exp ( - P)_\C $.
 There exists 
   a continuous linear extension operator (\cite[Theorem 5.4]{Wlo87})
  $$ F_{P_\alpha} \colon W^{1,2}(\relint ( \Phi_\C( \exp ( - P_\alpha)_\C ) ) )
  \to W^{1,2}( \C^d ). $$
 By applying \cref{theta invariant version of Adams Hedberg Theorem} 
  to $ F_{P_\alpha} ( \Phi_\C ( \alpha_{P, I, J } ) )  $ 
      ($I,J$ and $P \in \Lambda_{\max}$ containing $R$),
      the compact set $ \Phi_\C ( K ) $,
      and 
      open subsets $\relint ( \Phi_\C ( \exp ( - P_\alpha)_\C  ) )$,
 there exists a compactly supported continuous $\R$-valued function $\omega_0 ( - \log \lvert z \rvert )$ on $\C^r \times (\C^{\times})^{d-r}$ 
 such that 
 $ \omega:=  \Phi_\C^{-1}(\omega_0)$ is a required function. 
\end{proof}

\begin{lem}\label{approximation in codimension 1}
 Let $A \subset \R^d$ be a $d$-dimensional compact polyhedron.
  
 Then 
  there exists 
   a constant $C>0$ 
  such that 
   for 
    $\omega \in W_{\Trop}^{1,2} ( \relint ( \exp ( - A)_\C  ) )$
    and 
    $f \in C^{\infty}(\partial  \exp ( - A)_\C   )$ (i.e., the restriction of a smooth function defined on a neighborhood of $\partial \exp ( - A)_\C  $),
   there exists 
    $\tilde{f} (z  )\in C_{\Trop,\qs}^{\infty} ( \exp (- A)_\C)$
   such that 
    \begin{align*}
     & \tilde{f} (z = r e^{i \theta }) |_{ \partial ( \exp ( - A)_\C  ) } 
       = \frac{1}{ (2 \pi )^d } \int_{\theta'= (\theta'_1, \dots,\theta'_d) \in [0,2\pi ]^d } f (  r e^{i \theta' } ) d \theta'_1 \wedge \dots \wedge d \theta'_d \\
     & \Arrowvert \omega - \tilde{f} \Arrowvert_{W^{1,2} ( \relint ( \exp ( - A)_\C  ) ) } 
       < C \Arrowvert \omega |_{ \partial ( \exp ( - A)_\C  ) } - f \Arrowvert_{W^{ \frac{1}{2}, 2 } ( \partial \exp ( - A)_\C   ) } ,
    \end{align*}
where 
$\Arrowvert  \cdot \Arrowvert_{W^{ \frac{1}{2}, 2 } ( \partial \exp ( - A)_\C   )}$
is a fixed fractional Sobolev norm 
(\cite[Definition 3.1 and Definition 4.4]{Wlo87}).
\end{lem}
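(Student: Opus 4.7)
The plan is to reduce the problem to approximating an element of $W_{\Trop}^{1,2}$ with prescribed trace by a smooth quasi-smooth function, using the continuous linear extension $Z \colon W^{\frac{1}{2},2}(\partial \exp ( - A)_\C) \to W^{1,2}(\relint ( \exp ( - A)_\C ))$ of \cref{review trace operators} together with the angular averaging operator of \cref{ average of theta function W 12}. Set $g := \omega|_{\partial \exp ( - A)_\C} - f \in W^{\frac{1}{2},2}(\partial \exp ( - A)_\C )$; since $\omega \in W_{\Trop}^{1,2}$, the trace $\omega|_\partial$ is $\theta$-invariant.

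The first step is to produce a $\theta$-invariant ``defect'' correction. Let $u := (Zg)'$ be the angular average of $Zg$ on $\exp (-A)_\C$. Extending $Zg$ to $\C^d$ via \cite[Theorem 5.4]{Wlo87}, applying \cref{ average of theta function W 12}, and restricting back yields $u \in W_{\Trop}^{1,2}$ with $\Arrowvert u \Arrowvert_{W^{1,2}} \leq C_1 \Arrowvert g \Arrowvert_{W^{\frac{1}{2},2}}$ for a constant $C_1 > 0$ independent of $\omega$ and $f$. Since the compact torus $(S^1)^d$ acts on $\C^d$ by $W^{1,2}$-isometries preserving $\partial \exp ( - A)_\C$, the trace operator commutes with angular averaging, so the trace of $u$ equals the angular average of $g$, which by $\theta$-invariance of $\omega|_\partial$ equals $\omega|_\partial - f'$, where $f'$ denotes the angular average of $f$. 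Hence $\omega - u \in W_{\Trop}^{1,2}$ has trace exactly $f'$. Next, extend $f$ to a smooth function $\hat{f}$ on $\exp (-A)_\C$ (possible by the definition of $C^{\infty}(\partial \exp ( - A)_\C)$ as restrictions of smooth functions on a neighborhood, combined with a cutoff), and set $F_0 := \hat{f}' \in C_{\Trop,\qs}^{\infty}(\exp (-A)_\C)$; then $F_0|_\partial = f'$ and $v := (\omega - u) - F_0 \in W_{\Trop}^{1,2}$ has trace $0$.

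Finally, I will use density of smooth quasi-smooth functions compactly supported in the interior to approximate $v$. By the standard density of $C_0^{\infty}(\relint ( \exp (-A)_\C ))$ in the kernel of the $W^{1,2}$ trace operator (\cite[Theorem 8.9 (a)]{Wlo87}), combined with angular averaging---which preserves compact support in $\relint ( \exp (-A)_\C )$ because $(S^1)^d$ acts on $\exp (-A)_\C$ preserving both its interior and boundary---the space of quasi-smooth functions on $\exp (-A)_\C$ compactly supported in the interior is dense in the $\theta$-invariant part of the kernel of the trace operator. Pick such a $w$ with $\Arrowvert v - w \Arrowvert_{W^{1,2}} < \Arrowvert g \Arrowvert_{W^{\frac{1}{2},2}}$ and set $\tilde{f} := F_0 + w \in C_{\Trop,\qs}^{\infty}(\exp (-A)_\C)$; then $\tilde{f}|_\partial = f'$, and the triangle inequality gives $\Arrowvert \omega - \tilde{f} \Arrowvert_{W^{1,2}} \leq \Arrowvert u \Arrowvert_{W^{1,2}} + \Arrowvert v - w \Arrowvert_{W^{1,2}} \leq (C_1 + 1) \Arrowvert g \Arrowvert_{W^{\frac{1}{2},2}}$, yielding the claim with $C = C_1 + 1$. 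The main technical obstacle is this density statement; it rests on the torus invariance of $\exp (-A)_\C$ to ensure that angular averaging does not enlarge the support toward $\partial \exp ( - A)_\C$, after which the standard density for smooth compactly supported functions can be transported to the $\theta$-invariant setting.
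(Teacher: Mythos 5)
Your proposal is correct and follows essentially the same route as the paper's proof: both use the extension operator $Z$ of \cite[Theorem 8.8]{Wlo87} applied to $\omega|_{\partial}-f$, a smooth extension of $f$, the density of $C_0^{\infty}(\relint(\exp(-A)_\C))$ in the trace-zero subspace \cite[Theorem 8.9 (a)]{Wlo87}, and the boundedness of angular averaging (\cref{ average of theta function W 12}) together with the $\theta$-invariance of $\omega$, arriving at the same constant $C_1+1$. The only difference is bookkeeping — you average the three pieces separately, whereas the paper averages $\tilde f_1 + h$ once at the end — which does not change the argument.
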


\begin{proof}
  We put $g$
   the image of $\omega |_{ \partial ( \exp ( - A)_\C  ) } - f $
   under a continuous linear extension operator (\cite[Theorem 8.8]{Wlo87})
    $$Z \colon W^{ \frac{1}{2}, 2 } ( \partial \exp ( - A)_\C   )
        \to W^{ 1, 2 } ( \relint ( \exp ( - A)_\C  ) ) . $$
  Let $\tilde{f}_1 \in C^{\infty}( \exp ( - A)_\C  ) $ be an extension of $f$.
  Let $h \in C_0^{\infty} ( \relint ( \exp ( - A)_\C  ) ) $ be an  approximation of 
  $\omega - \tilde{f}_1 - g  \in W^{1,2} ( \relint ( \exp ( - A)_\C  ) ) $ (\cite[Theorem 8.9 (a)]{Wlo87}).
  We put  
       $$\tilde{f} (z = r e^{i \theta}) := \frac{1}{ (2 \pi )^d } \int_{\theta'= (\theta'_1, \dots,\theta'_d) \in [0,2\pi ]^d } ( \tilde{f}_1 + h )  ( r e^{i \theta' } ) d \theta'_1 \wedge \dots \wedge d \theta'_d  \in C_{\Trop,\qs}^{\infty} ( \exp ( -A)_\C ).$$
  Then we have 
   \begin{align*}
    \Arrowvert \omega - \tilde{f} \Arrowvert_{W^{1,2} ( \relint ( \exp ( - A)_\C  ) ) } 
    & \leq C_1 \Arrowvert \omega - (\tilde{f}_1 + h ) \Arrowvert_{W^{1,2} ( \relint ( \exp ( - A)_\C  ) ) } \\
    & \leq C_1 (\Arrowvert \omega - (\tilde{f}_1 + h +g  ) \Arrowvert_{W^{1,2} ( \relint ( \exp ( - A)_\C  ) ) } 
     + \Arrowvert g \Arrowvert_{W^{1,2} ( \relint ( \exp ( - A)_\C  ) ) } ) \\
    & \leq (C_1 +1) \Arrowvert g \Arrowvert_{W^{1,2} ( \relint ( \exp ( - A)_\C  ) ) } \\
    & \leq C \Arrowvert \omega |_{ \partial \exp ( - A)_\C  } - f \Arrowvert_{W^{\frac{1}{2} , 2 } ( \partial \exp ( - A)_\C  ) } .
   \end{align*}
  (The first inequality follows from \cref{ average of theta function W 12}.)
\end{proof}

\begin{proof}[Proof of \cref{density H = W with boundary conditions2}]
 By 
  partition of unity by $\A_{\Trop}^{0,0} ( \subset \A_{\Trop,\qs}^{0,0})$ (\cite[Lemma 2.7]{JSS19})  and \cref{approximation by multiplying continuous functions}, 
 it suffices to show that 
  every $\omega = ( \omega_P )_{P \in \Lambda_{\max}} \in W_{\Trop}^{p,q,1,2} ( \Lambda )$ 
   satisfying $\epsilon$ (resp. $\epsilon_1$ and $\epsilon_2$) 
   such that 
    $$\supp \omega 
       \subset
       (\C^{\times})^n \cap 
        \bigcup_{ \substack{P \in \Lambda_{\max}  \\ Q \subset P}} 
        \bigg( \exp ( - P)_\C  \setminus  \bigcup_{ \substack{ R \in \Lambda^{\ess} \\ Q \not\subset R, \ R \subset P }} 
        ( - \log \lvert \cdot \rvert )^{-1} ( R) \bigg)
         $$
    for some $Q \in \Lambda_{\max -1}^{\ess}$
  can be approximated by 
   $f \in \A_{\Trop,\pqs}^{p,q} (\Lambda )$ 
    satisfying the same condition $\epsilon$ (resp. $\epsilon_1$ and $\epsilon_2$). 
    (In particular, for $Q' \in \Lambda_{\max}^{\ess} \setminus \{Q\}$, we have $\omega_{P'}|_{(-\log \lvert \cdot \rvert )^{-1}(Q') } = 0$ for any $P' \in \Lambda_{\max}$ containing $Q'$.)
  
  This easily follows from \cref{approximation in codimension 1} as follows.
  We consider the case of single condition $\t''_{\min}$. 
   The other conditions are similar, and we omit them.
  For each $P \in \Lambda_{\max}$ containing $Q$,
  we fix 
   a $d$-dimensional compact polyhedron $A_P \subset P \cap \R^n$
   such that 
   $\supp \omega_P \subset \exp ( - A_P)_\C$
   and 
   $$\supp \omega_P \cap (- \log \lvert \cdot \rvert)^{-1}(Q) \subset  \relint ( \partial \exp ( - A_P)_\C  \cap (- \log \lvert \cdot \rvert)^{-1}(Q) ).$$
  Let 
   $$\eta_Q = 
      \sum'_{I,J} \eta_{Q,I,J} ( - \log \lvert z \rvert ) 
       d ( - \frac{i}{2} \log z_I ) \wedge d ( - \frac{1}{2} \log \overline{z_J})$$
   be as in definition of condition $\t''_{\min}$
   such that 
   $$\supp \eta_{Q,I,J} \cap \partial \exp ( -P_\C ) \subset \relint ( \partial \exp ( - A_P)_\C  \cap (- \log \lvert \cdot \rvert)^{-1}(Q) ) $$
   ($I,J, P \in \Lambda_{\max}$ containing $Q$).
  By a natural homeomorphism 
    \begin{align*}
    & \partial \exp ( - P)_\C   \cap (- \log \lvert \cdot \rvert )^{-1} (Q) \cap (\C^{\times})^n \\
    \cong &  ( Q \cap \R^n ) \times (S^1)^d \\
    \cong &  \partial \exp ( - P')_\C   \cap (- \log \lvert \cdot \rvert )^{-1} (Q) \cap (\C^{\times})^n
    \end{align*}
    for any $P , P' \in \Lambda_{\max}$ containing $Q$,
   we have a homeomorphism 
   \begin{align*}
    \Psi_{P,P'} \colon &\{ \omega  \in W^{\frac{1}{2},2}(\partial \exp ( - P)_\C    )
    \mid \supp \omega \subset \relint(\partial \exp ( - P)_\C   \cap (- \log \lvert \cdot \rvert )^{-1} (Q) ) \} \\
    \cong &
    \{ \omega  \in W^{\frac{1}{2},2}(\partial \exp ( - P')_\C    )
    \mid \supp \omega \subset \relint(\partial \exp ( - P')_\C   \cap (- \log \lvert \cdot \rvert )^{-1} (Q) ) \} 
   \end{align*}
   which send the restriction of $\eta_{I,J}(- \log \lvert z \rvert )$ to the restriction of it.

  In particular, we can take an approximation $f_{P,I,J} \in C^{\infty} ( \partial  \exp ( - A_P)_\C  )$ of 
  the function
  $\eta_{I,J}( - \log \lvert z \rvert )|_{ \partial \exp (-P)_\C}$
   in $W^{\frac{1}{2} ,2} ( \partial \exp ( - A_P)_\C ) $
   with 
    $$\supp f \subset \relint (\partial  \exp ( - A_P)_\C \cap (- \log \lvert \cdot \rvert )^{-1} (Q)  ) $$
    (\cite[Lemma 3.3 and Definition 4.4]{Wlo87})
   such that $f_{P,I,J} \mapsto f_{P',I,J}$ under the homeomophism $\Psi_{P,P'}$.
  Hence by applying \cref{approximation in codimension 1},
  we get a piece-wise quasi-smooth superform which approximates $\omega$ and satisfies $\t''_{\min}$.
\end{proof}

\end{document}